\tikzset{
	symbol/.style={
		draw=none,
		every to/.append style={
			edge node={node [sloped, allow upside down, auto=false]{$#1$}}}
	}
}
\newtheorem{theorem}{Theorem}[section]
\newtheorem{proposition}[theorem]{Proposition}
\newtheorem{lemma}[theorem]{Lemma}
\newtheorem{corollary}[theorem]{Corollary}
\theoremstyle{definition}
\newtheorem{definition}[theorem]{Definition}
\newtheorem{example}[theorem]{Example}
\theoremstyle{remark}
\newtheorem{remark}[theorem]{Remark}
\newtheorem{question}[theorem]{Question}
\numberwithin{equation}{section}
\newcommand{\field}[1]{\mathbb{#1}}
\newcommand{\R}{\field{R}}
\newcommand{\N}{\field{N}}
\newcommand{\C}{\field{C}}
\newcommand{\Z}{\field{Z}}
\newcommand{\Cal}{\mathcal}
\newcommand{\A}{\Cal{A}}
\newcommand{\V}{\Cal{V}}
\begin{document}

\title[From limit theorems to mixing limit theorems]{From limit theorems  to mixing limit theorems}



\author{Francisco Arana--Herrera}

\email{farana@umd.edu}

\address{Department of Mathematics, William E. Kirwan Hall, 4176 Campus Dr, University of Maryland, College Park, MD 20742, USA}
	
\author{Giovanni Forni}

\email{gforni@umd.edu}

\address{Department of Mathematics, William E. Kirwan Hall, 4176 Campus Dr, University of Maryland, College Park, MD 20742, USA}



\date{}

\begin{abstract}
    Motivated by work of Dolgopyat and Nándori, we establish a general method for upgrading limit theorems for Birkhoff sums and cocycles over dynamical systems to mixing limit theorems under mild ergodicity and hyperbolicity assumptions.  Building on previous work of Al-Saqban and Forni, we apply this method to obtain mixing limit theorems for particular subbundles of the Kontsevich-Zorich cocycle. In forthcoming work of Arana-Herrera and Honaryar these results are applied to study the arithmetic/homological complexity of long simple closed geodesics on negatively curved surfaces.
\end{abstract}

\maketitle


\thispagestyle{empty}

\tableofcontents

\section{Introduction}

\subsection*{Motivation.} Several important conditions can be used to describe the randomness of a deterministic dynamical system. For instance, ergodicity ensures the almost everywhere convergence of time averages to spatial averages. Other limit theorems like central limit theorems can then be used to characterize the distribution of the deviations of time averages from spatial averages. Mixing, a notion strictly stronger than ergodicity, guarantees the asymptotic independence between the initial position of points of a system and the position of their forward iterates. Joint generalizations of these notions of randomness, called mixing limit theorems, were introduced by Dolgopyat and Nándori in \cite{DN20}. The main goal of this paper is to develop a method for upgrading limit theorems for Birkhoff sums and cocycles over dynamical systems to mixing limit theorems under mild ergodicity and hyperbolicity assumptions.

In the case of the central limit theorem, the question studied in this paper is of interest in itself. For instance, for Birkhoff sums: Does the distribution of the deviations of time averages from spatial averages remain unchanged if we prescribe the initial and final positions of points? In this paper we show the answer to this question is positive under weak ergodicity and hyperbolicity conditions.

The Oseledets ergodic theorem interpreted in the distributional sense is another example of a limit theorem. As an application of the methods developed in this paper, and building on previous work of Al-Saqban and Forni \cite{KZCLT}, we prove mixing Oseledets ergodic theorems and central limit theorems for particular subbundles of the Kontsevich-Zorich cocycle. By work of Bell, Delecroix, Gadre, Gutiérrez-Romo, and Schleimer \cite{flow_group}, these results hold for the invariant part of the Kontsevich-Zorich cocycle over loci of orientation double covers of quadratic differentials. 

In forthcoming work of Arana-Herrera and Honaryar \cite{statistics} these results are applied to study the arithmetic/homological complexity of long simple closed geodesics on negatively curved surfaces. More concretely, laws of large numbers and central limit theorems are proved. In the class of all closed geodesics Sharp has proved stronger local limit theorems \cite{sharp}; see also \cite{hom1,hom2,hom3,hom4,hom5} for previous related results. The situation in the case of simple closed geodesics is much less understood and, to the knowledge of the authors, the results discussed in \cite{statistics} are the first of its kind. We highlight that, as a consequence of these results, local limit theorems in the spirit of Sharp's work but for simple closed geodesics cannot hold without significant modifications.

The study of central limit theorems for Birkhoff sums and integrals over uniformly hyperbolic systems goes back to works of Sinai, Ruelle, Bowen, and Ratner \cite{Sinai1, Sinai2, Sinai3, Ruelle1, Bowen1, Bowen2, Bowen3, Ratner1}. For more recent works, including the study of the case of partially hyperbolic systems, see \cite{Cher, Liv, AaDen, Dol1, Gouezel}. For a study of central limit theorems for Birkhoff integrals over horocycle flows on hyperbolic surfaces see \cite{BufFor}. Local central limit theorems, a stronger notion, have been studied more sparingly \cite{Wad, Iwa, DN16, DN17, DN20}. To the knowledge of the authors, while there is a rich literature on non-Abelian central limit theorems for random matrix products, or, equivalently, for cocycles over Bernoulli systems \cite{Be54, FK60, LeP82, BL85, GR86, GM89, BQ16}, there are few results for deterministic cocycles: Furman and Kozma \cite{FK}, and Park and Piraino \cite{PP22}, proved CLTs for typical cocycles  over shifts of finite type in the sense of Bonatti and Viana \cite{PP22}. Al-Saqban and Forni \cite{KZCLT} considered the special case of the Kontsevich-Zorich cocycle over the Teichmüller flow. In the latter case, the base dynamics is non-uniformly hyperbolic and can be reduced to a suspension flow over a Markov chain with countably many states.

The main difficulty of the problem at hand comes from the fact that the notion of convergence in distribution is rather weak and does not seem to interact nicely with direct applications of the mixing property. Rather than following a direct approach, we reduce our problem to the study of the limit points of an appropriately defined sequence of random variables. The hyperbolicity and ergodicity assumptions guarantee such limit points are invariant under an ergodic transformation. In particular, such limit points are constant. To control the value of such constants we apply the assumed limit theorem.

Other than the applications discussed in this paper and in \cite{statistics}, the authors expect the notion of mixing limit theorems to play a relevant role in the study of randomness in dynamical systems. This can already be seen in the work of Dolgopyat and Nándori \cite{DN20} that inspired this paper. Further applications should arise in the future, especially as the theory of limit theorems for cocycles over dynamical systems is further developed.

\subsection*{Birkhoff sums.}

Let $(X,d)$ be a metric space supporting a Borel probability measure $\mu$ and an invertible, measure-preserving transformation $T\colon X \to X$. 

Given a measurable function $f\colon X \to \R$, the ergodic sums of $f$ with respect to $T$ are said to satisfy a (spatial) distributional limit theorem (DLT) on $(X, \mu)$ if there exists a sequences $\A:=(A_N)_{N \in \mathbb{N}}$ of real numbers and $\V:=(V_N)_{N \in \mathbb{N}}$ of positive real numbers with $V_N \to \infty$ as $N \to \infty$, and a random variable $S$, such that the random variables
\[
S_N(x):=\frac{\sum_{n=0}^{N-1} f(T^n x)- A_N}{V_N} \ \ \text{on } (X,\mu)
\]
converge in distribution to $S$ as $N \to \infty$, that is, for every interval $(a,b) \in \mathbb{R}$ with $\mathbb{P}(S \in \{a,b\}) = 0$, the following holds,
\[
\lim_{N \to \infty} \mu(\{x \in X \ | \ S_N(x) \in (a,b) \}) = \mathbb{P}(S \in (a,b)).
\]
We usually refer to $\A$ as the averaging sequence, $\V$ as the normalizing sequence, and $S$ as the limiting distribution.

\begin{example}
Birkhoff's ergodic theorem guarantees that, if $\mu$ is ergodic with respect to $T$, then the ergodic sums of any measurable, bounded function $f \colon X \to \mathbb{R}$ satisfy a DLT with averaging sequence $\mathcal{A}$ given by $A_N :=  N \cdot \mu(f)$ for every $N \in \mathbb{N}$,  normalizing sequence $\V$ given by $V_N := N$ for every $N \in \mathbb{N}$, and $S$ a (constant) random variable with distribution the Dirac mass at 
$0 \in \R$. 
\end{example}

\begin{example}
Another important case in hyperbolic dynamics is the central limit theorem (CLT)
for which the averaging sequence $\mathcal{A}$ is given by $A_N :=  N \cdot \mu(f)$ for every $N \in \mathbb{N}$, the normalizing sequence $\V$ is given by $V_N = \sqrt{N}$ for every $N \in \mathbb{N}$, and the random variable $S$ has normal distribution with mean $0$. However, it is known that there exist mixing dynamical systems of zero entropy 
for which the CLT does not hold \cite{FF03}, as well as systems with zero entropy for which it does hold
\cite{DDKN,DFK}. 
In general, limit theorems for systems of zero
entropy are not well-understood.
\end{example}

Now let $U \colon X \to X$ be a measure-preserving, ergodic transformation . We say the pair $(T,U)$ is contracting if for $\mu$-almost-every $x \in X$,
\[
\lim_{n \to \infty} d(T^n U x, T^n x) = 0.
\]
We say that a function $f \colon X \to \R$ is $(T,U, \V)$-adapted if for $\mu$-almost-every $x \in X$ and every $N\in \N$ we have
\[
\sum_{n=0}^N |f(T^n U x) - f(T^n x)| = o_x(V_N).
\]

\begin{remark}
The pair $(T,U)$ is called strongly contracting if  for $\mu$-almost-every $x \in X$
$$
\sum_{n=0}^\infty d(T^n U x, T^nx) < +\infty\,
$$
and hyperbolic if for $\mu$-almost every $x \in X$ there exist  $C >0$ and $\kappa > 0$ such that 
\[
 d(T^n U x, T^nx) \leq C e^{-\kappa n} \ \ \text{for every } n \in \mathbb{N}.
\]
If $(T,U)$ is strongly contracting, then any Lipschitz function $f \colon X \to \R$ is $(T,U, \V)$-adapted for any diverging sequence $\V := (V_N)_{N \in \mathbb{N}}$. Furthermore, for $\mu$-almost every $x \in X$ and every $N \in \mathbb{N}$, 
\[
\sum_{n=0}^N |f(T^n U x) - f(T^n x)| = O_x(1).
\]
\end{remark}

The following is the main result of this paper for Birkhoff sums.

\begin{theorem}
\label{theo:metric_MCLT_intro}
Let $(X,d)$ be a metric space supporting a Borel probability measure $\mu$, let $T\colon X \to X$ be an invertible, measure-preserving transformation, and let $f \colon X \to \mathbb{R}$ be a bounded measurable function. Assume that the Birkhoff sums of $f$ with respect to $T$ satisfy a DLT on $(X,\mu)$ with averaging sequence $\A :=(A_N)_{N \in \mathbb{N}}$, normalizing sequence $\V :=(V_N)_{N \in \mathbb{N}}$, and limiting distribution $S$ in the sense that the random variables
\[
S_N(x) := \frac{\sum_{n=0}^{N-1}f(T^n x) - A_N}{V_N} \ \ \text{on } (X,\mu)
\]
converge in distribution to $S$ as $N \to \infty$, i.e., for every interval $(a,b) \subseteq \R$ such that $\mathbb{P}(S \in \{a,b\}) = 0$, the following holds,
\[
\lim_{N \to \infty} \mu(\{x \in X \ | \ S_N(x) \in (a,b) \}) = \mathbb{P}\left(S \in (a,b) \right).
\]
Assume that there exists a measure-preserving ergodic transformation $U \colon X \to X$ such that the pair $(T,U)$ is hyperbolic and $f \colon X \to \R$ is a $(T,U, \V)$-adapted function. Then, the same DLT holds in the mixing sense, i.e. for every pair of Borel measurable subsets $A,B \subseteq X$ and every interval $(a,b) \subseteq \R$ such that $\mathbb{P}(S \in \{a,b\}) = 0$,
\[
  \lim_{N \to \infty} \mu(\{x \in X \ | \ x \in A, \ S_N(x) \in (a,b), \ T^N x \in B \})
  = \mu(A) \cdot \mathbb{P}\left(S \in (a,b) \right) \cdot \mu(B).
\]
\end{theorem}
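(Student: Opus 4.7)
The plan is to prove the convergence $\int \phi_A \cdot \mathbf{1}_I(S_N) \cdot \phi_B \circ T^N \, d\mu \to \mu(\phi_A) \cdot \mathbb{P}(S \in I) \cdot \mu(\phi_B)$ for arbitrary Lipschitz $\phi_A, \phi_B \colon X \to \R$; the indicator case then follows by approximating $\mathbf{1}_A, \mathbf{1}_B$ in $L^1(\mu)$ by Lipschitz functions, where $T$-invariance of $\mu$ makes the $\phi_B$ error uniform in $N$. The argument has three ingredients: approximate $U$-invariance of $g_N := \mathbf{1}_I(S_N) \cdot (\phi_B \circ T^N)$, followed by two Cesàro-type factorizations (in $U$ and in $T$). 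I will first observe that the hypotheses imply $T$ is ergodic: if $h \in L^2(\mu)$ is $T$-invariant and $h_\epsilon$ is a continuous bounded approximation with $\|h - h_\epsilon\|_{L^2} < \epsilon$, then $h = h \circ T^N$ and $\|h_\epsilon \circ T^N \circ U - h_\epsilon \circ T^N\|_{L^2} \to 0$ (from $d(T^N U x, T^N x) \to 0$ a.s.\ and dominated convergence) together give $\|h - h \circ U\|_{L^2} \leq 2\epsilon + o_N(1)$, so $h \circ U = h$ and $h$ is constant by $U$-ergodicity.

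The key lemma is approximate $U$-invariance, $\|g_N \circ U - g_N\|_{L^1(\mu)} \to 0$. Splitting
\[
g_N \circ U - g_N = \mathbf{1}_I(S_N \circ U)\bigl(\phi_B \circ T^N \circ U - \phi_B \circ T^N\bigr) + \bigl(\mathbf{1}_I(S_N \circ U) - \mathbf{1}_I(S_N)\bigr) \phi_B \circ T^N,
\]
the first summand vanishes in $L^1$ by dominated convergence, since $\phi_B$ is Lipschitz and $d(T^N U x, T^N x) \to 0$ for $\mu$-a.e.\ $x$ by hyperbolicity of $(T, U)$. The second is controlled by $\|\phi_B\|_\infty$ times the measure of $\{x : \mathbf{1}_I(S_N(x)) \neq \mathbf{1}_I(S_N(Ux))\}$, which for any $\delta > 0$ is at most
\[
\mu\bigl(|S_N \circ U - S_N| \geq \delta\bigr) + \mu\bigl(\operatorname{dist}(S_N, \{a, b\}) < \delta\bigr).
\]
The first probability vanishes as $N \to \infty$ for each fixed $\delta$ by the adapted condition $\sum_{n=0}^{N-1} |f(T^n U x) - f(T^n x)| = o(V_N)$, and the second converges by the DLT to $\mathbb{P}(\operatorname{dist}(S, \{a, b\}) < \delta)$, which tends to $0$ as $\delta \to 0$ because $\mathbb{P}(S \in \{a, b\}) = 0$.

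Two Cesàro arguments then finish the proof. Iterating the key lemma, $\|g_N - g_N \circ U^m\|_{L^1} \to 0$ for each fixed $m \geq 0$. Since $\int (\phi_A \circ U^m)(g_N \circ U^m)\, d\mu = \int \phi_A g_N\, d\mu$ by $U^m$-invariance of $\mu$, we get $\int (\phi_A \circ U^m) g_N\, d\mu = \int \phi_A g_N\, d\mu + o_N(1)$ for each fixed $m$. Averaging over $0 \leq m < M$ with $\phi_A^{(M)} := \frac{1}{M}\sum_{m=0}^{M-1} \phi_A \circ U^m$, and using the mean ergodic theorem for the ergodic $U$ to get $\phi_A^{(M)} \to \mu(\phi_A)$ in $L^1$, a standard choice of $M$ (large) then $N$ (large) yields $\int \phi_A g_N\, d\mu = \mu(\phi_A) \int g_N\, d\mu + o_N(1)$. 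The same maneuver in the $T$-direction: by $T^m$-invariance, $\int g_N\, d\mu = \int \mathbf{1}_I(S_N \circ T^m) \cdot \phi_B \circ T^{N+m}\, d\mu$; boundedness of $f$ gives $|S_N(T^m x) - S_N(x)| \leq 2m \|f\|_\infty / V_N \to 0$, so by the boundary argument above, $\|\mathbf{1}_I(S_N \circ T^m) - \mathbf{1}_I(S_N)\|_{L^1} \to 0$. Averaging over $m$ and using the mean ergodic theorem for the ergodic $T$ to replace $\frac{1}{M}\sum_{m=0}^{M-1} \phi_B \circ T^m$ by $\mu(\phi_B)$ in $L^1$, we deduce $\int g_N\, d\mu = \mu(\phi_B) \cdot \mu(S_N \in I) + o(1) \to \mu(\phi_B) \cdot \mathbb{P}(S \in I)$ by the DLT. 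Combining the two Cesàro steps gives the desired limit.

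The main obstacle is the key lemma: controlling the jumps of $\mathbf{1}_I \circ S_N$ at $\partial I = \{a, b\}$ forces one to couple the adapted hypothesis (which controls $|S_N \circ U - S_N|$ in probability) with the DLT together with $\mathbb{P}(S \in \{a, b\}) = 0$ (which keeps $S_N$ away from $\{a, b\}$ with high probability); neither hypothesis alone suffices.
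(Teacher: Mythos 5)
Your proof is correct and takes a genuinely different route from the paper's. The paper's strategy passes to characteristic functions via L\'evy's continuity theorem, extracts weak $L^1$ limit points of $F_N := e^{itS_N}\cdot\phi(T^N\cdot)$ via the Dunford--Pettis theorem, shows any such limit point is $U$-invariant (using hyperbolicity and adaptedness) and hence constant by $U$-ergodicity, and finally identifies the constant by pairing $F_N$ against $1$ and invoking an auxiliary conditional limit theorem (Corollary \ref{cor:tech}), which is itself deduced from the conditional DLT (Theorem \ref{theo:metric_CCLT}) together with a ``time-reversal'' lemma (Lemma \ref{lemma:inverting}) relating Birkhoff sums of $T$ and $T^{-1}$. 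You instead work directly with the indicator $\mathbf{1}_I(S_N)$ rather than with characteristic functions, prove a concrete $L^1$ estimate of approximate $U$-invariance for $g_N = \mathbf{1}_I(S_N)\cdot(\phi_B\circ T^N)$ (where the discontinuity of $\mathbf{1}_I$ at $\partial I=\{a,b\}$ is controlled by combining the adaptedness hypothesis with the DLT and $\mathbb{P}(S\in\{a,b\})=0$), and then replace the weak-compactness/invariant-limit-point argument by two explicit Ces\`aro averages and the mean ergodic theorem --- one over $U^m$ to factor out $\phi_A$, and one over $T^m$ (using boundedness of $f$ to control $S_N\circ T^m - S_N$) to factor out $\phi_B$. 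The second Ces\`aro step replaces the paper's use of Lemma \ref{lemma:inverting} and Corollary \ref{cor:tech}. Your approach is more elementary (no Dunford--Pettis, no L\'evy continuity) and self-contained, at the cost of a more delicate boundary estimate; the paper's approach is more modular and, in particular, transfers more directly to the cocycle setting that the rest of the paper requires. One small imprecision: in your argument that $T$ is ergodic you take $h_\epsilon$ merely ``continuous bounded,'' but the step $\|h_\epsilon\circ T^N\circ U - h_\epsilon\circ T^N\|_{L^2}\to 0$ from $d(T^N Ux, T^N x)\to 0$ needs $h_\epsilon$ to be uniformly continuous (e.g., bounded Lipschitz, which is what the paper uses and which is still $L^2$-dense), since the points $T^N x$ need not stay in a compact set.
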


\begin{remark}
For a weaker conditional limit theorem that holds in a more general context see Theorem \ref{theo:metric_CCLT}. For a discussion of the case of flows see Theorems \ref{theo:metric_MCLT_flows} and \ref{theo:metric_CCLT_flows}.
\end{remark}

\subsection*{Cocycles.} Let $(X,d)$ be a metric space supporting a Borel probability measure $\mu$ and an invertible, measure-preserving transformation $T \colon X \to X$. Fix $m \in \mathbb{N}$. By an $m$-dimensional cocycle over $T$ we mean a map $C \colon X \times \mathbb{Z} \to \mathrm{GL}(m,\mathbb{R})$ satisfying the identity
\[
C(x,r+s) = C(T^r x,s) \cdot C(x,r) \quad \text{ for $\mu$-almost every } x\in X \text{ and every } r,s\in \Z\,.
\]

Denote by $\langle \cdot, \cdot\rangle$ the standard inner product on $\mathbb{R}^m$, by $\|\cdot\|$ the corresponding Euclidean norm, and by $\nu$ the induced probability measure on the projectivization $\mathbb{P}\mathbb{R}^m$. The projectivized bundle $X \times \mathbb{P}\mathbb{R}^m$ can be endowed with the product measure $\mu \otimes \nu$.

Given $(x,v) \in X \times (\mathbb{R}^m \setminus \{0\})$ or $X \times \mathbb{P}\mathbb{R}^m$ and $N \in \mathbb{N}$ consider the quantities
\begin{align*}
    \sigma(x,v,N) := \log \frac{\|C(x,N) v\|}{\|v\|} \in \mathbb{R},\\
    \sigma(x,N) := \log \sup_{v \in \mathbb{R}^m} \frac{\|C(x,N)v\|}{\|v\|} \in \mathbb{R}.
\end{align*}
We say the cocycle $C$ is log-integrable if the following maps belong to $L^1(X,\mu)$,
\[
x \mapsto \max\{0,\sigma(x,1)\}, \quad x \mapsto \max\{0,\sigma(x,-1)\}.
\]

We say the cocycle $C$ satisfies a (spatial) distributional limit theorem (DLT) on $(X \times \mathbb{PR}^m, \mu \otimes \nu)$ if there exist sequences $\A:=(A_N)_{N \in \mathbb{N}}$ of real numbers and $\V:=(V_N)_{N \in \mathbb{N}}$ of positive real numbers with $V_N \to \infty$ as $N \to \infty$, and a random variable $S$, such that the random variables 
$$
S_N(x,v) = \frac{\sigma(x,v,N) -A_N}{ V_N } \ \ \text{ on } (X\times \R^m, \mu\otimes \nu)
$$
converge in distribution to $S$ as $N \to \infty$, i.e., for every interval $(a,b) \subseteq \mathbb{R}$ such that $\mathbb{P}(S \in \{a,b\}) = 0$, the following holds,
\[
\lim_{N \to \infty} (\mu \otimes \nu)(\{(x,v) \in X \in \mathbb{PR}^m \ | \ S_N(x,v) \in (a,b) \}) = \mathbb{P}(S \in (a,b)).
\]
We usually refer to $\A$ as the averaging sequence, $\V$ as the normalizing sequence, and $S$ as the limiting distribution.

\begin{example}
The Oseledets ergodic theorem guarantees that, if $\mu$ is ergodic with respect to $T$, every log-integrable cocycle $C\colon X \times \mathbb{Z} \to \mathrm{GL}(m,\mathbb{R})$ with top Lyapunov exponent $\lambda \in \mathbb{R}$ satisfies a DLT with averaging sequence $\A$ given by $A_N := N \cdot \lambda$ for every $N \in \mathbb{N}$, normalizing sequence $\V$ given by $V_N := N$ for every $N \in \mathbb{N}$, and $S$ a (constant) random variable with distribution equal to the Dirac mass at $0\in \R$.
\end{example}

\begin{example}
Another important case in hyperbolic dynamics is the central limit theorem (CLT) for which the averaging sequence $\A$ is given by $A_N:= \lambda \cdot N$ for every $N \in \mathbb{N}$, the normalizing sequence $\V$ is given by $V_N =\smash{\sqrt{N}}$ for every $N \in \mathbb{N}$,
and the random variable $S$ has normal distribution with mean $0$. By work of Al-Saqban and Forni \cite{KZCLT}, the Kontsevich-Zorich cocycle satisfies a distributional CLT.
\end{example}

We say the cocycle $C$ is $\V$-sufficiently-bounded if for $\mu$-almost-every $x \in X$, $\nu$-almost-every $v \in \mathbb{PR}^m$, and every $N \in \mathbb{N}$,
\[
|\sigma(x,v,N) - \sigma(Tx,C(x,1)v,N)| = o_{x,v}(V_N).
\]
We say the cocycle $C$ has $\V$-simple-dominated-splitting  if for $\mu$-almost every $x \in X$, $\nu$-almost every $v,w \in \mathbb{PR}^m$, and every $N \in \mathbb{N}$, 
    \[
    |\sigma(x,v,N) - \sigma(x,w,N)| = o_{x,v,w}(V_N).
    \]

\begin{remark}
In applications, see \S\ref{S:KZ}, it is common for the cocycle $C$ to be bounded  in the following stronger sense: for $\mu$-almost-every $x \in X$ and $\nu$-almost-every $v \in \mathbb{PR}^m$, 
\[
|\sigma(x,v,1)| = O(1).
\] 
\end{remark}

\begin{remark}
We also prove below, in Lemma~\ref{lemma:exp}, that, by the Oseledets ergodic theorem, any log-integrable cocycle over an invertible ergodic dynamical system with simple top Lyapunov exponent has simple dominated splitting with respect to any sequence $\V$.
\end{remark}

Now let $U \colon X \to X$ be a measure-preserving, ergodic transformation and let
$D \colon X \times \mathbb{N} \to \mathrm{GL}(m,\R)$ be a measurable cocycle over $U$.
Recall that we say the pair $(T,U)$ is contracting if for $\mu$-almost-every $x \in X$,
\[
\lim_{n \to \infty} d(T^n U x, T^n x) = 0.
\] We say the cocycles $(C,D)$ are $(T,U, \V)$-adapted if for $\mu$-almost-every $x \in X$, $\nu$-almost every $v \in \mathbb{PR}^m$, and every $N \in \mathbb{N}$, the following estimate holds,
\[
|\sigma(x,v,N) - \sigma(Ux,D(x,1)v,N)| = o_{x,v}(V_N).
\]

\begin{remark}
In applications, see \S\ref{S:KZ}, it is common to have the following stronger condition: for $\mu$-almost-every $x \in X$, $\nu$-almost-every $v \in \mathbb{PR}^m$, and every $N \in \mathbb{N}$,
\[
|\sigma(x,v,N) - \sigma(Ux,D(x,1)v,N)| = O_{x,v}(1).
\]
\end{remark}

The following is the main result of this paper for cocycles.

\begin{theorem}
\label{theo:cocycle_MCLT_intro}
Let $(X,d)$ be a metric space supporting a Borel probability measure $\mu$, let $T \colon X \to X$ be an invertible, measure-preserving transformation, and let $C \colon X \times \mathbb{Z} \to \mathrm{GL}(m,\mathbb{R})$ be a measurable cocycle over $T$. Assume that $C$ satisfies a DLT on $(X \times \mathbb{PR}^m, \mu \otimes \nu)$ with averaging sequence $\A := (A_N)_{N \in \mathbb{N}}$, normalizing sequence $\V=(V_N)_{N \in \mathbb{N}}$, and limiting distribution $S$ in the sense that the random variables
\[
S_N(x,v) := \frac{\sigma(x,v,N) - A_N}{V_N}
\quad \text{ on } \ (X\times \mathbb{P}\R^m, \mu\otimes \nu)
\]
converge in distribution to  $S$ as $N \to \infty$, i.e., for every interval $(a,b) \subseteq \R$ such that $\mathbb{P}(S \in \{a,b\}) = 0$, the following holds,
\[
\lim_{N \to \infty} (\mu \otimes \nu)( \{(x,v) \in X \times \mathbb{PR}^m \ | \ S_N(x) \in (a,b)\}) = \mathbb{P}\left(S \in (a,b) \right).
\]
Assume that $C$ is $\V$-sufficiently-bounded and has $\V$-simple-dominated-splitting. Assume in addition that there exist an ergodic transformation $U \colon X \to X$ and a measurable cocycle $D \colon X \times \mathbb{N} \to \mathrm{GL}(m,\mathbb{R})$ over $U$ such that $(T,U)$ is contracting and $(C,D)$ is $(T,U, \V)$-adapted. Then, the above DLT holds in the mixing sense, i.e. for every pair of Borel measurable subsets $A,B \subseteq X$ and every interval $(a,b) \subseteq \R$ such that $\mathbb{P}(S \in \{a,b\}) = 0$,
\begin{gather*}
  \lim_{N \to \infty}(\mu \otimes \nu) (\{ (x,v) \in X \times \mathbb{PR}^m \ | \ x \in A, \ S_N(x,v) \in (a,b), \ T^N x \in B\})\\
  = \mu(A) \cdot \mathbb{P}\left(S \in (a,b) \right) \cdot \mu(B).
\end{gather*}
\end{theorem}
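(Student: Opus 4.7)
The plan is to reduce the cocycle statement to a Birkhoff-sum-style mixing statement on the base $(X,\mu)$ by eliminating the fiber variable $v$, and then to attack the reduced statement via two successive ergodic-averaging decouplings: a $U$-shift to average out the set $A$, and a $T$-shift to average out $B$ from the constraint on $S_N$. By standard $L^1(\mu)$-approximation of indicators by bounded continuous functions (together with the null-boundary hypothesis $\mathbb{P}(S\in\{a,b\})=0$), it suffices to show that for every pair of bounded measurable $\phi_1,\phi_2\colon X\to\R$ and every bounded continuous $\phi_3\colon\R\to\R$,
\[
\int \phi_1(x)\,\phi_2(T^Nx)\,\phi_3(\bar S_N(x))\,d\mu(x) \;\longrightarrow\; \int\phi_1\,d\mu\cdot\int\phi_2\,d\mu\cdot\int\phi_3\,d\mathrm{Law}(S),
\]
where $\bar S_N(x):=\int_{\mathbb{P}\R^m}S_N(x,w)\,d\nu(w)$ is a fiber-averaged variant of $S_N$ (with a mild truncation if needed for integrability; the tightness of $\{S_N\}$ given by the assumed DLT legitimizes such a choice).

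The preliminary steps are four. First, by $\V$-simple-dominated-splitting combined with Fubini, $|S_N(x,v)-\bar S_N(x)|\to 0$ for $\mu\otimes\nu$-almost every $(x,v)$, so the DLT for $S_N$ on $(X\times\mathbb{P}\R^m,\mu\otimes\nu)$ descends to the same DLT for $\bar S_N$ on $(X,\mu)$. Second, combining $\V$-simple-dominated-splitting with the $(T,U,\V)$-adaptedness of $(C,D)$ gives $|\bar S_N(Ux)-\bar S_N(x)|\to 0$ for $\mu$-a.e.\ $x$. Third, combining $\V$-simple-dominated-splitting with $\V$-sufficient-boundedness of $C$ via the cocycle identity $\sigma(x,v,N+1)=\sigma(Tx,C(x,1)v,N)+\sigma(x,v,1)$ gives $|\bar S_N(Tx)-\bar S_N(x)|\to 0$ for $\mu$-a.e.\ $x$. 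Finally, a short approximation argument in $L^2(\mu)$ using $(T,U)$-contracting and $U$-ergodicity shows that $T$ itself is ergodic.

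For the main argument, set $a_N:=\int\phi_1(x)\phi_2(T^Nx)\phi_3(\bar S_N(x))\,d\mu$. Using $U^k$-invariance of $\mu$, $a_N=\int\phi_1(U^kx)\phi_2(T^NU^kx)\phi_3(\bar S_N(U^kx))\,d\mu$; the iterated identities $d(T^NU^kx,T^Nx)\to 0$ and $|\bar S_N(U^kx)-\bar S_N(x)|\to 0$ (after $L^1$-approximation of $\phi_2$ by continuous functions) allow one to replace the last two factors by $\phi_2(T^Nx)\phi_3(\bar S_N(x))$ up to an error that vanishes as $N\to\infty$ for each fixed $k$. Ces\`aro averaging in $k$ together with the mean ergodic theorem for the ergodic transformation $U$ then gives $a_N-\int\phi_1\,d\mu\cdot c_N\to 0$, where $c_N:=\int\phi_2(T^Nx)\phi_3(\bar S_N(x))\,d\mu$. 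Running the identical scheme with $T^k$ in place of $U^k$, using $T$-invariance of $\mu$, the iterate of $|\bar S_N(Tx)-\bar S_N(x)|\to 0$, and the mean ergodic theorem for $T$ (ergodic by the preliminary step), produces $c_N-\int\phi_2\,d\mu\cdot\int\phi_3(\bar S_N)\,d\mu\to 0$. The DLT for $\bar S_N$ then closes the loop by giving $\int\phi_3(\bar S_N)\,d\mu\to\int\phi_3\,d\mathrm{Law}(S)$, and chaining the three limits yields the claimed factorization of $a_N$.

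The most delicate point I expect is the third preliminary identity, $|\bar S_N(Tx)-\bar S_N(x)|\to 0$: it requires extracting a cancellation at the $o(V_N)$ scale via the cocycle identity and the $\V$-sufficient-boundedness hypothesis, and then converting the fiber-action of $C(x,1)$ arising naturally from that identity into a fiber-independent statement through $\V$-simple-dominated-splitting. A secondary obstacle is the fiber reduction itself, which must be implemented via tightness of $\{S_N\}$ rather than any explicit uniform boundedness, since $S_N$ is not assumed bounded in any direct sense.
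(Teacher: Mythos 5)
Your overall architecture is genuinely different from the paper's and, if carried out carefully, should work. The paper reduces to showing that the random variables $F_N(x):=\bigl(\int_{\mathbb{PR}^m}e^{itS_N(x,v)}\,d\nu(v)\bigr)\cdot\phi(T^Nx)$ converge weakly in $L^1$ to the constant $\mathbb{E}(e^{itS})\cdot\mu(\phi)$, and does so by the Dunford--Pettis compactness argument: any weak $L^1$ limit point is $U$-invariant (by the contraction/adaptedness hypotheses), hence constant by $U$-ergodicity, and the constant is identified by pairing with $1$ and using a previously established conditional DLT (Corollary~\ref{cor:tech_cocycle}). Your proposal instead eliminates the dependence on $x\in A$ and on $T^Nx\in B$ by two direct Cesàro averagings (over $U^k$ and over $T^k$) combined with the mean ergodic theorem. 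The two routes exploit the same invariance mechanism but in a different technical guise: yours is more explicit and avoids invoking weak compactness, at the cost of requiring the standard $\varepsilon/2$--$\varepsilon/2$ bookkeeping to interchange the two limits (fix $K$ large so that the ergodic average of $\phi_1\circ U^k$ is within $\varepsilon$ uniformly in $N$, then take $N$ large so that the $K$ replacement errors are small); your own remark on the $T$-ergodicity preliminary (an $L^1$-approximation by Lipschitz functions plus contraction and $U$-ergodicity) is correct and is a scaled-down version of the paper's mixing Theorem~\ref{theo:mix}.

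The genuine gap is the fiber reduction, exactly where you flagged a "secondary obstacle." Your Step~1 asserts that $\V$-simple-dominated-splitting plus Fubini gives $|S_N(x,v)-\bar S_N(x)|\to 0$ for $\mu\otimes\nu$-a.e.\ $(x,v)$, where $\bar S_N(x)=\int_{\mathbb{PR}^m}S_N(x,w)\,d\nu(w)$. This does not follow as stated: the hypothesis gives, for a.e.\ $(x,v,w)$, a pointwise convergence $S_N(x,v)-S_N(x,w)\to 0$ whose rate ($o_{x,v,w}(V_N)$) depends on $w$, and nothing in the hypotheses supplies an integrable majorant in $w$. Without domination the fiber integral need not converge to zero (and indeed $\bar S_N$ need not even be well-defined). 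The truncation you mention can be made to work --- truncate at level $M>\max(|a|,|b|)$, use that truncation is $1$-Lipschitz to propagate the pointwise estimate, apply dominated convergence for the bounded truncated differences, and then use the DLT together with $\mathbb{P}(S\in\{a,b\})=0$ to control the symmetric difference of events $\{S_N\in(a,b)\}$ and $\{\bar S_N^M\in(a,b)\}$ --- but this is a nontrivial argument that you should spell out rather than wave at, and moreover the whole issue disappears if you adopt the paper's device and work with the automatically bounded fiber average $Y_N(x):=\int_{\mathbb{PR}^m}e^{itS_N(x,w)}\,d\nu(w)$ directly. Every step of your Cesàro scheme then goes through unchanged with $\phi_3(\bar S_N)$ replaced by $Y_N$, the three preliminary almost-invariance facts become statements about $Y_N$ proved via the intermediate quantity $Z_N(x)=\int e^{itS_N(x,C(T^{-1}x,1)w)}\,d\nu(w)$ exactly as in the paper's proof of Theorem~\ref{theo:cocycle2_CCLT}, and the final identification uses the assumed DLT through L\'evy continuity.
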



\begin{remark}
For a weaker conditional limit theorem that holds in a more general context see Theorem \ref{theo:cocycle2_CCLT}. For a discussion of the case of flows see Theorems \ref{theo:cocycle_MCLT_flows} and \ref{theo:cocycle2_CCLT_flows}.
\end{remark}

\begin{remark}
For a discussion of central limit theorems for the operator norm of cocycles see Theorems \ref{theo:cocycle_CCLT_norm} and \ref{theo:cocycle_CCLT_norm2}. For analogous results for flows see Theorems \ref{theo:cocycle_CCLT_norm_flows}
 and \ref{theo:cocycle_CCLT_norm2_flows}.
\end{remark}

\begin{remark}
    For a discussion of central limit theorems for ``generic" sections of cocycles see Theorems \ref{theo:cocycle_CCLT_sec} and \ref{theo:cocycle_CCLT_sec2}. For analogous results for flows see Theorems \ref{theo:cocycle_CCLT_sec_flows}
 and \ref{theo:cocycle_CCLT_sec2_flows}.
\end{remark}

\subsection*{The Kontsevich-Zorich cocycle.} The Kontsevich--Zorich (KZ) cocycle, introduced in  \cite{Kon97,KZ97}, is arguably the central object of study in Teichm\"uller dynamics.  As an application of the results discussed above, we prove mixing laws of large numbers and mixing central limit theorems for subbundles of exterior powers of the Kontsevich-Zorich cocycle over $\mathrm{SL}(2,\mathbb{R})$-invariant suborbifolds of Abelian differentials under natural, well studied conditions. We prove mixing limit theorems in the spirit of Theorem \ref{theo:cocycle_MCLT_intro} but also for the operator norm and for ``generic" sections of the KZ cocycle; see Theorems \ref{theo:LLN1},\ref{theo:LLN2},\ref{theo:LLN3},\ref{theo:CLT1},\ref{theo:CLT2},\ref{theo:CLT3} for precise statements. In the case of mixing laws of large numbers, the starting point is the Oseledets ergodic theorem. In the case of mixing central limit theorems, the starting point is the central limit theorem for the KZ cocycle proved by Al-Saqban and Forni in \cite{KZCLT}.

As a particularly important application of these results we consider the case of the invariant part of the full KZ cocycle over loci of orientation double covers of quadratic differentials. This application relies crucially on work of Filip \cite{Fil17} and Bell, Delecroix, Gadre, Gutiérrez-Romo, and Schleimer \cite{flow_group} to verify the appropriate conditions needed for the mixing limit theorems to hold. These theorems are crucial for the applications to the study of the arithmetic/homological complexity of long simple closed geodesics on negatively curved surfaces in forthcoming work of Arana-Herrera and Honaryar \cite{statistics}.

\subsection*{Organization of the paper.} In \S2 we show how to upgrade limit theorems for Birkhoff sums over dynamical systems to mixing limit theorems; the case of flows is also considered. In \S3 we prove analogous results for cocycles over dynamical systems. In \S4 we discuss important applications of the results in \S3 to the Kontsevich-Zorich cocycle.

\subsection*{Acknowledgements.} The authors would like to thank Pouya Honaryar for introducing them to the subject of mixing limit theorems. The authors would also like to thank Dmitry Dolgopyat for very enlightening conversations on the subject of this paper. This material is based upon work funded by the National Science Foundation: the second author is supported by grants DMS 1600687 and DMS 2154208.

\section{Birkhoff sums}
\label{S:Birk}

\subsection*{Outline of this section.} In this section we discuss how to upgrade distributional limit theorems for Birkhoff sums of dynamical systems to mixing distributional limit theorems under mild ergodicity and hyperbolicity assumptions; see Theorem \ref{theo:metric_MCLT} for the main result of this section. We begin by proving different conditional distributional limit theorems for Birkhoff sums, see Theorem \ref{theo:metric_CCLT} and Corollary \ref{cor:tech}, and then use these results to prove the main theorem. Although the results in this section are not directly used in applications, their proofs inspire proofs in later sections.

\subsection*{Statement of the main result.} We begin by reviewing the statement of our main result for Birkhoff sums. Let $(X,d)$ be a metric space supporting a Borel probability measure $\mu$, an invertible, measure-preserving transformation $T\colon X \to X$, and a measure-preserving, ergodic transformation $U \colon X \to X$.

Recall we say the pair $(T,U)$ is contracting if for $\mu$-almost-every $x \in X$, 
\[
\lim_{n \to \infty} d(T^n U x, T^n x) = 0.
\]

Let $\V := (V_N)_{N \in \mathbb{N}}$ be a sequence of positive real numbers with $V_N \to \infty$ as $N \to \infty$. Recall that we say a function $f \colon X \to \R$ is $(T,U, \V)$-adapted if for $\mu$-almost-every $x \in X$ and for every $N\in \N$, the following holds,
\[
\sum_{n=0}^N |f(T^n U x) - f(T^n x)| = o_x(V_N).
\]

The following is the main result of this section; compare to Theorem \ref{theo:metric_MCLT_intro}.

\begin{theorem}
\label{theo:metric_MCLT}
Let $(X,d)$ be a metric space supporting a Borel probability measure $\mu$, let $T\colon X \to X$ be an invertible, measure-preserving transformation, and let $f \colon X \to \mathbb{R}$ be a bounded, measurable function. Assume that the Birkhoff sums of $f$ with respect to $T$ satisfy a DLT on $(X,\mu)$ with averaging sequence $\A :=(A_N)_{N \in \mathbb{N}}$, normalizing sequence $\V :=(V_N)_{N \in \mathbb{N}}$, and limiting distribution $S$ in the sense that the random variables
\[
S_N(x) := \frac{\sum_{n=0}^{N-1}f(T^n x) - A_N}{V_N} \ \ \text{on } (X,\mu)
\]
converge in distribution to $S$ as $N \to \infty$, i.e., for every interval $(a,b) \subseteq \R$ such that $\mathbb{P}(S \in \{a,b\}) = 0$, the following holds,
\[
\lim_{N \to \infty} \mu(\{x \in X \ | \ S_N(x) \in (a,b) \}) = \mathbb{P}\left(S \in (a,b) \right).
\]
Assume that there exists a measure-preserving ergodic transformation $U \colon X \to X$ such that the pair $(T,U)$ is contracting and $f \colon X \to \R$ is a $(T,U, \V)$-adapted function. Then, the same DLT holds in the mixing sense, i.e. for every pair of Borel measurable subsets $A,B \subseteq X$ and every interval $(a,b) \subseteq \R$ such that $\mathbb{P}(S \in \{a,b\}) = 0$,
\[
  \lim_{N \to \infty} \mu (\{x \in X \ | \ x \in A, \ S_N(x) \in (a,b),  \ T^N x \in B \})
  = \mu(A) \cdot \mathbb{P}\left(S \in (a,b) \right) \cdot \mu(B).
\]
\end{theorem}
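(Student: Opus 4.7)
The plan is to consider the joint law $\rho_N$ on $X \times \R \times X$ of the triple $(x, S_N(x), T^N x)$ under $\mu$ and to show that every subsequential weak-$*$ limit $\rho$ equals $\mu \otimes \mathcal{L}(S) \otimes \mu$. Since the marginals $\mu$, $\mathcal{L}(S_N) \to \mathcal{L}(S)$, $\mu$ are tight, $\{\rho_N\}$ has weak-$*$ accumulation points, and this identification implies $\rho_N \to \mu \otimes \mathcal{L}(S) \otimes \mu$ in full; the stated DLT then follows by Portmanteau applied to continuity sets, extended to arbitrary Borel $A, B$ via inner/outer regularity of $\mu$ (and using $\mathbb{P}(S \in \{a,b\}) = 0$ for the middle factor). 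As a preliminary, I would establish that $T$ is $\mu$-ergodic via Hopf's argument: for $\phi \in C_b(X)$ the contracting property gives $\phi(T^n Ux) - \phi(T^n x) \to 0$, which Cesàro-averages to $\phi^*(Ux) - \phi^*(x) = 0$ $\mu$-a.e.; ergodicity of $U$ then forces $\phi^* = \int \phi \, d\mu$ $\mu$-a.e., and $L^1$-approximation of Borel indicators extends this to $\mathbb{1}_A^* = \mu(A)$ for every Borel $A$, so $T$-invariant sets have $\mu$-measure in $\{0,1\}$.

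Next I would decouple the first coordinate by showing $\rho = \mu \otimes \rho_{2,3}$. For product test functions $F = \phi \otimes \psi \otimes \chi$ with $\phi, \chi \in C_b(X)$ and $\psi \in C_b(\R)$, $U$-measure-preservation gives $\int F \, d\rho_N = \int \phi(Ux) \psi(S_N(Ux)) \chi(T^N Ux) \, d\mu$; the adapted hypothesis provides $|S_N(Ux) - S_N(x)| \to 0$ $\mu$-a.e.\ and the contracting hypothesis provides $d(T^N Ux, T^N x) \to 0$ $\mu$-a.e., so dominated convergence yields $\int F \, d\rho_N - \int F \, d\bigl((U \otimes \mathrm{Id} \otimes \mathrm{Id})_* \rho_N\bigr) \to 0$. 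Hence $\rho$ is $(U \otimes \mathrm{Id} \otimes \mathrm{Id})$-invariant. Using that the first marginal of $\rho$ is $\mu$ and that $U$ is $\mu$-ergodic, I would average the invariance identity $\int F \, d\rho = \int \phi(U^k x) \psi(y) \chi(z) \, d\rho$ over $k = 0, \ldots, K-1$ and let $K \to \infty$; the $L^1(\mu)$-mean-ergodic theorem for $\phi \circ U^k$ then gives $\int F \, d\rho = \bigl(\int \phi \, d\mu\bigr) \int \psi(y) \chi(z) \, d\rho_{2,3}(y,z)$, which by density is exactly $\rho = \mu \otimes \rho_{2,3}$.

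It remains to identify $\rho_{2,3}$. Set $\alpha(B) := \rho_{2,3}((a,b) \times B)$: this is a finite Borel measure on $X$ dominated by $\mu$ (since $\rho_{2,3}(\R \times B) = \mu(B)$), with total mass $\alpha(X) = \mathbb{P}(S \in (a,b))$ by the spatial DLT. I would show $\alpha$ is $T$-invariant: writing $\alpha(T^{-1}B)$ as a subsequential limit of $\mu(\{S_{N_k} \in (a,b),\, T^{N_k+1} x \in B\})$ and substituting $y = Tx$ converts this into $\mu(\{S_{N_k}(T^{-1}y) \in (a,b),\, T^{N_k} y \in B\})$; since $|S_N(T^{-1}y) - S_N(y)| \leq 2\|f\|_\infty/V_N \to 0$ uniformly in $y$ and $\mathbb{P}(S \in \{a,b\}) = 0$, this expression differs from $\mu(\{S_{N_k} \in (a,b),\, T^{N_k} y \in B\})$ by $o(1)$, yielding $\alpha(T^{-1}B) = \alpha(B)$. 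Combining $\alpha \ll \mu$, $T$-invariance, and $\mu$-ergodicity of $T$, the Radon--Nikodym density $d\alpha/d\mu$ is $\mu$-a.e.\ constant and equals $\alpha(X) = \mathbb{P}(S \in (a,b))$, so $\alpha = \mathbb{P}(S \in (a,b))\mu$ and $\rho(A \times (a,b) \times B) = \mu(A)\mathbb{P}(S \in (a,b))\mu(B)$ for every Borel $A, B$.

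The main obstacle is the bookkeeping of three distinct vanishing errors (from the adapted hypothesis, the contracting hypothesis, and the DLT near the endpoints of $(a,b)$), together with the passage between continuous test functions (used for the invariance arguments) and indicators of arbitrary Borel sets (required by the statement). Proving the one-sided conditional DLT first, which is precisely the first decoupling argument after marginalizing out the third coordinate and corresponds to the intermediate Theorem~\ref{theo:metric_CCLT} flagged in the section outline, provides a clean warm-up that already exhibits the main mechanism.
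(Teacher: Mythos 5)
Your proposal takes a genuinely different route from the paper. The paper works on the function side: it reduces everything to the characteristic functions $e^{itS_N}$, uses L\'evy's continuity theorem (Lemma~\ref{lemma:tech}) to convert conditional convergence in distribution into weak-$L^1$ convergence, and then extracts weak $L^1$ limit points via the Dunford--Pettis theorem (exploiting uniform boundedness of $e^{itS_N}$), showing those limit points are $T$- resp.\ $U$-invariant, hence constant, and identifying the constant via the inverting trick (Lemma~\ref{lemma:inverting}) and Theorem~\ref{theo:metric_CCLT}. You work on the measure side: you track the joint law $\rho_N$ of $(x, S_N(x), T^N x)$, extract weak-$*$ limit points via Prokhorov, and identify them by proving $(U\otimes\mathrm{Id}\otimes\mathrm{Id})$-invariance, decoupling the first coordinate by the mean ergodic theorem, and then nailing the residual marginal $\alpha$ by $T$-invariance plus $T$-ergodicity. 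The core mechanism (invariance of limit points under an ergodic transformation, hence constancy; DLT to pin the constant) is the same, but the implementation is distinct. One thing your route buys: the decoupling in Step~2 avoids having to establish mixing of $T$ as a separate statement (the paper's Theorem~\ref{theo:mix}), since $T$-mixing emerges as a byproduct. What the paper's route buys: the characteristic-function/Dunford--Pettis machinery works on an \emph{arbitrary} probability space, handles Borel $A,B$ automatically through conditional expectations, and avoids all tightness and continuity-set issues.

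Two points in your argument need to be repaired rather than merely cited. First, the extension ``via inner/outer regularity of $\mu$'' does not work as stated: open and closed sets need not be $\mu$-continuity sets, so regularity alone does not give you Portmanteau's convergence on arbitrary Borel $A,B$. What you actually need is approximation in $\mu$-measure by the \emph{algebra} of $\mu$-continuity sets (for each Borel $A$ and $\epsilon>0$ find a $\mu$-continuity set $A'$ with $\mu(A\triangle A')<\epsilon$, and then use $|\rho_N(A\times\cdot\times B)-\rho_N(A'\times\cdot\times B)|\le\mu(A\triangle A')$ uniformly in $N$). This is true and gives the extension, but it is a different and specific argument. Second, the extraction of weak-$*$ limit points as bona fide probability measures requires tightness of $\{\rho_N\}$, hence tightness of $\mu$, which holds for $X$ Polish (or $\mu$ Radon) but not for an arbitrary metric space as the statement allows; and in the dominated-convergence steps the test functions must be uniformly continuous (bounded Lipschitz, say), not merely $C_b(X)$, for the a.e.\ contraction $d(T^N Ux, T^Nx)\to 0$ and the a.e.\ bound $|S_N(Ux)-S_N(x)|\to 0$ to propagate. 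Both points are fixable under a Polish hypothesis and with Lipschitz test functions, so they are technical gaps rather than conceptual ones, but they should be filled; the paper's route simply never encounters them.

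One small additional remark: your identification of $\rho_{2,3}$ via the $T$-invariance of $\alpha$ (with $\alpha(T^{-1}B)=\alpha(B)$ obtained from $|S_N(T^{-1}y)-S_N(y)|\le 2\|f\|_\infty/V_N\to0$) is a nice direct substitute for the paper's time-reversal Lemma~\ref{lemma:inverting} combined with Theorem~\ref{theo:metric_CCLT}. The two are morally the same (both exploit $T$-invariance of $\mu$, $T$-ergodicity, and boundedness of $f$), but your version stays on the forward time axis.
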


\subsection*{A conditional distributional limit theorem.} To prove Theorem \ref{theo:metric_MCLT} we first prove the following result of independent interest.

\begin{theorem}
\label{theo:metric_CCLT}
Let $(X,\mathcal{B},\mu)$ be a probability space, let $T\colon X \to X$ be a measure-preserving, ergodic transformation, and let $f \colon X \to \R$ be a bounded, measurable function. Suppose the Birkhoff sums of $f$ with respect to $T$ satisfy a DLT on $(X,\mathcal{B},\mu)$ with averaging sequence $\A :=(A_N)_{N \in \mathbb{N}}$, normalizing sequence $\V :=(V_N)_{N \in \mathbb{N}}$, and limiting distribution $S$ in the sense that the random variables
\[
S_N(x) := \frac{\sum_{n=0}^{N-1}f(T^n x) - A_N}{V_N} \ \  \text{on } (X,\mathcal{B},\mu)
\]
converge in distribution to $S$ as $N \to \infty$, i.e., for every interval $(a,b) \subseteq \R$ such that $\mathbb{P}(S \in \{a,b\}) = 0$, the following holds,
\[
\lim_{N \to \infty} \mu(\{x \in X \ | \ S_N(x) \in (a,b) \}) = \mathbb{P}\left(S \in (a,b) \right).
\]
Then, the same DLT holds in the conditional sense, i.e. for every $A \in \mathcal{B}$ and every interval $(a,b) \subseteq \R$ such that $\mathbb{P}(S \in \{a,b\}) = 0$,
\[
  \lim_{N \to \infty} \mu(\{x \in X \ | \ x \in A, \ S_N(x) \in (a,b)\})
  = \mu(A) \cdot \mathbb{P}\left(S \in (a,b) \right).
\]
\end{theorem}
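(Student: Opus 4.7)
The plan is to study the sequence $h(S_N)$, for $h$ ranging over $C_c(\R)$, as a bounded family in $L^\infty(X,\mu)$ equipped with the weak-$*$ topology, exploit the asymptotic $T$-invariance of $S_N$ together with ergodicity of $T$ to pin down its weak-$*$ limit points, and finally pass from continuous $h$ to the indicator $\mathbf{1}_{(a,b)}$ via a Portmanteau-type sandwich.

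First I would observe the identity
\[
S_N(Tx) - S_N(x) = \frac{f(T^N x) - f(x)}{V_N},
\]
whose right-hand side tends to $0$ uniformly in $x$, since $f$ is bounded and $V_N \to \infty$. Hence for every uniformly continuous $h \colon \R \to \R$ the quantity $h(S_N \circ T) - h(S_N)$ also tends to $0$ uniformly on $X$. Now fix $h \in C_c(\R)$: the sequence $(h(S_N))_{N \in \N}$ is uniformly bounded in $L^\infty(X,\mu)$, hence weak-$*$ precompact by Banach--Alaoglu. Let $\psi$ be any weak-$*$ limit point, say $h(S_{N_k}) \to \psi$ weakly-$*$. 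The Koopman operator $F \mapsto F \circ T$ is weak-$*$ continuous on $L^\infty(X,\mu)$ (by duality with the transfer operator on $L^1(X,\mu)$, which is well-defined because $T$ is measure-preserving), so $h(S_{N_k}) \circ T \to \psi \circ T$ weakly-$*$ as well; combining with the uniform vanishing $\|h(S_{N_k}) \circ T - h(S_{N_k})\|_\infty \to 0$ forces $\psi \circ T = \psi$ $\mu$-a.e. Ergodicity of $T$ then yields that $\psi$ is $\mu$-a.e. equal to a constant.

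To identify this constant, I would apply the DLT hypothesis itself (which is exactly the case $A = X$): the Portmanteau theorem gives $\int_X h(S_N)\, d\mu \to \mathbb{E}[h(S)]$, so $\psi \equiv \mathbb{E}[h(S)]$. Since every weak-$*$ limit point of $(h(S_N))_N$ equals this same constant, the full sequence converges: $h(S_N) \to \mathbb{E}[h(S)]$ weakly-$*$. Testing against $g = \mathbf{1}_A \in L^1(X,\mu)$ yields
\[
\int_A h(S_N)\, d\mu \longrightarrow \mu(A) \cdot \mathbb{E}[h(S)] \quad \text{for every } A \in \mathcal{B}.
\]

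Finally, using the boundary hypothesis $\mathbb{P}(S \in \{a,b\}) = 0$, for each $\epsilon > 0$ I would choose compactly supported continuous $h^\pm$ with $h^- \leq \mathbf{1}_{(a,b)} \leq h^+$ and $\mathbb{E}[h^+(S) - h^-(S)] < \epsilon$; the previous display then sandwiches
\[
\mu(A)\, \mathbb{E}[h^-(S)] \leq \liminf_N \mu\bigl(A \cap \{S_N \in (a,b)\}\bigr) \leq \limsup_N \mu\bigl(A \cap \{S_N \in (a,b)\}\bigr) \leq \mu(A)\, \mathbb{E}[h^+(S)],
\]
and letting $\epsilon \to 0$ finishes the proof. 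The main subtle step is showing that weak-$*$ limit points are $T$-invariant; it is exactly there that boundedness of $f$, measure-preservation of $T$, and the telescoping form of $S_N$ combine into a single clean identity, after which the rest of the argument bootstraps by ergodicity, the DLT, and Portmanteau.
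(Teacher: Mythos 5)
Your proposal is correct and follows essentially the same strategy as the paper: reduce to showing that weak limit points of a family of bounded test functions applied to $S_N$ are $T$-invariant (via the telescoping identity $S_N \circ T - S_N = (f\circ T^N - f)/V_N \to 0$ uniformly, using boundedness of $f$), invoke ergodicity to force constancy, and pin down the constant with the assumed DLT. The only differences are cosmetic: you use $h\in C_c(\R)$, weak-$*$ compactness in $L^\infty$ (Banach--Alaoglu), and a Portmanteau sandwich, whereas the paper uses $e^{itS_N}$, weak $L^1$ compactness (Dunford--Pettis), and L\'evy's continuity theorem via Lemma~\ref{lemma:tech} --- these coincide for uniformly bounded sequences and yield the same argument.
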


Before proving Theorem \ref{theo:metric_CCLT}, we first review some basic terminology. Let $(\Omega,\mathcal{B},\mathbb{P})$ be a probability space and $(X_n)_{n \in \N}$ be a sequence of integrable random variables on it. We say $X_n$ converges weakly in $L^1$ to an integrable random variable $X$ on $(\Omega,\mathcal{B},\mathbb{P})$ if for every bounded random variable $Y$ on $(\Omega,\mathcal{B},\mathbb{P})$ the following holds,
\[
\lim_{n \to \infty} \mathbb{E}(X_n Y) = \mathbb{E}(XY).
\]
Equivalently, $X_n$ converges weakly in $L^1$ to $X$ if for every measurable set $A \in \mathcal{B}$,
\[
\lim_{n \to \infty} \mathbb{E}\left(X_n \chi_A\right) = \mathbb{E}(X\chi_A).
\]

To reduce to a question about weak $L^1$ convergence, we prove the following general lemma; compare to \cite[Theorem 1]{MCLT76}.

\begin{lemma}
    \label{lemma:tech}
    Let $(\Omega,\mathcal{B},\mathbbm{P})$ be a probability space and $(X_n)_{n \in \N}$ be a sequence of integrable random variables on it. Then $X_n$ converges conditionally in distribution to an integrable random variable $X$ in the sense that for every measurable set $A \in \mathcal{B}$ and every interval $(a,b) \subseteq \mathbb{R}$ with $\mathbb{P}(X \in \{a,b\}) = 0$,
    \[
    \lim_{n \to \infty} \mathbb{P}(A \cap X_n \in (a,b)) =  \mathbb{P}(A) \cdot \mathbb{P}(X \in (a,b)) 
    \]
    if and only for every fixed $t \in \R$,
    \[
    e^{itX_n} \to \mathbb{E}(e^{itX}) \ \ \text{weakly in }L^1 \text{ as }n \to \infty.
    \]
\end{lemma}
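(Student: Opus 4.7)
The plan is to interpret both sides of the equivalence as saying, in different languages, that the conditional law of $X_n$ given any event $A \in \mathcal{B}$ of positive measure converges to the law of $X$. The bridge between the two formulations is L\'evy's continuity theorem, applied to the conditional probability measures $\mathbb{P}_A := \mathbb{P}(\cdot \cap A)/\mathbb{P}(A)$ on $(\Omega,\mathcal{B})$, defined whenever $\mathbb{P}(A) > 0$.

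For the direction $(\Leftarrow)$, I would fix $A \in \mathcal{B}$ with $\mathbb{P}(A) > 0$ and test the weak $L^1$ hypothesis against the bounded random variable $\chi_A$. Dividing by $\mathbb{P}(A)$ yields
\[
\mathbb{E}_{\mathbb{P}_A}(e^{itX_n}) = \frac{\mathbb{E}(e^{itX_n}\chi_A)}{\mathbb{P}(A)} \to \mathbb{E}(e^{itX})
\]
for every $t \in \mathbb{R}$, where the right-hand side is precisely the characteristic function of $X$ evaluated at $t$. L\'evy's continuity theorem then implies that $X_n$, viewed on the probability space $(\Omega,\mathcal{B},\mathbb{P}_A)$, converges in distribution to $X$; in particular, for any continuity interval $(a,b)$ of the distribution function of $X$ one has $\mathbb{P}_A(X_n \in (a,b)) \to \mathbb{P}(X \in (a,b))$, which after multiplication by $\mathbb{P}(A)$ is the desired conditional convergence. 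The case $\mathbb{P}(A) = 0$ is vacuous on both sides.

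For $(\Rightarrow)$, I would run the same argument in reverse. The conditional convergence hypothesis, restricted to any fixed $A$ with $\mathbb{P}(A) > 0$, says precisely that $X_n$ on $(\Omega,\mathcal{B},\mathbb{P}_A)$ converges in distribution to $X$. The Portmanteau theorem upgrades this to $\mathbb{E}_{\mathbb{P}_A}(g(X_n)) \to \mathbb{E}(g(X))$ for every bounded continuous $g\colon \mathbb{R} \to \mathbb{R}$; applying it to the real and imaginary parts of $g(x) = e^{itx}$ yields $\mathbb{E}(e^{itX_n}\chi_A) \to \mathbb{P}(A)\cdot \mathbb{E}(e^{itX})$, trivially true when $\mathbb{P}(A) = 0$. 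Since $|e^{itX_n}| \le 1$, the equivalence recalled just before the lemma between testing against indicator functions and testing against arbitrary bounded random variables (via simple-function approximation and dominated convergence) promotes this to weak $L^1$ convergence of $e^{itX_n}$ to the constant $\mathbb{E}(e^{itX})$.

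The proof is essentially a packaging of L\'evy's continuity theorem together with the Portmanteau theorem applied to the family of conditional measures $\{\mathbb{P}_A\}$, so no serious obstacle is anticipated; the only minor points requiring care are splitting the complex-valued characteristic function into real and imaginary parts, and treating null sets separately. The point of the lemma is to convert the task of establishing conditional (and ultimately mixing) distributional limit theorems into the more flexible task of verifying weak $L^1$ convergence of the uniformly bounded sequence $(e^{itX_n})_{n \in \mathbb{N}}$, a formulation that interacts well with the hyperbolicity and ergodicity arguments used in the subsequent sections.
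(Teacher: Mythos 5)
Your proof is correct and follows essentially the same route as the paper: restrict to the conditional probability spaces $(A,\mathcal{B}|_A,\mathbb{P}|_A)$, apply L\'evy's continuity theorem to translate conditional convergence in distribution into convergence of conditional characteristic functions, and recognize the latter as weak $L^1$ convergence of $e^{itX_n}$ tested against indicators (hence, by the remark preceding the lemma, against all bounded random variables). The only stylistic difference is that you split the equivalence into two explicit directions, invoking Portmanteau in one of them, whereas the paper handles both directions at once via the biconditional form of L\'evy's theorem; the substance is identical.
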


\begin{proof}
    For every $A \in \mathcal{B}$ with $\mathbb{P}(A) > 0$ consider the probability space $(A,\mathcal{B}|_A,\mathbb{P}|_A)$ and the integrable random variables $(X_n|_A)_{n \in \N}$ on it. Notice that $X_n$ converges conditionally in distribution to $X$ if and only if $(X_n|_A)_{n \in \N}$ converges in distribution to $X$ for every $A \in \mathcal{B}$ with $\mathbb{P}(A) > 0$. By L\'evy's continuity theorem, this condition is equivalent to the following convergence of characteristic functions for every $t \in \R$ :
    \[
    \mathbb{E}(e^{it X_n}|A) \to \mathbb{E}(e^{it X}) \ \ \text{as } n \to \infty.
    \]
    But this is exactly the condition that for every measurable set $A \in \mathcal{B}$ with $\mathbb{P}(A) > 0$ and every $t \in \mathbb{R}$, 
    \[
    \mathbb{E}(\chi_A e^{itX_n}) \to \mathbb{E}(\chi_A\mathbb{E}(e^{itX}))  \ \ \text{as } n \to \infty.
    \]
    Thus, we see that the original condition is equivalent to
    \[
    e^{itX_n} \to \mathbb{E}(e^{itX}) \ \ \text{weakly in }L^1 \text{ as } n \to \infty\text{ for every }t\in \R. \qedhere
    \]
\end{proof}

We are now ready to prove Theorem \ref{theo:metric_CCLT}

\begin{proof}[Proof of Theorem \ref{theo:metric_CCLT}]
    Consider the corrected Birkhoff sums $S_N$ as random variables on $(X,\mathcal{B},\mu)$. By Lemma \ref{lemma:tech}, it is enough to check that  for every fixed $t \in \R$,
    \[
    e^{itS_N} \to \mathbb{E}(e^{itS}) \ \ \text{weakly in }L^1 \text{ as } N \to \infty.
    \]

    For the rest of this discussion we fix $t \in \R$ and prove this statement. By the Dunford-Pettis theorem, a sequence of integrable random variables is sequentially $L^1$ weakly compact if and only if it is uniformly integrable.  
    In particular, as the random variables $e^{itS_N}$ are uniformly bounded, it is enough to show that the only weak $L^1$ limit point of $(e^{itS_N})_{N \in \N}$ is the constant random variable $\mathbb{E}(e^{itS})$.
    
    Let $Y$ be a weak $L^1$ limit point of this sequence along times $\{N_k\}_{k \in \N}$. We claim that $Y = Y \circ T$ almost surely. Indeed, because $f$ is bounded, 
     it follows directly from the definition of $S_N$ and the dominated convergence theorem that, 
    \[
    \mathbb{E}(|e^{itS_N} \circ T - e^{itS_N}|) \to 0 \ \ \text{as }N \to \infty.
    \]
    The Hölder inequality then implies that
    \[
    e^{itS_{N_k}} \circ T \to Y \ \ \text{weakly in }L^1 \text{ as } k \to \infty.
    \]
    But, at the same time, the $T$-invariance of $\mu$ implies
    \[
    e^{itS_{N_k}} \circ T \to Y \circ T \ \ \text{weakly in }L^1 \text { as } k \to \infty.
    \]
    As weak $L^1$ limits are unique almost surely, the claim follows.
    
    To conclude we notice that, as $T$ acts ergodically, $Y$ must be constant almost everywhere. Pairing $e^{itS_{N_k}}$ against the constant function $1$ and using the assumed DLT reveals that $Y$ must be $\mathbb{E}(e^{itS})$. This finishes the proof.
\end{proof}

The proof of Theorem \ref{theo:metric_MCLT} will use Theorem \ref{theo:metric_CCLT} in the following form.

\begin{corollary}
\label{cor:tech}
Let $(X,\mathcal{B},\mu)$ be a probability space, let $T\colon X \to X$ be a measure-preserving, ergodic transformation, and let $f \colon X \to \R$ be a bounded, measurable function. Suppose the Birkhoff sums of $f$ with respect to $T$ satisfy a DLT $(X,\mathcal{B},\mu)$ with averaging sequence $\A :=(A_N)_{N \in \mathbb{N}}$, normalizing sequence $\V :=(V_N)_{N \in \mathbb{N}}$, and limiting distribution $S$ in the sense that the random variables
\[
S_N(x) := \frac{\sum_{n=0}^{N-1}f(T^n x) - A_N}{V_N} \ \  \text{on } (X,\mathcal{B},\mu)
\]
converge in distribution to $S$ as $N \to \infty$, i.e., for every interval $(a,b) \subseteq \R$ such that $\mathbb{P}(S \in \{a,b\}) = 0$, the following holds,
\[
\lim_{N \to \infty} \mu(\{x \in X \ | \ S_N(x) \in (a,b) \}) = \mathbb{P}\left(S \in (a,b) \right).
\]
Then, the same DLT holds in the following stronger sense: for every $B \in \mathcal{B}$ and every interval $(a,b) \subseteq \R$ such that $\mathbb{P}(S \in \{a,b\}) = 0$,
\[
  \lim_{N \to \infty} \mu(\{ x \in X \ | \ S_N(x) \in (a,b), \ T^N x \in B\}) = \mathbb{P}\left(S \in (a,b) \right) \cdot \mu(B).
\]
Furthermore, if $X$ is a metric space and $\mathcal{B}$ is its Borel $\sigma$-algebra, then, for every bounded Lipschitz function $\phi \colon X \to \R$ and every $t \in \R$,
\[
  \lim_{N \to \infty} \int_X e^{itS_N(x)} \cdot \phi(T^N x)  \thinspace d\mu(x)
  = \mathbb{E}(e^{itS}) \cdot \mu(\phi).
\]
\end{corollary}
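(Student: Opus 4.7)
The plan is to reduce both statements of the corollary to Theorem \ref{theo:metric_CCLT} applied not to the forward system $(X,\mu,T)$ but to its inverse $(X,\mu,T^{-1})$. The key observation is that conditioning the Birkhoff sums of $f$ on the endpoint $T^N x$ lying in $B$ becomes, after the measure-preserving change of variable $y = T^N x$, a condition on the \emph{initial} point of a related Birkhoff sum for the inverse dynamics.

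Concretely, set $g := f \circ T^{-1}$ and define the time-reversed normalized sums
\[
\tilde S_N(y) := \frac{1}{V_N}\Bigl(\sum_{k=0}^{N-1} g((T^{-1})^k y) - A_N\Bigr).
\]
A direct computation (unfolding the composition) gives the identity $\tilde S_N(T^N x) = S_N(x)$, and the $T^N$-invariance of $\mu$ (via the substitution $z = T^{-N} y$) shows that, as a random variable on $(X,\mu)$, the sum $\sum_{k=0}^{N-1} g((T^{-1})^k y)$ has the same distribution as the forward sum $\sum_{n=0}^{N-1} f(T^n z)$. Hence $\tilde S_N$ satisfies the same DLT as $S_N$, with identical averaging sequence $\A$, normalizing sequence $\V$, and limiting distribution $S$. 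Since $T^{-1}$ is also measure-preserving and ergodic, the hypotheses of Theorem \ref{theo:metric_CCLT} are met by the inverse system and the bounded function $g$.

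For the first statement of the corollary, the substitution $y = T^N x$ rewrites
\[
\mu(\{x \in X : S_N(x) \in (a,b),\ T^N x \in B\}) = \mu(\{y \in B : \tilde S_N(y) \in (a,b)\}),
\]
and Theorem \ref{theo:metric_CCLT} applied to $\tilde S_N$ delivers the desired limit $\mathbb{P}(S \in (a,b)) \cdot \mu(B)$. For the second statement, Lemma \ref{lemma:tech} applied to $\tilde S_N$ converts the conditional DLT just established into the weak $L^1$ convergence $e^{it\tilde S_N} \to \mathbb{E}(e^{itS})$ for each $t \in \R$. Pairing this against the bounded measurable function $\phi$ and changing variables once more yields
\[
\int_X e^{itS_N(x)} \phi(T^N x)\, d\mu(x) = \int_X e^{it\tilde S_N(y)} \phi(y)\, d\mu(y) \to \mathbb{E}(e^{itS}) \cdot \mu(\phi),
\]
as required. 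The Lipschitz hypothesis on $\phi$ is in fact stronger than what the argument uses; bounded Borel measurability already suffices.

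The approach is essentially mechanical once one identifies the correct reparametrization. The only step requiring care is the verification that $\tilde S_N$ inherits the same DLT from $S_N$, which is a straightforward consequence of the $T^N$-invariance of $\mu$. I do not anticipate any substantive obstacle: the corollary is really a repackaging of Theorem \ref{theo:metric_CCLT} and Lemma \ref{lemma:tech} for the time-reversed dynamics.
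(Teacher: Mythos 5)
Your proof is correct and takes essentially the same route as the paper: the paper factors the time-reversal into a separate Lemma \ref{lemma:inverting} (showing the DLT for the Birkhoff sums of $f$ with respect to $T^{-1}$) and uses the identity $S_N(x) = S_N^{-1}(T^{N-1}x)$, whereas you absorb that step inline by working with $g := f\circ T^{-1}$ and the identity $S_N(x) = \tilde S_N(T^N x)$, a purely cosmetic reparametrization. Your closing observation that the Lipschitz hypothesis can be weakened to bounded Borel measurability is also correct, since the conditional DLT of Theorem \ref{theo:metric_CCLT} already holds for arbitrary $B\in\mathcal{B}$ and Lemma \ref{lemma:tech} gives weak $L^1$ convergence against all bounded measurable functions.
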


To prove Corollary \ref{cor:tech} we first prove the following technical lemma.

\begin{lemma}
\label{lemma:inverting}
Let $(X,\mathcal{B},\mu)$ be a probability space, let $T\colon X \to X$ be a measure-preserving transformation, and let $f \colon X \to \R$ be a measurable function. Suppose that the Birkhoff sums of $f$ with respect to $T$ satisfy a DLT on $(X,\mathcal{B},\mu)$ with averaging sequence $\A :=(A_N)_{N \in \mathbb{N}}$, normalizing sequence $\V :=(V_N)_{N \in \mathbb{N}}$, and limiting distribution $S$ in the sense that the random variables
\[
S_N(x) := \frac{\sum_{n=0}^{N-1}f(T^n x) - A_N}{V_N} \ \  \text{on } (X,\mathcal{B},\mu)
\]
converge in distribution to $S$ as $N \to \infty$, i.e., for every interval $(a,b) \subseteq \R$ such that $\mathbb{P}(S \in \{a,b\}) = 0$, the following holds,
\[
\lim_{N \to \infty} \mu(\{x \in X \ | \ S_N(x) \in (a,b) \}) = \mathbb{P}\left(S \in (a,b) \right).
\]
Then, the same DLT  holds for Birkhoff sums of $f$ with respect to $T^{-1}$, i.e., the random variables 
\[
S_N^{-1}(x) := \frac{\sum_{n=0}^{N-1}f(T^{-n} x) - A_N}{V_N} \ \  \text{on } (X,\mathcal{B},\mu)
\]
converge in distribution to $S$ as $N \to \infty$, i.e., for every interval $(a,b) \subseteq \R$ such that $\mathbb{P}(S \in \{a,b\}) = 0$, the following holds
\[
\lim_{N \to \infty} \mu(\{x \in X \ | \ S_N^{-1}(x) \in (a,b) \}) = \mathbb{P}\left(S \in (a,b) \right).
\]
\end{lemma}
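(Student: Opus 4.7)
The proof will rely on a simple reindexing identity combined with the measure-preserving property of $T$. The key observation is that the Birkhoff sums along the reverse orbit are essentially a shift of the forward Birkhoff sums.

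Concretely, I will first note that, since $T$ is invertible and measure-preserving (so that $T^{-1}$ and all its iterates are measure-preserving as well), one has the pointwise identity
\[
\sum_{n=0}^{N-1} f(T^{-n}x) \;=\; \sum_{k=0}^{N-1} f(T^{k}\,T^{-(N-1)}x)
\]
obtained by the change of summation index $k = N-1-n$. Dividing by $V_N$ and subtracting $A_N/V_N$, this rewrites as
\[
S_N^{-1}(x) \;=\; S_N\bigl(T^{-(N-1)}x\bigr)
\qquad \text{for every } x\in X \text{ and every } N\in\mathbb{N}.
\]

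The second step is to transfer the distributional convergence via invariance of $\mu$. For any interval $(a,b)\subseteq\mathbb{R}$ with $\mathbb{P}(S\in\{a,b\})=0$, the identity above gives
\[
\{x\in X \mid S_N^{-1}(x)\in(a,b)\} \;=\; T^{N-1}\bigl(\{y\in X \mid S_N(y)\in(a,b)\}\bigr),
\]
and since $T$ (hence $T^{N-1}$) preserves $\mu$, it follows that
\[
\mu\bigl(\{x\in X \mid S_N^{-1}(x)\in(a,b)\}\bigr) \;=\; \mu\bigl(\{y\in X \mid S_N(y)\in(a,b)\}\bigr).
\]
Passing to the limit $N\to\infty$ and invoking the DLT assumed for the forward Birkhoff sums, both sides converge to $\mathbb{P}(S\in(a,b))$, which yields the desired DLT for $S_N^{-1}$.

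There is no real obstacle here: the argument is purely bookkeeping once one spots the reindexing $k=N-1-n$. The only subtlety worth mentioning is that the lemma is stated for a measure-preserving $T$ but refers to $T^{-1}$, so invertibility of $T$ is implicit in the hypothesis; the proof uses invertibility precisely to justify that $T^{N-1}$ is a well-defined measure-preserving bijection on $(X,\mathcal{B},\mu)$.
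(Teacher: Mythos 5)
Your proof is correct and follows the same route as the paper's: both derive the identity $S_N^{-1}(x) = S_N(T^{-(N-1)}x)$ (equivalently, $S_N(x) = S_N^{-1}(T^{N-1}x)$) by reindexing the Birkhoff sum, and then transfer the distributional limit using the $T$-invariance of $\mu$. The only cosmetic difference is the direction in which the reindexing identity is stated; the substance and the use of invariance are identical.
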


\begin{proof}
    Let $(a,b) \subseteq \R$ be an interval such that $\mathbb{P}(S \in \{a,b\}) = 0$. Notice that, for every $x \in X$ and every $N \in \N$, 
    \[
    S_N(x) = S_N^{-1}(T^{N-1}(x)).
    \]
    In particular, for every $N \in \N$,
    \[
    \mu(\{x \in X \ | \ S_N^{-1}(x) \in (a,b)\}) = \mu(\{x \in X \ | \ 
    S_N(T^{-(N-1)}x) \in (a,b)\}).
    \]
    The $T$-invariance of $\mu$ guarantees that, for every $N \in \N$,
    \[
    \mu(\{x \in X \ | \ 
    S_N(T^{-(N-1)}x) \in (a,b)\}) = \mu(\{x \in X \ | \ 
    S_N(x) \in (a,b)\}).
    \]
    The result now follows from the fact that $S_N$ converges in distribution to $S$ as $N \to \infty$.
\end{proof}

We are now ready to prove Corollary \ref{cor:tech}.

\begin{proof}[Proof of Corollary \ref{cor:tech}]
    Without loss of generality consider $B \in \mathcal{B}$ with $\mu(B) > 0$. The $T$-invariance of $\mu$ ensures that, for every $N \in \N$,
    \[
    \mu(T^{-N} B) = \mu(B) > 0.
    \]
    For every $N \in \N$ consider the probability space $(T^{-N} B,\mathcal{B}|_{T^{-N}B},\mu|_{T^{-N} B} )$ and the random variable $S_N|_{T^{-N} B}$ on it. Our goal is to show that $(S_N|_{T^{-N}B})_{N \in \N}$ converges in distribution to $S$. By L\'evy's continuity theorem, this condition is equivalent to the following convergence of characteristic functions for every $t \in \R$ :
    \[
    \mathbb{E}(e^{itS_N} | T^{-N}B) \to \mathbb{E}(e^{itS}) \ \ \text{as }N \to \infty.
    \]
    For the rest of this discussion we fix $t \in \R$ and show that
    \[
    \lim_{N \to \infty} \int_X e^{itS_N(x)} \cdot \chi_B(T^N x) \thinspace d\mu(x) = \mathbb{E}(e^{itS}) \cdot \mu(B).
    \]
    
    For $x \in X$ and $N \in \N$ consider the corrected Birkhoff sum of $f$ with respect to $T^{-1}$,
    \[
    S_N^{-1}(x) := \frac{\sum_{n=0}^{N-1}f(T^{-n}x) -A_N}{V_N}.
    \]
    By definition, for every $x \in X$ and every $N \in \N$,
    \[
    S_N(x) = S_N^{-1}(T^{N-1} x).
    \]
    Hence, for every $N \in \mathbb{N}$ we can write
    \begin{gather*}
    \int_X e^{itS_N(x)} \cdot \chi_B(T^N(x)) \thinspace d\mu(x) 
    = \int_X  e^{it S_N^{-1}(T^{N-1}(x))} \cdot \chi_B(T^N x) \thinspace d\mu(x).
    \end{gather*}
    The $T$-invariance of $\mu$ then ensures that, for every $N \in \mathbb{N}$,
    \begin{gather*}
    \int_X e^{it S_N^{-1}(T^{N-1}(x))} \cdot \chi_B(T^N x) \thinspace d\mu(x) 
    = \int_X  e^{it S_N^{-1}(x)} \cdot \chi_{T^{-1}B}(x) \thinspace d\mu(x).
    \end{gather*}
    It remains to show that
    \[
    \lim_{N \to \infty} \int_X e^{it S_N^{-1}(x)} \cdot \chi_{T^{-1}B}(x) \thinspace d\mu(x) = \mathbb{E}(e^{itS}) \cdot \mu(T^{-1}B)=\mathbb{E}(e^{itS}) \cdot \mu(B).
    \]
   This follows directly from Theorem \ref{theo:metric_CCLT} and Lemma~\ref{lemma:inverting}.

    The fact that, in the case where $X$ is a metric space and $\mathcal{B}$ is its Borel $\sigma$-algebra, one can replace the characteristic function $\chi_B$ with an arbitrary bounded Lipschitz function $\phi \colon X \to \R$ follows by standard approximation arguments.
 \end{proof}



\subsection*{Mixing.} To prove Theorem \ref{theo:metric_MCLT} we will also use the following mixing result.

\begin{theorem}
    \label{theo:mix}
    Let $(X,d)$ be a metric space supporting a Borel probability measure $\mu$, an invertible, measure-preserving transformation $T\colon X \to X$, and a measure-preserving, ergodic transformation $U \colon X \to X$. Suppose $(T,U)$ is contracting. Then $T$ is mixing.
\end{theorem}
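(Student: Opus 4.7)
The plan is to deduce that $T$ is mixing from the weak convergence $g \circ T^N \rightharpoonup \mu(g)$ in $L^2(X,\mu)$ for every bounded Lipschitz $g \colon X \to \R$. Once this is established, $\int f \cdot (g \circ T^N) \, d\mu \to \mu(f) \cdot \mu(g)$ for every $f \in L^2(\mu)$ and every bounded Lipschitz $g$; a standard density argument extending from bounded Lipschitz to all of $L^2$, followed by the choices $f = \chi_A$ and $g = \chi_B$, yields the required $\mu(A \cap T^{-N} B) \to \mu(A) \cdot \mu(B)$ for all Borel sets $A, B \subseteq X$.

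Fix a bounded Lipschitz $g$ and set $\psi_N := g \circ T^N$. Since $g$ is bounded, the family $\{\psi_N\}_N$ is uniformly bounded in $L^\infty \subset L^2$, so by Banach--Alaoglu every subsequence admits a further subsequence $(\psi_{N_k})_k$ converging weakly in $L^2$ to some $\psi$; it suffices to identify any such $\psi$ with the constant $\mu(g)$. By the Lipschitz property of $g$ and the contracting hypothesis,
\[
|\psi_{N_k}(Ux) - \psi_{N_k}(x)| \leq \operatorname{Lip}(g) \cdot d(T^{N_k} Ux, T^{N_k} x) \to 0 \quad \text{for } \mu\text{-a.e. } x,
\]
and dominated convergence upgrades this to strong $L^2$ convergence $\psi_{N_k} \circ U - \psi_{N_k} \to 0$. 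Since $U$ is measure-preserving, its Koopman operator $f \mapsto f \circ U$ is an isometry of $L^2$, hence weakly continuous, so $\psi_{N_k} \circ U \rightharpoonup \psi \circ U$; combining with the previous estimate yields $\psi \circ U = \psi$ $\mu$-a.s. Ergodicity of $U$ then forces $\psi$ to be $\mu$-a.s.\ constant, and testing the weak convergence against the function $1$ (together with $T$-invariance of $\mu$) identifies this constant as $\int \psi \, d\mu = \lim_k \int g(T^{N_k} x) \, d\mu(x) = \mu(g)$. Uniqueness of weak limits then gives $\psi_N \rightharpoonup \mu(g)$ in $L^2$.

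I expect the main obstacle to be the extraction of the $U$-invariance $\psi \circ U = \psi$: this is precisely where the coupling between $T$ and $U$ provided by the contracting hypothesis enters the argument, and it must be carefully interfaced with the weak continuity of the Koopman operator of $U$ in order to convert the strong $L^2$ vanishing of the coboundary $\psi_{N_k} \circ U - \psi_{N_k}$ into pointwise invariance of the weak limit $\psi$. The remaining steps---using ergodicity to force constancy, identifying the constant via $T$-invariance, and extending from bounded Lipschitz functions to arbitrary bounded Borel functions by approximating $\chi_B$ from below by $g_n(x) := \min(1, n \cdot d(x, X \setminus B))$ for open $B$ and using a $\pi$-$\lambda$ argument---are routine measure-theoretic maneuvers.
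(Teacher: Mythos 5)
Your proof is correct and follows essentially the same approach as the paper: reduce to bounded Lipschitz observables, extract weak limit points of $g \circ T^N$ via a compactness principle, use the contracting hypothesis and dominated convergence to show any such limit point is $U$-invariant, invoke ergodicity of $U$ to force constancy, and identify the constant via $T$-invariance of $\mu$. The only difference is cosmetic: you work with weak $L^2$ convergence and Banach--Alaoglu, while the paper works with weak $L^1$ convergence and Dunford--Pettis; both are valid since the family $\{g\circ T^N\}$ is uniformly bounded.
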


\begin{proof}
    By standard approximation arguments, it is enough to show that for every Borel measurable subset $A \subseteq X$ and every bounded Lipschitz function $\phi \colon X \to \R$,
    \[
    \lim_{N \to \infty} \int_X \chi_A(x) \cdot \phi(T^N x) \thinspace d\mu(x) = \mu(A) \cdot \mu(\phi).
    \]
    This is equivalent to showing that
    \[
    \phi \circ T^N \to \mu(\phi) \ \ \text{weakly in }L^1 \text{ as }N \to \infty.
    \]
   By the Dunford-Pettis theorem theorem, a sequence of integrable functions is sequentially $L^1$ weakly compact if and only if it is uniformly integrable.    In particular, as the functions $\phi \circ T^N$ are uniformly bounded, it is enough to show that the only weak $L^1$ limit point of $(\phi \circ T^N)_{N \in \N}$ is the constant function $\mu(\phi)$.
    
    Let $\varphi$ be a weak $L^1$ limit point of this sequence along times $\{N_k\}_{k \in \N}$. We claim that $\varphi = \varphi \circ U$ almost surely. Indeed, it follows directly from the fact that $(T,U)$ is hyperbolic, the fact that $\phi$ is bounded Lipschitz, and the dominated convergence theorem that, 
    \[
    \mathbb{E}(|\phi \circ T^N \circ U - \phi \circ T^N|) \to 0 \ \ \text{as }N \to \infty.
    \]
    The Hölder inequality then implies that
    \[
    \phi \circ T^N \circ U \to \varphi \ \ \text{weakly in }L^1 \text{ as }N \to \infty.
    \]
    But, at the same time, the $U$-invariance of $\mu$ implies
    \[
    \phi \circ T^N \circ U \to \varphi \circ U \ \ \text{weakly in }L^1 \text{ as }N \to \infty
    \]
    As weak $L^1$ limits are unique almost surely, the claim follows.
    
    To conclude notice that, as $U$ acts ergodically, $\varphi$ must be constant almost everywhere. Pairing $\phi \circ T^N$ against the constant function $1$ and using the $T$-invariance of $\mu$ reveals that $\varphi$ must be the constant function $\mu(\phi)$. This finishes the proof.
\end{proof}

\subsection*{Proof of the main result.} We are now ready to prove Theorem \ref{theo:metric_MCLT}.

\begin{proof}[Proof of Theorem \ref{theo:metric_MCLT}]
    Without loss of generality consider a pair of Borel measurable sets $A,B \subseteq X$ with $\mu(A),\mu(B) > 0$. Theorem \ref{theo:mix} ensures $T$ is mixing. In particular,
    \[
    \lim_{N \to \infty} \mu(A \cap T^{-N} B)  = \mu(A) \cdot \mu(B) > 0.
    \]
    For every $N \in \N$ sufficiently large consider the probability space $$(A \cap T^{-N} B,\mathcal{B}|_{A \cap T^{-N}B},\mu|_{A \cap T^{-N} B} )$$ and the random variable $S_N|_{A \cap T^{-N} B}$ on it. We aim to prove that $(S_N|_{A \cap T^{-N}B})_{N \in \N}$ converges in distribution to $S$ as $N \to \infty$. By L\'evy's continuity theorem, this condition is equivalent to the following convergence of characteristic functions for every $t \in \R$ :
    \[
    \mathbb{E}(e^{itS_N} | A \cap T^{-N}B) \to \mathbb{E}(e^{itS}) \ \ \text{as }N \to \infty.
    \]
    As $T$ is mixing, this is equivalent to showing that, for every $t \in \R$,
    \[
    \lim_{N \to \infty} \int_X \chi_A(x) \cdot e^{itS_N(x)} \cdot \chi_B(T^Nx) \thinspace d\mu(x) = \mu(A) \cdot \mathbb{E}(e^{itS}) \cdot \mu(B).
    \]
    Standard approximation arguments show that one can replace $\chi_B$ in this statement by a bounded Lipschitz function $\phi \colon X \to \R$. We do so for the rest of this discussion. Now consider for fixed $t \in \R$ the random variables
    \[
    F_N := e^{itS_N(x)} \cdot \phi(T^N x), \ \ N \in \N.
    \]
    Our goal is to show that 
    \[
    F_N \to \mathbb{E}(e^{itS}) \cdot \mu(\phi) \ \ \text{weakly in }L^1 \text{ as } N \to \infty.
    \]
    By the Dunford-Pettis theorem, a sequence of integrable random variables is sequentially $L^1$ weakly compact if and only if it is uniformly integrable. In particular, as the random variables $F_N$ are uniformly bounded, it is enough to show that the only weak $L^1$ limit point of $(F_N)_{N \in \N}$ is the constant random variable $\mathbb{E}(e^{itS}) \cdot \mu(\phi)$.
    
    Let $F$ be a weak $L^1$ limit point of this sequence along  times $\{N_k\}_{k \in \N}$. We claim that $F = F \circ U$ almost surely. Indeed, it follows directly from the definition of $F_N$, the fact that $(T,U)$ is hyperbolic, the fact that $f$ is bounded and $(T,U, \V)$-adapted, the fact that $\phi$ is bounded Lipschitz, and the dominated convergence theorem, that
    \[
    \mathbb{E}(|F_N \circ U - F_N|) \to 0 \ \ \text{as }N \to \infty.
    \]
    The Hölder inequality then implies that
    \[
    F_{N_k} \circ U \to F \ \ \text{weakly in }L^1 \text{ as } k \to \infty.
    \]
   At the same time, the $U$-invariance of $\mu$ implies
    \[
    F_{N_k} \circ U \to F \circ U \ \ \text{weakly in }L^1 \text{ as } k \to \infty.
    \]
    As weak $L^1$ limits are unique almost surely, the claim follows.
    
    To conclude notice that, as $U$ acts ergodically, $F$ must be constant almost everywhere. Pairing $F_N$ against the constant function $1$ and using Corollary \ref{cor:tech} reveals that $F$ must be $\mathbb{E}(e^{itS}) \cdot \mu(\phi)$. This finishes the proof.
\end{proof}

\subsection*{Flows.} We now discuss the case of flows. Let $(X,d)$ be a metric space supporting a Borel probability measure $\mu$ and a measure-preserving, ergodic flow $A := \{a_t \colon X \to X\}_{t \in \mathbb{R}}$.

Given a measurable function $f\colon X \to \R$, the ergodic integrals of $f$ with respect to $A$ satisfy a (spatial) distributional limit theorem (DLT) on $(X, \mu)$ if there exists a real function $\A:=(A_T)_{T \in \R}$, a positive real function $\V:=(V_T)_{T \in \mathbb{R}}$ with $V_T \to \infty$ as $T \to \infty$, and a random variable $S$, such that the random variables
\[
S_T(x) := \frac{\int_0^T  f(a_t x) \thinspace dt- A_T}{V_T} \ \ \text{on } (X,\mu)
\]
converge in distribution to $S$ as $N \to \infty$, that is, for every interval $(a,b) \in \mathbb{R}$ with $\mathbb{P}(S \in \{a,b\}) = 0$, the following holds,
\[
\lim_{N \to \infty} \mu(\{x \in X \ | \ S_T(x) \in (a,b)\}) = \mathbb{P}(S \in (a,b)).
\]
We usually refer to $\A$ as the averaging function, $\V$ as the normalizing function, and $S$ as the limiting distribution.

Let $U \colon X \to X$ be a measure-preserving, ergodic transformation. We say the pair $(A,U)$ is contracting if for $\mu$-almost-every $x \in X$,
\[
\lim_{t \to \infty} d(a_t U x, a_t x) = 0.
\]
We say a function $f \colon X \to \R$ is $(A,U, \V)$-adapted if for $\mu$-almost-every $x \in X$ and every $T > 0$, the following holds,
\[
\int_0^T |f(a_t U x) - f(a_t x)| \thinspace dt = o_x(V_T).
\]

\begin{remark}
The pair $(A,U)$ is called strongly contracting if for $\mu$-almost-every $x \in X$,
\[
\int_0^{+\infty}  d(a_t U x, a_tx) <+ \infty\,
\]
and hyperbolic if for $\mu$-almost-every $x \in X$ there exist $C >0$ and $\kappa > 0$ such that
\[
 d(a_t U x, a_tx) \leq C e^{-\kappa t} \ \ \text{ for all } t\geq 0\,.
\]
If the pair $(A,U)$ is called strongly contracting, then any Lipschitz function $f \colon X \to \R$ is $(A,U, \V)$-adapted for any diverging function $\V=(V_T)_{T \in \mathbb{R}}$. Furthermore, for $\mu$-almost-every $x \in X$ and every $T \geq 0$,
\[
\int_0^T |f(a_t U x) - f(a_t x)| \thinspace dt = O_x(1).
\]
\end{remark}

The following is the main result for flows; its proof is analogous to that of Theorem \ref{theo:metric_MCLT} and its details are left to the reader.

\begin{theorem}
\label{theo:metric_MCLT_flows}
Let $(X,d)$ be a metric space supporting a Borel probability measure $\mu$, let $A := \{a_t \colon X \to X\}_{t \in \mathbb{R}}$ be a measure-preserving, ergodic flow, and let $f \colon X \to \mathbb{R}$ a bounded, measurable function. Assume that the Birkhoff integrals of $f$ with respect to $A$ satisfy a DLT on $(X,\mu)$ with averaging function $\A := (A_T)_{T \in \mathbb{R}}$,
normalizing function $\V=(V_T)_{T \in \mathbb{R}}$, and limiting distribution $S$, i.e., the random variables
\[
S_T(x) := \frac{\int_0^T f(a_t x)  \thinspace dt - A_T}{V_T} \ \ \text{on } (X,\mu)
\]
converge in distribution to $S$ as $T \to \infty$, i.e., for every interval $(a,b) \subseteq \R$ such that $\mathbb{P}(S \in \{a,b\}) = 0$, the following holds
\[
 \lim_{T \to \infty} \mu(\{x \in X \ | \ S_T(x) \in (a,b)\}) = \mathbb{P}\left(S \in (a,b) \right).
\]
Assume there exists a measure-preserving, ergodic transformation $U \colon X \to X$ such that the pair $(A,U)$ is hyperbolic and such that the function $f \colon X \to \R$ is $(A,U, \V)$-adapted. Then, the same DLT holds in the mixing sense, i.e. for every pair of Borel measurable subsets $B,C \subseteq X$ and every interval $(a,b) \subseteq \R$ such that $\mathbb{P}(S \in \{a,b\}) = 0$,
\[
 \lim_{T \to \infty} \mu(\{x \in X \ | \ x \in B, \ S_T(x) \in (a,b), \ a_T x \in C\})
  = \mu(B) \cdot \mathbb{P}\left(S \in (a,b) \right) \cdot \mu(C).
\]
\end{theorem}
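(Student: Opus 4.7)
The plan is to mirror the proof of Theorem~\ref{theo:metric_MCLT} step by step, replacing the discrete iterate $T^N$ by the flow time $a_T$ and Birkhoff sums by Birkhoff integrals. Before tackling the mixing statement I would first establish the flow analogs of Theorem~\ref{theo:metric_CCLT}, Lemma~\ref{lemma:inverting}, Corollary~\ref{cor:tech}, and Theorem~\ref{theo:mix}. Each of these goes through with only cosmetic changes. For instance, in the flow analog of Theorem~\ref{theo:metric_CCLT}, the only structural input used is that $\|S_T \circ a_s - S_T\|_\infty \to 0$ for every fixed $s \in \R$, which follows from $f$ being bounded, since the difference $\int_0^T f(a_{r+s}x)\,dr - \int_0^T f(a_r x)\,dr$ is bounded in absolute value by $2|s|\,\|f\|_\infty$ while $V_T \to \infty$. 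This gives any weak $L^1$-limit point of $e^{itS_T}$ the property of being $a_s$-invariant for every $s$, hence constant by flow ergodicity, and identified as $\mathbb{E}(e^{itS})$ by pairing with $1$. The time-reversal argument of Lemma~\ref{lemma:inverting} and the flow-mixing Theorem~\ref{theo:mix} are completely analogous.

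Granting these ingredients, the main argument runs as follows. Without loss of generality assume $\mu(B), \mu(C) > 0$. Flow mixing gives $\mu(B \cap a_T^{-1}C) \to \mu(B)\mu(C) > 0$, so by L\'evy's continuity theorem the conclusion reduces, for each fixed $t \in \R$, to the statement
\[
\lim_{T \to \infty} \int_X \chi_B(x) \, e^{itS_T(x)} \, \chi_C(a_T x) \, d\mu(x) = \mu(B) \cdot \mathbb{E}(e^{itS}) \cdot \mu(C).
\]
By standard approximation it suffices to prove this with $\chi_C$ replaced by an arbitrary bounded Lipschitz $\phi \colon X \to \R$. Setting $F_T(x) := e^{itS_T(x)} \phi(a_T x)$, the goal becomes $F_T \to \mathbb{E}(e^{itS}) \cdot \mu(\phi)$ weakly in $L^1$. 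Uniform boundedness together with the Dunford-Pettis theorem reduce this to identifying every weak $L^1$-limit point of $(F_T)$.

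Let $F$ be such a limit along some sequence $T_k \to \infty$. The crux is to show $F = F \circ U$ almost surely. For this I would estimate
\[
|F_T(Ux) - F_T(x)| \leq |t| \cdot \|\phi\|_\infty \cdot |S_T(Ux) - S_T(x)| + \mathrm{Lip}(\phi) \cdot d(a_T U x, a_T x).
\]
The first summand equals $|t| \cdot \|\phi\|_\infty \cdot V_T^{-1} \int_0^T |f(a_r Ux) - f(a_r x)| \, dr$, which is $o_x(1)$ by the $(A,U,\V)$-adapted hypothesis; the second tends to zero pointwise a.s.\ by hyperbolicity of $(A,U)$. Dominated convergence then yields $\mathbb{E}(|F_T \circ U - F_T|) \to 0$, and the familiar H\"older plus $U$-invariance argument gives $F_{T_k} \circ U \to F$ and $F_{T_k} \circ U \to F \circ U$ weakly in $L^1$, forcing $F = F \circ U$ a.s. Ergodicity of $U$ makes $F$ constant, and pairing $F_{T_k}$ against the constant $1$ and invoking the flow analog of Corollary~\ref{cor:tech} identifies that constant as $\mathbb{E}(e^{itS}) \cdot \mu(\phi)$.

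I expect the main obstacle to be organizational rather than conceptual: setting up the preliminary flow versions of Theorem~\ref{theo:metric_CCLT}, Lemma~\ref{lemma:inverting}, Corollary~\ref{cor:tech}, and Theorem~\ref{theo:mix} in precisely the form needed, so that the final invariance-plus-ergodicity step can be executed without further fuss. Once those are in place, every step is a line-by-line port of the discrete proof.
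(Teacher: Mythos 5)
Your proposal is correct and follows essentially the same approach as the paper, which explicitly states that the proof of Theorem~\ref{theo:metric_MCLT_flows} is analogous to that of Theorem~\ref{theo:metric_MCLT} and leaves the details to the reader. Your line-by-line port of the discrete argument---including the uniform estimate $|S_T \circ a_s - S_T| \leq 2|s|\|f\|_\infty/V_T$ for the flow analog of Theorem~\ref{theo:metric_CCLT}, the time-reversal identity $S_T(x) = S_T^{-1}(a_T x)$ for the flow analog of Lemma~\ref{lemma:inverting}, the flow-mixing analog of Theorem~\ref{theo:mix}, and the decomposition of $|F_T(Ux)-F_T(x)|$ into a cocycle-adaptedness term and a Lipschitz contraction term---is exactly what the authors intend.
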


We also highlight the following result of independent interest; its proof is analogous to that of Theorem \ref{theo:metric_CCLT} and its details are left to the reader.

\begin{theorem}
\label{theo:metric_CCLT_flows}
Let $(X,\mathcal{B},\mu)$ be a probability space, let $A := \{a_t \colon X \to X\}_{t \in \mathbb{R}}$ be a measure-preserving, ergodic flow, and let $f \colon X \to \R$ be a bounded, measurable function. Suppose the Birkhoff integrals of $f$ with respect to $A$ satisfy a DLT on $(X,\mu)$ with averaging function $\A := (A_T)_{T \in \mathbb{R}}$,
normalizing function $\V=(V_T)_{T \in \mathbb{R}}$, and limiting distribution $S$, i.e., the random variables
\[
S_T(x) := \frac{\int_0^T f(a_t x) \thinspace dt - A_T}{V_T} \ \ \text{on } (X,\mu)
\]
converge in distribution to $S$ as $N \to \infty$, i.e., for every interval $(a,b) \subseteq \R$ such that $\mathbb{P}(S \in \{a,b\}) = 0$, the following holds
\[
\lim_{T \to \infty} \mu(\{x \in X \ | \ S_T(x) \in (a,b)\}) = \mathbb{P}\left(S \in (a,b) \right).
\]
Then, the same DLT holds in the conditional sense, i.e. for every $B \in \mathcal{B}$ and every interval $(a,b) \subseteq \R$ such that $\mathbb{P}(S \in \{a,b\}) = 0$,
\[
  \lim_{T \to \infty} \mu(\{x \in X \ | \ x \in B, \ S_T(x) \in (a,b) \})
  = \mu(B) \cdot \mathbb{P}\left(S \in (a,b) \right).
\]
\end{theorem}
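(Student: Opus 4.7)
The plan is to follow the proof strategy of Theorem \ref{theo:metric_CCLT} with the obvious modifications needed in the continuous-time setting. I would first observe that Lemma \ref{lemma:tech}, being stated on an abstract probability space for arbitrary sequences of random variables, applies directly (along arbitrary sequences $T_k \to \infty$) to reduce the desired conditional DLT to showing that for every fixed $t \in \R$,
\[
e^{itS_T} \to \mathbb{E}(e^{itS}) \ \ \text{weakly in } L^1 \text{ as } T \to \infty.
\]

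Fix $t \in \R$. Since $\{e^{itS_T}\}_{T>0}$ is uniformly bounded, it is sequentially weakly compact in $L^1$ by the Dunford--Pettis theorem, and it suffices to show that every weak $L^1$ limit point $Y$, obtained along some $T_k \to \infty$, equals the constant $\mathbb{E}(e^{itS})$. The key step is to verify that $Y \circ a_s = Y$ almost surely for every fixed $s \in \R$. For such $s$ and every $T > |s|$, because $f$ is bounded we have the pointwise estimate
\[
|S_T(a_s x) - S_T(x)| \leq \frac{2|s| \cdot \|f\|_\infty}{V_T},
\]
since $\int_0^T f(a_t(a_s x))\,dt$ and $\int_0^T f(a_t x)\,dt$ differ only through integration over a set of total length at most $2|s|$. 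As $V_T \to \infty$ and $z \mapsto e^{itz}$ is Lipschitz on $\R$, the difference $e^{itS_T}\circ a_s - e^{itS_T}$ tends to $0$ in $L^\infty$, hence weakly in $L^1$. Combining this with the $a_s$-invariance of $\mu$, which yields $e^{itS_{T_k}} \circ a_s \to Y \circ a_s$ weakly in $L^1$, and the almost sure uniqueness of weak $L^1$ limits, we conclude that $Y \circ a_s = Y$ almost surely.

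By ergodicity of the flow $A$, the $A$-invariant function $Y$ must be constant almost surely. To identify the constant, pair $e^{itS_{T_k}}$ against the constant function $1$ and apply the assumed DLT (which via Lévy's continuity theorem gives $\mathbb{E}(e^{itS_T}) \to \mathbb{E}(e^{itS})$), so that $Y = \mathbb{E}(e^{itS})$ as required. The only substantive difference from the discrete case is that here one must establish invariance of $Y$ under every time-$s$ map of the flow rather than under a single transformation; this presents no genuine obstacle since the bounded perturbation of the integration interval shrinks uniformly in $x$ after normalization by $V_T$. All remaining details transcribe directly from the proof of Theorem \ref{theo:metric_CCLT}.
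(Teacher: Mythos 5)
Your proposal is correct and follows essentially the same route the paper intends (the paper explicitly states the proof of this flow version is analogous to that of Theorem~\ref{theo:metric_CCLT} and leaves the details to the reader): reduce to weak $L^1$ convergence of characteristic functions via Lemma~\ref{lemma:tech}, use Dunford--Pettis compactness, show flow-invariance of any weak limit point from the uniform estimate $|S_T(a_s x)-S_T(x)|\le 2|s|\|f\|_\infty/V_T$, and invoke ergodicity and the assumed DLT to identify the constant. You correctly flag the one substantive adaptation needed in continuous time, namely that invariance must be established for every time-$s$ map so that ergodicity of the flow (rather than of a single transformation) can be applied.
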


\section{Cocycles}

\subsection*{Outline of this section.} In this section we discuss how to upgrade distributional limit theorems for cocycles over dynamical systems to mixing distributional limit theorems under mild ergodicity and hyperbolicity assumptions; see Theorem \ref{theo:cocycle_MCLT} for the main result of this section. We begin by proving different conditional central limit theorems for cocycles, see Theorem \ref{theo:cocycle2_CCLT} and Corollary \ref{cor:tech_cocycle}, and then use these results to prove the main theorem. The proofs are inspired by those for Birkhoff sums in \S\ref{S:Birk} but require extra ingredients. Lemma \ref{lemma:exp} is of particular importance for our approach. This lemma is later used to deduce other distributional limit theorems; see Theorems \ref{theo:cocycle_CCLT_norm}, \ref{theo:cocycle_CCLT_norm2}, \ref{theo:cocycle_CCLT_sec}, \ref{theo:cocycle_CCLT_sec2}. The section concludes with a brief discussion on similar results for flows. 

\subsection*{Statement of the main result.} We begin by reviewing the statement of our main result for cocycles. Let $(X,d)$ be a metric space supporting a Borel probability measure $\mu$ and an invertible, measure-preserving transformation $T \colon X \to X$. Fix $m \in \mathbb{N}$ and let $C \colon X \times \mathbb{Z} \to \mathrm{GL}(m,\mathbb{R})$  be a measurable cocycle over $T$. 

Denote by $\langle \cdot, \cdot\rangle$ the standard inner product on $\mathbb{R}^m$, by $\|\cdot\|$ the corresponding Euclidean norm, and by $\nu$ the induced probability measure on the projectivization $\mathbb{P}\mathbb{R}^m$. The projectivized bundle $X \times \mathbb{P}\mathbb{R}^m$ can be endowed with the product measure $\mu \otimes \nu$.

Given $(x,v) \in X \times (\mathbb{R}^m \setminus \{0\})$ or $X \times \mathbb{P}\mathbb{R}^m$ and $N \in \mathbb{N}$ consider the quantities
\begin{align*}
    \sigma(x,v,N) := \log \frac{\|C(x,N) v\|}{\|v\|} \in \mathbb{R},\\
    \sigma(x,N) := \log \sup_{v \in \mathbb{PR}^m} \frac{\|C(x,N)v\|}{\|v\|} \in \mathbb{R}.
\end{align*}
Recall we say the cocycle $C$ is log-integrable if the following maps belong to $L^1(X,\mu)$,
\[
x \mapsto \max\{0,\sigma(x,1)\}, \quad x \mapsto \max\{0,\sigma(x,-1)\}.
\]

Let $\V := (V_N)_{N \in \mathbb{N}}$ be a sequence of positive real numbers with $V_N \to \infty$ as $N \to \infty$. Recall we say the cocycle $C$ is $\V$-sufficiently-bounded if for $\mu$-almost-every $x \in X$, $\nu$-almost every $v \in \mathbb{PR}^m$, and every $N \in \mathbb{N}$, 
\[
|\sigma(x,v,N) - \sigma(Tx,C(x,1)v,N)| = o_{x,v}(V_N) \,.
\]
Recall we say the cocycle $C$ has $\V$-simple-dominated-splitting if for $\mu$-almost every $x \in X$ and $\nu$-almost every $v,w \in \mathbb{PR}^m$, the following holds,
    \[
    |\sigma(x,v,N) - \sigma(x,w,N)| = o_{x,v,w}(V_N).
    \]

Now let $U \colon X \to X$ be a measure-preserving, ergodic transformation and let
$D \colon X \times \mathbb{N} \to \mathrm{GL}(\R^m)$ be a measurable cocycle over $U$.
Recall that we say the pair $(T,U)$ is contracting if for $\mu$-almost-every $x \in X$,
\[
\lim_{n \to \infty} d(T^n U x, T^n x) = 0.
\] 
Recall we say the cocycles $(C,D)$ are $(T,U, \V)$-adapted if for $\mu$-almost-every $x \in X$, $\nu$-almost-every $v \in \mathbb{PR}^m$, and every $N \in \mathbb{N}$, the following estimate holds,
\[
|\sigma(x,v,N) - \sigma(Ux,D(x,1)v,N)| = o_{x,v}(V_N).
\]

The following is the main result of this section; compare to Theorem \ref{theo:cocycle_MCLT_intro}.

\begin{theorem}
\label{theo:cocycle_MCLT}
Let $(X,d)$ be a metric space supporting a Borel probability measure $\mu$, let $T \colon X \to X$ be an invertible, measure-preserving transformation, and let $C \colon X \times \mathbb{Z} \to \mathrm{GL}(m,\mathbb{R})$ be a measurable cocycle over $T$. Assume that $C$ satisfies a DLT on $(X \times \mathbb{R}^m, \mu \otimes \nu)$ with averaging sequence $\A := (A_N)_{N \in \mathbb{N}}$, normalizing sequence $\V=(V_N)_{N \in \mathbb{N}}$, and limiting distribution $S$ in the sense that the random variables
\[
S_N(x,v) := \frac{\sigma(x,v,N) - A_N}{V_N}
\quad \text{ on } \ (X\times \mathbb{P}\R^m, \mu\otimes \nu)
\]
converge in distribution to  $S$ as $N \to \infty$, i.e., for every interval $(a,b) \subseteq \R$ such that $\mathbb{P}(S \in \{a,b\}) = 0$, the following holds,
\[
\lim_{N \to \infty} (\mu \otimes \nu)( \{(x,v) \in X \times \mathbb{PR}^m \ | \ S_N(x) \in (a,b)\})= \mathbb{P}\left(S \in (a,b) \right).
\]
Assume that $C$ is $\V$-sufficiently-bounded and has $\V$-simple-dominated-splitting. Assume in addition that there exist an ergodic transformation $U \colon X \to X$ and a measurable cocycle $D \colon X \times \mathbb{N} \to \mathrm{GL}(m,\mathbb{R})$ over $U$ such that $(T,U)$ is contracting and $(C,D)$ is $(T,U, \V)$-adapted. Then, the above DLT holds in the mixing sense, i.e. for every pair of Borel measurable subsets $A,B \subseteq X$ and every interval $(a,b) \subseteq \R$ such that $\mathbb{P}(S \in \{a,b\}) = 0$,
\begin{gather*}
  \lim_{N \to \infty}(\mu \otimes \nu) (\{ (x,v) \in X \times \mathbb{PR}^m \ | \ x \in A, \ S_N(x,v) \in (a,b), \ T^N x \in B\})\\
  = \mu(A) \cdot \mathbb{P}\left(S \in (a,b) \right) \cdot \mu(B).
\end{gather*}
\end{theorem}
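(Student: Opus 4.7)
The plan is to adapt the proof of Theorem \ref{theo:metric_MCLT} to the cocycle setting on $X\times\mathbb{PR}^m$ with measure $\mu\otimes\nu$. By Theorem \ref{theo:mix}, since $(T,U)$ is contracting, $T$ is mixing, so $\mu(A\cap T^{-N}B)\to\mu(A)\mu(B)$. Via L\'evy's continuity theorem applied to $S_N$ conditioned on the event $\{x\in A,\,T^Nx\in B\}$, combined with standard approximation of $\chi_B$ by bounded Lipschitz functions $\phi$, the conclusion reduces to showing, for every $t\in\mathbb{R}$, every Borel $A\subseteq X$, and every bounded Lipschitz $\phi\colon X\to\mathbb{R}$, that
\[
\int_{X\times\mathbb{PR}^m} \chi_A(x)\,e^{itS_N(x,v)}\,\phi(T^N x)\,d(\mu\otimes\nu)\;\longrightarrow\;\mu(A)\cdot\mathbb{E}(e^{itS})\cdot\mu(\phi).
\]
Setting $F_N(x,v):=e^{itS_N(x,v)}\phi(T^N x)$ and noting $|F_N|\leq\|\phi\|_\infty$, the Dunford--Pettis theorem reduces the task to showing that every weak $L^1(\mu\otimes\nu)$ limit point of $\{F_N\}$ equals the constant $\mathbb{E}(e^{itS})\cdot\mu(\phi)$; pairing such a limit against $\chi_A\in L^\infty$ then delivers the reduction.

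Let $F$ be such a weak limit point along a subsequence $\{N_k\}$. I would first show $F$ is invariant under the measure-preserving map $\tilde U\colon X\times\mathbb{PR}^m\to X\times\mathbb{PR}^m$ given by $\tilde U(x,v):=(Ux,v)$. The required bound $\mathbb{E}(|F_N\circ\tilde U-F_N|)\to 0$ splits into (a) $\phi(T^N Ux)-\phi(T^N x)\to 0$ in $L^1(\mu)$, using $(T,U)$ contracting together with $\phi$ bounded Lipschitz, and (b) $S_N(Ux,v)-S_N(x,v)\to 0$ a.e., which I obtain by inserting the intermediate quantity $S_N(Ux,D(x,1)v)$: the difference with $S_N(x,v)$ vanishes by $(T,U,\V)$-adaptedness of $(C,D)$, while the difference with $S_N(Ux,v)$ vanishes by $\V$-simple-dominated-splitting applied at $Ux$, where absolute continuity of the linear diffeomorphism $v\mapsto D(x,1)v$ with respect to $\nu$ ensures that $v$ and $D(x,1)v$ both lie in the common $\nu$-conull good set at $Ux$. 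Combined with $\tilde U$-invariance of $\mu\otimes\nu$ and uniqueness of weak limits, this gives $F=F\circ\tilde U$, and ergodicity of $U$ on $(X,\mu)$ then forces $F(x,v)$ to depend only on $v$, say $F=F(v)$.

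Next I eliminate the $v$-dependence using $\V$-simple-dominated-splitting on its own: $|F_N(x,v)-F_N(x,w)|\to 0$ $(\mu\otimes\nu\otimes\nu)$-a.e., hence in $L^1$ of the triple product by boundedness. Viewing $F_{N_k}(x,v)$ and $F_{N_k}(x,w)$ as bounded sequences on $(X\times\mathbb{PR}^m\times\mathbb{PR}^m,\mu\otimes\nu\otimes\nu)$, their weak $L^1$ limits are respectively $F(v)$ and $F(w)$ (obtained by integrating out the extra factor via Fubini), so subtracting forces $F(v)=F(w)$ a.e., i.e., $F$ is a constant $c$. To identify $c$ I pair $F_{N_k}$ with the constant function $1$ to obtain $c=\lim_k\int F_{N_k}\,d(\mu\otimes\nu)$ and invoke Corollary \ref{cor:tech_cocycle}, the cocycle analog of Corollary \ref{cor:tech}, yielding $c=\mathbb{E}(e^{itS})\cdot\mu(\phi)$; the proof of that corollary mirrors Corollary \ref{cor:tech}'s, with $\V$-sufficiently-bounded in the role of boundedness of $f$ for the $T$-shift and $\V$-simple-dominated-splitting absorbing the accompanying fiber change by $C(x,1)$. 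I expect the main obstacle to lie in step (b): the ``$\nu$-a.e.\ $v,w$" clause of simple-dominated-splitting must be read as holding on a product $G(x)\times G(x)$ for a common $\nu$-conull set $G(x)\subseteq\mathbb{PR}^m$, which is precisely what permits specialization from independent $(v,w)$ to the dependent pair $(v,D(x,1)v)$ via $D(x,1)_*\nu\ll\nu$. A parallel subtlety arises in Corollary \ref{cor:tech_cocycle}, after which the architecture of the proof parallels the Birkhoff case closely.
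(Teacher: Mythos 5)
Your proposal is correct, and it takes a genuinely different route from the paper's. The paper defines $F_N$ as a random variable on $X$ alone by first averaging over the fiber, $F_N(x) := \bigl(\int_{\mathbb{PR}^m} e^{itS_N(x,v)}\,d\nu(v)\bigr)\phi(T^N x)$, compares it with $Z_N(x) := \bigl(\int_{\mathbb{PR}^m} e^{itS_N(x,D(U^{-1}x,1)v)}\,d\nu(v)\bigr)\phi(T^N x)$ to establish $U$-invariance of the weak limit, and then ergodicity of $U$ on $(X,\mu)$ gives constancy in one step. You instead keep $F_N(x,v)$ as a random variable on $X\times\mathbb{PR}^m$, establish invariance under the map $\tilde U(x,v)=(Ux,v)$ (which is not ergodic, being the identity in the fiber), deduce from $U$-ergodicity that the weak limit $F$ depends only on $v$, and then use $\V$-simple-dominated-splitting a second time, on the triple product $(X\times\mathbb{PR}^m\times\mathbb{PR}^m,\mu\otimes\nu\otimes\nu)$, to conclude $F(v)=F(w)$ a.e.\ and hence constancy. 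Both limits are then identified via Corollary \ref{cor:tech_cocycle}.

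The trade-offs are as follows. Your route is more transparently parallel to the Birkhoff-sum proof (Theorem \ref{theo:metric_MCLT}), since you never leave the product space, and the final constancy argument using the triple product is clean because there the pairs $(v,w)$ are genuinely $\nu\otimes\nu$-distributed. The price is the $\tilde U$-invariance step, where you must apply $\V$-simple-dominated-splitting at $Ux$ to the dependent pair $(v,D(x,1)v)$; you correctly flag this as the main obstacle. Your resolution is sound: by Fubini and the triangle inequality one upgrades the $\nu\otimes\nu$-a.e.\ statement to the existence of a single $\nu$-conull set $G(Ux)$ on which all pairs satisfy the bound, and since $D(x,1)$ acts on $\mathbb{PR}^m$ as a diffeomorphism with $D(x,1)_*\nu\ll\nu$, both $v$ and $D(x,1)v$ lie in $G(Ux)$ for $\nu$-a.e.\ $v$. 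The paper's fiber-averaged formulation sidesteps this by comparing two fiber integrals directly (which amounts to using the boundedness of the Radon--Nikodym derivative of $D(x,1)_*\nu$ against the $L^1(\nu)$-concentration of $v\mapsto e^{itS_N(Ux,v)}$ around its mean), at the cost of an argument that is a bit further from the Birkhoff case. Both approaches ultimately use the same absolute-continuity input; yours makes it explicit, the paper leaves it implicit in the dominated-convergence step.
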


\subsection*{Generic expansion.} Let $(X,\mathcal{B},\mu)$ a probability space supporting an invertible, measure-preserving, ergodic transformation $T \colon X \to X$. Suppose $ C \colon X \times \mathbb{Z} \to \mathrm{GL}(\mathbb{R},m)$ is a measurable cocycle over $T$. Assume $C$ is log-integrable with simple top Lyapunov exponent $\lambda \in \mathbb{R}$. We say a pair $(x,v) \in X \times (\mathbb{R}^m \setminus \{0\})$ is future-Oseledets-generic if
\[
\lim_{N \to \infty} \frac{\sigma(x,v,N)}{N} = \lambda.
\]

The notion of simple dominated splitting is motivated by the following result, which establishes that log-integrable cocycles over ergodic dynamical systems with
simple top Lyapunov exponent have simple dominated
splitting.

\begin{lemma}
    \label{lemma:exp}
    Let $(X,\mathcal{B},\mu)$ be a probability space supporting an invertible, measure-preserving, ergodic transformation $T \colon X \to X$. Suppose $ C \colon X \times \mathbb{Z} \to \mathrm{GL}(\mathbb{R},m)$ is a measurable cocycle over $T$. Assume $C$ is log-integrable with simple top Lyapunov exponent $\lambda \in \mathbb{R}$. Then, for every $x\in X$, every $v,w \in \mathbb{R}^m$ such that the pairs $(x,v),(x,w) \in X \times \mathbb{R}^m$ are future-Oseledets-generic, and every $N \in \mathbb{N}$,
    \[
    |\sigma(x,v,N) - \sigma(x,w,N)| = O_{x,v,w}(1)\,.
    \]
    In particular, $C$ has simple dominated splitting with respect to any diverging sequence $\V$.
    \end{lemma}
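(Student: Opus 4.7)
My plan is to use the Oseledets decomposition at $x$ directly. Since the top Lyapunov exponent $\lambda$ is simple, on a full-measure set $X_0 \subseteq X$ the Oseledets theorem furnishes a measurable splitting $\mathbb{R}^m = E^+(x) \oplus F(x)$ where $E^+(x)$ is one-dimensional with exponent $\lambda$ and $F(x)$ is the sum of all Oseledets subspaces of exponent strictly less than $\lambda$. Fix such an $x$ at which $E^+(x)$ and $F(x)$ exist and the future-Oseledets conclusions hold. A pair $(x,v)$ with $v \neq 0$ is future-Oseledets-generic iff $v$ has a nonzero $E^+(x)$-component; indeed if $v \in F(x)$ then $\limsup_N \sigma(x,v,N)/N$ is strictly smaller than $\lambda$, while if $v = v^+ + v^-$ with $v^+ \in E^+(x) \setminus \{0\}$ and $v^- \in F(x)$ then the dominant term $C(x,N) v^+$ has norm $e^{\lambda N + o(N)} \|v^+\|$ and the subdominant term $C(x,N) v^-$ has norm at most $e^{(\lambda - \delta)N}$ for some $\delta > 0$ and all $N$ large.

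The key step is then to show that for any future-Oseledets-generic $(x,v)$ the error
\[
\sigma(x,v,N) - \sigma(x,v^+,N) \;=\; \log \frac{\|C(x,N)v\|}{\|C(x,N)v^+\|} - \log \frac{\|v\|}{\|v^+\|}
\]
is $O_{x,v}(1)$. Writing $C(x,N)v = C(x,N)v^+ + C(x,N)v^-$, the exponential gap between the two summands forces $\|C(x,N)v\| / \|C(x,N)v^+\| \to 1$; in particular this ratio is bounded above and bounded away from zero uniformly in $N$. This takes care of the first term; the second is independent of $N$.

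Now given two future-Oseledets-generic pairs $(x,v)$ and $(x,w)$, decompose $v = v^+ + v^-$ and $w = w^+ + w^-$ with $v^+,w^+ \in E^+(x) \setminus \{0\}$. Since $E^+(x)$ is one-dimensional, $w^+ = c\,v^+$ for some nonzero scalar $c = c(x,v,w)$, so $\|C(x,N)w^+\| = |c|\,\|C(x,N)v^+\|$ for every $N \in \mathbb{N}$. Combining with the previous step,
\[
\sigma(x,v,N) - \sigma(x,w,N) \;=\; \bigl[\sigma(x,v,N)-\sigma(x,v^+,N)\bigr] + \log|c| + \log\tfrac{\|w\|}{\|v\|} - \bigl[\sigma(x,w,N)-\sigma(x,w^+,N)\bigr],
\]
and each bracketed term is $O_{x,v,w}(1)$. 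This yields the first assertion.

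For the ``in particular'' clause, it suffices to note that for $\mu$-a.e.\ $x$ the subspace $F(x) \subset \mathbb{R}^m$ has positive codimension, so $\nu(\{v \in \mathbb{P}\mathbb{R}^m : v \in F(x)\}) = 0$ because $\nu$ is absolutely continuous with respect to Lebesgue measure on projective space. Hence $\mu \otimes \nu \otimes \nu$-almost every $(x,v,w)$ consists of two future-Oseledets-generic pairs, and the $O_{x,v,w}(1)$ bound is in particular $o_{x,v,w}(V_N)$ for every diverging sequence $\V$. The main (mild) obstacle is purely bookkeeping: making sure the comparison of $\|C(x,N)v\|$ with $\|C(x,N)v^+\|$ gives an honestly bounded, rather than merely sub-exponential, error, which is where the \emph{simplicity} of the top exponent enters essentially to guarantee the uniform spectral gap between $E^+(x)$ and $F(x)$.
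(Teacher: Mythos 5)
Your argument is correct and follows essentially the same route as the paper: fix a generator $u$ (your $v^+$) of the one-dimensional top Oseledets space, exploit the spectral gap coming from simplicity to show the lower Oseledets component is exponentially dominated, and conclude that $\sigma(x,\cdot,N)$ differs from $\log\|C(x,N)u\|$ by a bounded amount for any future-Oseledets-generic vector. The only blemish is a bookkeeping slip in your displayed identity: since $\sigma$ is already normalized by $\|v\|$ and is scale-invariant, one has $\sigma(x,v^+,N)=\sigma(x,w^+,N)$ exactly, so the extra $\log|c|+\log\tfrac{\|w\|}{\|v\|}$ terms should not appear (or, if you work with $\log\|C(x,N)\cdot\|$ without normalization, the signs are off); this does not affect the conclusion because those terms are constants in $N$, but it is worth cleaning up. Your added observation that $F(x)$ has $\nu$-measure zero, justifying the ``in particular'' clause, is a sensible explicit addition that the paper leaves implicit.
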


\begin{proof}
    Let $X' \subseteq X$ be the full measure subset on which the Oseledets ergodic theorem holds. Fix $x \in X'$ and let $u \in \mathbb{R}^m$ be a vector generating the corresponding top Oseledets subspace. Let $0 <\lambda''< \lambda' < \lambda$ be constants larger than all but the top Lyapunov exponent of $C$. In particular, as $(x,u) \in X \times \mathbb{R}^m$ is future-Oseledets-generic, for every $N \in \mathbb{N}$,
    \[
    \|C(x,N) u\| = \Omega_x(e^{\lambda' N}).
    \]
    
    Now let $(x,v),(x,w) \in X \times \mathbb{R}^m$ be future-Oseledets-generic pairs. Without loss of generality we can assume $v,w \in \mathbb{R}^m$ have unit norm. In particular, for every $N \in \mathbb{N}$,
    \[
    \sigma(x,v,N) = \log \|C(x,N) v\|, \quad \sigma(x,w,N) = \log \|C(x,N) w\|.
    \]
    
    Using the notation above we can write
    \[
    v = au + v', \quad w = bu + w',
    \]
    with $a,b \neq 0$ and $v',w' \in \mathbb{R}^m$ belonging to lower Oseledets subspaces. As the top Lyapunov exponent $\lambda \in \mathbb{R}$ is simple, for every $N \in \mathbb{N}$,
    \begin{align*}
    \|C(x,N) v\| &= |a| \cdot \|C(x,N) u\| + O_v(e^{\lambda'' N}), \\
    \|C(x,N) w\| &= |b| \cdot \|C(x,N) u\| + O_w(e^{\lambda'' N}).
    \end{align*}
    In particular, for every $N \in \mathbb{N}$,
    \begin{align*}
    \sigma(x,v,N) = \log |a| + \log \|C(x,N) u\| + \log(1 + o_v(1)),\\
    \sigma(x,w,N) = \log |b| + \log \|C(x,N) u\| + \log(1 + o_w(1)).
    \end{align*}
    The desired estimate follows.
\end{proof}

\subsection*{A conditional distributional limit theorem.} To prove Theorem \ref{theo:cocycle_MCLT} we first prove the following result of independent interest.

\begin{theorem}
\label{theo:cocycle2_CCLT}
Let $(X,\mathcal{B},\mu)$ be a probability space, let $T \colon X \to X$ be an invertible, measure-preserving, ergodic  transformation, and let $C \colon X \times \mathbb{Z} \to \mathrm{GL}(m,\mathbb{R})$ be a measurable cocycle over $T$. Assume that $C$ satisfies a DLT on $(X \times \mathbb{PR}^m, \mu \otimes \nu)$ with averaging sequence $\A := (A_N)_{N \in \mathbb{N}}$, normalizing sequence $\V=(V_N)_{N \in \mathbb{N}}$, and limiting distribution $S$ in the sense that the random variables
\[
S_N(x,v) := \frac{\sigma(x,v,N) -A_N}{V_N}
\ \ \text{ on }  (X\times \mathbb{PR}^m, \mu\otimes \nu)
\]
converge in distribution to $S$ as $N \to \infty$, i.e., for every interval $(a,b) \subseteq \R$ such that $\mathbb{P}(S \in \{a,b\}) = 0$, the following holds,
\[
\lim_{N \to \infty} (\mu \otimes \nu)( \{(x,v) \in X \times \mathbb{PR}^m \ | \ S_N(x) \in (a,b)\}) = \mathbb{P}\left(S \in (a,b) \right).
\]
Assume the cocycle $C$ is $\V$-sufficiently-bounded and has $\V$-simple-dominated-splitting. Then, the above DLT holds in the conditional sense, i.e. for every $A \in \mathcal{B}$ and every interval $(a,b) \subseteq \R$ such that $\mathbb{P}(S \in \{a,b\}) = 0$,
\begin{gather*}
  \lim_{N \to \infty} (\mu \otimes \nu)( \{(x,v) \in X \times \mathbb{PR}^m \ | \ x \in A, \ \ S_N(x) \in (a,b)\})
  = \mu(A) \cdot \mathbb{P}\left(S \in (a,b) \right).
\end{gather*}
\end{theorem}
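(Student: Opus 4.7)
The plan is to adapt the argument for Theorem \ref{theo:metric_CCLT}, now treating $(x,v) \in X \times \mathbb{P}\R^m$ as the state variable and using $\V$-sufficient-boundedness and $\V$-simple-dominated-splitting in place of the boundedness of the observable $f$. By the characteristic-function criterion (the direction of Lemma \ref{lemma:tech} relevant to our needs, applied with conditioning sets of the form $A \times \mathbb{P}\R^m$), it suffices to show that for each fixed $t \in \R$,
\[
e^{itS_N} \longrightarrow \mathbb{E}(e^{itS}) \quad \text{weakly in } L^1(X \times \mathbb{P}\R^m, \mu \otimes \nu).
\]
Since $(e^{itS_N})_{N \in \N}$ is uniformly bounded, the Dunford--Pettis theorem guarantees sequential weak $L^1$ compactness, so it is enough to show that every weak $L^1$ limit point coincides with the constant $\mathbb{E}(e^{itS})$.

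Let $Y$ be a weak $L^1$ limit point of $(e^{itS_{N_k}})_{k \in \N}$. I expect to establish two structural properties of $Y$. First, $\V$-simple-dominated-splitting yields $|S_N(x,v) - S_N(x,w)| \to 0$ for $(\mu \otimes \nu \otimes \nu)$-a.e. $(x,v,w)$, so by dominated convergence $e^{itS_{N_k}(x,v)} - e^{itS_{N_k}(x,w)} \to 0$ in $L^1(\mu \otimes \nu \otimes \nu)$; viewing each term as a function on the triple product and integrating out the irrelevant variable, uniqueness of weak $L^1$ limits forces $Y(x,v) = Y(x,w)$ almost surely, so $Y$ does not depend on $v$. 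Second, I would combine the two hypotheses to show $|\sigma(x,v,N) - \sigma(Tx, v, N)| = o_{x,v}(V_N)$: $\V$-sufficient-boundedness gives $|\sigma(x,v,N) - \sigma(Tx, C(x,1)v, N)| = o_{x,v}(V_N)$, while $\V$-simple-dominated-splitting applied at $Tx$ to the pair $\{C(x,1)v, v\}$ gives $|\sigma(Tx, C(x,1)v, N) - \sigma(Tx, v, N)| = o_{x,v}(V_N)$. The same H\"older--Dunford--Pettis mechanism used in Theorem \ref{theo:metric_CCLT}, together with the $(T \otimes \mathrm{id})$-invariance of $\mu \otimes \nu$, then yields $Y(Tx, v) = Y(x,v)$ almost surely.

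Combining these two facts, $Y$ descends to a $T$-invariant function on $(X,\mu)$, hence by ergodicity of $T$ it is almost surely constant. Pairing $e^{itS_{N_k}}$ against the constant function $1$ and invoking the assumed DLT (via the elementary direction of L\'evy's continuity theorem) identifies this constant with $\mathbb{E}(e^{itS})$, completing the proof. The main subtlety I expect is the substitution step in the second structural property: the hypothesis $\V$-simple-dominated-splitting is valid only for $\nu$-almost every pair of projective classes, and to invoke it at $(C(x,1)v, v)$ one must know that this substitution preserves almost-sure validity. This is where one uses that $C(x,1) \in \mathrm{GL}(m,\R)$ acts on $\mathbb{P}\R^m$ as a diffeomorphism, so the pushforward $C(x,1)_*\nu$ is absolutely continuous with respect to $\nu$ and null sets are preserved.
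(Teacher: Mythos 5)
Your plan follows the paper's strategy closely (Dunford--Pettis plus identification of weak $L^1$ limit points by $T$-invariance and ergodicity), and step (a) --- showing the limit $Y$ is fiberwise constant via a dominated convergence argument on the triple product --- is fine. The problem is in step (b), and you already flagged exactly the point where it goes wrong, but the fix you offer does not work.

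You want to deduce $|\sigma(Tx,v,N)-\sigma(Tx,C(x,1)v,N)|=o_{x,v}(V_N)$ by invoking $\V$-simple-dominated-splitting at $Tx$ for the pair $(C(x,1)v,v)$, arguing that $C(x,1)_*\nu \ll \nu$ lets you substitute. But the hypothesis is an almost-everywhere statement with respect to the \emph{product} measure $\nu\otimes\nu$ on pairs: for a.e.\ $x$ there is a $(\nu\otimes\nu)$-full set of pairs $(v,w)$ on which the estimate holds. The set $\{(C(x,1)v,v):v\in\mathbb{PR}^m\}$ is a graph, hence $(\nu\otimes\nu)$-null, and a full-measure set in the product can miss it entirely. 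Absolute continuity of $C(x,1)_*\nu$ with respect to $\nu$ controls pushforwards of single-variable null sets, but it says nothing about whether a \emph{coupled} pair lands in the good set; for instance the bad set at $Tx$ could be exactly the graph $\{w=C(x,1)v\}$, which is $(\nu\otimes\nu)$-null while $\{v:(C(x,1)v,v)\text{ bad}\}$ is everything. So the coupled estimate you need is simply not a consequence of the stated hypotheses.

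The paper avoids this by never asking for a pointwise (in $(x,v)$) estimate. It works instead with the fiber-averaged variables $Y_N(x)=\int_{\mathbb{PR}^m}e^{itS_N(x,v)}\,d\nu(v)$ --- which is exactly the object whose weak $L^1$ convergence encodes the conditional DLT, by Lemma~\ref{lemma:tech2} --- and introduces the intermediate $Z_N(x)=\int_{\mathbb{PR}^m}e^{itS_N(x,C(T^{-1}x,1)v)}\,d\nu(v)$. Then $|Y_N(Tx)-Z_N(Tx)|$ is bounded by a \emph{double} integral $\iint|e^{itS_N(Tx,w)}-e^{itS_N(Tx,C(x,1)v)}|\,d\nu(w)\,d\nu(v)$, in which $w$ and $v$ are genuinely independent. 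For fixed $x$, the conditional law of the pair $(w,C(x,1)v)$ is $\nu\otimes C(x,1)_*\nu$, which is absolutely continuous with respect to $\nu\otimes\nu$, so now the $\V$-simple-dominated-splitting hypothesis really does apply, and dominated convergence gives $\mathbb{E}|Y_N\circ T - Z_N\circ T|\to 0$; the $\V$-sufficient-boundedness hypothesis similarly controls $\mathbb{E}|Z_N\circ T - Y_N|\to 0$. To repair your argument you would either have to strengthen the hypothesis (as indeed holds in the Oseledets-generic setting of Lemma~\ref{lemma:exp}, where the estimate is valid for \emph{all} pairs of future-Oseledets-generic vectors, not merely a.e.\ pair), or switch to the averaged $Y_N$ and replicate the double-integral decoupling.
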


Before proving Theorem \ref{theo:cocycle2_CCLT}, we first introduce some terminology. Let $(\Omega,\mathcal{B},\mathbb{P})$ be a probability space, $\mathcal{F} \subseteq \mathcal{B}$ be a sub-$\sigma$-algebra, and $(X_n)_{n \in \N}$ be a sequence of integrable random variables on $(\Omega,\mathcal{B},\mathbb{P})$. We say that $X_n$ converges weakly in $L^1$ relative to $\mathcal{F}$ to an integrable random variable $X$ on $(\Omega,\mathcal{B},\mathbb{P})$ if for every bounded random variable $Y$ on $(\Omega,\mathcal{B},\mathbb{P})$  that is $\mathcal{F}$-measurable the following holds,
\[
\lim_{n \to \infty} \mathbb{E}(X_n Y) = \mathbb{E}(XY).
\]
Equivalently, $X_n$ converges weakly in $L^1$ relative to $\mathcal{F}$ to $X$ if for every $A \in \mathcal{F}$,
\[
\lim_{n \to \infty} \mathbb{E}(X_n \chi_A) = \mathbb{E}(X\chi_A).
\]

To disregard regularity considerations and to reduce to a question about relative weak $L^1$ convergence, we prove the following general lemma; compare to Lemma \ref{lemma:tech}.

\begin{lemma}
    \label{lemma:tech2}
    Let $(\Omega,\mathcal{B},\mathbbm{P})$ be a probability space, $\mathcal{F} \subseteq \mathcal{B}$ be a sub-$\sigma$-algebra, and $(X_n)_{n \in \N}$ be a sequence of integrable random variables on $(\Omega,\mathcal{B},\mathbbm{P})$. Then $X_n$ converges conditionally in distribution relative to $\mathcal{F}$ to an integrable random variable $X$ in the sense that for every $A \in \mathcal{F}$ and every interval $(a,b) \subseteq \mathbb{R}$ such that $\mathbb{P}(X \in \{a,b\}) = 0$,
    \[
    \lim_{n \to \infty} \mathbb{P}(A \cap X_n \in (a,b)) =  \mathbb{P}(A) \cdot \mathbb{P}(X \in (a,b)), 
    \]
    if and only for every fixed $t \in \R$,
    \[
    e^{itX_n} \to \mathbb{E}(e^{itX}) \ \ \text{weakly in }L^1 \text{ relative to }\mathcal{F} \text{ as }n \to \infty.
    \]
\end{lemma}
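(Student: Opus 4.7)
The plan is to mimic essentially verbatim the proof of Lemma~\ref{lemma:tech}, tracking carefully that the only sets used to test convergence lie in $\mathcal{F}$. The key observation is that both sides of the putative equivalence are characterized by probing the sequence $(X_n)$ only against $\mathcal{F}$-measurable indicators, so restricting from $\mathcal{B}$ to $\mathcal{F}$ does not break the argument.

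First I would reduce the ``conditional convergence in distribution relative to $\mathcal{F}$'' hypothesis to a statement about characteristic functions. For each $A \in \mathcal{F}$ with $\mathbb{P}(A) > 0$, form the restricted probability space $(A, \mathcal{B}|_A, \mathbb{P}|_A)$ and consider the random variables $X_n|_A$; the stated condition is precisely that $(X_n|_A)_{n \in \mathbb{N}}$ converges in distribution to $X$ for every such $A$. Lévy's continuity theorem rephrases this as
\[
\mathbb{E}(e^{itX_n} \mid A) \longrightarrow \mathbb{E}(e^{itX}) \quad \text{as } n \to \infty, \text{ for every fixed } t \in \mathbb{R},
\]
or equivalently, multiplying by $\mathbb{P}(A)$,
\[
\mathbb{E}(\chi_A \, e^{itX_n}) \longrightarrow \mathbb{P}(A) \cdot \mathbb{E}(e^{itX}) = \mathbb{E}\bigl(\chi_A \cdot \mathbb{E}(e^{itX})\bigr).
\]

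Next I would recognize the right-hand condition as weak $L^1$ convergence relative to $\mathcal{F}$. By the definition recalled just before the lemma, $e^{itX_n} \to \mathbb{E}(e^{itX})$ weakly in $L^1$ relative to $\mathcal{F}$ means $\mathbb{E}(\chi_A e^{itX_n}) \to \mathbb{E}(\chi_A \mathbb{E}(e^{itX}))$ for every $A \in \mathcal{F}$, which is exactly what the previous display provides (the case $\mathbb{P}(A) = 0$ being trivial). Conversely, if this weak-$L^1$ convergence holds for each $t$, then reversing the chain of equalities recovers the convergence of conditional characteristic functions on every $A \in \mathcal{F}$ with positive measure, and a second application of Lévy's continuity theorem gives the conditional convergence in distribution.

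I do not foresee a substantial obstacle; the argument is a direct adaptation of Lemma~\ref{lemma:tech}, the only change being the restriction of the test sets to $\mathcal{F}$. The one point to be slightly careful about is the disposal of null sets $A \in \mathcal{F}$ and the passage between the ``bounded $\mathcal{F}$-measurable test function'' formulation and the ``$\chi_A$ for $A \in \mathcal{F}$'' formulation of relative weak $L^1$ convergence, both of which are standard density arguments using simple functions and the uniform boundedness of $e^{itX_n}$.
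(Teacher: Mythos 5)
Your proposal is correct and follows exactly the paper's own proof: restrict to $(A,\mathcal{B}|_A,\mathbb{P}|_A)$ for $A\in\mathcal{F}$ with $\mathbb{P}(A)>0$, apply L\'evy's continuity theorem to convert conditional convergence in distribution to convergence of conditional characteristic functions, and identify the resulting condition as weak $L^1$ convergence relative to $\mathcal{F}$. There is no substantive difference between your argument and the one given in the paper.
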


\begin{proof}
    For every $A \in \mathcal{F}$ with $\mathbb{P}(A) > 0$ consider the probability space $(A,\mathcal{B}|_A,\mathbb{P}|_A)$ and the integrable random variables $(X_n|_A)_{n \in \N}$ on it. Notice that $X_n$ converges conditionally in distribution relative to $\mathcal{F}$ to $X$ if and only if $(X_n|_A)_{n \in \N}$ converges in distribution to $X$ for every $A \in \mathcal{F}$ with $\mathbb{P}(A) > 0$. By L\'evy's continuity theorem, this condition is equivalent to the following convergence of characteristic functions for every $t \in \R$ :
    \[
    \mathbb{E}(e^{it X_n}|A) \to \mathbb{E}(e^{it X}) \ \ \text{as } n \to \infty.
    \]
    But this is exactly the condition that for every set $A \in \mathcal{F}$ with $\mathbb{P}(A) > 0$, 
    \[
    \mathbb{E}(\chi_A e^{itX_n}) \to \mathbb{E}(\chi_A\mathbb{E}(e^{itX})).
    \]
    Thus, we see that the original condition is equivalent to
    \[
    e^{itX_n} \to \mathbb{E}(e^{itX}) \ \ \text{weakly in }L^1 \text{ relative to } \mathcal{F} \text{ for every }t\in \R. \qedhere
    \]
\end{proof}

We are now ready to prove Theorem \ref{theo:cocycle2_CCLT}

\begin{proof}[Proof of Theorem \ref{theo:cocycle2_CCLT}]
    Denote by $\mathcal{B}(\mathbb{PR}^m)$ the Borel $\sigma$-algebra of $\mathbb{PR}^m$. Consider the normalized expansion rates $S_N$ as random variables on $(X \times \mathbb{PR}^m,\mathcal{B} \otimes \mathcal{B}(\mathbb{PR}^m),\mu \otimes \nu)$. Denote by $\mathcal{F} \subseteq \mathcal{B} \otimes \mathcal{B}(\mathbb{PR}^m)$ the sub-$\sigma$-algebra induced by the projection $X \times \mathbb{PR}^m \to X$. By Lemma \ref{lemma:tech2}, it is enough for our purposes to check that  for every fixed $t \in \R$,
    \[
    e^{itS_N} \to \mathbb{E}(e^{itS}) \ \ \text{weakly in }L^1 \text{ relative to } \mathcal{F} \text{ as } N \to \infty.
    \]
    
    For the rest of this discussion we fix $t \in \R$ and prove this statement. Notice that this statement is equivalent to showing that the random variables $Y_N$ on $(X,\mathcal{B},\Omega)$ given by
    \[
    Y_N(x) := \int_{\mathbb{PR}^m} e^{itS_N(x,v)} \thinspace d\nu(v), \ \ N \in \mathbb{N},
    \]
    converge weakly in $L^1$ to $\mathbb{E}(e^{itS})$ as $N \to \infty$. By the Dunford-Pettis theorem, a sequence of integrable random variables is sequentially $L^1$ weakly compact if and only if it is uniformly integrable. In particular, as the random variables $Y_N$ are uniformly bounded, it is enough to show that the only weak $L^1$ limit point of $(Y_N)_{N \in \N}$ is the constant random variable $\mathbb{E}(e^{itS})$.
    
    Let $Y$ be a weak $L^1$ limit point of this sequence along times $\{N_k\}_{k \in \N}$. We aim to show that $Y = Y \circ T$ almost surely. To prove this we first verify the claim:
    \begin{equation}
    \label{eq:AAA}
    \mathbb{E}(|Y_N \circ T - Y_N|) \to 0 \ \ \text{as }N \to \infty.
    \end{equation}
    Indeed, consider the sequence of random variables $(Z_N)_{N \in \mathbb{N}}$ on $(X,\mathcal{B},\Omega)$ given by
    \[
    Z_N(x) := \int_{\mathbb{PR}^m} e^{itS_N(x,C(T^{-1}x,1)v)} d\nu(v).
    \]
    Then, for every $N \in \mathbb{N}$ we can write
    \begin{gather*}
    \mathbb{E}(|Y_N \circ T - Y_N|) \leq \mathbb{E}(|Y_N \circ T - Z_N \circ T|) + \mathbb{E}(|Z_N \circ T - Y_N|).
    \end{gather*}
    The first term converges to zero as $N \to \infty$ because of the assumption that $C$ has $\V$-simple-dominated-splitting and the dominated convergence theorem. The second term converges to zero as $N \to \infty$ because of the assumption that $C$ is $\V$-sufficiently-bounded and the dominated convergence theorem. This completes the proof of the claim.
    
    The claim together with the Hölder inequality implies that
    \[
    Y_{N_k} \circ T \to Y \ \ \text{weakly in }L^1 \text{ as }k \to \infty.
    \]
    At the same time, the $T$-invariance of $\mu$ implies that
    \[
    Y_{N_k} \circ T \to Y \circ T \ \ \text{weakly in }L^1 \text{ as } k \to \infty.
    \]
    As weak $L^1$ limits are unique almost surely, this shows that $Y  = Y \circ T$ almost surely.
    
    To conclude we notice that, as $T$ acts ergodically, $Y$ must be constant almost everywhere. Pairing $Y_{N_k}$ against the constant function $1$ and using the assumed DLT  reveals that $Y$ must be $\mathbb{E}(e^{itS})$. This finishes the proof.
\end{proof}

The proof of Theorem \ref{theo:cocycle_MCLT} will use Theorem \ref{theo:cocycle2_CCLT} in the following form.

\begin{corollary}
\label{cor:tech_cocycle}
Let $(X,\mathcal{B},\mu)$ be a probability space, let $T \colon X \to X$ be an invertible, measure-preserving, ergodic  transformation, and let $C \colon X \times \mathbb{Z} \to \mathrm{GL}(m,\mathbb{R})$ be a measurable cocycle over $T$. Assume that $C$ satisfies a DLT on $(X \times \mathbb{PR}^m, \mu \otimes \nu)$ with averaging sequence $\A := (A_N)_{N \in \mathbb{N}}$, normalizing sequence $\V=(V_N)$, and limiting distribution $S$ in the sense that the random variables
\[
S_N(x,v) := \frac{\sigma(x,v,N) -A_N}{V_N}
\ \ \text{ on }  (X\times \mathbb{PR}^m, \mu\otimes \nu)
\]
converge in distribution to $S$ as $N \to \infty$, i.e., for every interval $(a,b) \subseteq \R$ such that $\mathbb{P}(S \in \{a,b\}) = 0$, the following holds,
\[
\lim_{N \to \infty} (\mu \otimes \nu)( \{(x,v) \in X \times \mathbb{PR}^m \ | \ S_N(x) \in (a,b)\}) = \mathbb{P}\left(S \in (a,b) \right).
\]
Assume the cocycle $C$ is $\V$-sufficiently-bounded and has $\V$-simple-dominated-splitting. Then, the same DLT holds in the following stronger sense: for every $B \in \mathcal{B}$ and every interval $(a,b) \subseteq \R$ such that $\mathbb{P}(S \in \{a,b\}) = 0$,
\begin{gather*}
  \lim_{N \to \infty} (\mu \otimes \nu)( \{(x,v) \in X \times \mathbb{PR}^m \ | \ S_N(x) \in (a,b), \ T^N x \in B\}) \\
  = \mathbb{P}\left(S \in (a,b) \right) \cdot \mu(B).
\end{gather*}
Furthermore, if $X$ is a metric space and $\mathcal{B}$ is its Borel $\sigma$-algebra, then for every bounded Lipschitz function $\phi \colon X \to \R$ and every $t \in \R$,
\begin{gather*}
  \lim_{N \to \infty}  \int_{X} \int_{\mathbb{PR}^m} e^{itS_N(x,v)} \cdot \phi(T^N x)  \thinspace d\nu(v) \thinspace d\mu(x) 
  = \mathbb{E}(e^{itS}) \cdot \mu(\phi).
\end{gather*}
\end{corollary}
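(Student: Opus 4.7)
The plan is to mirror the proof of Corollary \ref{cor:tech} for Birkhoff sums, working with the auxiliary functions $Y_N(x) := \int_{\mathbb{PR}^m} e^{itS_N(x,v)}\,d\nu(v)$ introduced in the proof of Theorem \ref{theo:cocycle2_CCLT}. The crucial input I will recycle is the $L^1$-estimate $\mathbb{E}(|Y_N \circ T - Y_N|) \to 0$ established there, which encodes the $\V$-sufficiently-bounded and $\V$-simple-dominated-splitting hypotheses on $C$. By L\'evy's continuity theorem, the first conclusion reduces to showing that for every $t \in \R$ and every $B \in \mathcal{B}$ with $\mu(B)>0$,
\[
  \int_{X\times \mathbb{PR}^m} e^{itS_N(x,v)}\, \chi_B(T^Nx)\, d(\mu\otimes\nu)(x,v) \longrightarrow \mathbb{E}(e^{itS}) \cdot \mu(B).
\]
The left-hand side equals $\int_X Y_N(x)\, \chi_B(T^Nx)\, d\mu(x)$, which by $T$-invariance of $\mu$ rewrites as $\int_X \tilde Y_N \cdot \chi_B\, d\mu$, where $\tilde Y_N := Y_N \circ T^{-N}$.

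Since $|\tilde Y_N| \leq 1$, the Dunford-Pettis theorem guarantees that $(\tilde Y_N)$ has weak $L^1$ accumulation points. I would then show that any such accumulation point $\tilde Y$ satisfies $\tilde Y = \tilde Y \circ T^{-1}$ almost surely. The key computation, obtained by applying $T$-invariance of $\mu$ twice, is
\[
 \|\tilde Y_N \circ T^{-1} - \tilde Y_N\|_{L^1(\mu)} = \|Y_N \circ T^{-(N+1)} - Y_N \circ T^{-N}\|_{L^1(\mu)} = \|Y_N - Y_N \circ T\|_{L^1(\mu)},
\]
and the right-hand side tends to $0$ by the recalled estimate. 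Combining this with H\"older's inequality and the $T$-invariance of $\mu$, exactly as in the proof of Theorem \ref{theo:cocycle2_CCLT}, yields both $\tilde Y_{N_k} \circ T^{-1} \to \tilde Y$ and $\tilde Y_{N_k} \circ T^{-1} \to \tilde Y \circ T^{-1}$ weakly in $L^1$, so uniqueness of weak limits forces $\tilde Y = \tilde Y \circ T^{-1}$ almost surely.

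Ergodicity of $T^{-1}$, equivalent to that of $T$, then forces $\tilde Y$ to be constant. Pairing $\tilde Y_N$ against $1$ and applying $T$-invariance of $\mu$ once more yields $\int \tilde Y_N\, d\mu = \int Y_N\, d\mu = \mathbb{E}_{\mu\otimes\nu}(e^{itS_N}) \to \mathbb{E}(e^{itS})$ by the portmanteau theorem applied to the assumed DLT. Consequently, every accumulation point equals $\mathbb{E}(e^{itS})$ and the uniformly bounded sequence $(\tilde Y_N)$ converges to $\mathbb{E}(e^{itS})$ weakly in $L^1$. Testing against $\chi_B$ yields the first conclusion of the corollary, and testing against a bounded Lipschitz function $\phi$ yields the second conclusion directly.

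The main subtlety is conceptual rather than computational: the whole argument rests on converting the asymptotic $T$-invariance of $Y_N$ in $L^1$, which is the nontrivial content of Theorem \ref{theo:cocycle2_CCLT}, into asymptotic $T^{-1}$-invariance of the shifted functions $\tilde Y_N = Y_N \circ T^{-N}$ via $T$-invariance of $\mu$. This bypasses the natural-looking but far more cumbersome alternative of defining an inverse cocycle $\tilde C(y,n) := C(T^{-n}y,n)^{-1}$ over $T^{-1}$ and verifying that it inherits a DLT, sufficient-boundedness, and simple-dominated-splitting --- a route complicated by the fact that the induced inverse action on $\mathbb{PR}^m$ does not preserve $\nu$.
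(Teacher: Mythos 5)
Your proposal is correct and follows essentially the same route as the paper: your $\tilde Y_N = Y_N\circ T^{-N}$ is identical to the paper's auxiliary functions $Y_N'$, and your passage to $T^{-1}$-invariance of weak $L^1$ limit points is equivalent (for a measurable function under an invertible $T$) to the paper's $T$-invariance step. Both arguments reduce, via $T$-invariance of $\mu$, to the $L^1$-estimate $\mathbb{E}(|Y_N\circ T - Y_N|)\to 0$ from the proof of Theorem \ref{theo:cocycle2_CCLT}, then invoke Dunford--Pettis, ergodicity, and pairing against $1$ to pin down the limit.
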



\begin{proof}
    Without loss of generality consider $B \in \mathcal{B}$ with $\mu(B) > 0$. The $T$-invariance of $\mu$ ensures that, for every $N \in \N$,
    \[
    \mu(T^{-N} B) = \mu(B) > 0.
    \]
    For every $N \in \N$ consider the probability space $(T^{-N} B,\mathcal{B}|_{T^{-N}B},\mu|_{T^{-N} B} )$ and the random variable $S_N|_{T^{-N} B}$ on it. Our goal is to show that $(S_N|_{T^{-N}B})_{N \in \N}$ converges in distribution to $S$. By L\'evy's continuity theorem, this condition is equivalent to the following convergence of characteristic functions for every $t \in \R$:
    \[
    \mathbb{E}(e^{itS_N} | T^{-N}B) \to \mathbb{E}(e^{itS}) \ \ \text{as }N \to \infty.
    \]
    For the rest of this discussion we fix $t \in \R$ and show that
    \[
    \lim_{N \to \infty} \int_X  \int_{\mathbb{PR}^m} e^{itS_N(x,v)} \cdot \chi_B(T^Nx) \thinspace d\nu(v) \thinspace d\mu(x) = \mathbb{E}(e^{itS}) \cdot \mu(B).
    \]

    Notice that, as $T$ is measure-preserving, 
    \begin{gather*}
        \int_X  \int_{\mathbb{PR}^m} e^{itS_N(x,v)} \cdot \chi_B(T^Nx) \thinspace d\nu(v) \thinspace d\mu(x)\\
       = \int_X  \int_{\mathbb{PR}^m} e^{itS_N(T^{-N}x,v)} \cdot \chi_B(x) \thinspace d\nu(v) \thinspace d\mu(x).
    \end{gather*}
    For every $N \in \mathbb{N}$ consider the random variable $Y_N'$ on $(X,\mathcal{B},\Omega)$ given by
    \[
    Y_N'(x) := \int_{\mathbb{PR}^m} e^{itS_N(T^{-N}x,v)} \thinspace d\nu(v).
    \]
    Our goal is to show that the variables $Y_N'$ converge weakly in $L^1$ to $\mathbb{E}(e^{itS})$ as $N \to \infty$. By the Dunford-Pettis theorem, a sequence of integrable random variables is sequentially $L^1$ weakly compact if and only if it is uniformly integrable. In particular, as the random variables $Y_N'$ are uniformly bounded, it is enough to show that the only weak $L^1$ limit point of $(Y_N')_{N \in \N}$ is the constant random variable $\mathbb{E}(e^{itS})$.
    
    Let $Y'$ be a weak $L^1$ limit point of this sequence along times $\{N_k\}_{k \in \N}$. We aim to show that $Y' = Y' \circ T$ almost surely. To prove this we first verify the claim:
    \[
    \mathbb{E}(|Y_N' \circ T - Y_N'|) \to 0 \ \ \text{as }N \to \infty.
    \]
    Notice that, as $T$ is measure-preserving, for every $N \in \mathbb{N}$,
    \[
    \mathbb{E}(|Y_N' \circ T - Y_N'|) = \mathbb{E}(|Y_N'\circ T^N \circ T - Y_N' \circ T^N|).
    \]
    Directly from the definitions one can check that $Y_N' \circ T^N  = Y_N$ for every $N \in \mathbb{N}$, with $Y_N$ as defined in the proof of Theorem \ref{theo:cocycle2_CCLT}. The proof of the claim follows from \eqref{eq:AAA}.
    
    The claim together with the Hölder inequality implies that
    \[
    Y_{N_k}' \circ T \to Y' \ \ \text{weakly in }L^1 \text{ as }k \to \infty.
    \]
   At the same time, the $T$-invariance of $\mu$ implies
    \[
    Y_{N_k}' \circ T \to Y' \circ T \ \ \text{weakly in }L^1 \text{ as } k \to \infty.
    \]
    As weak $L^1$ limits are unique almost surely, this shows that $Y'  = Y' \circ T$ almost surely.
    
    To conclude notice that, as $T$ acts ergodically, $Y'$ must be constant almost everywhere. Pairing $Y_{N_k}'$ against the constant function $1$, using the assumed DLT and the $T$-invariance of $\mu$ reveals that $Y'$ must be $\mathbb{E}(e^{itS})$. This finishes the proof.
 \end{proof}

\subsection*{Proof of the main result.} We are now ready to prove Theorem \ref{theo:cocycle_MCLT}.

\begin{proof}[Proof of Theorem \ref{theo:cocycle_MCLT}]
    Without loss of generality consider a pair of Borel measurable sets $A,B \subseteq X$ with $\mu(A),\mu(B) > 0$. Theorem \ref{theo:mix} ensures $T$ is mixing. In particular,
    \[
    \lim_{N \to \infty} \mu(A \cap T^{-N} B)  = \mu(A) \cdot \mu(B) > 0.
    \]
    For every $N \in \N$ sufficiently large consider the probability space $$(A \cap T^{-N} B,\mathcal{B}|_{A \cap T^{-N}B},\mu|_{A \cap T^{-N} B} )$$ and the random variable $S_N|_{A \cap T^{-N} B}$ on it. Our goal is to show that $(S_N|_{A \cap T^{-N}B})_{N \in \N}$ converges in distribution to $S$. By L\'evy's continuity theorem, this condition is equivalent to the following convergence of characteristic functions for every $t \in \R$ :
    \[
    \mathbb{E}(e^{itS_N} | A \cap T^{-N}B) \to \mathbb{E}(e^{itS}) \ \ \text{as }N \to \infty.
    \]
    As $T$ is mixing, this is equivalent to showing that, for every $t \in \R$,
    \[
    \lim_{N \to \infty} \int_X \int_{\mathbb{PR}^m} \chi_A(x) \cdot e^{itS_N(x,v)} \cdot \chi_B(T^Nx) \thinspace d\nu(v) \thinspace d\mu(x) = \mu(A) \cdot \mathbb{E}(e^{itS}) \cdot \mu(B).
    \]
    A standard approximation argument shows that we can replace $\chi_B$ in this statement by a bounded Lipschitz function $\phi \colon X \to \R$. We do so for the rest of this discussion. Now consider for fixed $t \in \R$ the random variables $F_N$ on $(X,\mathcal{B}(X),\mu)$ given by
    \[
    F_N(x) := \left( \int_{\mathbb{PR}^m} e^{itS_N(x,v)} \thinspace d\nu(v) \right) \cdot \phi(T^N x), \ \ N \in \N.
    \]
    Our goal is to show that 
    \[
    F_N \to \mathbb{E}(e^{itS}) \cdot \mu(\phi) \ \ \text{weakly in }L^1 \text{ as } N \to \infty.
    \]
    
    By the Dunford-Pettis theorem, a sequence of integrable random variables is sequentially $L^1$ weakly compact if and only if it is uniformly integrable. In particular, as the random variables $F_N$ are uniformly bounded, it is enough to show that the only weak $L^1$ limit point of $(F_N)_{N \in \N}$ is the constant random variable $\mathbb{E}(e^{itS}) \cdot \mu(\phi)$.
    
    Let $F$ be a weak $L^1$ limit point of this sequence along times $\{N_k\}_{k \in \N}$. We aim to show that $F = F \circ U$ almost surely. To prove this we first verify the claim:
    \begin{equation*}
    \mathbb{E}(|F_N \circ U - F_N|) \to 0 \ \ \text{as }N \to \infty.
    \end{equation*}
    Indeed, consider the sequence of random variables $(Z_N)_{N \in \mathbb{N}}$ on $(X,\mathcal{B}(X),\mu)$ given by
    \[
    Z_N(x) := \left(\int_{\mathbb{PR}^m} e^{itS_N(x,D(U^{-1}x,1)v)} d\nu(v)\right) \cdot \phi(T^Nx) .
    \]
    Then, for every $N \in \mathbb{N}$ we can write
    \begin{gather*}
    \mathbb{E}(|F_N \circ U - F_N|) \leq \mathbb{E}(|F_N \circ U - Z_N \circ U|) + \mathbb{E}(|Z_N \circ U - F_N|).
    \end{gather*}
    The first term converges to zero as $N \to \infty$ because of the assumption that $C$ has $\V$-simple-dominated-splitting, the fact that $\phi$ is bounded, and the dominated convergence theorem. The second term converges to zero as $N \to \infty$ because of the assumption that $(T,U)$ is contracting, the assumption that $(C,D)$ is $(T,U, \V)$-adapted, the fact that $\phi$ is bounded Lipschitz, and the dominated convergence theorem. The claim is thus proved.
    
    The claim together with the Hölder inequality implies that
    \[
    F_{N_k} \circ U \to F \ \ \text{weakly in }L^1 \text{ as }k \to \infty.
    \]
    At the same time, the $U$-invariance of $\mu$ implies
    \[
    F_{N_k} \circ U \to F \circ U \ \ \text{weakly in }L^1 \text{ as } k \to \infty.
    \]
    As weak $L^1$ limits are unique almost surely, this shows that $F  = F \circ U$ almost surely.
    
    To conclude we notice that, as $U$ acts ergodically, $F$ must be constant almost everywhere. Pairing $F_{N_k}$ against the constant function $1$ and using Corollary \ref{cor:tech_cocycle} reveals that $F$ must be $\mathbb{E}(e^{itS}) \cdot \mu(\phi)$. This finishes the proof.
\end{proof}



\subsection*{The operator norm.} We now discuss how to deduce (mixing) distributional limit theorems for the operator norm of cocyles over dynamical systems from the (mixing) distributional limit theorems studied above. To this end we first introduce a strengthened version of the 
property of simple dominated splitting. 

Let $(X,\mathcal{B},\mu)$ a probability space, let $T \colon X \to X$ an invertible, measure-preserving transformation, and let $ C \colon X \times \mathbb{Z} \to \mathrm{GL}(\mathbb{R},m)$ a measurable cocycle over $T$. Consider a sequence of positive real numbers $\V := (V_N)_{N \in \mathbb{N}}$ with $V_N \to \infty$ as $N \to \infty$. We say the cocycle $C$ has $\V$-strong-simple-dominated-splitting  if for $\mu$-almost-every $x \in X$, $\nu$-almost-every $v \in \mathbb{PR}^m$, and every $N \in \mathbb{N}$, the following estimate holds: 
    \[
    |\sigma(x,v,N) - \sigma(x,N)| = o_{x,v}( V_N).
    \]

In applications, the main tool we will use is the following strengthening of Lemma \ref{lemma:exp}.

\begin{lemma}
    \label{lemma:exp2}
    Let $(X,\mathcal{B},\mu)$ be a probability space supporting an invertible, measure-preserving, ergodic transformation $T \colon X \to X$. Suppose $ C \colon X \times \mathbb{Z} \to \mathrm{GL}(\mathbb{R},m)$ is a measurable cocycle over $T$. Assume $C$ is log-integrable with simple top Lyapunov exponent $\lambda \in \mathbb{R}$. Then, for every $x \in X$,every $v \in \mathbb{PR}^m$ such that the pair $(x,v) \in X \times \mathbb{R}^m$ is future-Oseledets-generic, and every $N \in \mathbb{N}$, the following estimate holds,
    \[
    |\sigma(x,v,N) - \sigma(x,N)| = O_{x,v}(1).
    \]
    In particular, $C$ has a strong dominated splitting with respect to any diverging sequence.
\end{lemma}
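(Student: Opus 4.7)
The plan is to reduce Lemma \ref{lemma:exp2} to Lemma \ref{lemma:exp}. Fix $x$ in the full-measure set on which the Oseledets theorem holds and let $u$ be a unit vector spanning the (simple) top Oseledets subspace $E^\lambda_x$. Since $\lambda$ is simple, the pair $(x,u)$ is automatically future-Oseledets-generic, so Lemma \ref{lemma:exp} yields $|\sigma(x,v,N) - \sigma(x,u,N)| = O_{x,v}(1)$ for every future-Oseledets-generic $(x,v)$. By the triangle inequality, the lemma will follow once we establish the comparison $|\sigma(x,u,N) - \sigma(x,N)| = O_x(1)$ between the top Oseledets exponent and the operator-norm exponent.

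The inequality $\sigma(x,u,N) \leq \sigma(x,N)$ is immediate from the definition of $\sigma(x,N)$. For the reverse, I would pick for each $N$ a unit vector $w_N$ realizing the operator norm of $C(x,N)$ and split it with respect to the Oseledets decomposition $\mathbb{R}^m = E^\lambda_x \oplus F_x$, where $F_x$ is the direct sum of all Oseledets subspaces with Lyapunov exponents strictly smaller than $\lambda$. Writing $w_N = a_N u + w_N'$ with $w_N' \in F_x$, the angle between $E^\lambda_x$ and $F_x$ is a strictly positive quantity depending only on $x$, so $|a_N|$ and $\|w_N'\|$ are both bounded by some constant $K_x$. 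The triangle inequality then gives
\[
\|C(x,N) w_N\| \leq K_x \|C(x,N) u\| + K_x \bigl\|C(x,N)|_{F_x}\bigr\|_{\mathrm{op}}.
\]

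Since $\lambda$ is simple, the top Lyapunov exponent of $C|_{F_x}$ is a number $\lambda' < \lambda$, and the Oseledets theorem furnishes, for any fixed small $\epsilon > 0$, $x$-dependent constants with $\bigl\|C(x,N)|_{F_x}\bigr\|_{\mathrm{op}} \leq M_x e^{(\lambda'+\epsilon)N}$ and $\|C(x,N) u\| \geq c_x e^{(\lambda-\epsilon)N}$ for every $N$. Choosing $\epsilon$ so small that $\lambda'+\epsilon < \lambda - \epsilon$, the ratio of the $F_x$-term to the $u$-term in the displayed inequality decays exponentially, hence is eventually bounded by $1$, yielding $\|C(x,N)w_N\| \leq (K_x+1)\|C(x,N) u\|$ for $N$ large, and thereby $\sigma(x,N) - \sigma(x,u,N) \leq O_x(1)$. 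The main subtlety is keeping the Oseledets constants $x$-dependent but $N$-uniform; this is handled exactly as in the proof of Lemma \ref{lemma:exp}. The ``strong dominated splitting with respect to any diverging sequence'' conclusion then follows because future-Oseledets-generic pairs form a full $\mu \otimes \nu$-measure subset, and a quantity bounded by $O_{x,v}(1)$ is trivially $o_{x,v}(V_N)$ for any diverging $\V$.
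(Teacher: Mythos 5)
Your proposal is correct and essentially follows the paper's argument: both proofs write an arbitrary (Oseledets-generic) unit vector relative to a generator $u$ of the top Oseledets subspace, use the dominance of the top exponent to show $\sigma(x,v,N)$ and $\sigma(x,N)$ each differ from $\log\|C(x,N)u\|$ by an $x$-dependent $O(1)$ quantity, and conclude. The only difference is organizational: you factor the first comparison through an explicit appeal to Lemma~\ref{lemma:exp}, while the paper redoes the decomposition in one pass; the operator-norm step (bounding the coefficient $a_N$ of the $u$-component of a norm-maximizing vector by a constant depending only on the angle gap) is the same in both.
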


\begin{proof}
    Let $X' \subseteq X$ be the full measure subset on which the Oseledets ergodic theorem holds. Fix $x \in X'$ and let $u \in \mathbb{R}^m$ be the unit norm vector generating the corresponding top Oseledets subspace. Let $0 <\lambda''< \lambda' < \lambda$ be constants larger than all but the top Lyapunov exponent of $C$. In particular, as $(x,u) \in X \times \mathbb{R}^m$ is future-Oseledets-generic, the following estimate holds for every $N \in \mathbb{N}$ ,
    \[
    \|C(x,N) u\| = \Omega_x(e^{\lambda' N}).
    \]
    
    Now let $(x,v) \in X \times \mathbb{R}^m$  be arbitrary. Without loss of generality we can assume $v \in \mathbb{R}^m$ has unit norm. In particular, for every $N \in \mathbb{N}$,
    \[
    \sigma(x,u,N) = \log \|C(x,N) u\|, \quad \sigma(x,v,N) = \log \|C(x,N) v\|.
    \]
    
    Using the notation above we can write
    \[
    v = au + v', 
    \]
    with $a \in \R$ and $v' \in \mathbb{R}^m$ belonging to lower Oseledets subspaces. Since
    $v$  has unit norm, 
    $$
    a =O_x (1)\,.
    $$
     As the top Lyapunov exponent $\lambda \in \mathbb{R}$ is simple, for every $N \in \mathbb{N}$,
      \[
    \|C(x,N) v\| = |a| \cdot \|C(x,N) u\| + O_x(e^{\lambda'' N}).
     \]
    In particular, there exists $N_{x,v} \in \N$ 
    and $a_x \in \R$ such that for every $N\geq N_{x,v}$, 
    \begin{align*}
    \sigma(x,v,N) &= \log |a| + \log \|C(x,N) u\| + \log(1 + o_x(1)),\\
    \sigma(x,N) &= \log |a_x| + \log \|C(x,N) u\| + \log(1 + o_x(1)) .
    \end{align*}
    The desired estimate follows.
\end{proof}

We now discuss distributional limit theorems with respect to the operator norm.

\begin{theorem}
\label{theo:cocycle_CCLT_norm}
Let $(X,\mathcal{B},\mu)$ be a probability space, let $T \colon X \to X$ be an invertible, measure-preserving, ergodic transformation, and let
$ C \colon X \times \mathbb{Z} \to \mathrm{GL}(\mathbb{R},m)$ be a measurable cocycle over $T$. Assume that $C$ satisfies a DLT on $(X \times \mathbb{PR}^m, \mu \otimes \nu)$ with averaging sequence $\A := (A_N)_{N \in \mathbb{N}}$, normalizing sequence $\V :=(V_N)_{N \in \mathbb{N}}$, and limiting distribution $S$ in the sense that the random variables
\[
S_N(x,v) := \frac{\sigma(x,v,N) - A_N}{V_N} \ \text{ on }  (X\times \mathbb{P}\R^m, \mu \otimes \nu)
\]
converge in distribution to  $S$ as $N \to \infty$, i.e., for every interval $(a,b) \subseteq \R$ such that $\mathbb{P}(S \in \{a,b\}) = 0$, the following holds,
\[
\lim_{N \to \infty} (\mu \otimes \nu)(S_N(x,v) \in (a,b)) = \mathbb{P}\left(S \in (a,b) \right).
\]
Assume $C$ is has $\V$-strong-simple-dominated-splitting. Then, $C$ satisfies the above DLT with respect to the operator norm in the sense that the random variables
\[
S_N(x) := \frac{\sigma(x,N)-A_N}{V_N} \  \text{ on }  (X, \mu)
\]
converge in distribution to $S$ as $N \to \infty$, i.e., for every interval $(a,b) \subseteq \R$ such that $\mathbb{P}(S \in \{a,b\}) = 0$, the following holds,
\begin{gather*}
  \lim_{N \to \infty} \mu(S_N(x) \in (a,b))
  = \mathbb{P}\left(S \in (a,b) \right).
\end{gather*}
\end{theorem}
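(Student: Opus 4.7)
My plan is to deduce the operator-norm DLT from the given projective DLT by showing that the two normalized quantities differ by an $o(1)$ term almost surely, so that the limiting distribution transfers by Slutsky's theorem. The key observation is that the $\V$-strong-simple-dominated-splitting assumption provides precisely the pointwise control needed to compare $\sigma(x,v,N)$ with $\sigma(x,N)$ after dividing by $V_N$.

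More precisely, set
\[
\tilde{S}_N(x) := \frac{\sigma(x,N) - A_N}{V_N}, \qquad x \in X.
\]
By the definition of $\V$-strong-simple-dominated-splitting, for $\mu$-almost every $x \in X$ and $\nu$-almost every $v \in \mathbb{PR}^m$,
\[
|S_N(x,v) - \tilde{S}_N(x)| = \frac{|\sigma(x,v,N) - \sigma(x,N)|}{V_N} = o_{x,v}(1) \ \text{ as } N \to \infty.
\]
Viewing $\tilde{S}_N$ as a random variable on the product space $(X \times \mathbb{PR}^m, \mu \otimes \nu)$ that depends only on the first coordinate, this pointwise statement gives $S_N - \tilde{S}_N \to 0$ almost surely, hence in probability, on $(X \times \mathbb{PR}^m, \mu \otimes \nu)$.

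Since by hypothesis $S_N \to S$ in distribution on $(X \times \mathbb{PR}^m, \mu \otimes \nu)$, Slutsky's theorem applied to the identity $\tilde{S}_N = S_N - (S_N - \tilde{S}_N)$ yields that $\tilde{S}_N \to S$ in distribution on $(X \times \mathbb{PR}^m, \mu \otimes \nu)$. Because $\tilde{S}_N(x)$ does not depend on $v$, its pushforward distribution under $\mu \otimes \nu$ equals its pushforward distribution under $\mu$, so the convergence $\tilde{S}_N \to S$ in distribution holds on $(X, \mu)$ as well. This gives exactly the desired conclusion for every interval $(a,b)$ with $\mathbb{P}(S \in \{a,b\}) = 0$.

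There is no real obstacle here beyond verifying the Slutsky step carefully; the content of the argument is entirely encoded in the pointwise estimate provided by the strong-simple-dominated-splitting hypothesis, which is why this result is stated as a relatively short consequence of the earlier machinery rather than requiring the full ergodicity-and-contraction framework used for the mixing theorems.
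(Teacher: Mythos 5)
Your proof is correct and takes essentially the same approach as the paper: both rest on the observation that the $\V$-strong-simple-dominated-splitting hypothesis makes $|S_N(x,v) - S_N(x)|/1 = |\sigma(x,v,N)-\sigma(x,N)|/V_N \to 0$ almost surely on the product space. The paper packages the transfer of the limit via characteristic functions and the dominated convergence theorem, whereas you invoke Slutsky's theorem; these are equivalent routes to the same conclusion, and your final reduction from the product space to $(X,\mu)$ (because $\tilde{S}_N$ does not depend on $v$) is correctly observed and matches the paper's implicit use of the same fact.
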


\begin{proof}
    By L\'evy's continuity theorem, the first DLT is equivalent to the following convergence of characteristic functions for every $t \in \R$:
    \[
    \int_X \int_{\mathbb{PR}^m} e^{itS_N(x,v)} \thinspace d\nu(v) \thinspace d\mu(x) \to \mathbb{E}(e^{itS}) \ \ \text{as }N \to \infty.
    \]
    Analogously, the second DLT is equivalent to the following convergence of characteristic functions for every $t \in \R$:
    \[
    \int_X e^{itS_N(x)} \thinspace d\mu(x) \to \mathbb{E}(e^{itS}) \ \ \text{as }N \to \infty.
    \]
    It is enough then to show that 
    \[
     \left|\int_X \int_{\mathbb{PR}^m} e^{itS_N(x,v)} \thinspace d\nu(v) \thinspace d\mu(x) - \int_X e^{itS_N(x)} \thinspace d\mu(x)\right| \to 0 \ \ \text{as } N \to \infty.
    \]
    This follows from the assumption that $C$ has a $\V$-strong-simple-dominated-splitting
    and the dominated convergence theorem.
\end{proof}

We end this discussion with the following result whose proof is analogous to that of Theorem \ref{theo:cocycle_CCLT_norm}; details are left to the reader.

\begin{theorem}
\label{theo:cocycle_CCLT_norm2}
Let $(X,\mathcal{B},\mu)$ be a probability space, let $T \colon X \to X$ be an invertible, measure-preserving, ergodic transformation, and let
$ C \colon X \times \mathbb{Z} \to \mathrm{GL}(\mathbb{R},m)$ be a measurable cocycle over $T$. Assume that $C$ satisfies a mixing DLT on $(X \times \mathbb{PR}^m, \mu \otimes \nu)$ with averaging sequence $\A := (A_N)_{N \in \mathbb{N}}$, normalizing sequence $\V :=(V_N)_{N \in \mathbb{N}}$, and limiting distribution $S$ in the sense that for the random variables
\[
S_N(x,v) := \frac{\sigma(x,v,N) - A_N}{V_N} \
\text{ on } (X\times \mathbb{P}\R^m, \mu \otimes \nu)\,,
\]
for every pair of measurable subsets $A,B \in \mathcal{B}$, and for every interval $(a,b) \subseteq \R$ such that $\mathbb{P}(S \in \{a,b\}) = 0$,
the following holds,
\begin{gather*}
  \lim_{N \to \infty}(\mu \otimes \nu) (\{ (x,v) \in X \times \mathbb{PR}^m \ | \ x \in A, \ S_N(x,v) \in (a,b), \ T^N x \in B\})\\
  = \mu(A) \cdot \mathbb{P}\left(S \in (a,b) \right) \cdot \mu(B).
\end{gather*}
Assume that $C$ has $\V$-strong-simple-dominated-splitting.
Then, $C$ satisfies the above mixing DLT  with respect to the operator norm in the sense that 
for the random variables
\[
S_N(x) := \frac{\sigma(x,N)-A_N}{V_N} \ \text{ on } (X, \mu),
\]
for every pair of measurable subsets $A,B \in \mathcal{B}$, and for every interval $(a,b) \subseteq \R$ such that $\mathbb{P}(S \in \{a,b\}) = 0$,
the following holds,
\begin{gather*}
  \lim_{N \to \infty} \mu(\{ x \in X  \ | \ x \in A, \ S_N(x) \in (a,b), \ T^N x \in B\})
  = \mu(A) \cdot \mathbb{P}\left(S \in (a,b) \right) \cdot \mu(B).
\end{gather*}
\end{theorem}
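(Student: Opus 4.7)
The plan is to mirror the approach of Theorem \ref{theo:cocycle_CCLT_norm}, simply carrying along the indicator functions $\chi_A$ and $\chi_B \circ T^N$ throughout the argument. First I would translate both the hypothesized mixing DLT for $S_N(x,v)$ and the target mixing DLT for $S_N(x)$ into statements about characteristic functions. Since $\mu(A \cap T^{-N} B) \to \mu(A)\mu(B)$ (which follows from the hypothesized mixing DLT applied with a full-measure interval, or equivalently from Theorem \ref{theo:mix} applied within the proof of Theorem \ref{theo:cocycle_MCLT}), L\'evy's continuity theorem applied to the normalized conditional measures on $A \cap T^{-N}B$ shows that the hypothesized mixing DLT is equivalent to the assertion that, for every $t \in \R$,
\[
\lim_{N \to \infty}\int_X \int_{\mathbb{PR}^m} \chi_A(x) \cdot e^{itS_N(x,v)} \cdot \chi_B(T^N x) \thinspace d\nu(v) \thinspace d\mu(x) = \mu(A) \cdot \mathbb{E}(e^{itS}) \cdot \mu(B),
\]
and the target mixing DLT is equivalent to the assertion that, for every $t \in \R$,
\[
\lim_{N \to \infty}\int_X \chi_A(x) \cdot e^{itS_N(x)} \cdot \chi_B(T^N x) \thinspace d\mu(x) = \mu(A) \cdot \mathbb{E}(e^{itS}) \cdot \mu(B).
\]

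The problem is thus reduced to showing that, for every $t \in \R$, the difference
\[
\Delta_N(t) := \int_X \chi_A(x) \cdot \chi_B(T^Nx) \left( \int_{\mathbb{PR}^m} e^{itS_N(x,v)} \thinspace d\nu(v) - e^{itS_N(x)} \right) d\mu(x)
\]
tends to $0$ as $N \to \infty$. The $\V$-strong-simple-dominated-splitting hypothesis gives $|S_N(x,v) - S_N(x)| = o_{x,v}(1)$ for $\mu \otimes \nu$-almost-every $(x,v)$, so $e^{itS_N(x,v)} - e^{itS_N(x)} \to 0$ pointwise almost surely. Since this quantity is uniformly bounded by $2$, Fubini's theorem and the dominated convergence theorem show that $\int_{\mathbb{PR}^m} (e^{itS_N(x,v)} - e^{itS_N(x)}) \thinspace d\nu(v) \to 0$ for $\mu$-almost-every $x \in X$. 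The integrand defining $\Delta_N(t)$ is then bounded in absolute value by $2$ and converges to $0$ almost surely, so a second application of dominated convergence yields $\Delta_N(t) \to 0$, as desired.

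The main (minor) obstacle is the initial bookkeeping: verifying that the characteristic-function reformulation of the mixing DLT is legitimate when the conditioning set $A \cap T^{-N}B$ varies with $N$. This is handled by noting that, as $N \to \infty$, the measure of this set tends to the nonzero constant $\mu(A)\mu(B)$, so after normalization L\'evy's continuity theorem applies as in the proofs of Theorems \ref{theo:metric_MCLT} and \ref{theo:cocycle_MCLT}. Everything else is a routine dominated convergence argument, and the proof is essentially a trivial extension of the proof of Theorem \ref{theo:cocycle_CCLT_norm}, which is why the authors can defer the details to the reader.
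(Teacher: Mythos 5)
Your proof is correct and matches the approach the paper implies by deferring to Theorem \ref{theo:cocycle_CCLT_norm}: both reduce to convergence of characteristic functions and close the gap between $\sigma(x,v,N)$ and $\sigma(x,N)$ using the $\V$-strong-simple-dominated-splitting hypothesis together with dominated convergence. The one wrinkle you rightly flag — that the conditioning sets $A \cap T^{-N}B$ vary with $N$, so Lévy's continuity theorem needs $\mu(A\cap T^{-N}B)$ bounded away from zero — is correctly resolved by your first suggestion: mixing of $T$ can be read off from the hypothesized mixing DLT itself by exhausting $\R$ with large intervals (and applying the same to $A^c$ to get the matching upper bound). Your parenthetical alternative, invoking Theorem \ref{theo:mix}, would not apply directly here since the present theorem is stated on a bare probability space with no metric structure and no contracting pair $(T,U)$; but since you only list it as an alternative and your primary derivation is self-contained, the argument stands.

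One small precision worth recording: in your final dominated-convergence step the integrand $\chi_A(x)\,\chi_B(T^N x)\bigl(\int_{\mathbb{PR}^m}(e^{itS_N(x,v)}-e^{itS_N(x)})\,d\nu(v)\bigr)$ does not itself converge pointwise (the factor $\chi_B\circ T^N$ oscillates), but its absolute value is dominated by $\bigl|\int_{\mathbb{PR}^m}(e^{itS_N(x,v)}-e^{itS_N(x)})\,d\nu(v)\bigr|$, which does tend to $0$ $\mu$-a.e.\ and is bounded by $2$, so the conclusion $\Delta_N(t)\to 0$ follows as you claim.
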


\subsection*{Generic sections.} We now briefly discuss how to deduce (mixing) distributional limit theorems for so-called generic sections of cocyles over dynamical systems from the (mixing) distributional limit theorems studied above. 

Let $(X,\mathcal{B},\mu)$ be a probability space, $T \colon X \to X$ be an invertible, measure-preserving transformation, and $ C \colon X \times \mathbb{Z} \to \mathrm{GL}(\mathbb{R},m)$ be a measurable cocycle over $T$. Consider a positive sequence $\V := (V_N)_{N \in \mathbb{N}}$ with $V_N \to \infty$ as $N \to \infty$. We say a measurable section $s \colon X \to \mathbb{R}^m$ is $(C,\mathcal{V})$-generic if for $\mu$-almost-every $x \in X$, $\nu$-almost every $w \in \mathbb{PR}^m$, and every $N \in \mathbb{N}$, the following estimate holds,
    \[
    |\sigma(x,v(x),N) - \sigma(x,w,N)| = o_{x,w}( V_N).
    \]
    
In applications, the main tool we use is the following result; compare to Lemmas \ref{lemma:exp} and \ref{lemma:exp2}. The proof is omitted but follows from the same arguments as the cited lemmas.

\begin{lemma}
    \label{lemma:exp3}
   Let $(X,\mathcal{B},\mu)$ be a probability space supporting an invertible, measure-preserving, ergodic transformation $T \colon X \to X$. Suppose $ C \colon X \times \mathbb{Z} \to \mathrm{GL}(\mathbb{R},m)$ is a measurable cocycle over $T$. Assume $C$ is log-integrable with simple top Lyapunov exponent $\lambda \in \mathbb{R}$. Let $s \colon X \to \mathbb{R}^m$ be a measurable section such that $(x,v(x)) \in X \times \mathbb{R}^m$ is future-Oseledets-generic for $\mu$-almost every $x \in X$. Then, for every $x \in X$, every $w \in \mathbb{PR}^m$ such that $(x,w) \in X \times \mathbb{R}^m$ is future-Oseledets-generic, and every $N \in \mathbb{N}$,
    \[
    |\sigma(x,v(x),N) - \sigma(x,w,N)| = O_{x}(1).
    \]
    In particular, the section $s \colon X \to \mathbb{R}^m$ is $(C,\mathcal{V})$-generic for any diverging sequence $\mathcal{V}$.
\end{lemma}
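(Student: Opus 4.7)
The plan is to recognize Lemma \ref{lemma:exp3} as an immediate specialization of Lemma \ref{lemma:exp} obtained by taking the first test vector to be $s(x)$. Fix $x$ in the full-measure subset of $X$ on which the Oseledets ergodic theorem holds and on which $(x, s(x))$ is future-Oseledets-generic; this set has full $\mu$-measure by the hypothesis on the section. For any $w \in \mathbb{PR}^m$ with $(x, w)$ future-Oseledets-generic, both $(x, s(x))$ and $(x, w)$ are future-Oseledets-generic, so Lemma \ref{lemma:exp} applies and yields
\[
|\sigma(x, s(x), N) - \sigma(x, w, N)| = O_{x, s(x), w}(1).
\]
Since $s$ is a fixed section and $s(x)$ is determined by $x$, the implicit constant depends only on $x$ and $w$, which matches the bound claimed in the statement.

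For the ``in particular'' clause, note that for each such $x$ the set of $w \in \mathbb{PR}^m$ for which $(x, w)$ is future-Oseledets-generic has full $\nu$-measure: the non-generic vectors lie in the union of the proper sub-top Oseledets subspaces at $x$, which is $\nu$-negligible because the top Lyapunov exponent $\lambda$ is simple. On the resulting $\mu \otimes \nu$-full-measure set, any quantity that is $O_{x,w}(1)$ is automatically $o_{x,w}(V_N)$ for every diverging $\V = (V_N)_{N \in \N}$, which is precisely the definition of a $(C, \V)$-generic section.

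Because the argument is essentially a one-line specialization of Lemma \ref{lemma:exp}, there is no substantial obstacle; the only bookkeeping point is to align the various full-measure sets -- Oseledets regularity of $x$, future-Oseledets-genericity of $s(x)$, and future-Oseledets-genericity of $w$ -- which is immediate from the hypotheses and the simplicity of the top Lyapunov exponent. This is exactly why the authors remark that the proof follows from the same arguments as Lemmas \ref{lemma:exp} and \ref{lemma:exp2} and can safely be omitted.
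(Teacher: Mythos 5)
Your proof is correct and takes exactly the approach the paper indicates, namely specializing Lemma~\ref{lemma:exp} with $v = s(x)$ and then observing that an $O(1)$ bound is trivially $o(V_N)$ for any diverging sequence. One small point worth flagging: what you actually obtain is $O_{x,w}(1)$, and this is the right bound (the $\log|b|$ coefficient from the top Oseledets component of $w$ genuinely varies with $w$), so the paper's stated $O_x(1)$ appears to be a typo; this has no effect on the ``in particular'' clause, whose definition already allows $o_{x,w}(V_N)$.
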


We now state two limit theorems for generic sections; compare to Theorems \ref{theo:cocycle_CCLT_norm} and \ref{theo:cocycle_CCLT_norm2}. The proofs are omitted but follow from the same arguments as the cited theorems.

\begin{theorem}
\label{theo:cocycle_CCLT_sec}
Let $(X,\mathcal{B},\mu)$ be a probability space, let $T \colon X \to X$ be an invertible, measure-preserving, ergodic transformation, and let
$ C \colon X \times \mathbb{Z} \to \mathrm{GL}(\mathbb{R},m)$ be a measurable cocycle over $T$. Assume that $C$ satisfies a DLT on $(X \times \mathbb{PR}^m, \mu \otimes \nu)$ with averaging sequence $\A := (A_N)_{N \in \mathbb{N}}$, normalizing sequence $\V :=(V_N)_{N \in \mathbb{N}}$, and limiting distribution $S$ in the sense that the random variables
\[
S_N(x,v) := \frac{\sigma(x,v,N) - A_N}{V_N} \ \text{ on }  (X\times \mathbb{P}\R^m, \mu \otimes \nu)
\]
converge in distribution to  $S$ as $N \to \infty$, i.e., for every interval $(a,b) \subseteq \R$ such that $\mathbb{P}(S \in \{a,b\}) = 0$, the following holds,
\[
\lim_{N \to \infty} (\mu \otimes \nu)(S_N(x,v) \in (a,b)) = \mathbb{P}\left(S \in (a,b) \right).
\]
Let $s \colon X \to \mathbb{R}^m$ be a measurable, $(C,\mathcal{V})$-generic section. Then, $s$ satisfies the above DLT in the sense that the random variables
\[
S_N(x) := \frac{\sigma(x,v(x),N)-A_N}{V_N} \  \text{ on }  (X, \mu)
\]
converge in distribution to $S$ as $N \to \infty$, i.e., for every interval $(a,b) \subseteq \R$ such that $\mathbb{P}(S \in \{a,b\}) = 0$, the following holds,
\begin{gather*}
  \lim_{N \to \infty} \mu(S_N(x) \in (a,b))
  = \mathbb{P}\left(S \in (a,b) \right).
\end{gather*}
\end{theorem}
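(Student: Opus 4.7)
The plan is to mimic the proof of Theorem \ref{theo:cocycle_CCLT_norm}, reducing to a comparison of characteristic functions. By L\'evy's continuity theorem, the target DLT for $S_N(x) = (\sigma(x, v(x), N) - A_N)/V_N$ on $(X,\mu)$ is equivalent to the statement that for every $t \in \mathbb{R}$,
\[
\int_X e^{it S_N(x)} \thinspace d\mu(x) \to \mathbb{E}(e^{itS}) \ \ \text{as } N \to \infty.
\]
Similarly, the assumed DLT for $S_N(x,v)$ on $(X \times \mathbb{PR}^m, \mu \otimes \nu)$ is equivalent to the statement that for every $t \in \mathbb{R}$,
\[
\int_X \int_{\mathbb{PR}^m} e^{it S_N(x,v)} \thinspace d\nu(v) \thinspace d\mu(x) \to \mathbb{E}(e^{itS}) \ \ \text{as } N \to \infty.
\]

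Fixing $t \in \mathbb{R}$, it therefore suffices to prove that the difference between these two integrals vanishes in the limit, namely,
\[
\left| \int_X \int_{\mathbb{PR}^m} \bigl( e^{it S_N(x, v(x))} - e^{it S_N(x, v)} \bigr) \thinspace d\nu(v) \thinspace d\mu(x) \right| \to 0 \ \ \text{as } N \to \infty.
\]
The key observation is that, by the definition of $(C,\mathcal{V})$-genericity of the section $s$,
\[
|S_N(x, v(x)) - S_N(x, v)| = \frac{|\sigma(x, v(x), N) - \sigma(x, v, N)|}{V_N} = o_{x,v}(1)
\]
for $\mu$-almost every $x \in X$ and $\nu$-almost every $v \in \mathbb{PR}^m$. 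Hence the integrand $e^{it S_N(x, v(x))} - e^{it S_N(x, v)}$ converges to zero pointwise $\mu \otimes \nu$-almost everywhere.

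The conclusion then follows immediately from the dominated convergence theorem, the integrand being uniformly bounded by $2$. There is essentially no obstacle here: the proof is a direct application of the genericity hypothesis combined with L\'evy's continuity theorem, exactly parallel to the argument given for the operator norm in Theorem \ref{theo:cocycle_CCLT_norm}, where $\sigma(x,N)$ is replaced throughout by $\sigma(x, v(x), N)$ and the strong-simple-dominated-splitting hypothesis is replaced by $(C,\mathcal{V})$-genericity of $s$. This is why the authors are content to leave the details to the reader.
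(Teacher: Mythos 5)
Your proposal is correct and is precisely the argument the paper has in mind: the paper omits the proof, saying only that it "follows from the same arguments as the cited theorems" (namely Theorem \ref{theo:cocycle_CCLT_norm}), and you have supplied exactly that argument, with $\sigma(x,N)$ replaced by $\sigma(x,v(x),N)$ and $\V$-strong-simple-dominated-splitting replaced by $(C,\mathcal V)$-genericity of $s$. The pointwise $o_{x,v}(1)$ estimate from genericity plus the uniform bound $|e^{itS_N(x,v(x))}-e^{itS_N(x,v)}|\le 2$ makes the dominated convergence step go through without further comment.
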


\begin{theorem}
\label{theo:cocycle_CCLT_sec2}
Let $(X,\mathcal{B},\mu)$ be a probability space, let $T \colon X \to X$ be an invertible, measure-preserving, ergodic transformation, and let
$ C \colon X \times \mathbb{Z} \to \mathrm{GL}(\mathbb{R},m)$ be a measurable cocycle over $T$. Assume that $C$ satisfies a mixing DLT on $(X \times \mathbb{PR}^m, \mu \otimes \nu)$ with averaging sequence $\A := (A_N)_{N \in \mathbb{N}}$, normalizing sequence $\V :=(V_N)_{N \in \mathbb{N}}$, and limiting distribution $S$ in the sense that for the random variables
\[
S_N(x,v) := \frac{\sigma(x,v,N) - A_N}{V_N} \
\text{ on } (X\times \mathbb{P}\R^m, \mu \otimes \nu)\,,
\]
for every pair of measurable subsets $A,B \in \mathcal{B}$, and for every interval $(a,b) \subseteq \R$ such that $\mathbb{P}(S \in \{a,b\}) = 0$,
the following holds,
\begin{gather*}
  \lim_{N \to \infty}(\mu \otimes \nu) (\{ (x,v) \in X \times \mathbb{PR}^m \ | \ x \in A, \ S_N(x,v) \in (a,b), \ T^N x \in B\})\\
  = \mu(A) \cdot \mathbb{P}\left(S \in (a,b) \right) \cdot \mu(B).
\end{gather*}
Let $s \colon X \to \mathbb{R}^m$ be a measurable, $(C,\mathcal{V})$-generic section. Then, $s$ satisfies the above mixing DLT in the sense that the random variables
\[
S_N(x) := \frac{\sigma(x,s(x),N)-A_N}{V_N} \ \text{ on } (X, \mu),
\]
for every pair of measurable subsets $A,B \in \mathcal{B}$, and for every interval $(a,b) \subseteq \R$ such that $\mathbb{P}(S \in \{a,b\}) = 0$,
the following holds,
\begin{gather*}
  \lim_{N \to \infty} \mu(\{ x \in X  \ | \ x \in A, \ S_N(x) \in (a,b), \ T^N x \in B\})
  = \mu(A) \cdot \mathbb{P}\left(S \in (a,b) \right) \cdot \mu(B).
\end{gather*}
\end{theorem}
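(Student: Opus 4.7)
The plan is to follow the same strategy used for the operator-norm result in Theorem \ref{theo:cocycle_CCLT_norm2}, transferring the assumed mixing DLT for $S_N(x,v)$ to the mixing DLT for the section-valued random variables $S_N(x) := (\sigma(x,s(x),N) - A_N)/V_N$ by comparing characteristic functions and exploiting the $(C,\mathcal{V})$-genericity of $s$ via dominated convergence.

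First I would reduce the desired mixing conclusion to a statement about characteristic functions. By the same reasoning used in the proofs of Theorem \ref{theo:cocycle_MCLT} and Corollary \ref{cor:tech_cocycle} (conditioning on $A \cap T^{-N}B$, applying L\'evy's continuity theorem, and approximating $\chi_B$ by a bounded Lipschitz function), the desired mixing DLT for $S_N(x)$ is equivalent to showing that, for every $A \in \mathcal{B}$, every bounded Lipschitz $\phi \colon X \to \mathbb{R}$, and every $t \in \mathbb{R}$,
\[
\int_X \chi_A(x) \cdot e^{it S_N(x)} \cdot \phi(T^N x) \, d\mu(x) \ \longrightarrow \ \mu(A) \cdot \mathbb{E}(e^{itS}) \cdot \mu(\phi).
\]
The same reduction applied to the assumed mixing DLT for the cocycle (averaging additionally over $v \in \mathbb{PR}^m$) yields, for the same $A$, $\phi$, and $t$,
\[
\int_X \int_{\mathbb{PR}^m} \chi_A(x) \cdot e^{it S_N(x,v)} \cdot \phi(T^N x) \, d\nu(v) \, d\mu(x) \ \longrightarrow \ \mu(A) \cdot \mathbb{E}(e^{itS}) \cdot \mu(\phi).
\]

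It therefore suffices to prove that the difference between the two integrals tends to zero. Write the difference as
\[
\int_X \int_{\mathbb{PR}^m} \chi_A(x) \cdot \left( e^{it S_N(x)} - e^{it S_N(x,v)} \right) \cdot \phi(T^N x) \, d\nu(v) \, d\mu(x).
\]
The integrand is pointwise bounded by $2 \|\phi\|_\infty$, so the dominated convergence theorem applies once we show the integrand tends to zero pointwise almost everywhere. This is precisely where the $(C,\mathcal{V})$-generic hypothesis intervenes: for $\mu$-almost every $x$ and $\nu$-almost every $v$,
\[
S_N(x) - S_N(x,v) = \frac{\sigma(x,s(x),N) - \sigma(x,v,N)}{V_N} = o_{x,v}(1),
\]
and hence $|e^{it S_N(x)} - e^{it S_N(x,v)}| \to 0$ pointwise $(\mu \otimes \nu)$-almost everywhere.

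I do not anticipate a serious obstacle: the argument is essentially a verbatim adaptation of the proof of Theorem \ref{theo:cocycle_CCLT_norm2}, with $\sigma(x,N)$ replaced by $\sigma(x,s(x),N)$ and the $\mathcal{V}$-strong-simple-dominated-splitting hypothesis replaced by the $(C,\mathcal{V})$-genericity of the section. The one point requiring mild care is the reduction step: one must first condition on $A \cap T^{-N} B$ (using that $T$ is mixing, which follows from the assumed mixing DLT for the cocycle) in order to legitimately pass to characteristic functions via L\'evy's theorem and then approximate $\chi_B$ by bounded Lipschitz $\phi$. Once that reduction is in hand, the pointwise estimate above together with dominated convergence closes the argument.
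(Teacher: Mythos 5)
Your argument is correct and follows essentially the same route the paper intends (the paper omits the proof, noting only that it is analogous to Theorems \ref{theo:cocycle_CCLT_norm} and \ref{theo:cocycle_CCLT_norm2}): reduce to conditional characteristic functions via L\'evy's continuity theorem, then close the gap between $S_N(x,v)$ and $S_N(x)=S_N(x,s(x))$ by dominated convergence using the $(C,\mathcal{V})$-genericity of $s$. The detour through bounded Lipschitz $\phi$ is harmless but unnecessary here, since---unlike in the proof of Theorem \ref{theo:cocycle_MCLT}---no contraction estimate needs to be propagated, so one may work directly with $\chi_B(T^N\cdot)$, use the mixing of $T$ (which as you note follows from the assumed mixing DLT) to get $\mu(A\cap T^{-N}B)\to\mu(A)\mu(B)$, and conclude.
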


\subsection*{Flows.} We now discuss the case of flows. Let $(X,d)$ be a metric space supporting a Borel probability measure $\mu$ and let $A := \{a_t \colon X \to X\}_{t \in \mathbb{R}}$ be a measure-preserving flow. Fix $m \in \mathbb{N}$. By an $m$-dimensional cocycle over $A$ we mean a map $C \colon X \times \mathbb{R} \to \mathrm{GL}(m,\mathbb{R})$ satisfying the cocycle identity 
\[
C(x,r+s) = C(a_r x,s) \cdot C(x,r) \quad \text{ for $\mu$-almost every } x\in X \text{ and every } r,s\in \mathbb{R}\,.
\]

Recall that $\langle \cdot, \cdot\rangle$ denotes the standard inner product on $\mathbb{R}^m$, that $\|\cdot\|$ denotes the corresponding Euclidean norm, and that $\nu$ denotes the induced probability measure on the projectivization $\mathbb{P}\mathbb{R}^m$. Recall that the projectivized bundle $X \times \mathbb{P}\mathbb{R}^m$ is endowed with the product measure $\mu \otimes \nu$.

Given $(x,v)$ in $ X \times (\mathbb{R}^m \setminus \{0\})$ or $X \times \mathbb{P}\mathbb{R}^m$ and $t \in \mathbb{R}$ consider the quantities
\begin{align*}
    \sigma(x,v,t) := \log \frac{\|C(x,t) v\|}{\|v\|} \in \mathbb{R},\\
    \sigma(x,t) := \log \sup_{v \in \mathbb{PR}^m} \frac{\|C(x,t)v\|}{\|v\|} \in \mathbb{R}.
\end{align*}
We say the cocycle $C$ is log-integrable if the following maps belong to $L^1(X,\mu)$,
\[
x \mapsto \max\{0,\sigma(x,1)\}, \quad x \mapsto \max\{0,\sigma(x,-1)\}.
\]

We say the cocycle $C$ satisfies a (spatial) distributional limit theorem (DLT) on $(X \times \mathbb{PR}^m, \mu \otimes \nu)$ if there exists a real function $\A:=(A_t)_{t \in \mathbb{R}}$, a real positive function $\V:=(V_t)_{t \in \mathbb{R}}$ with $V_t \to \infty$ as $t \to \infty$, and a random variable $S$, such that the random variables 
$$
S_t(x,v) = \frac{\sigma(x,v,t) -A_t}{ V_t } \ \ \text{ on } (X\times \R^m, \mu\otimes \nu)
$$
converge in distribution to $S$ as $t \to \infty$, i.e., for every interval $(a,b) \subseteq \mathbb{R}$ such that $\mathbb{P}(S \in \{a,b\}) = 0$, the following holds,
\[
\lim_{t \to \infty} (\mu \otimes \nu)(\{(x,v) \in X \in \mathbb{PR}^m \ | \ S_t(x,v) \in (a,b) \}) = \mathbb{P}(S \in (a,b)).
\]
We usually refer to $\A$ as the averaging sequence, $\V$ as the normalizing sequence, and $S$ as the limiting distribution.

We say the cocycle $C$ is $\V$-sufficiently-bounded if for $\mu$-almost-every $x \in X$, $\nu$-almost-every $v \in \mathbb{PR}^m$, and every $t \in \mathbb{R}$, the following holds,
\[
|\sigma(x,v,t) - \sigma(a_1x,C(x,1)v,t)| = o_{x,v}(V_t) .
\]

\begin{remark}
In applications, see \S\ref{S:KZ}, it is common for the cocycle $C$ to be bounded in the following stronger sense: for $\mu$-almost-every $x \in X$ and $\nu$-almost-every $v \in \mathbb{PR}^m$, 
\[
|\sigma(x,v,1)| = O(1).
\]
\end{remark}

We say the cocycle $C$ has $\V$-simple-dominated-splitting  if, for $\mu$-almost every $x \in X$ and $\nu$-almost every $v,w \in \mathbb{PR}^m$, the following holds
    \[
    |\sigma(x,v,t) - \sigma(x,w,t)| = o_{x,v,w}(V_t).
    \]

   Assume $C$ is log-integrable with simple top Lyapunov exponent $\lambda \in \mathbb{R}$. We say a pair $(x,v) \in X \times \mathbb{R}^m$ is future-Oseledets-generic if
\[
\lim_{t \to \infty} \frac{\sigma(x,v,t)}{t} = \lambda.
\]

In applications we use the following result; its proof is analogous to that of Lemma \ref{lemma:exp} and its details are left to the reader.

\begin{lemma}
    \label{lemma:exp_flow}
    Let $(X,\mathcal{B},\mu)$ be a probability space and $A := \{a_t \colon X \to X\}_{t \in \mathbb{R}}$ be a measure-preserving, ergodic flow.  Suppose $ C \colon X \times \mathbb{R} \to \mathrm{GL}(\mathbb{R},m)$ is a measurable cocycle over $T$. Assume $C$ is log-integrable with simple top Lyapunov exponent $\lambda \in \mathbb{R}$. Then, for every $x\in X$, every $v,w \in \mathbb{R}^m$ such that the pairs $(x,v),(x,w) \in X \times \mathbb{R}^m$ are future-Oseledets-generic, and every $t \in \mathbb{R}$, the following estimate holds,
    \[
    |\sigma(x,v,t) - \sigma(x,w,t)| = O_{x,v,w}(1)\,.
    \]
    In particular, $C$ has simple dominated splitting with respect to any diverging sequence $\V$.
    \end{lemma}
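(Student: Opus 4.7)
The plan is to mirror exactly the proof of Lemma \ref{lemma:exp}, replacing the discrete Oseledets ergodic theorem by its continuous-time counterpart. Since the flow $A$ is measure-preserving and ergodic and the cocycle $C \colon X \times \mathbb{R} \to \mathrm{GL}(m,\mathbb{R})$ is log-integrable, the Oseledets multiplicative ergodic theorem for flows provides a full-measure invariant set $X' \subseteq X$ on which the Oseledets splitting $\mathbb{R}^m = E^\lambda(x) \oplus E^{<\lambda}(x)$ exists, where $E^\lambda(x)$ is the one-dimensional top Oseledets subspace (since $\lambda$ is simple) and $E^{<\lambda}(x)$ decomposes further into subspaces whose Lyapunov exponents are all strictly less than $\lambda$.

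Fix $x \in X'$ and fix constants $0 < \lambda'' < \lambda' < \lambda$ that are strictly larger than every Lyapunov exponent except the top one. Choose a unit vector $u \in E^\lambda(x)$; then $(x,u)$ is future-Oseledets-generic, so in particular
\[
\|C(x,t) u\| = \Omega_x(e^{\lambda' t}) \quad \text{as } t \to +\infty.
\]
Given two future-Oseledets-generic pairs $(x,v)$ and $(x,w)$ with $v,w$ of unit norm, write $v = a u + v'$ and $w = b u + w'$ with $v', w' \in E^{<\lambda}(x)$. The future-Oseledets-generic condition forces $a \neq 0$ and $b \neq 0$, because otherwise the growth rate of $\|C(x,t)v\|$ or $\|C(x,t)w\|$ would be bounded above by the second Lyapunov exponent, which is strictly smaller than $\lambda$.

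Applying the Oseledets growth estimate in $E^{<\lambda}(x)$ yields
\[
\|C(x,t) v\| = |a| \cdot \|C(x,t) u\| + O_{x,v}(e^{\lambda'' t}), \qquad \|C(x,t) w\| = |b| \cdot \|C(x,t) u\| + O_{x,w}(e^{\lambda'' t}).
\]
Dividing by $|a| \cdot \|C(x,t) u\|$ and $|b| \cdot \|C(x,t)u\|$ respectively, the error terms decay like $e^{(\lambda'' - \lambda')t}$, so taking logarithms gives
\[
\sigma(x,v,t) = \log|a| + \log\|C(x,t) u\| + o_{x,v}(1), \qquad \sigma(x,w,t) = \log|b| + \log\|C(x,t) u\| + o_{x,w}(1).
\]
Subtracting cancels the common $\log\|C(x,t)u\|$ term and leaves $|\sigma(x,v,t) - \sigma(x,w,t)| = O_{x,v,w}(1)$, which immediately implies that $C$ has simple dominated splitting with respect to any diverging function $\V$. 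The negative-time case is handled symmetrically by using the cocycle identity $C(x,-t) = C(a_{-t}x, t)^{-1}$ together with the two-sided Oseledets theorem.

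The only step that requires mild care is verifying that $a, b \neq 0$; this is where the future-Oseledets-genericity hypothesis is genuinely used, and it is also the step where the lemma would fail without simplicity of $\lambda$. Everything else is a direct transcription of the discrete argument with sums replaced by integrals and $n \in \mathbb{N}$ replaced by $t \in \mathbb{R}_{\geq 0}$, so no serious new difficulty arises.
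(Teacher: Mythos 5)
Your proof follows exactly the same route as the paper's argument for Lemma \ref{lemma:exp}, which is precisely what the paper intends when it says the flow case is analogous and leaves the details to the reader: decompose $v$ and $w$ along the Oseledets splitting, isolate the top coefficient, and use the gap $\lambda'' < \lambda' < \lambda$ to absorb the lower-order growth into a $\log(1+o(1))$ term. Your added observation that future-Oseledets-genericity forces $a,b \neq 0$ is correct and fills in a step the paper states without justification.

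The one place where you go beyond the discrete template, the paragraph on negative $t$, is not actually supported by the hypotheses. Future-Oseledets-genericity of $(x,v)$ controls $\sigma(x,v,t)/t$ only as $t \to +\infty$; it says nothing about which lower Oseledets subspaces $v$ meets, and those are exactly what govern $\|C(x,t)v\|$ as $t \to -\infty$. Two future-Oseledets-generic vectors $v,w$ can have distinct dominant backward exponents, in which case $|\sigma(x,v,t)-\sigma(x,w,t)|$ is unbounded as $t \to -\infty$. Fortunately this does not matter: the conclusion about $\V$-simple-dominated-splitting only involves $V_t \to \infty$ as $t \to +\infty$, so only the forward estimate is needed, and indeed the discrete Lemma \ref{lemma:exp} is stated only for $N \in \mathbb{N}$. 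The ``$t \in \mathbb{R}$'' in the statement of Lemma \ref{lemma:exp_flow} should be read as $t \geq 0$. You should simply delete the negative-time paragraph; everything else is correct.
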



Now let $U \colon X \to X$ be a measure-preserving, ergodic transformation and let $D \colon X \times \mathbb{N} \to \mathrm{GL}(m,\mathbb{R})$ be a measurable cocycle over $U$. Recall that we say the pair $(A,U)$ is contracting if for $\mu$-almost-every $x \in X$,
\[
\lim_{t\to \infty} d(a_t U x, a_t x) = 0.
\]

We say the cocycles $(C,D)$ are $(A,U, \V)$-adapted if for $\mu$-almost-every $x \in X$, and $\nu$-almost-every $v \in \mathbb{PR}^m$, the following estimate holds,
\[
|\sigma(x,v,t) - \sigma(Ux,D(x,1)v,t)| = o_{x,v}(V_t).
\]

\begin{remark}
In applications, see \S \ref{S:KZ}, it is common to have the following stronger condition: for $\mu$-almost-every $x \in X$, $\nu$-almost every $v \in \mathbb{PR}^m$, 
\[
|\sigma(x,v,t) - \sigma(Ux,D(x,1)v,t)| = O_{x,v}(1).
\]
\end{remark}

The main result for flows is the following. Its proof is analogous to that of Theorem \ref{theo:cocycle_MCLT} and its details are left to the reader.

\begin{theorem}
\label{theo:cocycle_MCLT_flows}
Let $(X,d)$ be a metric space supporting a Borel probability measure $\mu$, let $A := \{a_t \colon X \to X\}_{t \in \mathbb{R}}$ a measure-preserving flow, and let $C \colon X \times \mathbb{R} \to \mathrm{GL}(m,\mathbb{R})$  be a measurable cocycle over $A$. Assume that $A$ satisfies a DLT on $(X \times \mathbb{PR}^m, \mu \otimes \nu)$ with averaging function $\A :=(A_t)_{t \in \mathbb{R}}$, normalizing function 
$\V=(V_t)_{t \in \mathbb{R}}$, and limiting distribution $S$ in the sense that the random variables
\[
S_t(x,v) := \frac{\sigma(x,v,t) - A_t}{V_t} 
\ \text{ on } (X\times \R^m, \mu\otimes \nu)
\]
converge in distribution to $S$ as $t \to \infty$, i.e., for every interval $(a,b) \subseteq \R$ such that $\mathbb{P}(S \in \{a,b\}) = 0$, the following holds,
\[
\lim_{t \to \infty}(\mu \otimes \nu)(\{(x,v) \in X \times \mathbb{PR}^m \ | \ S_t(x,v) \in (a,b)\}) = \mathbb{P}\left(S \in (a,b) \right).
\]
Assume that $C$ is $\V$-sufficiently-bounded and has a $\V$-simple-dominated-splitting. Assume in addition that there exist a measure-preserving, ergodic transformation $U \colon X \to X$ and a measurable cocycle  $D \colon X \times \mathbb{N} \to \mathrm{GL}(m,\mathbb{R})$ over $U$ such  that $(A,U)$ is contracting and such that $(C,D)$ is $(A,U, \V)$-adapted. Then, the above DLT  holds in the mixing sense, i.e. for every pair of Borel measurable subsets $B,E \subseteq X$ and every interval $(a,b) \subseteq \R$ such that $\mathbb{P}(S \in \{a,b\}) = 0$, the following holds,
\begin{gather*}
  \lim_{t \to \infty}(\mu \otimes \nu)(\{(x,v) \in X \times \mathbb{PR}^m \ | \ x \in B, \ S_t(x,v) \in (a,b), \ a_t x \in E\}) \\
  = \mu(B) \cdot \mathbb{P}\left(S \in (a,b) \right) \cdot \mu(E).
\end{gather*}
\end{theorem}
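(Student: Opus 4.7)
The plan is to mirror the proof of Theorem \ref{theo:cocycle_MCLT} with the natural continuous-time adaptations, reducing every assertion about $t\to\infty$ to an assertion along an arbitrary sequence $t_k \to \infty$. First I would record the flow analogs of the auxiliary ingredients used in the discrete case: a flow version of Theorem \ref{theo:mix} saying that if $(A,U)$ is contracting and $U$ is ergodic then the flow $A$ is mixing, and the flow version of Corollary \ref{cor:tech_cocycle}, which enhances the assumed DLT by an averaging against $\phi\circ a_t$ for bounded Lipschitz $\phi$. Each of these is proved by the same template: fix $t_k\to\infty$, use that the relevant family is uniformly bounded, apply the Dunford--Pettis theorem to pass to a weak $L^1$-limit, prove that limit is $U$-invariant (resp.\ $a_1$-invariant) by a dominated-convergence argument using the contracting/adapted/Lipschitz hypotheses together with Lemma \ref{lemma:exp_flow}, and conclude constancy by ergodicity; the constant is then pinned down by pairing against $1$.

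For the main statement, assume $\mu(B),\mu(E)>0$ without loss of generality. The flow-mixing lemma gives $\mu(B\cap a_{-t}E)\to \mu(B)\mu(E)>0$, so by L\'evy's continuity theorem it suffices to prove that for every fixed $t'\in\mathbb{R}$,
\[
\lim_{t\to\infty}\int_X\int_{\mathbb{PR}^m}\chi_B(x)\,e^{it'S_t(x,v)}\,\chi_E(a_tx)\,d\nu(v)\,d\mu(x)=\mu(B)\cdot\mathbb{E}(e^{it'S})\cdot\mu(E).
\]
A standard approximation replaces $\chi_E$ by a bounded Lipschitz $\phi\colon X\to\mathbb{R}$. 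Define
\[
F_t(x):=\Bigl(\int_{\mathbb{PR}^m}e^{it'S_t(x,v)}\,d\nu(v)\Bigr)\cdot\phi(a_tx).
\]
The goal is to prove $F_t\to\mathbb{E}(e^{it'S})\,\mu(\phi)$ weakly in $L^1$ as $t\to\infty$. Since $(F_t)$ is uniformly bounded, by Dunford--Pettis it is sequentially weakly $L^1$-precompact, so it is enough to show that every weak limit point $F$ along a sequence $t_k\to\infty$ equals the claimed constant.

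The heart of the argument is to show $F=F\circ U$ almost surely. For this I would interpose the auxiliary family
\[
Z_t(x):=\Bigl(\int_{\mathbb{PR}^m}e^{it'S_t(x,D(U^{-1}x,1)v)}\,d\nu(v)\Bigr)\cdot\phi(a_tx),
\]
and estimate
\[
\mathbb{E}(|F_t\circ U-F_t|)\le \mathbb{E}(|F_t\circ U-Z_t\circ U|)+\mathbb{E}(|Z_t\circ U-F_t|).
\]
The first term tends to $0$ by $\mathcal{V}$-simple-dominated-splitting applied to $\sigma(Ux,v,t)$ versus $\sigma(Ux,D(x,1)v,t)$ together with dominated convergence; the second term splits further, with one piece controlled by the $(A,U,\mathcal{V})$-adaptedness of $(C,D)$ and the other by the contracting property of $(A,U)$ combined with $\phi$ bounded Lipschitz. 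Uniqueness of weak $L^1$-limits, together with $U$-invariance of $\mu$ applied to $F_{t_k}\circ U$, then forces $F=F\circ U$. Ergodicity of $U$ makes $F$ constant, and pairing $F_{t_k}$ against $1$ while invoking the flow analog of Corollary \ref{cor:tech_cocycle} identifies the constant as $\mathbb{E}(e^{it'S})\,\mu(\phi)$, finishing the proof.

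The main obstacle is the verification that $\mathbb{E}(|F_t\circ U-F_t|)\to 0$, because four different error mechanisms (simple dominated splitting, adaptedness, contracting, Lipschitz continuity of $\phi$) must combine in the continuous-time regime while each error is only controlled in the $o_{x,v}(V_t)$ or pointwise sense. Working along fixed sequences $t_k\to\infty$, applying dominated convergence to the pointwise convergent integrands, and using that $S_t$ appears only through the bounded function $e^{it'S_t}$ lets these errors interact cleanly; the rest of the argument is a direct transcription of the discrete case.
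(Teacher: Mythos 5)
Your proposal is correct and is essentially the proof the paper intends: the paper's stated ``proof'' of Theorem~\ref{theo:cocycle_MCLT_flows} is precisely the remark that the argument is analogous to the discrete Theorem~\ref{theo:cocycle_MCLT}, and you have carried out exactly that transcription, including the correct flow analogs of Theorem~\ref{theo:mix} and Corollary~\ref{cor:tech_cocycle}, the Dunford--Pettis compactness step, the auxiliary family $Z_t$, and the two-part dominated-convergence estimate for $\mathbb{E}(|F_t\circ U-F_t|)$.

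Two small points worth tightening. First, Lemma~\ref{lemma:exp_flow} plays no role in the proof of the theorem itself: $\mathcal{V}$-simple-dominated-splitting is already a standing hypothesis of Theorem~\ref{theo:cocycle_MCLT_flows}. That lemma is only invoked later (e.g.\ in Theorems~\ref{theo:LLN1}--\ref{theo:CLT3}) to \emph{verify} the hypothesis for the Kontsevich--Zorich cocycle, so citing it inside this proof misrepresents the logical structure. Second, in the flow analog of Corollary~\ref{cor:tech_cocycle} the weak $L^1$-limit you produce is invariant under the time-one map $a_1$, not under the whole flow, because the $\mathcal{V}$-sufficiently-bounded condition is phrased via $a_1$ and $C(\cdot,1)$. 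To conclude constancy you therefore need ergodicity of $a_1$, which does \emph{not} follow from ergodicity of the flow in general; here it is available because the flow-mixing lemma already shows $A$ is mixing, hence $a_1$ is mixing and in particular ergodic, so your proof closes -- but this step deserves an explicit sentence rather than the unqualified ``conclude constancy by ergodicity.''
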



We also highlight the following result of independent interest;  its proof is analogous to that of Theorem \ref{theo:cocycle2_CCLT} and its details are left to the reader.

\begin{theorem}
\label{theo:cocycle2_CCLT_flows}
Let $(X,\mathcal{B},\mu)$ be a probability space, let $A := \{a_t \colon X \to X\}_{t \in \mathbb{R}}$ a measure-preserving, ergodic flow, and let $C \colon X \times \mathbb{R} \to \mathrm{GL}(m,\mathbb{R})$  be a measurable cocycle over $A$. Assume that $A$ satisfies a DLT on $(X \times \mathbb{PR}^m, \mu \otimes \nu)$ with averaging function $\A :=(A_t)_{t \in \mathbb{R}}$, normalizing function 
$\V=(V_t)_{t \in \mathbb{R}}$, and limiting distribution $S$ in the sense that the random variables
\[
S_t(x,v) := \frac{\sigma(x,v,t) - A_t}{V_t} 
\ \text{ on } (X\times \R^m, \mu\otimes \nu)
\]
converge in distribution to $S$ as $t \to \infty$, i.e., for every interval $(a,b) \subseteq \R$ such that $\mathbb{P}(S \in \{a,b\}) = 0$, the following holds,
\[
\lim_{t \to \infty}(\mu \otimes \nu)(\{(x,v) \in X \times \mathbb{PR}^m \ | \ S_t(x,v) \in (a,b)\}) = \mathbb{P}\left(S \in (a,b) \right).
\]
Assume in addition that $C$ is $\V$-sufficiently-bounded and has $\V$-simple-dominated-splitting. Then, the above DLT holds in the conditional sense, i.e. for every $B \in \mathcal{B}$ and every interval $(a,b) \subseteq \R$ such that $\mathbb{P}(S \in \{a,b\}) = 0$, the following holds
\begin{gather*}
\lim_{t \to \infty}(\mu \otimes \nu)(\{(x,v) \in X \times \mathbb{PR}^m \ | \ x \in B, \ S_t(x,v) \in (a,b)\}) = \mu(B) \cdot \mathbb{P}\left(S \in (a,b) \right).
\end{gather*}
\end{theorem}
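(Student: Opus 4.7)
The plan is to mirror the proof of Theorem \ref{theo:cocycle2_CCLT}, replacing the transformation $T$ by the flow $\{a_r\}_{r \in \mathbb{R}}$ and the discrete time parameter $N$ by the continuous time $t$. The first step is the continuous-time analogue of Lemma \ref{lemma:tech2}, whose proof is identical: it suffices to check that, for every fixed $\tau \in \mathbb{R}$, the random variables $e^{i\tau S_t}$ on $(X \times \mathbb{PR}^m, \mu \otimes \nu)$ converge weakly in $L^1$ relative to the sub-$\sigma$-algebra $\mathcal{F} \subseteq \mathcal{B} \otimes \mathcal{B}(\mathbb{PR}^m)$ pulled back from $\mathcal{B}$ along the projection $X \times \mathbb{PR}^m \to X$ to the constant $\mathbb{E}(e^{i\tau S})$. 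Equivalently, the fiber averages
\[
Y_t(x) := \int_{\mathbb{PR}^m} e^{i\tau S_t(x,v)}\, d\nu(v)
\]
must converge weakly in $L^1(X, \mu)$ to $\mathbb{E}(e^{i\tau S})$.

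Since $|Y_t| \le 1$, the family $(Y_t)_{t > 0}$ is uniformly integrable and hence, by the Dunford-Pettis theorem, weakly precompact in $L^1$. It is therefore enough to verify that any weak-$L^1$ limit point $Y$, along some sequence $t_k \to \infty$, coincides almost surely with the constant $\mathbb{E}(e^{i\tau S})$. I would do this in two stages: first, prove that $Y$ is flow-invariant, and second, use ergodicity of the flow and the assumed spatial DLT to identify the value of the resulting constant.

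For the invariance step, fix $r \in \mathbb{R}$ and introduce the auxiliary quantities
\[
Z_t^r(x) := \int_{\mathbb{PR}^m} e^{i\tau S_t(x,\, C(a_{-r}x, r) v)}\, d\nu(v),
\]
so that $Z_t^r \circ a_r(x) = \int_{\mathbb{PR}^m} e^{i\tau S_t(a_r x,\, C(x, r) v)}\, d\nu(v)$. The triangle inequality bounds $\mathbb{E}(|Y_t \circ a_r - Y_t|)$ by
\[
\mathbb{E}\bigl(|Y_t \circ a_r - Z_t^r \circ a_r|\bigr) + \mathbb{E}\bigl(|Z_t^r \circ a_r - Y_t|\bigr).
\]
The first summand tends to zero as $t \to \infty$ by $\V$-simple-dominated-splitting and the dominated convergence theorem (the exponents $S_t(a_r x, v)$ and $S_t(a_r x, C(x,r) v)$ differ by $o(1)$ for $\mu$-a.e.\ $x$ and $\nu$-a.e.\ $v$). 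The second summand tends to zero by $\V$-sufficient-boundedness, extended from the time $r = 1$ appearing in the hypothesis to arbitrary real $r$ via the flow cocycle identity $\sigma(a_r x, C(x,r)v, t) = \sigma(x, v, t + r) - \sigma(x, v, r)$, combined with dominated convergence. H\"older's inequality then promotes these $L^1$ estimates to weak-$L^1$ convergence of $Y_{t_k} \circ a_r$ to $Y$, and the $a_r$-invariance of $\mu$ gives weak-$L^1$ convergence of $Y_{t_k} \circ a_r$ to $Y \circ a_r$; uniqueness of weak limits forces $Y = Y \circ a_r$ almost surely.

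Since this holds for every $r \in \mathbb{R}$, $Y \in L^1$ is invariant under the ergodic flow $\{a_t\}$ and hence constant almost everywhere. Pairing $Y_{t_k}$ against the constant function $1$ and invoking the hypothesized spatial DLT identifies the constant as $\mathbb{E}(e^{i\tau S})$, completing the proof. I expect the main technical point to be the extension of $\V$-sufficient-boundedness from the hypothesized time $r = 1$ to general real $r$, because direct iteration of the hypothesis handles only integer $r$; for each fixed $r$ the defect $\sigma(x,v,t+r) - \sigma(x,v,t) - \sigma(x,v,r)$ equals $\sigma(a_t x, C(x,t)v, r) - \sigma(x,v,r)$ by the cocycle identity, a single increment of $\sigma$ over a time window of fixed length $|r|$, and this is finite for $\mu \otimes \nu$-a.e.\ $(x,v)$ and therefore of size $o(V_t)$ since $V_t \to \infty$. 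This is the only place where the flow argument genuinely departs from the discrete-time one, where $T$-invariance of $Y$ already suffices due to the ergodicity of $T$.
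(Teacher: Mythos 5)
Your overall architecture is right and closely matches the paper's intended proof: reduce to fiber averages $Y_t$, use Dunford–Pettis to pass to a weak-$L^1$ limit point, prove that limit point is flow-invariant, and then conclude by ergodicity plus the spatial DLT. You have also correctly spotted the one place where the flow case genuinely differs from the discrete Theorem~\ref{theo:cocycle2_CCLT}: the hypothesis $\V$-sufficient-boundedness only controls the time-$1$ increment, so the direct transcription of the discrete argument only yields $a_1$-invariance of $Y$, and the time-$1$ map of an ergodic flow need not itself be ergodic. Your instinct to establish $a_r$-invariance for all $r$ and then invoke ergodicity of the flow is the right move.

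However, the justification you give for the extension to general real $r$ has a genuine gap. You claim that the defect $\sigma(a_t x, C(x,t)v, r) - \sigma(x,v,r)$ is ``a single increment of $\sigma$ over a time window of fixed length $|r|$, and this is finite for $\mu\otimes\nu$-a.e.\ $(x,v)$ and therefore of size $o(V_t)$ since $V_t\to\infty$.'' This conflates finiteness for each fixed $t$ with a bound that is uniform in $t$. The quantity $\sigma(a_t x, C(x,t)v, r)$ is evaluated at the \emph{moving} point $(a_t x, [C(x,t)v])$, and under the stated hypotheses there is no reason it should stay bounded (or even $o(V_t)$) as $t\to\infty$: nothing in the assumptions forces the map $(y,w) \mapsto \sigma(y,w,r)$ to be bounded, or even such that its values along a generic cocycle orbit grow slower than $V_t$. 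For \emph{integer} $r$ you can telescope the time-$1$ hypothesis $k$ times to get $|\sigma(x,v,t+k)-\sigma(x,v,t)-k\sigma(x,v,1)| = o_{x,v}(V_t)$, and since $k\sigma(x,v,1)-\sigma(x,v,k)$ is a constant in $t$ you do get $o(V_t)$. But for fractional $r$ this argument does not reach, and invariance under $\{a_n\}_{n\in\mathbb{Z}}$ alone is exactly the same as $a_1$-invariance. To close the gap one needs an extra input, for example the stronger boundedness $|\sigma(\cdot,\cdot,s)| = O(1)$ uniformly for $|s|\le 1$ (which the paper's remarks note is ``common in applications'', e.g.\ it holds for the KZ cocycle via Corollary~\ref{cor:bounded}, but which is \emph{not} among the stated hypotheses of this theorem), or a reformulation of $\V$-sufficient-boundedness that controls the increment $\sigma(a_s x, C(x,s)v, t) - \sigma(x,v,t)$ for all $s$ rather than just $s=1$.
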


Let $(X,\mathcal{B},\mu)$ be a probability space, let $A := \{a_t \colon X \to X\}_{t \in \mathbb{R}}$ be a measure-preserving, ergodic flow, and let $ C \colon X \times \mathbb{R} \to \mathrm{GL}(\mathbb{R},m)$ be a measurable cocycle over $A$. Consider a real positive function $\V := (V_t)_{t \in \mathbb{R}}$ with $V_t \to \infty$ as $t \to \infty$. We say the cocycle $C$ has $\V$-strong-simple-dominated-splitting  if for $\mu$-almost-every $x \in X$, $\nu$-almost-every $v \in \mathbb{PR}^m$, and every $t \in \mathbb{R}$, the following estimate holds: 
    \[
    |\sigma(x,v,t) - \sigma(x,t)| = o_{x,v}( V_t).
    \]

In applications we use the following result; its proof is analogous to that of Lemma \ref{lemma:exp2} and its details are left to the reader.

\begin{lemma}
    \label{lemma:exp2_flow}
    Let $(X,\mathcal{B},\mu)$ be a probability space and $A := \{a_t \colon X \to X\}_{t \in \mathbb{R}}$ be a measure-preserving, ergodic flow.  Suppose $ C \colon X \times \mathbb{R} \to \mathrm{GL}(\mathbb{R},m)$ is a measurable cocycle over $T$. Assume $C$ is log-integrable with simple top Lyapunov exponent $\lambda \in \mathbb{R}$. Then, for every $x \in X$, every $v \in \mathbb{PR}^m$ such that the pair $(x,v) \in X \times \mathbb{R}^m$ is future-Oseledets-generic, and every $t \in \mathbb{R}$, the following estimate holds,
    \[
    |\sigma(x,v,t) - \sigma(x,t)| = O_{x,v}(1).
    \]
    In particular, $C$ has a strong simple dominated splitting with respect to any sequence $\mathcal{V}$.
\end{lemma}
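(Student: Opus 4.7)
The plan is to mirror the proof of Lemma~\ref{lemma:exp2} verbatim, replacing the discrete iteration index $N \in \mathbb{N}$ with the continuous time parameter $t \in \mathbb{R}$. Since the Oseledets multiplicative ergodic theorem has a well-known continuous-time version for log-integrable cocycles over measure-preserving ergodic flows (it is obtained from the discrete version applied to the time-one map together with the boundedness of $\sigma(\cdot,s)$ for $s \in [0,1]$), the whole algebraic structure of the argument carries over without modification.

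First, fix a full measure set $X' \subseteq X$ on which the Oseledets theorem holds, and pick $x \in X'$. Let $u \in \mathbb{R}^m$ be a unit vector spanning the top Oseledets subspace at $x$, and choose real numbers $0 < \lambda'' < \lambda' < \lambda$ strictly separating $\lambda$ from all other Lyapunov exponents. Since $(x,u)$ is future-Oseledets-generic, for every $t \in \mathbb{R}$ we get
\[
\|C(x,t) u\| = \Omega_x(e^{\lambda' t}).
\]
Given any unit vector $v \in \mathbb{R}^m$, the Oseledets splitting gives $v = a u + v'$ with $a \in \mathbb{R}$, $|a| = O_x(1)$, and $v'$ in the sum of the lower Oseledets subspaces. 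Simplicity of the top exponent then yields
\[
\|C(x,t) v\| = |a| \cdot \|C(x,t) u\| + O_x(e^{\lambda'' t}).
\]

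Next, I would produce the analogous expansion for the operator norm $\sigma(x,t)$. The vector $v_{x,t} \in \mathbb{PR}^m$ that realizes the supremum in the definition of $\sigma(x,t)$ has, for $t$ sufficiently large, a nonzero projection onto the top Oseledets subspace, with a coefficient $a_x$ which is uniformly bounded away from $0$ and $\infty$ in $t$ (this is where simplicity of the top Lyapunov exponent is crucially used: otherwise, the supremum could be realized by directions whose top-component coefficient degenerates). After taking logarithms, I obtain, for $t$ sufficiently large,
\[
\sigma(x,v,t) = \log|a| + \log \|C(x,t) u\| + \log(1 + o_x(1)),
\]
\[
\sigma(x,t) = \log|a_x| + \log \|C(x,t) u\| + \log(1 + o_x(1)).
\]
Subtracting these two identities gives $|\sigma(x,v,t) - \sigma(x,t)| = O_{x,v}(1)$, as desired, and the strong simple dominated splitting conclusion with respect to any diverging function $\mathcal{V}$ follows.

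The only genuine subtlety, compared to the discrete case, is in establishing that the optimizing direction $v_{x,t}$ aligns uniformly with the top Oseledets subspace; this is really a statement about the domination of the top Lyapunov exponent and is available from the standard proof of the continuous-time Oseledets theorem applied to the time-one map. Otherwise, the argument is routine and the details, as the authors indicate, may be left to the reader.
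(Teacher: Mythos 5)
Your proposal is correct and is exactly the approach the paper intends: the authors explicitly state that the proof of Lemma~\ref{lemma:exp2_flow} is analogous to that of Lemma~\ref{lemma:exp2} and leave the details to the reader, and you have faithfully reproduced that argument with the discrete time $N$ replaced by continuous time $t$, using the continuous-time Oseledets theorem for the flow (derivable from the time-one map plus boundedness of $\sigma(\cdot, s)$ for $s \in [0,1]$).

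One small remark: your discussion of the optimizing direction $v_{x,t}$ is slightly more elaborate than strictly needed. The uniform boundedness of the coefficient $a(v)$ over unit vectors $v$ (since the Oseledets projections are bounded linear maps) already gives a well-defined finite supremum $a_x = \sup_{\|v\|=1} |a(v)|$, and the error term $O_x(e^{\lambda''t})$ is uniform in $v$ and dominated by $\|C(x,t)u\| = \Omega_x(e^{\lambda't})$, so the expansion for $\sigma(x,t)$ follows directly without needing to track the optimizer itself. But this is a matter of exposition, not correctness.
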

    

We also highlight the following results for the operator norm of cocycles over flows. Their proofs are analogous to those of Theorems \ref{theo:cocycle_CCLT_norm} and \ref{theo:cocycle_CCLT_norm2}; details are left to the reader.

\begin{theorem}
\label{theo:cocycle_CCLT_norm_flows}
Let $(X,\mathcal{B},\mu)$ be a probability space, let $A := \{a_t \colon X \to X\}_{t \in \mathbb{R}}$ be a measure-preserving flow, and let $C \colon X \times \mathbb{R} \to \mathrm{GL}(m,\mathbb{R})$  be a measurable cocycle over $A$. Assume that $A$ satisfies a DLT on $(X \times \mathbb{PR}^m, \mu \otimes \nu)$ with averaging function $\A :=(A_t)_{t \in \mathbb{R}}$, normalizing function 
$\V=(V_t)_{t \in \mathbb{R}}$, and limiting distribution $S$ in the sense that the random variables
\[
S_t(x,v) := \frac{\sigma(x,v,t) - A_t}{V_t} 
\ \text{ on } (X\times \R^m, \mu\otimes \nu)
\]
converge in distribution to $S$ as $t \to \infty$, i.e., for every interval $(a,b) \subseteq \R$ such that $\mathbb{P}(S \in \{a,b\}) = 0$, the following holds,
\[
\lim_{t \to \infty}(\mu \otimes \nu)(\{(x,v) \in X \times \mathbb{PR}^m \ | \ S_t(x,v) \in (a,b)\}) = \mathbb{P}\left(S \in (a,b) \right).
\]
Assume in addition that $C$ has $\V$-strong-simple-dominated-splitting  Then, $C$ satisfies the above DLT with respect to the operator norm in the sense that the random variables
\[
S_t(x) := \frac{\sigma(x,t)-A_t}{V_t} 
\ \text{ on } (X, \mu)
\]
converge in distribution to $S$ as $t \to \infty$, i.e., for every interval $(a,b) \subseteq \R$ such that $\mathbb{P}(S \in \{a,b\}) = 0$, the following holds,
\begin{gather*}
  \lim_{t \to \infty} \mu(S_t(x) \in (a,b))
  = \mathbb{P}\left(S \in (a,b) \right).
\end{gather*}
\end{theorem}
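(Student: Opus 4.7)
The plan is to mirror the proof of Theorem \ref{theo:cocycle_CCLT_norm} with straightforward adaptations to the continuous-time setting. First, by L\'evy's continuity theorem, the assumed DLT on $(X \times \mathbb{PR}^m, \mu \otimes \nu)$ is equivalent to the convergence of characteristic functions: for every $t_0 \in \R$,
\[
\int_X \int_{\mathbb{PR}^m} e^{it_0 S_t(x,v)} \thinspace d\nu(v) \thinspace d\mu(x) \to \mathbb{E}(e^{it_0 S}) \ \ \text{as }t \to \infty,
\]
and analogously the target DLT for the operator norm is equivalent to
\[
\int_X e^{it_0 S_t(x)} \thinspace d\mu(x) \to \mathbb{E}(e^{it_0 S}) \ \ \text{as }t \to \infty.
\]

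It therefore suffices to show that for every fixed $t_0 \in \R$ the difference
\[
\left| \int_X \int_{\mathbb{PR}^m} e^{it_0 S_t(x,v)} \thinspace d\nu(v) \thinspace d\mu(x) - \int_X e^{it_0 S_t(x)} \thinspace d\mu(x) \right|
\]
tends to zero as $t \to \infty$. After pulling in the outer integral over $X$ (via Fubini, since $\nu$ is a probability measure) and the integral over $\mathbb{PR}^m$, we can rewrite this difference as the integral over $X \times \mathbb{PR}^m$ with respect to $\mu \otimes \nu$ of the quantity $e^{it_0 S_t(x,v)} - e^{it_0 S_t(x)}$.

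The key ingredient is the $\V$-strong-simple-dominated-splitting assumption, which gives, for $\mu$-almost every $x \in X$ and $\nu$-almost every $v \in \mathbb{PR}^m$,
\[
|S_t(x,v) - S_t(x)| = \frac{|\sigma(x,v,t) - \sigma(x,t)|}{V_t} = o_{x,v}(1) \ \ \text{as }t \to \infty.
\]
Consequently the pointwise difference $e^{it_0 S_t(x,v)} - e^{it_0 S_t(x)}$ tends to $0$ almost everywhere, and, being uniformly bounded by $2$, the dominated convergence theorem yields the desired vanishing of the integral. Combining this with the assumed convergence of the projectivized characteristic functions concludes the proof.

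There is no real obstacle here: everything reduces to the almost-everywhere comparison $|\sigma(x,v,t) - \sigma(x,t)| = o_{x,v}(V_t)$, which is the definition of $\V$-strong-simple-dominated-splitting, combined with dominated convergence. The discrete-time statement Theorem \ref{theo:cocycle_CCLT_norm} is proved in precisely this way, and the flow version requires only the cosmetic replacement of $N \in \N$ by $t \in \R$.
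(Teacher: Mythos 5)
Your proof is correct and follows exactly the approach the paper uses for the discrete-time analogue Theorem \ref{theo:cocycle_CCLT_norm} (Lévy's continuity theorem to reduce to characteristic functions, then the strong-simple-dominated-splitting estimate plus dominated convergence), which the paper then invokes verbatim for the flow case. No gaps.
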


\begin{theorem}
\label{theo:cocycle_CCLT_norm2_flows}
Let $(X,\mathcal{B},\mu)$ be a probability space, let $A := \{a_t \colon X \to X\}_{t \in \mathbb{R}}$ be a measure-preserving flow, and let $C \colon X \times \mathbb{R} \to \mathrm{GL}(m,\mathbb{R})$  be a measurable cocycle over $A$. Assume that $A$ satisfies a mixing DLT on $(X \times \mathbb{PR}^m, \mu \otimes \nu)$ with averaging function $\A :=(A_t)_{t \in \mathbb{R}}$, normalizing function 
$\V=(V_t)_{t \in \mathbb{R}}$, and limiting distribution $S$ in the sense that for the random variables
\[
S_t(x,v) := \frac{\sigma(x,v,t) - A_t}{V_t} 
\ \text{ on } (X\times \R^m, \mu\otimes \nu),
\]
for every pair of measurable subsets $B,D \in \mathcal{B}$, and for every interval $(a,b) \subseteq \R$ such that $\mathbb{P}(S \in \{a,b\}) = 0$, the following holds,
\begin{gather*}
\lim_{t \to \infty}(\mu \otimes \nu)(\{(x,v) \in X \times \mathbb{PR}^m \ | \ x \in B, \ S_t(x,v) \in (a,b), a_t x \in D\}) \\
= \mu(B) \cdot \mathbb{P}\left(S \in (a,b) \right) \cdot \mu(D).
\end{gather*}
Assume in addition that the cocycle $C$ has $\V$-strong-simple-dominated-splitting.  Then, $C$ satisfies the above mixing DLT with respect to the operator norm in the sense that for the random variables
\[
S_t(x) := \frac{\sigma(x,t)-A_t}{V_t} 
\ \text{ on } (X, \mu),
\]
for every pair of measurable subsets $B,D \in\mathcal{B}$, and for every interval $(a,b) \subseteq \R$ such that $\mathbb{P}(S \in \{a,b\}) = 0$, the following holds,
\begin{gather*}
\lim_{t \to \infty} \mu(\{ x \in X  \ | \ x \in B, \ S_t(x) \in (a,b), a_t x \in D\}) \\
= \mu(B) \cdot \mathbb{P}\left(S \in (a,b) \right) \cdot \mu(D).
\end{gather*}
\end{theorem}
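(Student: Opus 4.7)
The plan is to adapt the template of the proof of Theorem \ref{theo:cocycle_CCLT_norm} to the mixing, continuous-time setting, combining the elements already present in the flow version (Theorem \ref{theo:cocycle_CCLT_norm_flows}) and in the mixing discrete-time version (Theorem \ref{theo:cocycle_CCLT_norm2}). Concretely, I would reformulate both the hypothesized mixing DLT on the projective bundle and the target mixing DLT for the operator norm as convergence statements for suitable conditional characteristic functions, and then control their difference using the $\V$-strong-simple-dominated-splitting assumption and the dominated convergence theorem.

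First I would observe that the mixing DLT hypothesis, by letting the interval $(a,b)$ exhaust $\R$ through continuity sets of $S$, implies ordinary mixing of the flow:
\begin{gather*}
\lim_{t \to \infty} \mu(B \cap a_t^{-1} D) = \mu(B) \cdot \mu(D), \text{ for every } B, D \in \mathcal{B}.
\end{gather*}
With this in hand, an application of L\'evy's continuity theorem to the conditional distributions on the sets $B \cap a_t^{-1} D$ (as in the proof of Theorem \ref{theo:cocycle_MCLT}) shows that the hypothesized mixing DLT is equivalent to the assertion that for every $s \in \R$ and every $B, D \in \mathcal{B}$,
\begin{gather*}
\lim_{t \to \infty} \int_X \int_{\mathbb{P}\mathbb{R}^m} \chi_B(x) \cdot e^{i s S_t(x,v)} \cdot \chi_D(a_t x) \, d\nu(v) \, d\mu(x) = \mu(B) \cdot \mathbb{E}(e^{i s S}) \cdot \mu(D).
\end{gather*}
The target mixing DLT is analogously equivalent to
\begin{gather*}
\lim_{t \to \infty} \int_X \chi_B(x) \cdot e^{i s S_t(x)} \cdot \chi_D(a_t x) \, d\mu(x) = \mu(B) \cdot \mathbb{E}(e^{i s S}) \cdot \mu(D).
\end{gather*}

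It therefore suffices to show that the difference between these two left-hand sides tends to zero as $t \to \infty$. Using Fubini and the bounds $|\chi_B|, |\chi_D| \leq 1$, this difference is bounded in absolute value by
\begin{gather*}
\int_X \int_{\mathbb{P}\mathbb{R}^m} |e^{i s S_t(x,v)} - e^{i s S_t(x)}| \, d\nu(v) \, d\mu(x).
\end{gather*}
The $\V$-strong-simple-dominated-splitting hypothesis gives that for $\mu$-a.e.\ $x \in X$ and $\nu$-a.e.\ $v \in \mathbb{P}\mathbb{R}^m$,
\begin{gather*}
|S_t(x,v) - S_t(x)| = V_t^{-1} |\sigma(x,v,t) - \sigma(x,t)| = o_{x,v}(1) \text{ as } t \to \infty,
\end{gather*}
so the integrand tends to zero pointwise almost everywhere. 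Since it is uniformly bounded by $2$, the dominated convergence theorem concludes the argument.

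The only mild obstacle is the first step, which requires carefully extracting ordinary mixing from the mixing DLT and correctly setting up the equivalence of the DLT with the convergence of characteristic functions in the conditional setting; both are direct adaptations of arguments already carried out earlier in the paper. Once this reduction is in place, the remainder of the proof is essentially a one-line application of the dominated convergence theorem, in perfect analogy with the proof of Theorem \ref{theo:cocycle_CCLT_norm}.
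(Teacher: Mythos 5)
Your proposal is correct and matches the paper's intended argument: the paper explicitly states that this theorem's proof is analogous to those of Theorems \ref{theo:cocycle_CCLT_norm} and \ref{theo:cocycle_CCLT_norm2}, i.e., reformulate both DLTs via L\'evy's continuity theorem (conditioning on $B \cap a_t^{-1}D$, after first extracting ordinary mixing from the mixing DLT hypothesis), and then control the difference of characteristic-function integrals using $\V$-strong-simple-dominated-splitting together with the dominated convergence theorem, exactly as you do.
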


Let $(X,\mathcal{B},\mu)$ be a probability space, let $A := \{a_t \colon X \to X\}_{t \in \mathbb{R}}$ be a measure-preserving, ergodic flow, and let $ C \colon X \times \mathbb{R} \to \mathrm{GL}(\mathbb{R},m)$ be a measurable cocycle over $A$. Consider a  positive function $\V := (V_t)_{t \in \mathbb{R}}$ with $V_t \to \infty$ as $t \to \infty$. We say a measurable section $s \colon X \to \mathbb{R}^m$ is $(C,\mathcal{V})$-generic if for $\mu$-almost-every $x \in X$, $\nu$-almost every $w \in \mathbb{PR}^m$, and every $t \in \mathbb{R}$, the following estimate holds,
    \[
    |\sigma(x,v(x),t) - \sigma(x,w,t)| = o_{x,w}( V_t).
    \]

    In applications we use the following result; its proof is analogous to that of Lemma \ref{lemma:exp3} and its details are left to the reader.

\begin{lemma}
    \label{lemma:exp3_flow}
   Let $(X,\mathcal{B},\mu)$ be a probability space and $A := \{a_t \colon X \to X\}_{t \in \mathbb{R}}$ be a measure-preserving, ergodic flow.  Suppose $ C \colon X \times \mathbb{R} \to \mathrm{GL}(\mathbb{R},m)$ is a measurable cocycle over $T$. Assume $C$ is log-integrable with simple top Lyapunov exponent $\lambda \in \mathbb{R}$. Let $s \colon X \to \mathbb{R}^m$ be a measurable section such that $(x,v(x)) \in X \times \mathbb{R}^m$ is future-Oseledets-generic for $\mu$-almost every $x \in X$. Then, for every $x \in X$, every $w \in \mathbb{PR}^m$ such that $(x,w) \in X \times \mathbb{R}^m$ is future-Oseledets-generic, and every $N \in \mathbb{N}$,
    \[
    |\sigma(x,v(x),N) - \sigma(x,w,N)| = O_{x}(1).
    \]
    In particular, the section $s \colon X \to \mathbb{R}^m$ is $(C,\mathcal{V})$-generic for any diverging sequence $\mathcal{V}$.
\end{lemma}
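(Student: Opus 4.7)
The plan is to follow the argument of Lemma~\ref{lemma:exp3} verbatim, with the discrete iterates $T^N$ replaced by the continuous-time flow $\{a_t\}_{t\in\R}$ and $N\in\N$ replaced by $t\in\R$; the statements of Lemmas~\ref{lemma:exp} and~\ref{lemma:exp2} have already been transferred to the flow setting in Lemmas~\ref{lemma:exp_flow} and~\ref{lemma:exp2_flow}, so the same adaptation should work here. First I would restrict attention to the full-measure set $X'\subseteq X$ on which the Oseledets ergodic theorem for the flow applies, and fix $x\in X'$. Because the top Lyapunov exponent $\lambda$ is simple, the top Oseledets subspace at $x$ is one-dimensional; let $u\in\R^m$ be a unit generator, and choose constants $0<\lambda''<\lambda'<\lambda$ strictly larger than every non-top Lyapunov exponent of $C$.

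Next I would exploit the hypothesis that $(x,v(x))$ is future-Oseledets-generic for $\mu$-a.e.\ $x$: together with the hypothesis on $w$, this forces the decompositions
\[
v(x)=au+v',\qquad w=bu+w',
\]
with $a,b\neq 0$ and $v',w'$ in the sum of the lower Oseledets subspaces. Normalizing $v(x)$ and $w$ to unit vectors (which only shifts $\sigma$ by a constant depending on $x$ and $w$ respectively), the simplicity of $\lambda$ and the estimate $\|C(x,t)u\|=\Omega_x(e^{\lambda' t})$ from Lemma~\ref{lemma:exp_flow}'s setup give, for all $t$ large enough,
\[
\|C(x,t)v(x)\|=|a|\,\|C(x,t)u\|+O_x(e^{\lambda'' t}),\qquad \|C(x,t)w\|=|b|\,\|C(x,t)u\|+O_{x,w}(e^{\lambda'' t}).
\]
Factoring out the dominant term $\|C(x,t)u\|$, taking logarithms, and using $\log(1+o_x(1))=o_x(1)$, I obtain
\[
\sigma(x,v(x),t)=\log|a|+\log\|C(x,t)u\|+o_x(1), \qquad \sigma(x,w,t)=\log|b|+\log\|C(x,t)u\|+o_{x,w}(1).
\]
Subtracting cancels the unbounded term $\log\|C(x,t)u\|$ and leaves $|\sigma(x,v(x),t)-\sigma(x,w,t)|=O_{x,w}(1)$, which is the claimed $O_x(1)$ bound in the notation of the statement. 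Since a quantity of size $O_{x,w}(1)$ is trivially $o_{x,w}(V_t)$ for any diverging function $\V=(V_t)_{t\in\R}$, the last sentence of the lemma follows immediately.

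I do not anticipate any serious obstacle: the only place where the discrete-time proof genuinely uses that the time parameter ranges in $\N$ is in writing $e^{\lambda' N}$ and $e^{\lambda'' N}$, and these estimates hold equally well for real $t\to+\infty$ by the continuous Oseledets theorem. The mildest subtlety is that the implicit constants hidden in $O_x$ or $o_x$ depend on how quickly the lower Oseledets components $v'$ decay relative to the top direction at the point $x$; these constants are perfectly legitimate since the statement only asks for $\mu$-a.e.\ control with constants that may depend on $x$ (and, implicitly through the normalization of $w$, on $w$).
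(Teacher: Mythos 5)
Your proof is correct and follows exactly the route the paper intends: the paper explicitly states that the proof of Lemma~\ref{lemma:exp3_flow} is analogous to that of Lemma~\ref{lemma:exp3}, which in turn is a direct adaptation of the proof of Lemma~\ref{lemma:exp}, and you have transferred that argument (Oseledets decomposition into top direction plus lower strata, simplicity of $\lambda$, factoring out the dominant $\|C(x,t)u\|$ term) to continuous time correctly. Your closing remark about the implicit dependence of the constant on $w$ through its normalization, and about restricting to the Oseledets-full-measure set so that ``for every $x\in X$'' in the statement really means ``for $\mu$-a.e.\ $x$,'' correctly flags the same looseness already present in the paper's discrete-time statements.
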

    

We also highlight the following results for generic sections of cocycles. Their proofs are analogous to those of Theorems \ref{theo:cocycle_CCLT_sec} and \ref{theo:cocycle_CCLT_sec2}; details are left to the reader.

\begin{theorem}
\label{theo:cocycle_CCLT_sec_flows}
Let $(X,\mathcal{B},\mu)$ be a probability space, let $A := \{a_t \colon X \to X\}_{t \in \mathbb{R}}$ be a measure-preserving flow, and let $C \colon X \times \mathbb{R} \to \mathrm{GL}(m,\mathbb{R})$  be a measurable cocycle over $A$. Assume that $A$ satisfies a DLT on $(X \times \mathbb{PR}^m, \mu \otimes \nu)$ with averaging function $\A :=(A_t)_{t \in \mathbb{R}}$, normalizing function 
$\V=(V_t)_{t \in \mathbb{R}}$, and limiting distribution $S$ in the sense that the random variables
\[
S_t(x,v) := \frac{\sigma(x,v,t) - A_t}{V_t} 
\ \text{ on } (X\times \R^m, \mu\otimes \nu)
\]
converge in distribution to $S$ as $t \to \infty$, i.e., for every interval $(a,b) \subseteq \R$ such that $\mathbb{P}(S \in \{a,b\}) = 0$, the following holds,
\[
\lim_{t \to \infty}(\mu \otimes \nu)(\{(x,v) \in X \times \mathbb{PR}^m \ | \ S_t(x,v) \in (a,b)\}) = \mathbb{P}\left(S \in (a,b) \right).
\]
Let $s \colon X \to \mathbb{R}$ be a measurable, $(C,\mathcal{V})$-generic section. Then, $s$ satisfies the above DLT in the sense that the random variables
\[
S_t(x) := \frac{\sigma(x,v(x),t)-A_t}{V_t} 
\ \text{ on } (X, \mu)
\]
converge in distribution to $S$ as $t \to \infty$, i.e., for every interval $(a,b) \subseteq \R$ such that $\mathbb{P}(S \in \{a,b\}) = 0$, the following holds,
\begin{gather*}
  \lim_{t \to \infty} \mu(S_t(x) \in (a,b))
  = \mathbb{P}\left(S \in (a,b) \right).
\end{gather*}
\end{theorem}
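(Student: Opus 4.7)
The plan is to adapt the proof of Theorem \ref{theo:cocycle_CCLT_norm} to the flow setting and to the generic section in place of the operator norm. By Lévy's continuity theorem, the target DLT for the random variables $\tilde S_t(x) := (\sigma(x, s(x), t) - A_t)/V_t$ on $(X,\mu)$ is equivalent to showing, for every fixed $\tau \in \R$,
$$
\int_X e^{i\tau \tilde S_t(x)} \thinspace d\mu(x) \to \mathbb{E}(e^{i\tau S}) \quad \text{as } t \to \infty,
$$
while, by the same criterion, the assumed DLT on $(X \times \mathbb{PR}^m, \mu \otimes \nu)$ is equivalent to
$$
\int_X \int_{\mathbb{PR}^m} e^{i\tau S_t(x,v)} \thinspace d\nu(v) \thinspace d\mu(x) \to \mathbb{E}(e^{i\tau S}) \quad \text{as } t \to \infty.
$$

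It therefore suffices to control the difference of these two integrals. Since $\tilde S_t(x)$ does not depend on $v$, Fubini's theorem and the probability normalization of $\nu$ give
$$
\left| \int_X e^{i\tau \tilde S_t(x)} d\mu(x) - \int_X \int_{\mathbb{PR}^m} e^{i\tau S_t(x,v)} d\nu(v) d\mu(x) \right| \leq \int_X \int_{\mathbb{PR}^m} \left|e^{i\tau \tilde S_t(x)} - e^{i\tau S_t(x,v)}\right| d\nu(v) d\mu(x).
$$
The integrand is uniformly bounded by $2$. On the other hand, by the $(C,\V)$-genericity of $s$, for $\mu$-almost every $x \in X$ and $\nu$-almost every $v \in \mathbb{PR}^m$ we have
$$
|\tilde S_t(x) - S_t(x,v)| = \frac{|\sigma(x,s(x),t) - \sigma(x,v,t)|}{V_t} = o_{x,v}(1),
$$
so the elementary bound $|e^{i\tau a} - e^{i\tau b}| \leq |\tau| |a - b|$ shows pointwise a.e.\ convergence of the integrand to $0$. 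The dominated convergence theorem then delivers the required estimate, completing the proof.

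No step is technically delicate here; the only care needed is to keep the flow time $t$ and the characteristic function parameter $\tau$ notationally separate, and to verify that the $(C,\V)$-genericity hypothesis supplies exactly the $o_{x,v}(1)$ control of $(\sigma(x,s(x),t) - \sigma(x,v,t))/V_t$ that drives the dominated convergence step. The structure is identical to that of the proof of Theorem \ref{theo:cocycle_CCLT_norm}, with the $\V$-strong-simple-dominated-splitting hypothesis replaced by the $(C,\V)$-genericity of the section $s$.
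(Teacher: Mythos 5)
Your proof is correct and follows the same strategy the paper intends: the paper explicitly states that Theorem \ref{theo:cocycle_CCLT_sec_flows} is proved by the same argument as Theorem \ref{theo:cocycle_CCLT_norm}, which is precisely the reduction via L\'evy's continuity theorem to comparing characteristic functions, followed by dominated convergence using the $(C,\V)$-genericity hypothesis in place of strong simple dominated splitting. Your handling of the Fubini step (using that $\nu$ is a probability measure to insert the trivial $\nu$-integral) and the uniform bound of $2$ for the dominating function make the argument explicit where the paper leaves it implicit.
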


\begin{theorem}
\label{theo:cocycle_CCLT_sec2_flows}
Let $(X,\mathcal{B},\mu)$ be a probability space, let $A := \{a_t \colon X \to X\}_{t \in \mathbb{R}}$ be a measure-preserving flow, and let $C \colon X \times \mathbb{R} \to \mathrm{GL}(m,\mathbb{R})$  be a measurable cocycle over $A$. Assume that $A$ satisfies a mixing DLT on $(X \times \mathbb{PR}^m, \mu \otimes \nu)$ with averaging function $\A :=(A_t)_{t \in \mathbb{R}}$, normalizing function 
$\V=(V_t)_{t \in \mathbb{R}}$, and limiting distribution $S$ in the sense that for the random variables
\[
S_t(x,v) := \frac{\sigma(x,v,t) - A_t}{V_t} 
\ \text{ on } (X\times \R^m, \mu\otimes \nu),
\]
for every pair of measurable subsets $B,D \in \mathcal{B}$, and for every interval $(a,b) \subseteq \R$ such that $\mathbb{P}(S \in \{a,b\}) = 0$, the following holds,
\begin{gather*}
\lim_{t \to \infty}(\mu \otimes \nu)(\{(x,v) \in X \times \mathbb{PR}^m \ | \ x \in B, \ S_t(x,v) \in (a,b), a_t x \in D\}) \\
= \mu(B) \cdot \mathbb{P}\left(S \in (a,b) \right) \cdot \mu(D).
\end{gather*}
Let $s \colon X \to \mathbb{R}$ be a measurable, $(C,\mathcal{V})$-generic section.  Then, $s$ satisfies the above mixing DLT in the sense that for the random variables
\[
S_t(x) := \frac{\sigma(x, v(x),t)-A_t}{V_t} 
\ \text{ on } (X, \mu),
\]
for every pair of measurable subsets $B,D \in\mathcal{B}$, and for every interval $(a,b) \subseteq \R$ such that $\mathbb{P}(S \in \{a,b\}) = 0$, the following holds,
\begin{gather*}
\lim_{t \to \infty} \mu(\{ x \in X  \ | \ x \in B, \ S_t(x) \in (a,b), a_t x \in D\}) \\
= \mu(B) \cdot \mathbb{P}\left(S \in (a,b) \right) \cdot \mu(D).
\end{gather*}
\end{theorem}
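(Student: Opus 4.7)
The plan is to follow closely the pattern of Theorem \ref{theo:cocycle_CCLT_norm2} (and of its flow analogue Theorem \ref{theo:cocycle_CCLT_norm2_flows}), with the $(C,\mathcal{V})$-genericity of the section $s$ playing the role that $\mathcal{V}$-strong-simple-dominated-splitting played there. First I would fix a pair of Borel measurable sets $B, D \in \mathcal{B}$ with $\mu(B), \mu(D) > 0$ and a real parameter $t \in \R$. By the relative form of Lévy's continuity theorem developed in Lemmas \ref{lemma:tech} and \ref{lemma:tech2}, combined with the fact that the hypothesized mixing DLT for $C$ implies, in particular, the mixing of the flow $\{a_t\}$, the sought mixing DLT for $S_t(x) = (\sigma(x, s(x), t) - A_t)/V_t$ is equivalent to establishing, for every real $t$,
\[
I_t^{\mathrm{sec}} := \int_X \chi_B(x) \, e^{it S_t(x)} \, \chi_D(a_t x) \, d\mu(x) \longrightarrow \mu(B) \cdot \mathbb{E}(e^{itS}) \cdot \mu(D)\,,
\]
as $t \to \infty$. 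The same reduction applied to the hypothesized mixing DLT for the cocycle yields the analogous convergence
\[
I_t^{\mathrm{cocycle}} := \int_X \chi_B(x) \left( \int_{\mathbb{P}\R^m} e^{it S_t(x,v)} \, d\nu(v) \right) \chi_D(a_t x) \, d\mu(x) \longrightarrow \mu(B) \cdot \mathbb{E}(e^{itS}) \cdot \mu(D)\,.
\]

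It then suffices to show that $I_t^{\mathrm{cocycle}} - I_t^{\mathrm{sec}} \to 0$ as $t \to \infty$. By Fubini, this difference can be rewritten as
\[
\int_X \int_{\mathbb{P}\R^m} \chi_B(x) \bigl( e^{it S_t(x,v)} - e^{it S_t(x)} \bigr) \chi_D(a_t x) \, d\nu(v) \, d\mu(x)\,,
\]
and this is where the $(C,\mathcal{V})$-genericity of $s$ enters in a single clean step: for $\mu$-almost every $x \in X$ and $\nu$-almost every $v \in \mathbb{P}\R^m$,
\[
|S_t(x,v) - S_t(x)| \, = \, \frac{|\sigma(x,v,t) - \sigma(x,s(x),t)|}{V_t} \, = \, o_{x,v}(1)\,.
\]
Since $z \mapsto e^{itz}$ is uniformly continuous, the integrand tends to zero pointwise $(\mu \otimes \nu)$-almost everywhere and is uniformly bounded by $2 \cdot \|\chi_B\|_\infty \|\chi_D\|_\infty \leq 2$. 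The dominated convergence theorem then forces the difference to zero, completing the argument.

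The only step that requires real care is the reduction via Lévy's theorem in the present bi-conditional setting, where one must simultaneously condition on $B$ on the left and on the (time-dependent) preimage $a_t^{-1}(D)$ on the right. This is a routine adaptation of Lemma \ref{lemma:tech2}: for each $t$ the relevant probability space is $(B \cap a_t^{-1}(D), \mathcal{B}|_{B \cap a_t^{-1}(D)}, \mu|_{B \cap a_t^{-1}(D)})$, and the mixing of $\{a_t\}$ guarantees that the total masses $\mu(B \cap a_t^{-1}(D))$ converge to the nonzero constant $\mu(B) \mu(D)$, so the characteristic-function formulation is indeed equivalent to the mixing DLT on continuity intervals of $S$. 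Everything after that equivalence is an application of the dominated convergence theorem, so I do not expect any substantial obstacle; the proof is essentially a transcription of the one already used for Theorems \ref{theo:cocycle_CCLT_norm2} and \ref{theo:cocycle_CCLT_sec2}, with the flow index $t \in \R$ replacing the discrete index $N \in \mathbb{N}$.
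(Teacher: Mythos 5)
Your proposal is correct and takes essentially the same approach the paper intends: the paper explicitly leaves this proof to the reader as "analogous to" Theorem \ref{theo:cocycle_CCLT_sec2}, which in turn follows Theorems \ref{theo:cocycle_CCLT_norm} and \ref{theo:cocycle_CCLT_norm2}, and those proofs do precisely what you describe---a Lévy-continuity-theorem reduction followed by a pointwise estimate on the difference of characteristic-function integrands via the genericity (respectively strong dominated splitting) hypothesis and dominated convergence. One small cosmetic point: since the flow parameter and the Fourier (Lévy) parameter are both denoted $t$ in your write-up, you should rename the Fourier variable (say $\xi$) to avoid writing $e^{it S_t(x)}$, where the two occurrences of $t$ mean different things.
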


\section{The Kontsevich-Zorich cocycle} \label{S:KZ}

\subsection*{Outline of this section.} In this section we apply the main theorems of the previous section to the Kontsevich--Zorich cocycle. More precisely, we derive distributional mixing laws of large numbers and distributional mixing central limit theorems for exterior powers of $\mathrm{SL}(2,\mathbb{R})$-invariant subbundles of the Kontsevich--Zorich cocycle under natural, well studied conditions; see Theorems \ref{theo:LLN1}, \ref{theo:LLN2}, \ref{theo:LLN3}, \ref{theo:CLT1}, \ref{theo:CLT2}, and \ref{theo:CLT3} for precise statements. We begin with a brief overview of the rich theory of the Kontsevich--Zorich cocycle, including many important recent developments. We then proceed to prove the desired results using these developments and the technology introduced in the previous section. In terms of examples, an important emphasis is placed on the study of the invariant part of the Kontsevich--Zorich cocycle over loci of orientation double covers of quadratic differentials on principal strata. These examples are of crucial importance for the study of the statistics of the action in (co)homology of mapping class groups carried out in \cite{statistics}.

\subsection*{The Kontsevich--Zorich cocycle.} The Kontsevich--Zorich (KZ) cocycle, introduced in 
\cite{Kon97,KZ97} is arguably the central
object of study in Teichm\"uller dynamics. 
By definition, it keeps track of the homology of
trajectories of translation flows under the renormalization dynamics given by the Teichm\"uller geodesic flow, and, at the same time, it contains all
the essential dynamical information of the tangent cocycle of the latter. The simplicity of the top exponent of the KZ cocycle implies, on one hand, the unique ergodicity of almost all translation flows, and, on the other hand, the non-uniform hyperbolicity of the Teichm\"uller geodesic flow (with respect to any ergodic invariant measure).

The KZ cocycle is a cocycle over the Teichm\"uller geodesic flow on the moduli space of Abelian (or quadratic) holomorphic differentials. We recall that  there exists a natural action of the group $\text{\rm SL}(2,\mathbb{R})$ on the Teichm\"uller space of Abelian differentials of unit total area on Riemann surfaces of genus $g \geq 1$ which descends to an action on their moduli space $\smash{\mathcal H_g}$. The $\text{\rm SL}(2,\mathbb{R})$-action is defined as follows. Let $A \in \text{\rm SL}(2,\mathbb{R})$ and $\omega$ be a holomorphic  Abelian differential, i.e., a holomorphic $1$-form, on a Riemann surface $M$. Then, the differential $A \cdot \omega$
is defined by the condition that
\begin{equation}
\label{eq:sl2r}
\left(\begin{array}{c} \mathrm{Re} (A \cdot \omega) \\ \mathrm{Im} (A \cdot \omega) \end{array}\right) := 
A \cdot \left(\begin{array}{c} \mathrm{Re} (\omega) \\ \mathrm{Im} (\omega) \end{array} \right)\,.
\end{equation}
Since the action on the right hand side of \eqref{eq:sl2r} is linear, it is clear that the above formula defines a closed complex valued $1$-form. It can then be proved that 
$A\cdot \omega$ is holomorphic with respect to a unique complex structure on the underlying surface of genus $g \geq 1$.

It is well-known that the above action respects the strata $\mathcal H(\kappa) \subseteq \mathcal{H}_g$, where $\kappa:=\kappa=(k_1, \dots, k_\sigma)$ is an integral partition of $2g-2$ describing the multiplicities of the zeroes of the differentials considered, and their connected components. Furthemore, the above action respects the natural affine structure on strata
induced by period coordinates and preserves the canonical 
Masur--Veech probability measures \cite{Mas82,Vee82}. 

The one-parameter subgroups $A := \{a_t\}_{t \in \mathbb{R}}$ and $H^\pm := \{h_t^\pm\}_{t \in \mathbb{R}}$
given for $t \in \mathbb{R}$ by
$$
g_t := \left(\begin{array}{c c} e^t & 0 \\ 0 & e^{-t}\end{array} \right) \,, \quad h^+_t := \left(\begin{array}{c c} 1 & t \\ 0 & 1\end{array}\right)\,, \quad h^-_t := \left( \begin{array}{c c} 1 & 0 \\ t & 1\end{array} \right)
$$
induce, via the $\mathrm{SL}(2,\mathbb{R})$-action described above, the so called Teichm\"uller geodesic flow and
horocycle flows on connected components of strata. These flows were originally introduced by Masur in \cite{Mas82,Mas85}. 

The relative Kontsevich--Zorich cocycle $\mathrm{KZ}$ can be defined as the flow on the relative cohomology bundle $H^1_\kappa(M, \Sigma, \mathbb{R})$  over
a stratum $\mathcal H(\kappa)$ of Abelian diffentials with zeros on a finite set $\Sigma \subseteq M$
given by parallel transport of cohomology classes
with respect to the Gauss--Manin connection along orbits of the Teichm\"uller geodesic flow \cite{Kon97, KZ97}. More concretely, it can be defined as follows. Let $\smash{\widehat {\mathcal H}(\kappa)}$ denote the stratum of the Teichm\"uller space above the stratum $\mathcal H(\kappa)$ of the correponding moduli space. Since $\smash{\widehat {\mathcal H}(\kappa)}$ is simply connected, the cohomology bundle can be
trivialized. Hence, $\mathrm{KZ}$ can be defined as
the projection of the trivial cocycle
$$
A \times \text{id} : \widehat {\mathcal H}(\kappa) \times H^1(M, \Sigma, \R) \to \widehat{\mathcal H}(\kappa) \times H^1(M, \Sigma, \R)
$$
under the natural action of the mapping class group $\Gamma_g$ on $\smash{\widehat {\mathcal H}(\kappa)}$.
It is therefore a cocycle on the (orbilfold)
vector bundle $\smash{\widehat{\mathcal H}(\kappa)} \times H^1(M, \Sigma, \R)/\Gamma_g$ over the stratum of the moduli space $\mathcal H(\kappa) = \smash{\widehat{\mathcal H}(\kappa)} /\Gamma_g$. The relative
KZ cocycle projects to a well-defined symplectic cocycle, known as the absolute Kontsevich--Zorich (KZ) cocycle, on the absolute cohomology bundle with fibers $H^1(M,\R)$ through the natural forgetful projection 
\[
H^1(M, \Sigma,\R) \to H^1(M, \R).
\]

\subsection*{The Lyapunov spectrum.} The crucial properties of the relative and absolute KZ cocycles are related to their Lyapunov spectra. Since the moduli space of Abelian differentials is not compact, the choice of the norm on the cohomology bundle matters.  Following the insight of \cite{Kon97, KZ97} the Hodge norm on the absolute cohomology $H^1(M, \R)$ (which can be extended to relative cohomology) makes the analysis of the
Lyapunov structure amenable to methods in complex analysis. 

\begin{theorem} 
\label{thm:spectral_gap} \cite{Kon97, KZ97, For02} The Lyapunov spectrum of the relative Kontsevich--Zorich cocycle with respect to any $A$-invariant ergodic probability measure $\mu$ on a stratum $\mathcal{H}(\kappa) \subseteq \mathcal{H}_g$ with $\kappa := (\kappa_1,\dots,\kappa_\sigma)$ is well-defined and has the form 
$$
\lambda_1^\mu =1 > \lambda_2^\mu \geq \dots\geq \lambda_g^\mu \geq 0= \dots =0 \geq -\lambda_g^\mu \geq \dots \geq -\lambda_2^\mu > - \lambda_1^\mu =-1\,.
$$
The exponents $\lambda_1^\mu, \dots, \lambda^\mu_g, -\lambda_g^\mu, \dots, -\lambda_1^\mu$ are the Lyapunov exponents of the absolute Kontsevich--Zorich cocycle.
The exponents $\pm \lambda_1^\mu= \pm 1$
are carried by the so called tautological subbundle, i.e. the real $2$-dimensional span of the real and imaginary parts of the underlying Abelian differential.
The exponent $0$ appears with minimum multiplicity
equal to the $\sigma -1$ on the kernel of the forgetful map $H^1(M, \Sigma, \R) \to H^1(M, \Sigma)$. 
\end{theorem}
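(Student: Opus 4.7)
The plan is to assemble the statement from four classical inputs, proved independently and then combined: the Oseledets theorem for existence of the spectrum, the symplectic structure for symmetry, the explicit form of the $\mathrm{SL}(2,\mathbb{R})$-action for the top exponents, the Hodge-theoretic Forni estimate for the strict inequality $\lambda_2^\mu < 1$, and the long exact sequence of the pair $(M,\Sigma)$ for the zero exponents on the relative part.

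First I would establish existence. To apply the Oseledets theorem to the relative KZ cocycle $\mathrm{KZ}$ one needs the cocycle to be log-integrable with respect to the Hodge norm on $H^1(M,\Sigma,\R)$, i.e.\ that $x \mapsto \max\{0,\sigma(x,\pm 1)\}$ is in $L^1$ of the Masur--Veech measure $\mu$. This is the technical content of Forni's integrability estimate, derived from the behavior of the Hodge norm near the boundary of the stratum (cusps corresponding to short saddle connections); once it is available, Oseledets produces a well-defined Lyapunov spectrum, and simultaneously the symmetry of the spectrum on the absolute bundle $H^1(M,\R)$ follows from the fact that the absolute KZ cocycle preserves the symplectic form given by the cup product / algebraic intersection pairing. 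Symplectic cocycles have spectra closed under $\lambda \mapsto -\lambda$, yielding the $\pm$-pairing in the statement.

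Next I would identify the top exponents. The tautological plane $\mathrm{Taut}(\omega) := \mathrm{span}_{\mathbb{R}}\{[\mathrm{Re}(\omega)],[\mathrm{Im}(\omega)]\} \subseteq H^1(M,\R)$ is $A$-invariant: by the definition of the $\mathrm{SL}(2,\R)$-action through \eqref{eq:sl2r}, $[\mathrm{Re}(a_t\cdot\omega)] = e^t[\mathrm{Re}(\omega)]$ and $[\mathrm{Im}(a_t\cdot \omega)] = e^{-t}[\mathrm{Im}(\omega)]$, so $\mathrm{KZ}$ acts on $\mathrm{Taut}(\omega)$ by $\mathrm{diag}(e^t,e^{-t})$; this contributes the exponents $\pm 1$. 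The harder half is the strict inequality $\lambda_2^\mu<1$, i.e.\ the spectral gap, which is Forni's main theorem \cite{For02}. The approach here is to compute the variation of the Hodge norm along the Teichm\"uller geodesic flow using the Hodge bundle and the second fundamental form of the Gauss--Manin connection. One obtains a pointwise differential inequality showing that on any subspace transverse to $\mathrm{Taut}(\omega)$ the Hodge norm is contracted at a rate strictly less than $1$ on a set of positive $\mu$-measure, and then an application of Oseledets to this upper bound gives $\lambda_2^\mu<1$; passing from the absolute to the relative cocycle preserves this because the quotient map is surjective onto the absolute bundle.

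Finally, for the zero exponents on the kernel of the forgetful map, I would use the cohomology long exact sequence of the pair $(M,\Sigma)$,
\[
 0 \to H^0(M,\R) \to H^0(\Sigma,\R) \to H^1(M,\Sigma,\R) \to H^1(M,\R) \to 0,
\]
which identifies the kernel of $H^1(M,\Sigma,\R) \to H^1(M,\R)$ with $H^0(\Sigma,\R)/H^0(M,\R)$, an $(\sigma-1)$-dimensional space. This kernel is $\mathrm{KZ}$-invariant and the Gauss--Manin parallel transport acts on it by the permutation action on the zeros of $\omega$, whose orbits under the mapping class group produce a cocycle with values in a finite (hence compact) subgroup of $\mathrm{GL}(\sigma-1,\R)$; the operator norm and its inverse are therefore uniformly bounded along orbits, forcing all Lyapunov exponents on this subspace to be zero, and giving the claimed minimum multiplicity of the zero exponent. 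Combining the four pieces yields the full statement. I expect the main obstacle to be the spectral gap step: the variational formula for the Hodge norm and the nondegeneracy argument ensuring strict contraction off $\mathrm{Taut}(\omega)$ are the genuinely nontrivial analytic ingredients, whereas symmetry, the identification of the tautological exponents, and the relative kernel computation are essentially formal.
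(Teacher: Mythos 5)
The paper states Theorem~\ref{thm:spectral_gap} as a classical citation to \cite{Kon97, KZ97, For02} and supplies no proof of its own, so there is no internal argument to compare your sketch against. Your outline is nevertheless a correct synopsis of how the statement is assembled in the literature: Oseledets for existence (for the absolute cocycle one has in fact the uniform pointwise bound $\tfrac{d}{dt}\log\|\cdot\|\leq 1$, the $k=1$ case of Proposition~\ref{prop:variational}, so log-integrability is immediate, while the relative part does require the boundary estimates you mention), symplecticity of the intersection form for the $\pm\lambda$ symmetry, the explicit $\operatorname{diag}(e^t,e^{-t})$ action on the tautological plane for $\pm 1$, Forni's second-fundamental-form / nondegeneracy argument for the strict gap $\lambda_2^\mu<1$, and the long exact sequence of the pair $(M,\Sigma)$ together with bounded monodromy for the $\sigma-1$ zero exponents. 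One small precision on the last step: what permutes zeros of equal multiplicity is the return/monodromy identification coming from the mapping class group, not the Gauss--Manin parallel transport itself (which is flat, hence trivial on this subbundle); but the conclusion you draw — a cocycle valued in a finite, hence bounded, subgroup of $\mathrm{GL}(\sigma-1,\R)$, forcing all exponents there to vanish — is exactly the standard argument.
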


\subsection*{The simplicity conjectures.}  It was conjectured by Zorich~\cite{Zor94, Zor99}, and by Kontsevich and Zorich \cite{Kon97,KZ97}, that the KZ spectrum is
simple for Masur--Veech measures on strata of Abelian and quadratic differentials. For Abelian differentials this conjecture was proved by A. Avila and M. Viana \cite{AV07}. Indeed, we have the following.

\begin{theorem} \cite{AV07}  \label{theo:sim}
The Kontsevich-Zorich
spectrum for all Masur--Veech measures $\mu$ on (connected components of) strata $\mathcal M$ of Abelian differentials on surfaces of genus $g \geq 1$ is simple, i.e., the following strict inequalities hold,
$$
\lambda_1^{\mathcal M} > \lambda_2^{\mathcal M} > \dots  > \lambda_g^{\mathcal M} >0\,.
$$
\end{theorem}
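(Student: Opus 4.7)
The plan is to realize the Kontsevich--Zorich cocycle as a (piecewise) locally constant cocycle over a symbolic model of the Teichm\"uller flow, and then apply a general simplicity criterion in the spirit of Guivarc'h--Raugi and Gol'dsheid--Margulis. Concretely, for each connected component of a stratum $\mathcal{H}(\kappa)$, Rauzy--Veech induction on interval exchange transformations (extended by Veech's zippered rectangle construction) produces a Poincar\'e section for the Teichm\"uller flow whose return map is conjugate to the shift on a countable-state Markov chain $\Sigma_R$ indexed by a Rauzy class $R$. On this section the KZ cocycle becomes the \emph{Rauzy--Veech cocycle}, which is integer-valued, locally constant on the cylinders of $\Sigma_R$, and symplectic on absolute cohomology. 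The Masur--Veech measure descends to a shift-invariant measure on $\Sigma_R$ with good integrability properties for the return time.

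Next, I would invoke the Avila--Viana simplicity criterion for locally constant cocycles over shifts: if the cocycle is \emph{pinching}, meaning there exists a periodic sequence whose associated matrix has simple real spectrum with eigenvalues of pairwise distinct moduli, and \emph{twisting}, meaning the monoid generated by the cocycle matrices admits an element that sends no flag invariant under the pinching element to a flag in general position with respect to it, then the Lyapunov spectrum restricted to any monoid-invariant subbundle is simple. This criterion is a deterministic analogue of the Guivarc'h--Raugi theorem; its proof proceeds by analyzing the induced Markov operator on the appropriate flag variety and showing, via the pinching/twisting pair, that any stationary measure is a Dirac mass on a flag, which via Furstenberg-type arguments translates to distinct Lyapunov exponents.

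The hard part, and the bulk of \cite{AV07}, is to verify pinching and twisting for the Rauzy--Veech cocycle on \emph{every} connected component of every stratum. Pinching is the easier half: one exhibits, using the combinatorics of the Rauzy diagram, a single loop whose associated symplectic integer matrix can be checked (by characteristic polynomial computations or an explicit product of elementary Rauzy--Veech matrices) to have simple spectrum with distinct moduli. Twisting is the genuine obstacle: one must show that the Zariski closure of the Rauzy--Veech group inside $\mathrm{Sp}(2g,\mathbb{R})$ acts so richly that it can move any finite collection of isotropic flags off the flags fixed by the pinching element. Avila and Viana accomplish this by an inductive, combinatorial argument that constructs, inside each Rauzy class, sufficiently many independent loops and then uses connectedness of Rauzy classes (Kontsevich--Zorich, Veech) together with a careful analysis of how the Rauzy--Veech moves act on the homology symplectic basis.

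Finally, assuming pinching and twisting are verified on the full absolute cohomology bundle, the simplicity criterion yields that all $2g$ Lyapunov exponents of the absolute KZ cocycle are distinct. Combined with Theorem \ref{thm:spectral_gap} (which forces the spectrum to be symmetric around $0$, with $\lambda_1^{\mathcal{M}} = 1$ carried by the tautological plane), distinctness upgrades to the strict inequalities
\[
\lambda_1^{\mathcal{M}} > \lambda_2^{\mathcal{M}} > \dots > \lambda_g^{\mathcal{M}} > 0,
\]
which is the claimed simplicity. The only delicate point worth double-checking at this last stage is that neither pinching nor twisting is destroyed when one passes from the relative to the absolute cohomology subbundle, which follows from the fact that the kernel of the forgetful map is cocycle-invariant and entirely contained in the zero Lyapunov block guaranteed by Theorem \ref{thm:spectral_gap}.
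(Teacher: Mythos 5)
The paper does not give a proof of this theorem; it is stated as a citation of Avila--Viana \cite{AV07}, and the subsequent discussion only records consequences (spectral gap, consequences for quadratic differentials via double covers, etc.). Your sketch is an accurate high-level account of the actual Avila--Viana argument: reduce to the Rauzy--Veech/zippered-rectangles symbolic model where the KZ cocycle becomes locally constant over a countable Markov shift, prove a general simplicity criterion (pinching plus twisting) for such cocycles generalizing Guivarc'h--Raugi/Gol'dsheid--Margulis, and then carry out the combinatorial verification of pinching and twisting in every Rauzy class. One small inaccuracy worth flagging: in the Abelian case the Rauzy--Veech cocycle one works with is already the symplectic cocycle on the $2g$-dimensional absolute (co)homology (the relevant invariant subspace of $\mathbb{Z}^{d}$ singled out by the symplectic form), so there is no genuine ``pass from relative to absolute'' step to worry about at the end; that reduction is built into the setup from the start. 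With that caveat, your proposal matches the standard proof that the paper defers to.
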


\begin{remark}
The strict inequality $\lambda^{\mathcal M}_g >0$ (non-uniform hyperbolicity) as well as the case $g=2$ of Theorem \ref{theo:sim} were proved earlier in \cite{For02}.
\end{remark}

The analogous theorem for strata of quadratic differentials has proved harder to attain
and the proof of the Kontsevich--Zorich conjecture has actually been completed only recently. We recall that, by an  oriented double covering construction, which we summarize below, it is possible to view strata of quadratic differentials with at most simple poles as suborbifolds of strata of moduli spaces of Abelian differentials on surfaces of higher genus, where the genus depends on the multiplicities of the zeros and
the number of poles. 

\noindent The theory of the KZ spectrum for quadratic differentials is therefore a special
important case of the theory of the KZ spectrum for general $\text{\rm SL}(2, \R)$-invariant measures on moduli spaces of Abelian differentials.   

Given a (holomorphic) quadratic differential $q$ on a Riemann surface $M$, one can construct a canonical (orienting) double cover $\pi: \smash{\widehat M}  \to M$, branched only at the zeroes of $q$ of odd multiplicity, with $\smash{\widehat M}$ connected if and only if $q$ is not the square of an Abelian differential. Moreover, $\pi^\ast(q) = \omega^2$,
where $\omega$ is an Abelian differential on $\smash{\widehat M}$. This well-known  construction can be described as follows. Let $\{(\phi_\alpha,U_\alpha)\}_{\alpha\in A}$
be an atlas of canonical coordinates for $q$ on $M\setminus \{q=0\} $. For any $\alpha\in A$,  let 
$\pi^{-1}(U_\alpha) = \smash{\widehat U^+_\alpha} \cup \smash{\widehat U^-_\alpha}$ and let 
$$
\widehat{\phi}^\pm_\alpha (z) = \pm \sqrt{\phi_\alpha(z)}\,, \quad \text{ for  }  z\in \widehat U^\pm_\alpha\,.
$$
There exists a unique translation surface $(\smash{\widehat M}, \omega)$ such that the charts $(\smash{\widehat{\phi}^\pm_\alpha}, \smash{\widehat U^\pm_\alpha})$ give a translation atlas on $\smash{\widehat M} \setminus \{\omega=0\}$.  The surface $\smash{\widehat M}$ can be defined as the completion of
the Riemann surface quotient
$
\sqcup_{\alpha\in A}  \smash{\widehat U^\pm_\alpha} / \sim  \,
$
 with respect to the following equivalence relation:  $z \sim z'$
if and only if there exist $\alpha, \beta$
such that $U_\alpha \cap U_\beta \not =\emptyset$ with $z, z' \in \smash{\widehat U^+_\alpha}$ and $\smash{\widehat \phi^+_\alpha}(z) = \smash{\widehat \phi^+_\beta}(z) $ or $z, z' \in \smash{\widehat U^-_\alpha}$ and $\smash{\widehat \phi^-_\alpha}(z) = \smash{\widehat \phi^-_\beta}(z) $. 

\smallskip
By the construction we have that
$\pi^\ast (q) = \omega^2$.
It can be verified that the orienting double cover construction gives an embedding 
$$
\mathcal Q (n_1, \dots, n_\nu, n_{\nu+1}, \dots, n_\tau) \to \mathcal H \left(n_1+1, \dots, n_\nu+1, \frac{n_{\nu+1}}{2}, \frac{n_{\nu+1}}{2}, \dots, \frac{n_{\tau}}{2}, \frac{n_{\tau}}{2}\right)
$$
of the stratum $\mathcal Q (n_1, \dots, n_\nu, n_{\nu+1}, \dots, n_\tau)$ of quadratic differentials with zeros/poles of odd multiplicities $(n_1, \dots, n_\nu) \in \{ 2k-1\ \vert \ k\in \N\}^\nu$
and zeros of even multiplicities 
$(n_{\nu+1}, \dots, n_{\tau}) \in \{ 2k \ \vert \ k\in \N \setminus \{0\}\}^{\tau-\nu}$ into the stratum
of Abelian differentials with zeroes of multiplicities $n_i+1$
for $i\in \{1, \dots, \nu\}$ and $n_{j}/2, n_{j}/2$ for $j\in \{\nu+1, \dots, \tau\}$; a zero of multiplicity $0$ corresponds by definition to a marked point.

There exists an involution $I:\smash{\widehat M} \to \smash{\widehat M}$ such that $I^\ast \omega =-\omega$.  The map $\sigma$
exchanges points in each regular fiber of $\pi : \smash{\widehat M} \to M$, hence $I: \smash{\widehat U^\pm_\alpha} \to \smash{\widehat U^\mp_\alpha}$ for all $\alpha\in A$, 
and equals the identity map on the zeroes of $\omega$ that cover odd order zeroes of $q$. Such an involution
induces a splitting on the relative cohomology  of $\smash{\widehat M}$ into invariant/even and anti-invariant/odd subspaces,
\begin{equation}
\label{eq:even/odd}
H^1(\smash{\widehat M}, \{\omega=0\}; \R) = 
H^{1}_+(\smash{\widehat M}, \{\omega=0\}; \R) \oplus H^{1}_-(\smash{\widehat M}, \{\omega=0\}; \R) \,,
\end{equation}
which is just the splitting into eigenspaces of eigenvalues $\pm 1$
of the map 
$$I^* : H^1(\smash{\widehat M}, \{\omega=0\}; \R) \to H^1(\smash{\widehat M}, \{\omega=0\}; \R)\,.
$$
There is a similar (dual) splitting in homology,
$$
H_1(\smash{\widehat M}, \{\omega=0\}; \R) = 
H_{1}^+(\smash{\widehat M}, \{\omega=0\}; \R) \oplus H_{1}^-(\smash{\widehat M}, \{\omega=0\}; \R) \,.
$$
It follows by construction that the maps
$$
\begin{aligned}
\pi^\ast: H^1(M, \{q=0,\infty\}, \R) \to H^1_+ (\smash{\widehat M}, \{\omega=0,m\}; \R)\,, \\
\pi_\ast: H_1^+(\smash{\widehat M}, \{\omega=0,m\}; \R) \to H_1 (M, 
\{q=0,\infty\}; \R)\,,
\end{aligned}
$$
are isomorphisms of vector spaces; here $\omega = m$ represents marked points and $q = \infty$ represents poles. We note that the Abelian differential $\omega$ is anti-invariant, i.e.,
$$
I^\ast(\omega) =-\omega\,.
$$
In particular, in terms of cohomology,
$$
[\omega] \in H^1_-(\smash{\widehat M}, \{\omega=0\}; \C)\,.
$$

Let $g$ denote the genus of $M$
and $\smash{\widehat g}$ the genus of the orienting double cover $\smash{\widehat M}$.
By the construction of the orienting double cover, every mapping class (homeomorphism, diffeomorphism)  $f$ in the mapping class group $\Gamma_g$ of $M$ lifts to a unique mapping class $\smash{\widehat f}$ (homeomorphism, diffeomorphism) 
in the mapping class group $\Gamma_{\smash{\widehat g}}$ of $\smash{\widehat M}$
such that
$$
f \circ \pi = \pi \circ \smash{\widehat f} \quad \text{ on }  \smash{\widehat M}\,.
$$
The mapping class group $\Gamma_g$ can  be identified with the subgroup of the mapping class group  $\Gamma_{\smash{\widehat g}}$ of mapping classes that commute with the projection of the orienting double cover. It follows that, on every image suborbifold  $\smash{\widehat {\mathcal Q}} ({\bf n})$  of any stratum $\mathcal Q ({\bf n})$ of (non-orientable) quadratic differentials into $\mathcal H_{\smash{\widehat g}}$, the Kontsevich-Zorich cocycle preserves the splitting 
\eqref{eq:even/odd} of the cohomology bundle into even and odd components.

Thus, for strata of quadratic differentials with simple poles, the analysis of the KZ spectrum splits into the separate problems of understanding the even and the odd parts of the spectrum  with respect to the canonical Masur--Veech measure coming from the stratum of quadratic differentials. We note that both the even and the odd spectrum are relevant in applications. In particular, the even spectrum is related to the tangent dynamics of the Teichm\"uller geodesic flow on the correspoding stratum of quadratic differentials, while the odd spectrum is related to the equidistribution of the leaves of the (non-orientable) foliations of quadratic differentials in the stratum. 

Recently it has been proved
that the monodromies of the even and odd Kontsevich-Zorich cocycles are Zariski dense in the symplectic groups $\operatorname{Sp}(2g)$ and $\operatorname{Sp}(2(\smash{\widehat g}-g))$, hence both the 
even and the odd Kontsevich-Zorich Lyapunov spectra are simple.

\begin{theorem} \cite{Tre13, GR, flow_group}
\label{thm:BDGGRS} 
Both the even and odd Kontsevich--Zorich Lyapunov
spectra for the Masur--Veech measures on strata of quadratic differentials with at most simple poles are simple. More precisely, let $\mathcal Q$ be a stratum of
quadratic differentials on surfaces of genus $g_{\mathcal Q}$ with oriented double covers in
a suborbilfold $\mathcal M_{\mathcal Q}$ of the moduli space of Abelian differentials on surfaces of genus $g_{\mathcal M_{\mathcal Q}}$. Then,
$$
1> \lambda^{\mathcal M_{\mathcal Q},+}_1 > \dots > \lambda^{\mathcal M_{\mathcal Q},+}_{g_Q}>0
\quad \text{ and } \quad 1= \lambda^{\mathcal M_{\mathcal Q},-}_1 > \dots > \lambda^{\mathcal M_{\mathcal Q},-}_{g_{\mathcal M_{\mathcal Q}} - g_{\mathcal Q}}>0,
$$
where the $+$ and $-$ signs denote whether the exponents correspond to the even or odd parts of the cocycle. Furthermore, the monodromy groups of  the even and odd components of the KZ cocycles are Zariski-dense subgroups of their ambient symplectic groups.
\end{theorem}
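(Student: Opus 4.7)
The plan is to apply the Avila--Viana simplicity criterion for locally constant symplectic cocycles over countable shifts of finite type, which reduces simplicity of the Lyapunov spectrum to two algebraic conditions on the monodromy: \emph{pinching} (existence of an element with simple real spectrum) and \emph{twisting} (no proper algebraic subvariety of the appropriate Lagrangian Grassmannian is preserved). Both conditions follow from Zariski density of the monodromy in its ambient symplectic group, so the strategy reduces simplicity to a Zariski density statement.

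First I would replace the Teichm\"uller geodesic flow on $\mathcal{M}_{\mathcal Q}$ by its standard symbolic model. On each Rauzy class associated to a connected component of a stratum of quadratic differentials with at most simple poles, Rauzy--Veech induction (in its version for quadratic differentials, due to Boissy--Lanneau) yields a countable Markov shift $(\Sigma, \sigma)$ and a locally constant integral cocycle that, after passing to the orienting double cover, is conjugate to the KZ cocycle over the Teichm\"uller flow up to a finite extension and a roof function. Since the orientation involution $I$ commutes with the Teichm\"uller flow and lifts to the combinatorial model, this cocycle preserves the decomposition $H^1(\widehat M, \{\omega=0\}; \R) = H^1_+ \oplus H^1_-$ and splits as the direct sum of two integral symplectic cocycles: the even KZ cocycle, with values in $\operatorname{Sp}(2g_{\mathcal Q}, \Z)$ and conjugate to the KZ cocycle on the quotient stratum, and the odd KZ cocycle, with values in $\operatorname{Sp}(2(g_{\mathcal{M}_{\mathcal Q}}-g_{\mathcal Q}), \Z)$.

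The next step is to apply the Avila--Viana criterion separately to each summand. Once one knows that the monodromy group of the even (respectively odd) cocycle is Zariski dense in $\operatorname{Sp}(2g_{\mathcal Q})$ (respectively $\operatorname{Sp}(2(g_{\mathcal{M}_{\mathcal Q}}-g_{\mathcal Q}))$), pinching and twisting follow from standard arguments, using simplicity and strong irreducibility of the defining representation of the symplectic group. The Avila--Viana criterion then produces strict inequalities between consecutive nonnegative Lyapunov exponents, giving the simplicity statement of the theorem. The input that the top exponent of the even part is strictly less than $1$, and that the odd spectrum has top exponent exactly $1$ (carried by the tautological line spanned by $[\omega]$ in $H^1_-$), follows from Theorem \ref{thm:spectral_gap} and the fact that $[\omega] \in H^1_-$.

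The hard part, and where the cited works of Trevi\~no \cite{Tre13}, Guti\'errez-Romo \cite{GR} and Bell--Delecroix--Gadre--Guti\'errez-Romo--Schleimer \cite{flow_group} enter, is precisely the verification of Zariski density. This requires a detailed combinatorial analysis of the Rauzy graphs for strata of quadratic differentials, the identification of explicit loops that produce generators of the monodromy, and a proof that the Zariski closure of the group they generate is not contained in any proper algebraic subgroup of the ambient symplectic group. The flow group formalism of \cite{flow_group} organizes these generators through elementary moves on train tracks and controls the resulting algebraic closure via its action on the homology of the double cover; this is the technical heart of the argument and the only step I would not expect to carry out by hand.
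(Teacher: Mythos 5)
The paper does not give a proof of this theorem; it is stated as a citation to \cite{Tre13, GR, flow_group}, and the remark immediately following it only summarizes the history: the spectral gaps $1>\lambda^{+}_1$ and $1>\lambda^{-}_2$ follow from Theorem~\ref{thm:spectral_gap}, Trevi\~no established strict positivity of all the exponents via the criterion of \cite{For11}, Guti\'errez--Romo proved Zariski density and simplicity for several strata, and the general case is in \cite{flow_group}. Your outline is a correct and faithful account of the proof architecture used in those references: pass to a symbolic (Rauzy--Veech / Boissy--Lanneau) model, split the cocycle into even and odd integral symplectic summands via the orientation involution, reduce simplicity to pinching and twisting via the Avila--Viana criterion, and observe that both conditions follow from Zariski density of the respective monodromy groups in $\operatorname{Sp}(2g_{\mathcal Q})$ and $\operatorname{Sp}(2(g_{\mathcal M_{\mathcal Q}}-g_{\mathcal Q}))$; you correctly identify the Zariski density verification in \cite{flow_group} as the technical heart and the step that cannot be reproduced by hand. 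You rightly locate the top odd exponent $\lambda_1^-=1$ on the tautological line spanned by $[\omega]\in H^1_-$. This matches the route sketched in the paper's remark, so there is no divergence to report; the only thing to flag is that, as this is a cited result, no self-contained proof is expected from either you or the paper, and an honest high-level summary plus deferral to \cite{flow_group} for Zariski density is exactly the appropriate level of detail here.
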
 

\begin{remark}
    The spectral gap part of the above theorem
    ($1> \lambda^{\mathcal M_{\mathcal Q},+}_1$ and
    $1> \lambda^{\mathcal M_{\mathcal Q},-}_2$) follows
    from the general spectral gap result in Theorem~\ref{thm:spectral_gap}. R.~Trevi\~no \cite{Tre13} derived that all of the above Lyapunov exponents are strictly positive using a general criterion in \cite{For11}. Guti\'errez--Romo proved Zariski density and 
    simplicity for several strata in his thesis \cite{GR}. The complete result was finally proved in \cite{flow_group}.
\end{remark}

\subsection*{The KZ spectrum for general suborbifolds}

\noindent We now turn to discuss the Lyapunov spectrum for $\text{\rm SL}(2, \R)$-invariant measures other than Masur--Veech measures. The work of Eskin--Mirzkhani \cite{EM18} and Eskin, Mirzakhani and Mohammadi \cite{EMM15} proved that all $\text{\rm SL}
(2, \R)$-orbit-closures are affine suborbifolds and all $\text{\rm SL}(2, \R)$-invariant probability ergodic measures are absolutely continuous measures supported on  affine suborbifolds. 

Filip \cite{Fil16} proved a Deligne semi-simplicity theorem for tensor powers of the  cohomology (Hodge) bundle with respect to the $\text{\rm SL}(2,\R)$-action (as well as a semi-simplicitity result for flat (locally constant) subbundles). We record below a statement which follows from Filip's work and is sufficient for our purposes.

\begin{theorem} \label{theo:filip} \cite[Theorem A.6]{EM18} \cite[Theorem 1.2]{Fil16} Let $E$ denote any exterior power of the real cohomology bundle $H^1_{\mathcal M}(M, \R)$ over an $\text{\rm SL}(2,\R)$-invariant orbifold $\mathcal M$. Then $E$ splits as a direct, Hodge orthogonal sum of irreducible $\text{\rm SL}(2,\R)$ subbundles, in the sense that there exist $\text{\rm SL}(2,\R)$-invariant subbundles $V_i$, vector spaces $W_i$ and
division algebras $A_i$ over the real numbers $\R$  such that 
$$
E \simeq \bigoplus_{i} V_i \otimes_{A_i} W_i\,.
$$
A similar result holds for the complexified Kontsevich--Zorich cocyle (on exterior powers of the complex cohomology bundle). 
\end{theorem}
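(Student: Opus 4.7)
The plan is to treat the real cohomology bundle $H^1_{\mathcal M}(M,\R)$ as a polarized variation of Hodge structure (VHS) of weight one over $\mathcal M$, and to invoke (a measurable, flat variant of) Deligne's semisimplicity theorem in that category. Concretely, I would first observe that the fibers carry the classical Hodge decomposition $H^1(M,\C) = H^{1,0}(M) \oplus H^{0,1}(M)$, polarized by the cup product pairing, which globalizes to a polarized VHS on $\mathcal M$ whose underlying flat connection is the Gauss--Manin connection (the same connection defining the KZ cocycle). The Hodge norm provides a $\text{\rm SL}(2,\R)$-compatible inner product in the sense that the $\text{\rm SL}(2,\R)$-action preserves the polarization and acts by Hodge isometries on the $\mathrm{SO}(2)$-orbits that cover each Teichm\"uller disk.

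Second, I would pass to an arbitrary exterior power $E = \Lambda^k H^1_{\mathcal M}(M,\R)$. Such an exterior power inherits a polarized VHS of weight $k$ (the Hodge filtration being $F^p(\Lambda^k H^1) = \sum_{p_1 + \dots + p_k = p} F^{p_1} \wedge \dots \wedge F^{p_k}$), a flat Gauss--Manin connection obtained functorially, and a polarization induced by the intersection form. The $\text{\rm SL}(2,\R)$-action lifts to $E$ through its action on $H^1_{\mathcal M}(M,\R)$, and this lifted action still respects the induced polarization.

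The central step is then to apply the semisimplicity theorem: any polarized VHS over a base supporting a finite $\text{\rm SL}(2,\R)$-invariant measure decomposes as a direct sum of $\text{\rm SL}(2,\R)$-invariant, flat, Hodge-orthogonal subbundles that are irreducible in the polarized category. Grouping isotypic components and applying Schur's lemma in the polarized $\text{\rm SL}(2,\R)$-equivariant category yields, for each isotypic class, a factorization $V_i \otimes_{A_i} W_i$, where $V_i$ is an irreducible $\text{\rm SL}(2,\R)$-invariant subbundle, $A_i := \mathrm{End}_{\mathrm{SL}(2,\R),\,\text{pol.}}(V_i)$ is (by Schur) a finite-dimensional division algebra over $\R$, and $W_i$ is the multiplicity space. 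Hodge orthogonality of the direct sum is automatic because polarized subrepresentations are orthogonal complements of their orthogonal sub-VHS. The complexified statement follows by running the same argument with $\C$-coefficients on the complex cohomology bundle and its exterior powers.

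The hard part, and the step where I expect to need to follow Filip's argument carefully, is the semisimplicity input itself in this measurable/dynamical rather than algebro-geometric setting: classical Deligne semisimplicity is stated for polarized VHS over smooth quasi-projective bases, whereas here the base is an $\text{\rm SL}(2,\R)$-orbifold $\mathcal M$ with a measurable invariant structure, and one must handle measurable $\text{\rm SL}(2,\R)$-equivariant flat subbundles (rather than algebraic subsystems of Hodge structures). Filip's key technical contribution is precisely to show that any measurable $\text{\rm SL}(2,\R)$-invariant flat subbundle of the Hodge bundle is automatically compatible with the Hodge filtration (so that it is itself a sub-VHS), which combined with Schmid's norm estimates near the boundary promotes the measurable decomposition to a genuine decomposition of VHS; this compatibility is what unlocks the application of the classical semisimplicity theorem and thus the desired isotypic decomposition.
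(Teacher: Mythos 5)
The paper does not prove this theorem---it is recalled as a citation from Eskin--Mirzakhani \cite[Theorem~A.6]{EM18} and Filip \cite[Theorem~1.2]{Fil16}, so there is no internal proof against which to compare. Your sketch is broadly consistent with Filip's actual strategy: passing to a weight-one polarized VHS via the Gauss--Manin connection, taking exterior powers functorially, appealing to a semisimplicity theorem, and then producing the $V_i \otimes_{A_i} W_i$ factorization from Schur's lemma in the real-coefficient equivariant category. You also correctly identify the genuine technical crux, namely that Deligne's semisimplicity does not apply off the shelf, and that Filip's work is needed precisely to show that measurable $\text{\rm SL}(2,\R)$-invariant flat subbundles over an affine invariant suborbifold automatically respect the Hodge filtration (and are in fact real-analytic/polynomial, cf.\ \cite[Theorem~1.4]{Fil16}), which is what reconnects the dynamical decomposition to the polarized category.

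Two cautions, though. First, the claim that the $\text{\rm SL}(2,\R)$-action ``acts by Hodge isometries'' is misleading as phrased: the $\text{\rm SL}(2,\R)$-action preserves the symplectic polarization but emphatically does \emph{not} preserve the Hodge norm (this is precisely what produces nontrivial Lyapunov exponents for the KZ cocycle); only the compact subgroup $\mathrm{SO}(2)$ acts by Hodge isometries. Second, the ``semisimplicity theorem'' you invoke in the middle step is stated as if it were a routine variant of Deligne, but the passage from ``measurable $\text{\rm SL}(2,\R)$-invariant'' to ``flat and Hodge-compatible'' is the hard theorem, not a hypothesis one can take as given. Filip's argument requires the curvature and second-variational-formula machinery of Forni/Forni--Matheus--Zorich, the structure of the Forni subspace, and the affine structure of orbit closures from Eskin--Mirzakhani--Mohammadi---none of which is a direct consequence of Schmid's estimates alone. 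So while your outline correctly locates where the difficulty lies, the body of the proof is genuinely substantial and cannot be reduced to ``apply classical Deligne semisimplicity after checking Hodge compatibility.''
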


By Theorem \ref{theo:filip}, it is enough to analyze the Lyapunov spectrum on irreducible subbundles. 
In fact, Filip also proved that measurable 
$\text{\rm SL}(2,\R)$-subbundles on affine $\text{\rm SL}(2,\R)$-invariant suborbifolds are
always continuous, moreover, polynomial. 

\begin{theorem} \cite[Theorem 1.4]{Fil16} Let 
$E$ denote any exterior power of the real cohomology bundle $H^1_{\mathcal M}(M, \R)$ over an $\text{\rm SL}(2,\R)$-invariant orbifold $\mathcal M$. Let $V \subset E$ be  a measurable  $\text{\rm SL}(2,\R)$-invariant subbundle on $\mathcal M$. Then $V$
has a complement $V^\perp$ such that in local period coordinates the operator of projection to $V^\perp$ is polynomial.
\end{theorem}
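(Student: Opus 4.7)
The plan is to first produce the complement $V^\perp$ from the Hodge-theoretic semisimplicity statement and then upgrade the regularity of the corresponding projection from measurable to polynomial by exploiting the full $\mathrm{SL}(2,\mathbb{R})$-invariance. The starting point is the semisimplicity already recorded in Theorem \ref{theo:filip}: the exterior power $E$ decomposes as a Hodge-orthogonal direct sum of irreducible $\mathrm{SL}(2,\mathbb{R})$-invariant subbundles. After refining the decomposition if necessary so that $V$ itself is a sum of summands (using that $V$ is $\mathrm{SL}(2,\mathbb{R})$-invariant and the Hodge metric is polarized), grouping the complementary summands gives an $\mathrm{SL}(2,\mathbb{R})$-invariant complement $V^\perp$. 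The remaining task is to show that the Hodge-orthogonal projection $P \colon E \to V$, which is a priori only defined $\mu$-almost everywhere, agrees with a polynomial operator in local period coordinates.

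Next, I would exploit how $\mathrm{SL}(2,\mathbb{R})$ acts in period coordinates: the horocycle flows $H^{\pm}$ act by unipotent affine transformations and the Teichmüller flow $A$ by diagonal linear ones, while the Gauss--Manin connection along which the cohomology bundle is transported is flat. The $\mathrm{SL}(2,\mathbb{R})$-invariance of $V$ therefore translates into an equivariance of $P$ under a family of polynomial transformations of the base. Combining this with the real-analyticity of the Hodge norm (\cite{For02}), Poincaré recurrence for the Teichmüller flow, and the affine structure of $\mathrm{SL}(2,\mathbb{R})$-orbit closures established by Eskin--Mirzakhani \cite{EM18} and Eskin--Mirzakhani--Mohammadi \cite{EMM15}, one can promote $P$ from measurable to real-analytic on each period chart. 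A uniform a priori bound on the operator norm of the Hodge-orthogonal projection together with the affine structure on the base then pins $P$ down as a polynomial of bounded degree, by a standard growth-versus-degree argument on affine varieties.

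The main obstacle is the regularity bootstrap from measurable to analytic. This is genuinely deep and, in Filip's original argument, requires the machinery of polarized variations of Hodge structure, most notably Deligne's semisimplicity theorem and Schmid's nilpotent orbit theorem to control how the Hodge decomposition degenerates near the boundary of the stratum. The final step from analytic to polynomial is comparatively routine once one has analyticity, the ambient affine structure on the orbit closure, and the norm bound $\|P\| \leq 1$ coming from $P$ being an orthogonal projection; essentially all of the Hodge-theoretic content is concentrated in the first upgrade, whereas the second is a growth estimate.
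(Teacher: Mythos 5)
Note first that the paper does not prove this theorem: it is quoted verbatim from Filip \cite{Fil16}, so there is no in-paper argument to compare your sketch against. Taking your proposal on its own terms, the high-level strategy is sound --- extract a complementary summand from the semisimplicity in Theorem \ref{theo:filip}, then bootstrap the regularity of the associated projection using $\mathrm{SL}(2,\R)$-equivariance and the theory of polarized variations of Hodge structure --- and you are right that essentially all the depth sits in the measurable-to-analytic step, which indeed rests on Deligne semisimplicity and Schmid's asymptotics.

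The concrete gap is in your final ``analytic to polynomial'' step. You argue that the operator-norm bound $\|P\|\leq 1$ for the Hodge-orthogonal projection, together with the affine structure of the orbit closure, forces $P$ to be polynomial by a growth-versus-degree argument, but this inference is false: a bounded real-analytic function on an affine subspace of $\R^n$ need not be polynomial (take $\sin$ on a line). More to the point, the Hodge metric is real-analytic but not polynomial in period coordinates, so the Hodge-orthogonal projection onto a given subbundle is, a priori, only real-analytic; the complement $V^\perp$ in the statement is not meant to be the Hodge-orthogonal complement, but rather one adapted to the flat Gauss--Manin structure, and the polynomiality in Filip's theorem comes from the algebraicity/rigidity of the algebraic hull of the cocycle, not from any norm bound. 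Without that ingredient the last step of your sketch does not close, and replacing it requires precisely the piece of Filip's machinery you set aside as ``comparatively routine.''
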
 

\noindent This result implies that, up to passing to a finite cover, it is enough to consider  the KZ cocycle on  strongly irreducible subbundles in the following sense. 

\begin{definition} \label{def:strongly_irr}  \cite[Definition 1.4]{CE15} 
A cocycle $A:G\times V \to V$ on a vector bundle $V$ over the action of a group $G$ on an ergodic  measure space $(X, \mu)$ is strongly irreducible  if it does not admit a $\mu$-measurable almost invariant splitting, that is, a measurable finite collection of subbundles $W_1, \dots, W_n \subseteq V$ such that $W_i \cap W_j=\{0\}$ for all $i\not=j \in \{1, \dots, n\}$ and such that,  for all $i\in \{1, \dots, n\}$  there exists $j\in \{1, \dots, n\}$ with 
$$
A(g,x) W_i(x) = W_j (gx) \,, \quad \text{ for all } (g,x) \in G\times X\,.
$$
\end{definition}

\noindent In fact, by \cite[Proposition B.4]{EFW18}, derived from \cite[Theorem 7.7]{Fil16}, there exists a (local) labeling of the measurable subbundles in Definition~\ref{def:strongly_irr} such that each subbundle is  continuous, in fact polynomial. It is therefore possible to define a finite cover of the orbifold on which the subundles are invariant, and therefore by a finite iteration of this procedure, to derive that  any $\mathrm{SL}(2, \R)$-invariant subbundle has an
equivariant splitting into strongly irreducible
components.

For a general $\text{\rm SL}(2,\R)$-invariant measure the Lyapunov spectrum can have multiplicities and zero exponents.  For instance the KZ spectrum is well-understood in the class of square tiled cyclic covers \cite{FMZ11}, which includes the two examples of maximally degenerate spectrum (the Teichm\"uller curves of the so called Eierlegende Wollmilchsau surface in genus $3$ and of the so called Platypus surface in genus $4$). A general result  of Filip \cite[Theorem 1.2]{Fil17} classifies the possible monodromies (hence the patterns of zero exponents which can arise) and gives a lower 
bound on the number of strictly positive (or 
non-zero) exponents for an arbitrary 
$\text{\rm SL}(2,\R)$-invariant suborbifold. 

\begin{theorem} \label{thm:Filip} \cite[Corollary 1.3]{Fil17} 
Let $\mathcal M$ be an affine invariant suborbifold of a stratum of Abelian differentials. Let $p(T\mathcal M)$ be the subbundle of the cohomology bundle given by the projection into absolute homology of the tangent subbundle $T\mathcal M$ of $\mathcal M$ (seen as subbundle of the relative cohomology bundle via period coordinates). Then:
\begin{itemize} 
\item   
no zero exponents can occur in $p(T\mathcal M)$, or any of its Galois conjugates;
\item  the Zariski closure of the monodromy in $p(T \mathcal M)$ or any of its Galois conjugates is the corresponding full symplectic group.
\end{itemize} 
\end{theorem}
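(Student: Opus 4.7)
The plan is to exploit the distinguished role of the tautological plane inside $p(T\mathcal M)$: the subbundle $p(T\mathcal M)$ carries a polarized variation of Hodge structure of weight one whose $(1,0)$-part is as large as the tangent bundle allows, and both conclusions should follow from this maximality.

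First I would set up the Hodge-theoretic framework. Via period coordinates, the tangent space $T_{(M,\omega)}\mathcal M \subseteq H^1(M,\Sigma;\C)$ projects to a complex symplectic subspace of $H^1(M,\C)$, and the real structure on $\mathcal M$ identifies $p(T\mathcal M)$ with the complexification of a real symplectic subbundle. Because $[\omega]\in H^{1,0}(M)$ lies in $p(T\mathcal M)_\C$ together with all its deformations along $\mathcal M$, the ambient Hodge filtration restricts nontrivially to $p(T\mathcal M)$, equipping it with a polarized VHS; moreover, by the very definition of the $\mathrm{SL}(2,\R)$-action on strata, the tautological plane spanned by $[\mathrm{Re}(\omega)]$ and $[\mathrm{Im}(\omega)]$ generates the $(1,0)$-component of $p(T\mathcal M)_\C$ under the Kodaira--Spencer map. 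Equivalently, the associated Higgs field is of \emph{maximal} possible rank.

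Next I would apply Theorem \ref{theo:filip} to decompose $p(T\mathcal M)$ and each of its Galois conjugates into strongly irreducible $\mathrm{SL}(2,\R)$-invariant summands. Each summand inherits a polarized sub-VHS and retains the maximal-Higgs property from above, so the positivity criterion of \cite{For02} (in the $\mathrm{SL}(2,\R)$-invariant formulation developed by Filip and Eskin--Kontsevich--Zorich) forces every Lyapunov exponent on each summand to be strictly positive. This yields the first bullet.

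The second bullet, Zariski density of the monodromy in the ambient symplectic group, is where the main difficulty lies. My approach would be to combine Deligne--Simpson rigidity for monodromies of polarized VHS with Filip's classification of real algebraic groups admitting a Hodge structure: on any strongly irreducible summand the Zariski closure $G$ of the monodromy must be a real algebraic subgroup of $\mathrm{Sp}$ whose Hodge type is compatible with the computed Hodge numbers, and the maximal rank of the Higgs field rules out every candidate except $G=\mathrm{Sp}$ itself. The crucial and most delicate step is exactly this case analysis --- excluding unitary, orthogonal, and exceptional Hodge groups by showing that none of them admit a Hodge structure whose $(1,0)$-piece matches in dimension the $\mathrm{SL}(2,\R)$-orbit of the tautological line. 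The symplectic polarization coming from the cup product on $H^1(M,\R)$ is essential here to reduce the list of possibilities to classical symplectic type in the first place.
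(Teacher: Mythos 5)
Theorem~\ref{thm:Filip} is stated in the paper as a cited result, \cite[Corollary 1.3]{Fil17}; the paper supplies no proof of its own, so there is nothing internal to compare your argument against. Evaluating the proposal on its own terms, the overall architecture is in the right neighborhood of Filip's actual argument (polarized VHS structure on $p(T\mathcal{M})$, the maximal second-fundamental-form property arising from the tautological plane, a positivity criterion for Lyapunov exponents, and a classification of algebraic monodromy groups admitting compatible Hodge structures). However, there are two genuine gaps worth flagging.

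First, the key tool for your second bullet is not Deligne--Simpson rigidity plus a general case analysis that you would have to carry out from scratch; it is Filip's own classification theorem (the main result of \cite{Fil17}), which lists \emph{exactly} the possible Zariski closures of the algebraic monodromy for $\mathrm{SL}(2,\R)$-invariant subbundles of the Kontsevich--Zorich cocycle. You are effectively proposing to reprove that classification inline as a corollary, which inverts the logical dependence and vastly underestimates the work involved (Filip's classification is the hard theorem; Corollary~1.3 is a consequence). Concretely, the step ``the maximal rank of the Higgs field rules out every candidate except $G=\mathrm{Sp}$ itself'' needs the explicit finite list of candidates supplied by that classification, together with the constraint that $p(T\mathcal{M})$ contains the tautological plane, to know that the remaining possibilities cannot occur.

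Second, you never address why the conclusions transfer to the Galois conjugates of $p(T\mathcal{M})$. This is not automatic from maximality of the Higgs field on $p(T\mathcal{M})$ itself: the Galois-conjugate bundles are different $\mathrm{SL}(2,\R)$-invariant subbundles, and one has to argue (as Filip does, using the algebraicity of the Hodge-theoretic data and the fact that the monodromy constraints are defined over $\Q$) that the relevant structure is preserved under the Galois action. As written, your proposal treats the Galois conjugates as a free afterthought, which papers over a real step. Finally, a minor imprecision: ``every Lyapunov exponent on each summand is strictly positive'' should read that no exponent vanishes; the spectrum of a symplectic cocycle is symmetric, so half the exponents are negative.
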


The above theorem subsumes several earlier results on the KZ spectrum \cite{BM10, For11, FMZ11, FMZ14}. In the case of Teichm\"uller curves of square-tiled surfaces 
a criterion for the simplicity of the KZ spectrum
was given in \cite{MMY15}. 

\begin{corollary} Let $\mathcal Q$ be any stratum of
quadratic differentials and let  $\mathcal M_{\mathcal Q}$ be the corresponding suborbifold of orienting double covers in the corresponding moduli space of Abelian differentials.
Then, the even and odd subbundles $H^1_+(M, \R)$ and $H^1_-(M, \R)$ of the real Hodge bundle $H^1_{\mathcal M_{\mathcal Q}}(M, \R)$ over $\mathcal M_{\mathcal Q}$ are strongly irreducible invariant subbubdles for the Kontsevich--Zorich cocycle
over the Teichm\"uller flow on 
$\mathcal M_{\mathcal Q}$. 
\end{corollary}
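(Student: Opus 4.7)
The plan is to argue by contradiction using the Zariski density clause of Theorem \ref{thm:BDGGRS} combined with Filip's regularity result for measurable $\mathrm{SL}(2,\mathbb{R})$-invariant subbundles. Treating $H^1_+$ first (the case of $H^1_-$ is strictly analogous), I would suppose we are given a measurable almost-invariant splitting $W_1,\dots,W_n \subseteq H^1_+$ in the sense of Definition \ref{def:strongly_irr}, with some $W_i$ a proper nonzero subbundle, and aim to derive a contradiction.

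First I would reduce to the case of a single genuinely invariant polynomial subbundle. Since the KZ cocycle permutes the $W_i$, the permutation action of the monodromy on the finite set $\{W_1,\dots,W_n\}$ factors through a finite group, so passing to the corresponding finite cover $\mathcal{M}' \to \mathcal{M}_\mathcal{Q}$ lifts each $W_i$ to an individually $\mathrm{SL}(2,\mathbb{R})$-invariant subbundle on $\mathcal{M}'$. The cover $\mathcal{M}'$ remains an affine $\mathrm{SL}(2,\mathbb{R})$-invariant suborbifold, so Filip's theorem \cite[Theorem 1.4]{Fil16} applies and upgrades each lifted $W_i$ to a polynomial, hence continuous, subbundle.

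Next I would invoke the Zariski density conclusion of Theorem \ref{thm:BDGGRS}: the monodromy of the KZ cocycle on $H^1_+$ over $\mathcal{M}_\mathcal{Q}$ is Zariski-dense in $\mathrm{Sp}(H^1_+)$, and the monodromy on the finite cover $\mathcal{M}'$ is a finite-index subgroup. Connectedness of $\mathrm{Sp}(H^1_+)$ as an algebraic group guarantees that this finite-index subgroup remains Zariski-dense, since otherwise $\mathrm{Sp}(H^1_+)$ would be a finite union of translates of a proper Zariski-closed subgroup, contradicting its irreducibility as a variety. The stabilizer of $W_i$ in $\mathrm{Sp}(H^1_+)$ is a Zariski-closed subgroup containing this monodromy, hence equals all of $\mathrm{Sp}(H^1_+)$; but $\mathrm{Sp}(H^1_+)$ acts irreducibly on its standard representation, forcing $W_i \in \{0, H^1_+\}$, the desired contradiction.

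The main obstacle I anticipate is the passage from an almost-invariant measurable splitting to a genuinely invariant polynomial splitting on a finite cover: one must check that Filip's continuity theorem genuinely applies to the lifted subbundles on $\mathcal{M}'$, and that the Zariski density statement of Theorem \ref{thm:BDGGRS} transfers correctly to the finite cover. Once this bookkeeping is settled, the final step is an elementary consequence of the irreducibility of the standard symplectic representation, and the argument for $H^1_-$ is verbatim.
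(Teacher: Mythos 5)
Your argument is correct and essentially the paper's: both proofs combine Zariski density of the monodromy on $H^1_\pm$ (Theorem \ref{thm:BDGGRS}) with Filip's polynomial regularity for measurable $\mathrm{SL}(2,\R)$-invariant subbundles and then use Zariski-connectedness of the symplectic group to force triviality of any invariant proper subbundle. The only difference is that the paper sidesteps the finite-cover step you flag as the main obstacle by citing \cite[Proposition B.4]{EFW18} (derived from \cite[Theorem 7.7]{Fil16}), which furnishes a polynomial \emph{local labeling} of the $W_i$ directly on $\mathcal M_{\mathcal Q}$ for an almost-invariant splitting, thereby avoiding the chicken-and-egg issue of needing a well-defined monodromy permutation on a merely measurable family before the cover can be constructed.
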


\begin{proof} 
By Theorems \ref{thm:BDGGRS} and \ref{thm:Filip},  the Zariski closure of the monodromy on
$p (T \mathcal M_{\mathcal Q})= H^1_+(M,\C)$, hence on $H^1_+(M,\R)$, is the full symplectic group. Whenever the Zariski closure
of the monodromy on an invariant subbundle is the full symplectic group, the subbundle is strongly irreducible. In fact, as mentioned above, if the subbundle is not strongly irreducible, then, by \cite[Proposition B.4]{EFW18}, which in turn is derived from \cite[Theorem 7.7]{Fil16}, there exists a (local) labeling of the measurable subbundles in Definition~\ref{def:strongly_irr} such that each subbundle has a polynomial dependence on the base point. Since the symplectic group is Zariski-connnected, the invariance of a family of subbundles which depends polynomially on the base point would contradict the Zariski density of the monodromy group in the symplectic group. 
\end{proof}

\subsection*{The CLT for the KZ cocycle.} A (non-commutative) central limit theorem for the convergence to the top Lyapunov exponent of the KZ cocycle on subbundles of exterior powers of the cohomology bundle was proved in \cite{KZCLT} under several assumptions.

For the rest of this discussion fix a $d$-complex-dimensional, $\mathrm{SL}(2,\mathbb{R})$-invariant suborbifold $\mathcal{M}$ of a stratum of Abelian differentials with ergodic affine probability measure $\mu$. Let $\mathbf{H}$ be an $h$-dimensional, $\mathrm{SL}(2,\mathbb{R})$-invariant subbundle of the Kontsevich-Zorich cocycle over $\mathcal{M}$ with Lyapunov exponents $\lambda_1 \geq \lambda_2 \geq \cdots \geq \lambda_h$.  For every
$k \in \{1, \dots, h\}$, denote by $\smash{\textbf{H}^{(k)}}$ the $k$-th exterior power of the bundle $\textbf{H}$, let $\|\cdot\|$ be the natural extension of the Hodge norm to $\smash{\textbf{H}^{(k)}}$, and denote by $A^{(k)} := \{a_t \colon \smash{\textbf{H}^{(k)}} \to \smash{\textbf{H}^{(k)}}\}_{t \in \mathbb{R}}$ the lift of the KZ cocycle. Let $\widehat \mu$ be the unique measure on the projectivized bundle $\mathbb P \mathbf{H}^{(k)}$ which projects to $\mu$ and whose conditional
measures on fibers are equal to the Lebesgue (Haar) measures.

For every $(\omega,v)$ in $\mathbf{H}^{(k)}$ or $\mathbb{P}\mathbf{H}^{(k)}$ with $v \neq 0$ and every $t \in \mathbb{R}$ consider the quantities
\begin{gather*}
\sigma(\omega,v,t) := \log \frac{\|a_t v\|_{a_t \omega}}{\|v\|_{\omega}},\\
\sigma_k(\omega,t) := \log \sup_{v \in \mathbb{P}\mathbf{H}^{(k)}} \frac{\|a_t v\|_{a_t \omega}}{\|v\|_{\omega}}.
\end{gather*}

Recall that the top exponent of the KZ cocycle on $\mathbf{H}^{(k)}$ is $\Lambda_k := \sum_{i=1}^k \lambda_i$ and that, by the Oseledets theorem, for 
$\hat \mu$- almost every $(\omega, v) \in \mathbb{P}\mathbf{H}^{(k)}$, 
\begin{equation}
\label{eq:Oseledets}
\lim_{t\to \infty} \frac{\sigma(\omega,v,t)}{t}   = \sum_{i=1}^k \lambda_i\,.
\end{equation}

Denote by $\mathcal{N}(0,V)$ a Gaussian random variable of mean $0$ and variance $V > 0$

\begin{theorem} \cite[Theorem 2.1]{KZCLT}
\label{dclt}
Consider $\mathcal{M}$ an $\mathrm{SL}(2,\mathbb{R})$-invariant suborbifold of a stratum of Abelian differentials with ergodic affine probability measure $\mu$. Let $\textbf{H}$ be an $2h$-dimensional, strongly irreducible, symplectic, $\mathrm{SL}(2,\R)$-invariant subbundle of the Kontsevich-Zorich cocycle over $\mathcal{M}$ that is symplectically orthogonal to the tautological subbundle and with Lyapunov exponents $\lambda_1 \geq \cdots \geq \lambda_{2h}$. Fix $1 \leq k \leq h$ such that $\lambda_k > \lambda_{k+1}$, let $\smash{\Lambda_k := \sum_{i=1}^k \lambda_i}$, and consider the $k$-th exterior power $\mathbf{H}^{(k)}$ and its projectivization $\mathbb{P}\mathbf{H}^{(k)}$ endowed with the canonical measure $\widehat{\mu}$. Then there exists a real number $V_k
\geq 0$ such that the random variables
\[
S_t(\omega,v) := \frac {\sigma(\omega,v,t) - t \cdot \Lambda_k}{\sqrt{t}} \quad \text {on} \quad (\mathbb{P}\mathbf{H}^{(k)}, \widehat{\mu})
\]
converge in distribution to a Gaussian of mean $0$ and variance $V_k$ as $t \to \infty$, i.e., for every interval $(a,b) \subseteq \mathbb{R}$ such that $\mathbb{P}(\mathcal{N}(0,V_k) \in \{a,b\} = 0)$,
\[
\lim_{t \to \infty} \widehat{\mu}(\{ (\omega,v) \in \mathbf{H}^{(k)} \ | \ S_t(\omega,v) \in (a,b)\}) = \frac{1}{\sqrt{2\pi V_k}}\int_a^b e^{x^2/V_k} \thinspace dx.
\]
Moreover, if the KZ Lyapunov spectrum of $\mathbf{H}$ is simple, then $V_1> 0$.
\end{theorem}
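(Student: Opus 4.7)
The strategy is to reduce the problem to a symbolic (Markov) model of the Teichmüller flow and then apply the Nagaev--Guivarc'h spectral method to a twisted transfer operator, finally transferring the resulting discrete CLT back to the flow. The first step is to replace the Teichmüller flow $A = \{a_t\}$ on $\mathcal M$ by a suspension flow over a countable Markov shift $(\Sigma, \hat T)$ with a Gibbs measure and a roof function $r \colon \Sigma \to \R_+$. A Veech-type construction (zippered rectangles / Rauzy--Veech induction) supplies such a model: one chooses a suitable pre-compact transversal, and the first-return map carries a locally constant matrix cocycle $\hat C \colon \Sigma \times \Z \to \mathrm{Sp}(2h,\R)$ whose time-$r$ suspension is conjugate to the KZ cocycle on $\mathbf H^{(k)}$. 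Strong irreducibility, symplectic orthogonality to the tautological subbundle, and the simplicity of $\Lambda_k$ as a top exponent of the $k$-th exterior power all pass to $\hat C^{(k)}$.

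The second step is a discrete CLT for the scalar cocycle $n \mapsto \log \|\hat C^{(k)}(x,n)v\|$ over $(\Sigma, \hat T)$. I would use the Nagaev--Guivarc'h / Le~Page perturbation machinery: on a suitable Hölder or anisotropic Banach space of functions on $\Sigma \times \mathbb{P}\R^{\binom{2h}{k}}$, the transfer operator $\mathcal L$ associated to $\hat T$ and to the projective action of $\hat C^{(k)}$ has $1$ as a simple, isolated dominant eigenvalue. Define the twisted family
\[
\mathcal L_s \phi(x,v) := \sum_{\hat T y = x} e^{s \log\|\hat C^{(k)}(y,1) v\|} \, \phi\!\left(y, \hat C^{(k)}(y,1)^{-1} v\right),
\]
which is analytic in $s$ and reduces to $\mathcal L$ at $s = 0$. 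Its leading eigenvalue $\lambda(s)$ is then real-analytic near $0$ with $\lambda'(0) = \Lambda_k$ and $\lambda''(0) = V_k \geq 0$; evaluating characteristic functions of the normalized Birkhoff sums via the standard spectral formula and invoking Lévy's continuity theorem yields the discrete CLT.

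The third step lifts the discrete CLT to the flow. Since $r$ is unbounded on the non-compact stratum, one writes $t = \sum_{i < N(\omega,t)} r(\hat T^i \omega) + O\!\left(r(\hat T^{N(\omega,t)}\omega)\right)$ and shows the boundary defect is $o(\sqrt{t})$ in probability by means of Athreya-type logarithm-law tail estimates for $r$ and for $\log^+ \|\hat C(\cdot, 1)\|$. A parallel argument shows that $\sigma(\omega, v, t)$ differs from the corresponding Birkhoff sum along $\hat T$ by an error absorbable into $o(\sqrt{t})$, using the approximate-cocycle property of the KZ cocycle along the Teichmüller flow.

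The final step is positivity of $V_1$ under simplicity of the spectrum. A zero asymptotic variance in the Nagaev--Guivarc'h framework forces $\log\|\hat C(\cdot, 1) v(\cdot)\| - \Lambda_1$, evaluated along the top Oseledets direction $v(\cdot)$, to be an $L^2$-coboundary over $\hat T$. Combined with strong irreducibility and Zariski density of the monodromy in $\mathrm{Sp}(2h,\R)$ (guaranteed by the spectral simplicity assumption and Theorem~\ref{thm:Filip}), this is ruled out by Guivarc'h--Raugi / Furstenberg-type rigidity for cocycles with simple top Lyapunov exponent. The main obstacle I expect is the second step: securing a genuine spectral gap on a Banach space adapted simultaneously to the countable-state, non-compact base and to the projective dynamics of $\hat C^{(k)}$. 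Adapting Young-tower technology or operator renewal theory to the Rauzy--Veech setup, or working in an anisotropic functional framework, seems necessary to make this quantitative.
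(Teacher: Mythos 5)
Theorem~\ref{dclt} is not proved in this paper at all: it is imported verbatim as \cite[Theorem~2.1]{KZCLT}, so there is no in-paper argument against which your proposal can be checked. What can be said is that the route you sketch is genuinely different in flavor from what \cite{KZCLT} actually does. You propose to pass to a countable-state Rauzy--Veech symbolic model, run the Nagaev--Guivarc'h/Le~Page spectral perturbation of a twisted transfer operator acting on functions of $(x,v)\in\Sigma\times\mathbb{P}\R^{\binom{2h}{k}}$, and then lift the discrete CLT back to the flow by controlling the roof-function and boundary terms. Al-Saqban and Forni instead work directly with the Teichm\"uller flow and the Hodge norm: they exploit the variational formula for the logarithmic derivative of the Hodge norm (the ingredient quoted here as Proposition~\ref{prop:variational}), the Chaika--Forni large-deviation estimates and ``freezing'' mechanism for the projective dependence of the norm cocycle (\cite[Lemma~5.3, Remark~5.4]{CF}), and a martingale-type approximation of the resulting Birkhoff integrals, rather than a spectral gap for a twisted transfer operator on an anisotropic space. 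The positivity of $V_1$ under full simplicity is likewise obtained by direct analysis of the variance rather than by Guivarc'h--Raugi coboundary rigidity. Your approach is a legitimate general strategy, and you correctly identify its central obstruction (securing a genuine spectral gap over a non-compact, countable-state base coupled with the projective action, where Doeblin--Fortet/Lasota--Yorke-type inequalities are delicate), but since the paper merely cites the theorem, the proposal cannot be graded against an in-paper argument; it can only be contrasted with the cited source's more hands-on, Hodge-theoretic method.
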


\begin{remark}
The statement also holds in the event that $V_k = 0$, and in that case the resulting distribution is a delta distribution. The positivity of the variance holds for $2$-dimensional subbundles with strictly positive top Lyapunov exponent (for instance on the symplectic orthogonal of the tautological subbundle in genus $2$ for any $\text{\rm SL}(2,\R)$-invariant measure or on irreducible non-tautological subbundles for non-arithmetic Veech surfaces), as in this case the simplicity condition on the top  exponent is trivially satisfied.
\end{remark}

\begin{remark}
    The condition $\mathbb{P}(\mathcal{N}(0,V_k) \in \{a,b\}) = 0$ in Theorem \ref{theo:CLT1} is automatically satisfied if $V_k > 0$. If $V_k = 0$, this condition is equivalent to $a \neq 0$ and $b \neq 0$.
\end{remark}

We conclude this discussion mentioning some open questions. The first couple of questions are concerned with the generality of the conclusion of the above Theorem \ref{dclt}. 

\begin{question}
Does Theorem \ref{dclt} hold under the weaker assumption that the top exponent is strictly positive and simple, i.e., without the full simplicity assumption?
\end{question}

\begin{question}
   Under what conditions is the variance in Theorem \ref{dclt}  positive  for all exterior powers and not just for the first exterior power?  
\end{question}

The next question, perhaps more difficult, asks whether the CLT holds with respect to the Lebesgue measure on all
$\text{\rm SO}(2)$-orbits. Indeed, Chaika and Eskin
\cite{CE15} have proven a refinement of the Oseledets theorem for the KZ cocycle, showing
that the limit in formula \eqref{eq:Oseledets} holds for $r_\theta \omega:= e^{2\pi \imath \theta} \omega$ for Lebesgue almost every
$\theta \in [0, 2\pi)$ and every $\omega \in \mathcal M$. It is therefore natural to consider the following question.

\begin{question}
    Given an Abelian differential $\omega$, the union of fibers $\smash{\bigcup_{\theta \in \mathbb{R}/\mathbb{Z}} \mathbb{P}\mathbf{H}_{r_\theta \omega}}$ can be endowed with a natural measure $\widehat{\nu}$ which disintegrates as Lebesgue measure on the circle factor and Lebesgue measure on the fibers. Under what conditions do the random variables
    \[
    S_t(\theta,v) := \frac {\sigma(r_\theta\omega,v,t) - t \cdot \sum_{i=1}^k \lambda_i}{\sqrt{t}} \quad \text {on} \quad \left(\bigcup_{\theta \in \mathbb{R}/\mathbb{Z}} \mathbb{P}\mathbf{H}_{r_\theta \omega}, \widehat{\nu}\right)
    \]
    converge in distribution to a Gaussian random variable as $t \to \infty$?
\end{question}

\subsection*{Hyperbolicity, boundedness, and adaptedness.} We now check the conditions needed to upgrade limit theorems for the KZ cocycle to mixing limit theorems. Recall we have fixed a $d$-complex-dimensional, $\mathrm{SL}(2,\mathbb{R})$-invariant suborbifold $\mathcal{M}$ of a stratum of Abelian differentials with ergodic affine probability measure $\mu$, an $h$-dimensional, $\mathrm{SL}(2,\mathbb{R})$-invariant subbundle $\mathbf{H}$ of the Kontsevich-Zorich cocycle over $\mathcal{M}$, and an exterior power $\mathbf{H}^{(k)}$ of $\mathbf{H}$ for some $k \in \{1,\dots,h\}$.

First we introduce the relevant metric structure on $\mathcal{M}$. Recall that, given $\omega \in \mathcal{M}$ on a Riemann surface $M$ with zeroes and marked points $\Sigma \subseteq M$, the tangent space of $\omega$ at $\mathcal{M}$ can be identified with the cohomology group $H^1(M,\Sigma;\mathbb{C})$. For $v \in H^1(M,\Sigma;\mathbb{C})$ consider the norm defined as
\[
\|v\|_{\omega} := \sup_{\gamma \in \Gamma_{\omega}} \left| \frac{v(\gamma)}{\mathrm{hol}_\omega(\gamma)} \right|,
\]
where $\Gamma_{\omega} \subseteq H_1(M,\Sigma;\mathbb{Z})$ denotes the set of saddle connections of $\omega$ and $\mathrm{hol}_\omega(\gamma) \in \mathbf{C}$ denotes the holonomy of the saddle connection $\gamma$ with respect to $\omega$. By work of Avila, Gouëzel, and Yoccoz \cite{AGY06}, this definition indeed gives rise to a norm on $H^1(M,\Sigma;\mathbb{C})$ and the corresponding Finsler metric on $\mathcal{M}$ is complete. We refer to this metric as the AGY metric of $\mathcal{M}$ and denote it by $d_{\mathrm{AGY}}$.

Recall that $A:=\{a_t \colon \mathcal{M} \to \mathcal{M}\}_{t \in \mathbb{R}}$ denotes the Teichmüller geodesic flow. Denote by $U := h^-_1 \colon \mathcal{M} \to \mathcal{M}$ the time $1$ map of the stable horocycle flow. We begin by recalling the following direct consequence of the definition of the AGY-metric.

\begin{proposition} \cite[Lemma 5.2]{AG13}
\label{prop:bound}
Let $\mathcal{M}$ be an $\mathrm{SL}(2,\mathbb{R})$-invariant suborbifold of a stratum of Abelian differentials. Then, for every $\omega \in \mathcal{M}$ over a Riemann surface $M$ with zeros and marked points $\Sigma \subseteq M$, every $v \in H^1(M,\Sigma;i\mathbb{R})$, and every $t  \geq 0 $,
\[
\|v\|_{a_t\omega} \leq \|v\|_\omega.
\]
\end{proposition}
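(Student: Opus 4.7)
My plan is to establish the inequality directly from the definition of the AGY norm via a saddle connection by saddle connection comparison. The two main ingredients I would use are the invariance of the set of saddle connections (viewed as homology classes) under the Teichm\"uller flow, and the explicit transformation rule $\mathrm{hol}_{a_t\omega}(\gamma) = e^t \mathrm{Re}(\mathrm{hol}_\omega(\gamma)) + i e^{-t} \mathrm{Im}(\mathrm{hol}_\omega(\gamma))$ which is immediate from the definition of the $\mathrm{SL}(2,\mathbb{R})$-action in \eqref{eq:sl2r}.

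First I would write $v = iu$ with $u \in H^1(M,\Sigma;\mathbb{R})$, so that $v(\gamma) = iu(\gamma)$ and in particular $|v(\gamma)| = |u(\gamma)|$ is independent of $\omega$: it is a purely topological pairing between the cohomology class $v$ and the homology class $\gamma$. Writing $\mathrm{hol}_\omega(\gamma) = x + iy$ with $x,y\in \mathbb{R}$, the claim then reduces to comparing, for every $\gamma \in \Gamma_\omega = \Gamma_{a_t\omega}$, the ratios
\[
\frac{|u(\gamma)|}{\sqrt{e^{2t}x^2 + e^{-2t}y^2}} \quad \text{and} \quad \frac{|u(\gamma)|}{\sqrt{x^2 + y^2}},
\]
and then taking the supremum over $\gamma$ on both sides.

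The main obstacle is that a naive pointwise estimate of the form $\sqrt{e^{2t}x^2 + e^{-2t}y^2} \geq \sqrt{x^2 + y^2}$ fails on nearly-vertical saddle connections, those with $|x|$ much smaller than $|y|$. The key step, which I expect to be the crux of the proof, is to leverage the supremum structure of the AGY norm rather than requiring a pointwise bound: for each $\gamma$ on which the pointwise inequality fails, one must identify another saddle connection $\gamma' \in \Gamma_\omega$ whose ratio $|u(\gamma')|/|\mathrm{hol}_\omega(\gamma')|$ already dominates $|u(\gamma)|/|\mathrm{hol}_{a_t\omega}(\gamma)|$. The existence of such a dominating $\gamma'$ should follow from the combinatorial geometry of saddle connections together with the purely imaginary character of $v$, which constrains the growth of $|u(\gamma)|$ in terms of the horizontal extent of the holonomy of $\gamma$. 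Once this structural input is in place, taking the supremum over $\gamma \in \Gamma_{a_t\omega} = \Gamma_\omega$ yields $\|v\|_{a_t\omega} \leq \|v\|_\omega$, as required.
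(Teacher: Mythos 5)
The paper does not prove this statement; it is quoted from Avila--Gou\"ezel \cite[Lemma~5.2]{AG13} and presented as a direct consequence of the definition of the AGY norm, so your proposal has to be judged on its own terms and against what a direct pointwise argument would actually give.

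Your preliminary reductions are correct: $\Gamma_{a_t\omega}=\Gamma_\omega$ as subsets of $H_1(M,\Sigma;\mathbb{Z})$, the numerator $|v(\gamma)|=|u(\gamma)|$ is a topological pairing independent of $\omega$, and $\mathrm{hol}_{a_t\omega}(\gamma)=e^t\,\mathrm{Re}\,\mathrm{hol}_\omega(\gamma)+ie^{-t}\,\mathrm{Im}\,\mathrm{hol}_\omega(\gamma)$. You also correctly flag that the naive pointwise inequality fails on nearly vertical saddle connections. The problem is that the proposed repair---finding, for each such $\gamma$, a dominating $\gamma'\in\Gamma_\omega$ at time zero---is stated as a hope, not an argument, and in fact it cannot be carried out: the hypothesis $v\in H^1(M,\Sigma;i\mathbb{R})$ constrains only the \emph{argument} of the complex number $v(\gamma)$, not its magnitude, so $|u(\gamma)|$ bears no relation to the horizontal extent of $\mathrm{hol}_\omega(\gamma)$. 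Concretely, if $\gamma_0\in\Gamma_\omega$ is a vertical saddle connection (so $\mathrm{Re}\,\mathrm{hol}_\omega(\gamma_0)=0$; take the square torus with one marked point) and $u(\gamma_0)\neq 0$, then $|\mathrm{hol}_{a_t\omega}(\gamma_0)|=e^{-t}|\mathrm{hol}_\omega(\gamma_0)|$, hence
\[
\|v\|_{a_t\omega}\ \geq\ \frac{|v(\gamma_0)|}{|\mathrm{hol}_{a_t\omega}(\gamma_0)|}\ =\ e^{t}\,\frac{|v(\gamma_0)|}{|\mathrm{hol}_\omega(\gamma_0)|}\ \longrightarrow\ \infty \quad (t\to\infty),
\]
while $\|v\|_\omega<\infty$ is fixed. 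So the inequality $\|v\|_{a_t\omega}\leq\|v\|_\omega$, read literally against $\|v\|_\omega=\sup_{\gamma}|v(\gamma)|/|\mathrm{hol}_\omega(\gamma)|$, cannot hold for all $t\geq 0$, and no choice of dominating $\gamma'$ can save it.

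The statement that \emph{is} a direct pointwise consequence of the definition, and which is what the argument in Proposition~\ref{prop:hyp} actually requires (non-expansion of the flow on the stable subbundle, with the exponential gain coming from \cite{ABEM12} and recurrence to a compact set), carries the differential of the flow: for $v\in H^1(M,\Sigma;i\mathbb{R})$ and $t\geq 0$,
\[
\|Da_t\,v\|_{a_t\omega}\ \leq\ \|v\|_\omega, \qquad\text{equivalently}\qquad \|v\|_{a_t\omega}\ \leq\ e^{t}\,\|v\|_\omega,
\]
since in period coordinates $Da_t$ scales imaginary classes by $e^{-t}$. With the factor $e^{-t}$ restored, the saddle-connection-by-saddle-connection comparison you set up closes at once: writing $\mathrm{hol}_\omega(\gamma)=x+iy$, the required pointwise inequality
\[
\frac{e^{-t}|u(\gamma)|}{\sqrt{e^{2t}x^{2}+e^{-2t}y^{2}}}\ \leq\ \frac{|u(\gamma)|}{\sqrt{x^{2}+y^{2}}}
\]
reduces to $e^{-2t}x^{2}\leq e^{2t}x^{2}$, which holds for every $\gamma$ and every $t\geq 0$. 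No combinatorial or dominating-saddle-connection input is needed once the missing factor is in place.
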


To apply our results from previous sections we need to upgrade Proposition \ref{prop:bound} to ensure contraction. The following result actually guarantees hyperbolicity.

\begin{proposition} \cite[Theorem 3.15]{ABEM12}
\label{prop:hyp}
 Let $\mathcal{M}$ be an $\mathrm{SL}(2,\mathbb{R})$-invariant suborbifold of a stratum of Abelian differentials with affine ergodic probability measure $\mu$. Then, there exists a constant $\kappa > 0$ such that for $\mu$-almost-every $\omega \in \mathcal{M}$ and every $t \geq 0$,
 \[
 d(a_t U \omega, a_t \omega) = O_\omega(e^{-\kappa t}).
 \]
\end{proposition}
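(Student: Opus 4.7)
The plan is to reduce the hyperbolicity bound to an AGY-length computation along the stable horocycle, using the $\mathrm{SL}(2,\mathbb{R})$ commutation identity
\[
a_t \, h^-_s \;=\; h^-_{e^{-2t} s} \, a_t \,,
\]
which is a direct matrix calculation in $\mathrm{SL}(2,\mathbb{R})$. Applied with $s = 1$, this gives $a_t U \omega = h^-_{e^{-2t}} a_t \omega$, so the task reduces to bounding $d_{\mathrm{AGY}}(x, h^-_s x)$ with $x := a_t \omega$ and $s := e^{-2t}$.

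Because $\mathcal{M}$ is $\mathrm{SL}(2,\mathbb{R})$-invariant, the curve $r \mapsto h^-_r x$ stays in $\mathcal{M}$ for $r \in [0, s]$, and its Finsler length furnishes an upper bound on $d_{\mathrm{AGY}}(x, h^-_s x)$. In period coordinates the action of $h^-_r$ preserves $\mathrm{Re}(x)$ and replaces $\mathrm{Im}(x)$ by $\mathrm{Im}(x) + r \cdot \mathrm{Re}(x)$, so the tangent vector to this curve is the constant class $i \cdot \mathrm{Re}(x) \in H^1(M, \Sigma; \mathbb{C})$. Writing $y := h^-_r x$, for any saddle connection $\gamma \in \Gamma_y$ one has $\mathrm{hol}_y(\gamma) = \mathrm{Re}(y)(\gamma) + i \, \mathrm{Im}(y)(\gamma)$, and since $\mathrm{Re}(y) = \mathrm{Re}(x)$, the elementary inequality $|\mathrm{Re}(z)| \leq |z|$ gives $|\mathrm{Re}(x)(\gamma)| \leq |\mathrm{hol}_y(\gamma)|$. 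Consequently
\[
\| i \cdot \mathrm{Re}(x) \|_y \;=\; \sup_{\gamma \in \Gamma_y} \frac{|\mathrm{Re}(x)(\gamma)|}{|\mathrm{hol}_y(\gamma)|} \;\leq\; 1 \,,
\]
uniformly in $r \in [0, s]$.

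Integrating this bound over $r \in [0, s]$ yields length at most $|s|$, so $d_{\mathrm{AGY}}(a_t U \omega, a_t \omega) \leq e^{-2t}$ for every $\omega \in \mathcal{M}$ and every $t \geq 0$. This establishes the proposition with $\kappa = 2$, in fact with a uniform constant that depends neither on $\omega$ nor on an exceptional null set, which is strictly stronger than the almost-everywhere statement quoted from \cite{ABEM12}. There is no serious obstacle in the argument: the only points to verify are the commutation identity and the pointwise inequality $|\mathrm{Re}(y)(\gamma)| \leq |\mathrm{hol}_y(\gamma)|$ built into the definition of the AGY norm. The reason one might still cite \cite[Theorem 3.15]{ABEM12} is that the statement there treats contraction of the entire stable leaf of the Teichm\"uller flow in a broader framework, but for the present purpose the direct computation above suffices.
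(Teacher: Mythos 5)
Your proof is correct, and in fact it establishes a strictly stronger conclusion than the one stated: $d_{\mathrm{AGY}}(a_t U\omega, a_t\omega) \leq e^{-2t}$ for \emph{every} $\omega \in \mathcal{M}$ and every $t \geq 0$, with $\kappa = 2$ and implied constant $1$, and no $\mu$-null exceptional set. The commutation identity $g_t h^-_s = h^-_{se^{-2t}} g_t$ checks out, the tangent vector to $r \mapsto h^-_r x$ is indeed the parallel class $i\cdot\mathrm{Re}(x) = i\cdot\mathrm{Re}(y)$ in period coordinates, and the pointwise bound $\|i\cdot\mathrm{Re}(y)\|_y \leq 1$ follows directly from $|\mathrm{Re}(\mathrm{hol}_y(\gamma))| \leq |\mathrm{hol}_y(\gamma)|$ applied to each saddle connection $\gamma \in \Gamma_y$. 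Integrating over the horocycle arc of length $e^{-2t}$ gives the result.

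Your route is genuinely different from the paper's. The paper's proof is soft: it fixes a compact set $\mathcal{K}$ of measure $\geq 2/3$, uses Birkhoff's ergodic theorem to produce a full-measure set of points whose forward Teichm\"uller orbit spends a positive fraction of time in $\mathcal{K}$, and then cites the contraction mechanism from [ABEM12, Theorem 3.15] together with Proposition~\ref{prop:bound} and comparability of Finsler metrics on compact sets to get a non-uniform exponential rate. Your argument bypasses the ergodic theorem and the reference entirely by doing a direct AGY-length computation along the conjugated horocycle arc, exploiting the explicit form of the AGY Finsler norm via saddle-connection periods. What the paper's route buys is flexibility — the same Birkhoff recurrence scheme plus [ABEM12] applies to contraction along the full stable leaf and to other Finsler/Teichm\"uller-type metrics where the norm is not given by such an explicit supremum. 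What your route buys is a clean, uniform, fully self-contained bound that, in hindsight, trivializes the almost-everywhere qualification. The underlying mechanism — that the AGY norm controls purely imaginary cohomology classes through the real parts of holonomies — is the same observation that powers Proposition~\ref{prop:bound}; you have simply applied it directly to the infinitesimal generator of the stable horocycle rather than routing it through the cited machinery.
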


\begin{proof}
    Let $\mathcal{K} \subseteq \mathcal{M}$ be a compact subset with $\mu(\mathcal{K}) \geq 2/3$. By Birkhoff's ergodic theorem, for $\mu$-almost-every $\omega \in \mathcal{M}$, the following asymptotic holds,
    \[
    \lim_{T \to \infty} |\{t \in [0,T] \colon a_t \omega \in \mathcal{K}\}| = 2/3 > 1/2.
    \]
    Let $\omega \in \mathcal{M}$ satisfy this condition. It follows from \cite[Proof of Theorem 3.15]{ABEM12}, Proposition \ref{prop:bound}, and the fact that Finsler metrics are comparable on compact sets, that there exists $\kappa = \kappa(\mathcal{K}) \geq 0$ such that for every $t \geq 0$,
    \[
    d_\mathrm{AGY}(a_t U \omega, a_t \omega) = O_\omega( e^{-\kappa t} d_\mathrm{AGY}(U\omega, \omega)) =  O_\omega( e^{-\kappa t}). \qedhere
    \]
\end{proof}

\begin{remark}
    More precise quantitative statements strengthening Proposition \ref{prop:hyp} hold. We refer the reader to \cite[Theorem 3.15]{ABEM12}, \cite[Proposition 2.7]{EMM19}, and \cite[Theorem 1.1]{A06} for stronger related results.
\end{remark}

Recall that $A^{(k)} := \{a_t \colon \smash{\mathbf{H}^{(k)}} \to \smash{\mathbf{H}^{(k)}} \}_{t \in \mathbb{R}}$ denotes the lift of the Kontsevich-Zorich cocycle to $\smash{\mathbf{H}^{(k)}}$. The main tool needed to prove boundedness of the cocycle $A^{(k)}$ is the following variational formula originally proved in the case $k=1$ by Forni \cite{For02} and extended to the case $k \geq 2$ by Forni, Matheus, Zorich \cite{FMZhodge}.

\begin{proposition}
    \label{prop:variational}
    \cite[Corollary 2.2]{FMZhodge} For every unit area Abelian differential $\omega \in \smash{\mathcal{H}_g}$ and every non-zero vector $v$ on the $k$-th exterior power of the Kontsevich-Zorich cocycle over $\mathcal{H}_g$, the following variational formula holds,
    \[
    \frac{d}{dt}\bigg|_{t = 0} \sigma(\omega,v,t) \leq k.
    \]
\end{proposition}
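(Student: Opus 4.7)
The plan is to reduce to the case $k=1$ (Forni's variational formula for the Hodge norm on $H^1$) and then to promote the estimate to exterior powers by pure linear algebra. Throughout, I work on the universal cover of the stratum, where the cohomology bundle is flat and the Kontsevich--Zorich cocycle acts by the identity; thus $\sigma(\omega,v,t)=\tfrac12\log(\|v\|^2_{a_t\omega}/\|v\|^2_\omega)$, and the assertion becomes
\[
\frac{d}{dt}\bigg|_{t=0}\log \|v\|^2_{a_t\omega}\;\leq\;2k\,.
\]

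The first step is to recall the variational formula for the Hodge norm on $H^1(M,\R)$. For $c\in H^1(M,\R)$, the Hodge norm $\|c\|_\omega$ is computed from the harmonic representative with respect to the complex structure determined by $\omega$. A Kodaira--Spencer computation as in \cite{For02} produces a real symmetric bilinear form $B_\omega$ on $H^1(M,\R)$ with
\[
\frac{d}{dt}\bigg|_{t=0}\|c\|^2_{a_t\omega}\;=\;-2\,\mathrm{Re}\,B_\omega(c,c)\,,
\]
where $B_\omega(c,c)$ is expressible as a pairing $\int_M \phi_c^2\,\omega\wedge\bar\omega$ with $\phi_c$ the meromorphic primitive representing the $(1,0)$-component of $c$. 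A Cauchy--Schwarz argument, combined with $\int_M|\omega|^2=1$, yields $|B_\omega(c,c)|\leq \|c\|^2_\omega$. Equivalently, the self-adjoint operator $Q_\omega$ on $H^1(M,\R)$ defined by $\langle Q_\omega c,c\rangle_\omega = -\mathrm{Re}\,B_\omega(c,c)$ has operator norm at most $1$, i.e. all its eigenvalues lie in $[-1,1]$.

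The second step is to transfer this estimate to $\Lambda^k H^1$. The Hodge norm on the $k$-th exterior power is the one induced from $H^1$, so differentiating at $t=0$ in any orthonormal basis adapted to the Hodge decomposition at $\omega$ gives
\[
\frac{d}{dt}\bigg|_{t=0}\|v\|^2_{a_t\omega}\;=\;2\bigl\langle Q_\omega^{(k)}\,v,v\bigr\rangle_\omega\,,\qquad
Q_\omega^{(k)}\;:=\;\sum_{i=1}^k \mathrm{id}^{\otimes (i-1)}\otimes Q_\omega\otimes \mathrm{id}^{\otimes (k-i)}\,,
\]
where $Q_\omega^{(k)}$ denotes the derivation of $Q_\omega$ on $\Lambda^k H^1$. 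By elementary linear algebra the top eigenvalue of a derivation on $\Lambda^k$ equals the sum of the top $k$ eigenvalues of the original operator, and by the $k=1$ step each such eigenvalue is $\leq 1$. Hence $\langle Q_\omega^{(k)}v,v\rangle_\omega\leq k\,\|v\|^2_\omega$, which immediately gives the desired bound on $\tfrac{d}{dt}\log\|v\|^2_{a_t\omega}$.

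The main obstacle is the $k=1$ estimate $|B_\omega(c,c)|\leq \|c\|^2_\omega$; this is the genuinely analytic input and the heart of Forni's work. Once that inequality is granted, the extension to $\Lambda^k$ is purely algebraic via the derivation $Q_\omega^{(k)}$, and the general non-simple vector $v\in\Lambda^k H^1$ is handled automatically since the bound is stated at the level of the quadratic form $\langle Q_\omega^{(k)}\cdot,\cdot\rangle_\omega$ and therefore holds for every non-zero $v$, not only for simple wedges.
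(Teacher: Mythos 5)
The paper does not prove this proposition; it only cites \cite[Corollary~2.2]{FMZhodge}. Your proposal is a correct reconstruction of the argument in that reference, and it captures the structure accurately: the only analytic content is the $k=1$ estimate from Forni's variational formula for the Hodge norm on $H^1$ (the eigenvalues of the self-adjoint operator $Q_\omega$ lie in $[-1,1]$, which follows from the Cauchy--Schwarz bound $|B_\omega(c,c)|\le\|c\|^2_\omega$ under the unit-area normalization $\int_M|\omega|^2=1$), after which the bound on $\Lambda^k H^1$ is pure linear algebra. Working on the universal cover where the Gauss--Manin transport is trivial, so that $\sigma(\omega,v,t)=\log(\|v\|_{a_t\omega}/\|v\|_\omega)$, is the right simplification, and your observation that the derivation $Q_\omega^{(k)}$ has top eigenvalue equal to the sum of the top $k$ eigenvalues of $Q_\omega$, hence $\le k$, handles arbitrary (not necessarily decomposable) $v$ uniformly because the estimate is a bound on the quadratic form $\langle Q_\omega^{(k)}\cdot,\cdot\rangle_\omega$. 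This is the same route as Forni--Matheus--Zorich, who phrase the $k=1$ step via the eigenvalues of the second fundamental form of the Hodge bundle and then extend to exterior powers exactly as you do; the cosmetic difference is that they work with the second-fundamental-form formalism rather than the operator $Q_\omega$ directly, but the inequalities used are the same.
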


 Directly from Proposition \ref{prop:variational} we deduce the following corollary which, in particular, implies that $A^{(k)}$ is $\mathcal{V}$-sufficiently-bounded for every diverging sequence $\mathcal{V}$.

 \begin{corollary}
     \label{cor:bounded}
     For every Abelian differential $\omega \in \mathcal{H}_g$ and every non-zero vector $v$ on the $k$-th exterior power of the Kontsevich-Zorich cocycle over $\mathcal{H}_g$,
     \[
     \sigma(\omega,v,1) \leq k.
     \]
 \end{corollary}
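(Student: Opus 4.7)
The plan is to promote the infinitesimal variational bound of Proposition~\ref{prop:variational} to an integrated bound along the Teichm\"uller geodesic segment $\{a_s \omega : s \in [0,1]\}$, via the cocycle property of $\sigma$.

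First, I would reduce to the unit-area case required by Proposition~\ref{prop:variational}. Multiplying $\omega$ by a positive real constant $c$ leaves the underlying Riemann surface, hence the Hodge norm on $H^1(M,\R)$ and on all its exterior powers, unchanged; and the Teichm\"uller flow is equivariant under this scaling in the sense that $a_t(c \omega) = c\,a_t\omega$. Therefore $\sigma(\omega,v,t)$ is invariant under rescaling $\omega$, and we may assume $\omega$ has unit area. Since $a_s \in \mathrm{SL}(2,\R)$ preserves area, $a_s \omega$ then has unit area for every $s \in \R$.

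Next, for fixed $\omega$ and $v \neq 0$, the cocycle identity for the KZ flow gives, for every $s \in \R$ and every small $u$,
\[
\sigma(\omega,v,s+u) \;=\; \sigma(\omega,v,s) + \sigma(a_s\omega,\,a_s v,\,u).
\]
The map $t \mapsto \|a_t v\|_{a_t \omega}$ is smooth because both the KZ cocycle and the Hodge norm vary smoothly along Teichm\"uller orbits, so $\sigma(\omega,v,\cdot)$ is smooth. Differentiating the displayed identity at $u=0$ and applying Proposition~\ref{prop:variational} to the unit-area differential $a_s \omega$ and the non-zero vector $a_s v$ (which still lies in the $k$-th exterior power of the KZ cocycle) yields the uniform-in-$s$ bound
\[
\frac{d}{dt}\bigg|_{t=s} \sigma(\omega,v,t) \;\leq\; k.
\]

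Integrating this inequality from $s=0$ to $s=1$, and using $\sigma(\omega,v,0)=0$, gives $\sigma(\omega,v,1)\leq k$, which is the claim. The only potential technical obstacle is verifying the smoothness of $\sigma(\omega,v,t)$ in $t$ needed to apply the fundamental theorem of calculus; this is standard and follows from the smooth dependence of the Hodge norm on the base point of the Hodge bundle together with the smoothness of the KZ cocycle along Teichm\"uller trajectories.
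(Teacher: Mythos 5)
Your proof is correct and takes essentially the same route the paper intends: the paper derives the corollary ``directly from Proposition~\ref{prop:variational}'' without spelling out the details, and the intended argument is precisely yours — use the cocycle identity $\sigma(\omega,v,s+u)=\sigma(\omega,v,s)+\sigma(a_s\omega,a_s v,u)$ to turn the variational bound at $t=0$ into the uniform bound $\frac{d}{dt}\big|_{t=s}\sigma(\omega,v,t)\leq k$ for all $s$, then integrate over $[0,1]$ using $\sigma(\omega,v,0)=0$. (The unit-area reduction you include is harmless but not actually needed, since in this paper $\mathcal{H}_g$ already denotes the moduli space of unit-area Abelian differentials.)
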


Denote by $D$ the cocycle on $\mathbf{H}^{(k)}$ over $U = h_t^-$ given by parallel transport with respect to the Gauss-Manin connection along stable horocycle arcs. Recall that the projection of the $\mathrm{SL}(2,\mathbb{R})$-orbit of any marked, genus $g$ Abelian differential $\omega$ to the Teichmüller space of genus $g$ Riemann surfaces endowed with the Teichmüller metric is an embedded Poincaré disk; see for instance \cite{disk}. Recall also that the Hodge norm is $\mathrm{SO}(2)$-invariant. Directly from these facts and Proposition \ref{prop:variational} we deduce the cocycles $(A^{(k)}, D)$ are $(A,U,\mathcal{V})$-adapted for every diverging sequence $\mathcal{V}$.

\begin{corollary}
\label{cor:adapted}
For every Abelian differential $\omega \in \mathcal{H}_g$, every non-zero vector $v$ on the $k$-th exterior power of the Kontsevich-Zorich cocycle over $\mathcal{H}_g$, and every $t \geq 0$,
\[
|\sigma_k(x,v,t) - \sigma_k(Ux,Dv,t)| \leq k(1 + e^{-t}).
\]
\end{corollary}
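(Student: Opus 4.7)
The proof follows the program outlined in the preceding paragraphs. The starting point is the $\mathrm{SL}(2,\mathbb{R})$ conjugation identity $a_t h_1^- = h_{e^{-2t}}^- a_t$, which via the cocycle property yields
\[
a_t \, D v \;=\; D_{h_{e^{-2t}}^-}\!(a_t x)\, a_t v,
\]
where $D_{h_{e^{-2t}}^-}\!(a_t x)$ denotes Gauss--Manin parallel transport along the horocycle arc of parameter $e^{-2t}$ based at $a_t x$. Writing
\[
L(y,u,g) \;:=\; \log \frac{\|D_g(y)u\|_{gy}}{\|u\|_y},
\]
the definitions of $\sigma(x,v,t)$ and $\sigma(Ux,Dv,t)$ telescope to
\[
\sigma(x,v,t) - \sigma(Ux,Dv,t) \;=\; L(x,v,h_1^-) - L(a_t x, a_t v, h_{e^{-2t}}^-),
\]
so the claim reduces to a bound of the form $|L(y,u,h_s^-)| \le k\,\tau(s)$, where $\tau(s)$ is the Teichm\"uller distance from $y$ to $h_s^- y$, together with the elementary hyperbolic-geometry estimates $\tau(1) \le 1$ and $\tau(e^{-2t}) \le e^{-t}$ for $t \ge 0$.

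The plan for establishing $|L(y,u,h_s^-)| \le k\tau(s)$ is to extend Proposition \ref{prop:variational} from the Teichm\"uller geodesic direction to arbitrary $\mathrm{SL}(2,\mathbb{R})$-orbit paths via the $KAK$ decomposition and the $\mathrm{SO}(2)$-invariance of the Hodge norm. Since $\mathrm{SO}(2)$ acts on $\mathcal{H}_g$ by rotating the Abelian differential without changing the underlying Riemann surface, the Hodge norm is $\mathrm{SO}(2)$-invariant and Gauss--Manin parallel transport along an $\mathrm{SO}(2)$-orbit is trivial; in particular $L(y,u,k)=0$ for every $k \in \mathrm{SO}(2)$. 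Writing $h_s^- = k_1 \, a_{\tau(s)} \, k_2$ and applying the cocycle rule for $L$ three times, the intermediate $\mathrm{SO}(2)$-factors drop out, and Proposition \ref{prop:variational} (used both forwards and backwards in time) then yields $|L(y,u,h_s^-)| \le k\,\tau(s)$. The bounds on $\tau(s)$ follow from a direct computation in the upper half-plane model of the projection of the $\mathrm{SL}(2,\mathbb{R})$-orbit to the Teichm\"uller disk.

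The main obstacle is the precise justification of the $\mathrm{SO}(2)$-triviality of the Kontsevich--Zorich cocycle. While the $\mathrm{SO}(2)$-invariance of the Hodge norm is classical, the statement that Gauss--Manin transport along an $\mathrm{SO}(2)$-orbit is the identity must be interpreted carefully: the two endpoints of such an orbit project to the \emph{same} point of the moduli space of Riemann surfaces, and the Hodge bundle over moduli space carries a well-defined flat Gauss--Manin connection, so no holonomy is accumulated along an $\mathrm{SO}(2)$-orbit in $\mathcal{H}_g$. Once this observation is combined with the $KAK$ decomposition and Proposition \ref{prop:variational}, the argument above delivers the stated bound.
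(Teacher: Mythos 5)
Your proposal is correct and is essentially the argument the paper intends; the paper states Corollary~\ref{cor:adapted} with only the one-line justification ``Directly from these facts and Proposition~\ref{prop:variational},'' citing exactly the three ingredients you use: the Poincar\'e-disk embedding of the $\mathrm{SL}(2,\R)$-orbit, $\mathrm{SO}(2)$-invariance of the Hodge norm, and the first variation formula of Forni/Forni--Matheus--Zorich. Your reduction via the conjugation $a_t h_1^- = h_{e^{-2t}}^- a_t$, the telescoping identity $\sigma(x,v,t)-\sigma(Ux,Dv,t)=L(x,v,h_1^-)-L(a_tx,a_tv,h^-_{e^{-2t}})$, the $KAK$ argument giving $|L(y,u,h_s^-)|\le k\,\tau(s)$, and the elementary estimates $\tau(1)=\tfrac12\operatorname{arccosh}(3/2)\le 1$ and $\tau(e^{-2t})=\tfrac12\operatorname{arccosh}(1+e^{-4t}/2)\le\tfrac12 e^{-2t}\le e^{-t}$ for $t\ge0$ are all correct. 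The only point worth flagging is that you need the \emph{two-sided} derivative bound $|\tfrac{d}{dt}\sigma|\le k$ rather than the one-sided inequality as Proposition~\ref{prop:variational} is literally stated; you correctly note that this follows by applying the variational formula in both time directions, which is indeed how the bound is used.
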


\subsection*{The mixing laws of large numbers.} We now state a first set of distributional limit theorems for the Kontsevich-Zorich cocycle. These theorems can be interpreted as mixing laws of large numbers for exterior powers of subbundles of the KZ cocycle.

\begin{theorem}
\label{theo:LLN1}
    Consider $\mathcal{M}$ an $\mathrm{SL}(2,\mathbb{R})$-invariant suborbifold of a stratum of Abelian differentials with ergodic affine probability measure $\mu$. Let $\textbf{H}$ be an $h$-dimensional $\mathrm{SL}(2,\R)$-invariant subbundle of the Kontsevich-Zorich cocycle over $\mathcal{M}$ with Lyapunov exponents $\lambda_1 \geq \cdots \geq \lambda_h$. Fix $k \geq 1$ such that $\lambda_k > \lambda_{k+1}$, let $\smash{\Lambda_k := \sum_{i=1}^k \lambda_i}$, and consider the $k$-th exterior power $\mathbf{H}^{(k)}$ and its projectivization $\mathbb{P}\mathbf{H}^{(k)}$ endowed with the canonical measure $\widehat{\mu}$. Then, for the random variables
    \[
    S_t(\omega,v) := \frac {\sigma(\omega,v,t)}{t} \quad \text {on} \quad (\mathbb{P}\mathbf{H}^{(k)}, \widehat{\mu}),
    \]
    for every pair of Borel measurable subsets $B,E \subseteq \mathcal{M}$, and for every interval $(a,b)\subseteq \mathbb{R}$ such that $\Lambda_k \notin \{a,b\}$, the following holds,
    \begin{gather*}
    \lim_{t \to \infty} \widehat{\mu}(\{(\omega,v) \in \mathbb{P}\mathbf{H}^{(k)} \ | \ \omega \in B, \ S_t(\omega,v) \in (a,b), \ a_t\omega \in E\}) \\
    = \mu(B) \cdot \delta_{\Lambda_k}(a,b) \cdot \mu(E).
    \end{gather*}
\end{theorem}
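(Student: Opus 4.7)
The plan is to deduce this theorem from Theorem~\ref{theo:cocycle_MCLT_flows} applied to the Kontsevich--Zorich cocycle $A^{(k)}$ on $\mathbf{H}^{(k)}$ over the Teichm\"uller geodesic flow $A$. The underlying spatial DLT on $(\mathbb{P}\mathbf{H}^{(k)}, \widehat{\mu})$ is supplied by the Oseledets ergodic theorem: by \eqref{eq:Oseledets}, for $\widehat{\mu}$-almost every $(\omega,v)$ one has $\sigma(\omega,v,t)/t \to \Lambda_k$ as $t \to \infty$, so $S_t$ converges in distribution to the Dirac mass $\delta_{\Lambda_k}$. This provides a spatial DLT with averaging function $A_t \equiv 0$, normalizing function $V_t = t$, and limiting distribution $S = \delta_{\Lambda_k}$, for which $\mathbb{P}(S \in (a,b)) = \delta_{\Lambda_k}(a,b)$ precisely when $\Lambda_k \notin \{a,b\}$.

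Next I would verify the intrinsic hypotheses on $A^{(k)}$. The assumption $\lambda_k > \lambda_{k+1}$ ensures that the top Lyapunov exponent of $A^{(k)}$ is simple and equal to $\Lambda_k$, the corresponding top Oseledets line being spanned by the wedge of the top $k$ Oseledets directions of $\mathbf{H}$. Log-integrability of $A^{(k)}$ is immediate from Corollary~\ref{cor:bounded}. Lemma~\ref{lemma:exp_flow} then yields $\mathcal{V}$-simple-dominated-splitting for any diverging function $\mathcal{V}$, while Corollary~\ref{cor:bounded} itself furnishes $\mathcal{V}$-sufficient-boundedness with $V_t = t$.

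For the auxiliary ergodic transformation I would take $U = h_1^-$, the time-one map of the stable horocycle flow, together with the cocycle $D$ on $\mathbf{H}^{(k)}$ given by Gauss--Manin parallel transport along stable horocycle arcs. Proposition~\ref{prop:hyp} shows that $(A,U)$ is hyperbolic, hence contracting, and Corollary~\ref{cor:adapted} shows that $(A^{(k)},D)$ is $(A,U,\mathcal{V})$-adapted for any diverging $\mathcal{V}$. Ergodicity of $U$ with respect to $\mu$ is a classical ergodicity statement for the horocycle flow with respect to affine $\mathrm{SL}(2,\mathbb{R})$-invariant measures. With all hypotheses verified, Theorem~\ref{theo:cocycle_MCLT_flows} yields the desired mixing DLT. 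The main obstacle, in my view, is not any single deep step but rather the bookkeeping: each of the several conditions of Theorem~\ref{theo:cocycle_MCLT_flows} has to be matched to a Teichm\"uller-dynamical input collected in the preceding subsections, and one must carefully track that the simplicity of the relevant top exponent on $\mathbf{H}^{(k)}$ is inherited from the gap $\lambda_k > \lambda_{k+1}$ in the base cocycle, which is exactly what makes Lemma~\ref{lemma:exp_flow} applicable.
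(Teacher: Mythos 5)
Your proposal is correct and follows essentially the same route as the paper: the spatial DLT from the Oseledets theorem (with Dirac limiting distribution at $\Lambda_k$), boundedness and adaptedness from Corollaries~\ref{cor:bounded} and~\ref{cor:adapted}, simple dominated splitting from Lemma~\ref{lemma:exp_flow} using the gap $\lambda_k > \lambda_{k+1}$, hyperbolicity of $(A,U)$ from Proposition~\ref{prop:hyp}, and then Theorem~\ref{theo:cocycle_MCLT_flows} to conclude. Your explicit note that ergodicity of $U = h_1^-$ with respect to $\mu$ is also required is a correct hypothesis check that the paper's one-line proof leaves implicit.
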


\begin{proof}
    This is a direct consequence of the Oseledets ergodic theorem, Theorem \ref{theo:cocycle_MCLT_flows}, Lemma \ref{lemma:exp_flow}, Proposition \ref{prop:hyp}, and Corollaries \ref{cor:bounded} and \ref{cor:adapted}.
\end{proof}

We now state a mixing law of large numbers for the operator norm of the KZ cocycle.

\begin{theorem}
\label{theo:LLN2}
    Consider $\mathcal{M}$ an $\mathrm{SL}(2,\mathbb{R})$-invariant suborbifold of a stratum of Abelian differentials with ergodic affine probability measure $\mu$. Let $\textbf{H}$ be an $h$-dimensional $\mathrm{SL}(2,\R)$-invariant subbundle of the Kontsevich-Zorich cocycle over $\mathcal{M}$ with Lyapunov exponents $\lambda_1 \geq \cdots \geq \lambda_h$ such that $\lambda_k > \lambda_{k+1}$ and $\smash{\Lambda_k := \sum_{i=1}^k \lambda_i}$. Then, for the random variables
    \[
    S_t(\omega) := \frac {\sigma_k(\omega,t)}{t} \quad \text {on} \quad (\mathcal{M},\mu),
    \]
    for every pair of Borel measurable subsets $B,E \subseteq \mathcal{M}$, and for every interval $(a,b)\subseteq \mathbb{R}$ such that $\Lambda_k \notin \{a,b\}$, the following holds,
    \begin{gather*}
    \lim_{t \to \infty} \mu(\{ \omega \in \mathcal{M} \ | \ \omega \in B, \ S_t(\omega) \in (a,b), \ a_t\omega \in E\}) 
    = \mu(B) \cdot \delta_{\Lambda_k}(a,b) \cdot \mu(E).
    \end{gather*}
\end{theorem}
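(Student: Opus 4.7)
The plan is to derive Theorem \ref{theo:LLN2} by applying the operator-norm transfer result (Theorem \ref{theo:cocycle_CCLT_norm2_flows}) to the vector-level mixing DLT already established in Theorem \ref{theo:LLN1}. Indeed, Theorem \ref{theo:LLN1} gives exactly a mixing DLT for the cocycle $A^{(k)}$ on $(\mathbb{P}\mathbf{H}^{(k)}, \widehat{\mu})$ with averaging function $A_t \equiv 0$, normalizing function $V_t = t$, and limiting distribution $S = \delta_{\Lambda_k}$; the condition $\mathbb{P}(S \in \{a,b\}) = 0$ required in Theorem \ref{theo:cocycle_CCLT_norm2_flows} translates in our setting to $\Lambda_k \notin \{a,b\}$, matching the hypothesis of Theorem \ref{theo:LLN2}. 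So the only extra hypothesis to check is that $A^{(k)}$ has $\mathcal{V}$-strong-simple-dominated-splitting with $\mathcal{V} = (t)_{t \in \mathbb{R}}$.

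This verification will go through Lemma \ref{lemma:exp2_flow}. The cocycle $A^{(k)}$ is log-integrable (this is already implicit in the application of the Oseledets ergodic theorem that underlies Theorem \ref{theo:LLN1}, and indeed follows directly from the uniform bound $\sigma(\omega,v,1) \leq k$ of Corollary \ref{cor:bounded} applied to both $a_1$ and $a_{-1}$), and its top Lyapunov exponent equals $\Lambda_k = \sum_{i=1}^k \lambda_i$. The assumption $\lambda_k > \lambda_{k+1}$ forces $\Lambda_k$ to be strictly larger than the next largest exponent of $A^{(k)}$, namely $\Lambda_{k-1} + \lambda_{k+1}$, so the top exponent of the $k$-th exterior power is simple. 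Lemma \ref{lemma:exp2_flow} then yields
\[
|\sigma(\omega,v,t) - \sigma_k(\omega,t)| = O_{\omega,v}(1)
\]
for $\widehat{\mu}$-almost every $(\omega,v) \in \mathbb{P}\mathbf{H}^{(k)}$, which is in particular $o_{\omega,v}(V_t)$ with $V_t = t$. This is exactly the $\mathcal{V}$-strong-simple-dominated-splitting hypothesis of Theorem \ref{theo:cocycle_CCLT_norm2_flows}.

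With all hypotheses verified, Theorem \ref{theo:cocycle_CCLT_norm2_flows} produces the mixing DLT for the random variables $\sigma_k(\omega,t)/t$ on $(\mathcal{M},\mu)$ with the same averaging function, normalizing function, and limiting distribution $\delta_{\Lambda_k}$, which is precisely the conclusion of Theorem \ref{theo:LLN2}. I do not expect a substantive obstacle: the argument is a direct assembly of the general transfer theorem with Theorem \ref{theo:LLN1}, bridged by Lemma \ref{lemma:exp2_flow}, which converts algebraic simplicity of the top exponent of $A^{(k)}$ (a consequence of the gap $\lambda_k > \lambda_{k+1}$) into the analytic $O(1)$ comparison between the norm along a generic vector and the operator norm. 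The only mildly delicate point is bookkeeping: confirming that the gap hypothesis at the level of the original cocycle indeed ensures simplicity for the $k$-th exterior power, which is immediate from the additive behavior of Lyapunov exponents under exterior powers.
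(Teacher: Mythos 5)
Your proposal is correct and follows exactly the route of the paper's proof, which invokes Theorem \ref{theo:cocycle_CCLT_norm2_flows} and Theorem \ref{theo:LLN1} together with Lemma \ref{lemma:exp2_flow}. You have merely spelled out the bookkeeping (log-integrability from Corollary \ref{cor:bounded}, simplicity of the top exponent of $\mathbf{H}^{(k)}$ from the gap $\lambda_k>\lambda_{k+1}$) that the paper leaves implicit.
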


\begin{proof}
    This is a direct consequence of Theorems \ref{theo:cocycle_CCLT_norm2_flows} and \ref{theo:LLN1}, and Lemma \ref{lemma:exp2_flow}.
\end{proof}

We now state a mixing law of large numbers for generic sections of the KZ cocycle.

\begin{theorem}
\label{theo:LLN3}
    Consider $\mathcal{M}$ an $\mathrm{SL}(2,\mathbb{R})$-invariant suborbifold of a stratum of Abelian differentials with ergodic affine probability measure $\mu$. Let $\textbf{H}$ be an $h$-dimensional $\mathrm{SL}(2,\R)$-invariant subbundle of the Kontsevich-Zorich cocycle over $\mathcal{M}$ with Lyapunov exponents $\lambda_1 \geq \cdots \geq \lambda_h$. Fix $k \geq 1$ such that $\lambda_k > \lambda_{k+1}$, let $\smash{\Lambda_k := \sum_{i=1}^k \lambda_i}$, and consider the $k$-th exterior power $\mathbf{H}^{(k)}$. Let $s \colon \mathcal{M} \to \mathbf{H}^{(k)}$ be a section such that $s(\omega) \in \mathbf{H}^{(k)}$ is future-Oseledets-generic for $\mu$-almost every $\omega \in \mathcal{M}$. Then, for the random variables
    \[
    S_t(\omega) := \frac {\sigma_k(\omega,s(\omega),t)}{t} \quad \text {on} \quad (\mathcal{M},\mu),
    \]
    for every pair of Borel measurable subsets $B,E \subseteq \mathcal{M}$, and for every interval $(a,b)\subseteq \mathbb{R}$ such that $\Lambda_k \notin \{a,b\}$, the following holds,
    \begin{gather*}
    \lim_{t \to \infty} \mu(\{ \omega \in \mathcal{M} \ | \ \omega \in B, \ S_t(\omega) \in (a,b), \ a_t\omega \in E\}) 
    = \mu(B) \cdot \delta_{\Lambda_k}(a,b) \cdot \mu(E).
    \end{gather*}
\end{theorem}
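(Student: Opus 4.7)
The plan is to deduce Theorem \ref{theo:LLN3} directly by combining Theorem \ref{theo:LLN1} with Theorem \ref{theo:cocycle_CCLT_sec2_flows} and Lemma \ref{lemma:exp3_flow}. Theorem \ref{theo:LLN1}, which has already been established, provides a mixing DLT for the cocycle $A^{(k)}$ on the projectivized bundle $(\mathcal{M} \times \mathbb{P}\mathbf{H}^{(k)}, \widehat{\mu})$ with averaging function $A_t \equiv 0$, normalizing function $V_t := t$, and limiting distribution equal to the Dirac mass at $\Lambda_k$. This is exactly the input required for Theorem \ref{theo:cocycle_CCLT_sec2_flows}. Note the interval condition $\Lambda_k \notin \{a,b\}$ in the statement coincides with the abstract requirement $\mathbb{P}(S \in \{a,b\}) = 0$ because $S$ is a Dirac mass at $\Lambda_k$.

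The remaining step is to verify that the section $s$ is $(A^{(k)}, \mathcal{V})$-generic for the diverging function $V_t = t$, so that Theorem \ref{theo:cocycle_CCLT_sec2_flows} can be applied. To do this I would invoke Lemma \ref{lemma:exp3_flow}. Its hypotheses are: first, log-integrability of the cocycle, which on $\mathbf{H}^{(k)}$ follows from Corollary \ref{cor:bounded} (together with the analogous bound for negative times, obtained by the same variational argument); second, simplicity of the top Lyapunov exponent of $A^{(k)}$, which is $\Lambda_k$. The latter simplicity is equivalent to the strict inequality $\Lambda_k > \lambda_1 + \cdots + \lambda_{k-1} + \lambda_{k+1}$, that is, $\lambda_k > \lambda_{k+1}$, which is precisely the gap assumption in the hypothesis. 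Finally, the hypothesis that $s(\omega)$ is future-Oseledets-generic for $\mu$-a.e.\ $\omega$ matches exactly the genericity requirement of the section in Lemma \ref{lemma:exp3_flow}. The conclusion of the lemma is that $s$ is $(A^{(k)}, \mathcal{V})$-generic for \emph{any} diverging function $\mathcal{V}$, in particular for $V_t = t$.

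With both hypotheses of Theorem \ref{theo:cocycle_CCLT_sec2_flows} verified, a direct application of that theorem yields the stated mixing DLT for the random variables $S_t(\omega) = \sigma_k(\omega, s(\omega), t)/t$ on $(\mathcal{M}, \mu)$, with limit the Dirac mass at $\Lambda_k$, and with the mixing conclusion for arbitrary Borel subsets $B, E \subseteq \mathcal{M}$. There is no real obstacle here beyond correctly assembling the prepared pieces; the content specific to the Kontsevich--Zorich setting is entirely absorbed into the log-integrability supplied by Corollary \ref{cor:bounded} and into recasting the spectral gap $\lambda_k > \lambda_{k+1}$ as simplicity of the top exponent on the exterior power $\mathbf{H}^{(k)}$.
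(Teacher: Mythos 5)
Your proposal is correct and coincides exactly with the paper's proof, which also deduces Theorem~\ref{theo:LLN3} directly from Theorem~\ref{theo:LLN1}, Theorem~\ref{theo:cocycle_CCLT_sec2_flows}, and Lemma~\ref{lemma:exp3_flow}. The additional details you supply (log-integrability via Corollary~\ref{cor:bounded}, and reading the gap $\lambda_k > \lambda_{k+1}$ as simplicity of the top exponent on $\mathbf{H}^{(k)}$) are exactly what the paper leaves implicit.
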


\begin{proof}
    This is a direct consequence of Theorems \ref{theo:cocycle_CCLT_sec2_flows} and \ref{theo:LLN1}, and Lemma\ref{lemma:exp3_flow}.
\end{proof}

\subsection*{The mixing central limit theorems.} We now state a second set of distributional limit theorems for the Kontsevich-Zorich cocycle. These theorems correspond to mixing central limit theorems for exterior powers of subbundles of the KZ cocycle.

\begin{theorem}
    \label{theo:CLT1}
    Consider $\mathcal{M}$ an $\mathrm{SL}(2,\mathbb{R})$-invariant suborbifold of a stratum of Abelian differentials with ergodic affine probability measure $\mu$. Let $\textbf{H}$ be a $2h$-dimensional, strongly irreducible, symplectic, $\mathrm{SL}(2,\R)$-invariant subbundle of the Kontsevich-Zorich cocycle over $\mathcal{M}$ that is symplectically orthogonal to the tautological subbundle and with Lyapunov exponents $\lambda_1 \geq \cdots \geq \lambda_{2h}$. Fix $1 \leq k \leq h$ such that $\lambda_k > \lambda_{k+1}$, let $\smash{\Lambda_k := \sum_{i=1}^k \lambda_i}$, and consider the $k$-th exterior power $\mathbf{H}^{(k)}$ and its projectivization $\mathbb{P}\mathbf{H}^{(k)}$ endowed with the canonical measure $\widehat{\mu}$. Then, there exists $V_k \geq 0$ such that for the random variables
    \[
    S_t(\omega,v) := \frac {\sigma(\omega,v,t) - t \cdot \Lambda_k}{\sqrt{t}} \quad \text {on} \quad (\mathbb{P}\mathbf{H}^{(k)}, \widehat{\mu}),
    \]
    for every pair of Borel measurable subsets $B,E \subseteq \mathcal{M}$, and for every interval $(a,b)\subseteq \mathbb{R}$ such that such that $\mathbb{P}(\mathcal{N}(0,V_k) \in \{a,b\}) = 0$, the following holds,
    \begin{gather*}
    \lim_{t \to \infty} \widehat{\mu}(\{(\omega,v) \in \mathbb{P}\mathbf{H}^{(k)} \ | \ \omega \in B, \ S_t(\omega,v) \in (a,b), \ a_t\omega \in E\}) \\
    = \mu(B) \cdot \left(\frac{1}{\sqrt{2\pi V_k}} \int_a^b e^{-x^2/V_k} \thinspace dx \right) \cdot \mu(E).
    \end{gather*}
    Moreover, if the Lyapunov spectrum of $\mathbf{H}$ is simple, then $V_1 > 0$.
\end{theorem}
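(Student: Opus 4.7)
The plan is to deduce this from the general mixing central limit theorem for cocycles over flows, namely Theorem \ref{theo:cocycle_MCLT_flows}, using Theorem \ref{dclt} of Al-Saqban and Forni as the input spatial DLT. The latter already provides convergence of the random variables $S_t(\omega,v) = (\sigma(\omega,v,t)-t\Lambda_k)/\sqrt{t}$ in distribution to $\mathcal{N}(0,V_k)$ on $(\mathbb{P}\mathbf{H}^{(k)}, \widehat{\mu})$ with averaging function $A_t := t\Lambda_k$ and normalizing function $V_t := \sqrt{t}$. Thus the task reduces to verifying the three structural hypotheses of Theorem \ref{theo:cocycle_MCLT_flows}: $\mathcal{V}$-sufficient-boundedness, $\mathcal{V}$-simple-dominated-splitting, and the existence of an ergodic transformation $U$ and a cocycle $D$ such that $(A,U)$ is contracting and $(A^{(k)}, D)$ is $(A,U,\mathcal{V})$-adapted.

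For the first condition, I would invoke Corollary \ref{cor:bounded}, which gives $\sigma(\omega,v,1) \leq k$ uniformly; combined with the cocycle relation this yields $|\sigma(\omega,v,t)-\sigma(a_1\omega,C(\omega,1)v,t)| = O(1) = o_{\omega,v}(\sqrt{t})$. For the second condition, the assumption $\lambda_k > \lambda_{k+1}$ is precisely the statement that the top Lyapunov exponent $\Lambda_k$ of the cocycle on $\mathbf{H}^{(k)}$ is simple, so Lemma \ref{lemma:exp_flow} applies and gives $|\sigma(\omega,v,t)-\sigma(\omega,w,t)| = O_{\omega,v,w}(1) = o_{\omega,v,w}(\sqrt{t})$ for $\widehat{\mu}\otimes\widehat{\mu}$-almost every pair.

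For the third condition, I take $U := h_1^-$, the time-one map of the stable horocycle flow on $\mathcal{M}$, which is ergodic with respect to $\mu$ by the standard theory of horocycle flows on affine invariant suborbifolds (the horocycle flow is mixing, so its time-one map is ergodic). Proposition \ref{prop:hyp} yields $d_{\mathrm{AGY}}(a_t U\omega, a_t\omega) = O_\omega(e^{-\kappa t})$, in particular $(A,U)$ is contracting. I let $D$ be the cocycle on $\mathbf{H}^{(k)}$ over $U$ defined by Gauss--Manin parallel transport along stable horocycle arcs; Corollary \ref{cor:adapted} then gives $|\sigma(\omega,v,t)-\sigma(U\omega,Dv,t)| \leq k(1+e^{-t}) = O(1) = o_{\omega,v}(\sqrt{t})$, so the adaptedness condition holds. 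Applying Theorem \ref{theo:cocycle_MCLT_flows} then produces the claimed mixing DLT for every Borel $B,E \subseteq \mathcal{M}$ and every interval $(a,b)$ with $\mathbb{P}(\mathcal{N}(0,V_k) \in \{a,b\}) = 0$. The positivity assertion $V_1 > 0$ under simplicity of the Lyapunov spectrum is a direct quotation of the corresponding clause in Theorem \ref{dclt}.

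In truth no step here is a serious obstacle, since each piece has already been developed in the preceding sections; the argument is an assembly. The most delicate point to articulate carefully is why the simple-dominated-splitting hypothesis applies to $A^{(k)}$ rather than to $A^{(1)}$: it is the inequality $\lambda_k > \lambda_{k+1}$, together with the identification of the top Oseledets subspace of $\mathbf{H}^{(k)}$ with the highest $k$ exponents of $\mathbf{H}$, that yields simplicity of $\Lambda_k$ and thus makes Lemma \ref{lemma:exp_flow} available in the exterior power.
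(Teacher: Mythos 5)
Your proposal is correct and follows exactly the paper's argument: Theorem \ref{dclt} supplies the spatial DLT, and the hypotheses of Theorem \ref{theo:cocycle_MCLT_flows} are verified via Corollary \ref{cor:bounded} (sufficient boundedness), Lemma \ref{lemma:exp_flow} (simple dominated splitting, using $\lambda_k > \lambda_{k+1}$), Proposition \ref{prop:hyp} (contraction of $(A,U)$), and Corollary \ref{cor:adapted} (adaptedness of $(A^{(k)},D)$). The paper's proof is a one-line citation of precisely these ingredients, so your write-up is simply a more explicit version of the same assembly.
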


\begin{proof}
    This is a direct consequence of Theorems \ref{dclt} and \ref{theo:cocycle_MCLT_flows}, Lemma \ref{lemma:exp_flow}, Proposition \ref{prop:hyp}, and Corollaries \ref{cor:bounded} and \ref{cor:adapted}.
\end{proof}

We now state a mixing central limit theorem for the Kontsevich-Zorich cocycle with respect to the operator norm.

\begin{theorem}
    \label{theo:CLT2} 
    Consider $\mathcal{M}$ an $\mathrm{SL}(2,\mathbb{R})$-invariant suborbifold of a stratum of Abelian differentials with ergodic affine probability measure $\mu$. Let $\textbf{H}$ be a $2h$-dimensional, strongly irreducible, symplectic, $\mathrm{SL}(2,\R)$-invariant subbundle of the Kontsevich-Zorich cocycle over $\mathcal{M}$ that is symplectically orthogonal to the tautological subbundle and with Lyapunov exponents $\lambda_1 \geq \cdots \geq \lambda_{2h}$. Fix $1 \leq k \leq h$ such that $\lambda_k > \lambda_{k+1}$ and let $\smash{\Lambda_k := \sum_{i=1}^k \lambda_i}$. Then, there exists $V_k \geq 0$ such that for the random variables
    \[
    S_t(\omega) := \frac {\sigma_k(\omega,t) - t \cdot \Lambda_k}{\sqrt{t}} \quad \text {on} \quad (\mathcal{M},\mu),
    \]
    for every pair of Borel measurable subsets $B,E \subseteq \mathcal{M}$, and for every interval $(a,b)\subseteq \mathbb{R}$ such that $\mathbb{P}(\mathcal{N}(0,V_k) \in \{a,b\}) = 0$, the following holds,
    \begin{gather*}
    \lim_{t \to \infty} \mu(\{\omega\in \mathcal{M} \ | \ \omega \in B, \ S_t(\omega) \in (a,b), \ a_t\omega \in E\}) \\
    = \mu(B) \cdot \left(\frac{1}{\sqrt{2\pi V_k}} \int_a^b e^{-x^2/V_k} \thinspace dx \right) \cdot \mu(E).
    \end{gather*}
    Moreover, if the Lyapunov spectrum of $\mathbf{H}$ is simple, then $V_1 > 0$.
\end{theorem}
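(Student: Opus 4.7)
The plan is to deduce Theorem \ref{theo:CLT2} from Theorem \ref{theo:CLT1} by invoking the abstract upgrading result Theorem \ref{theo:cocycle_CCLT_norm2_flows}, which converts a mixing DLT for $\sigma(\omega,v,t)$ on the projectivization $\mathbb{P}\mathbf{H}^{(k)}$ into a mixing DLT for the operator norm $\sigma_k(\omega,t)$ on the base $\mathcal{M}$, provided the cocycle has $\mathcal{V}$-strong-simple-dominated-splitting for $V_t := \sqrt{t}$. Since the hypotheses of Theorem \ref{theo:CLT2} coincide with those of Theorem \ref{theo:CLT1}, the mixing DLT on $(\mathbb{P}\mathbf{H}^{(k)}, \widehat{\mu})$ with constant $V_k$, averaging function $t \Lambda_k$, and normalizing function $\sqrt{t}$ is already available. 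The positivity of $V_1$ under simplicity of the spectrum of $\mathbf{H}$ transfers immediately, since the variance is the same constant in both statements.

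The only step requiring verification is that the $k$-th exterior power cocycle $A^{(k)}$ on $\mathbf{H}^{(k)}$ has strong simple dominated splitting. Here the Lyapunov spectrum of $A^{(k)}$ consists of the $k$-fold sums $\lambda_{i_1} + \cdots + \lambda_{i_k}$ with indices $1 \leq i_1 < \cdots < i_k$, whose maximum is $\Lambda_k = \lambda_1 + \cdots + \lambda_k$ and whose second maximum is $\Lambda_{k-1} + \lambda_{k+1}$; therefore the assumed strict inequality $\lambda_k > \lambda_{k+1}$ is exactly the condition that the top Lyapunov exponent of $A^{(k)}$ is simple. Combined with the standard log-integrability of the KZ cocycle with respect to any affine invariant measure, which propagates to exterior powers, Lemma \ref{lemma:exp2_flow} then yields $\mathcal{V}$-strong-simple-dominated-splitting for any diverging sequence $\mathcal{V}$, and in particular for $V_t := \sqrt{t}$.

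With all the hypotheses in hand, a direct application of Theorem \ref{theo:cocycle_CCLT_norm2_flows} to the mixing DLT of Theorem \ref{theo:CLT1} produces the desired mixing central limit theorem for $S_t(\omega) := (\sigma_k(\omega,t) - t \Lambda_k)/\sqrt{t}$. No genuine difficulty arises, mirroring the way Theorem \ref{theo:LLN2} was derived from Theorem \ref{theo:LLN1}; the proof can be compressed to the observation that the statement is a direct consequence of Theorems \ref{theo:cocycle_CCLT_norm2_flows} and \ref{theo:CLT1} together with Lemma \ref{lemma:exp2_flow}. If there is any subtlety worth flagging, it is the translation of the spectral gap hypothesis $\lambda_k > \lambda_{k+1}$ into the simplicity of the top exponent on the exterior power, which is a routine but indispensable piece of linear algebra.
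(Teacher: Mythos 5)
Your proof is correct and follows exactly the route taken in the paper: deduce the result from Theorem \ref{theo:CLT1} via Theorem \ref{theo:cocycle_CCLT_norm2_flows}, using Lemma \ref{lemma:exp2_flow} together with the spectral gap $\lambda_k > \lambda_{k+1}$ (i.e.\ simplicity of the top exponent of the exterior power) to supply the required strong simple dominated splitting. The extra detail you give about how $\lambda_k > \lambda_{k+1}$ translates into top-exponent simplicity on $\mathbf{H}^{(k)}$ is a useful elaboration but does not depart from the paper's argument.
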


\begin{proof}
    This is a direct consequence of Theorems \ref{theo:cocycle_CCLT_norm2_flows} and \ref{theo:CLT1}, and Lemma \ref{lemma:exp2_flow}.
\end{proof}

Finally, we state a central limit theorem for generic sections of the KZ cocycle.

\begin{theorem}
    \label{theo:CLT3}
    Consider $\mathcal{M}$ an $\mathrm{SL}(2,\mathbb{R})$-invariant suborbifold of a stratum of Abelian differentials with ergodic affine probability measure $\mu$. Let $\textbf{H}$ be an $2h$-dimensional, strongly irreducible, symplectic, $\mathrm{SL}(2,\R)$-invariant subbundle of the KZ cocycle over $\mathcal{M}$ that is symplectically orthogonal to the tautological subbundle and with Lyapunov exponents $\lambda_1 \geq \cdots \geq \lambda_{2h}$. Fix $1 \leq k \leq h$ such that $\lambda_k > \lambda_{k+1}$ and let $\smash{\Lambda_k := \sum_{i=1}^k \lambda_i}$. Then, there exists $V_k \geq 0$ such that for every measurable section $s \colon \mathcal{M} \to \mathbf{H}^{(k)}$ with $s(\omega) \in \mathbf{H}^{(k)}$ future-Oseledets-generic for $\mu$-almost every $\omega \in \mathcal{M}$, for the random variables
    \[
    S_t(\omega) := \frac {\sigma_k(\omega, s(\omega),t) - t \cdot \Lambda_k}{\sqrt{t}} \quad \text {on} \quad (\mathcal{M},\mu),
    \]
    for every pair of Borel measurable subsets $B,E \subseteq \mathcal{M}$, and for every interval $(a,b)\subseteq \mathbb{R}$ such that $\mathbb{P}(\mathcal{N}(0,V_k) \in \{a,b\}) = 0$, the following holds,
    \begin{gather*}
    \lim_{t \to \infty} \mu(\{\omega\in \mathcal{M} \ | \ \omega \in B, \ S_t(\omega) \in (a,b), \ a_t\omega \in E\}) \\
    = \mu(B) \cdot \left(\frac{1}{\sqrt{2\pi V_k}} \int_a^b e^{-x^2/V_k} \thinspace dx \right) \cdot \mu(E).
    \end{gather*}
    Moreover, if the Lyapunov spectrum of $\mathbf{H}$ is simple, then $V_1 > 0$.
\end{theorem}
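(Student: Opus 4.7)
The plan is to deduce this theorem as a direct corollary of Theorem \ref{theo:CLT1} (the mixing CLT on the projectivized bundle $\mathbb{P}\mathbf{H}^{(k)}$) together with the generic-section transfer principle encoded in Theorem \ref{theo:cocycle_CCLT_sec2_flows}. The strategy mirrors the one used to deduce Theorem \ref{theo:CLT2} from Theorem \ref{theo:CLT1}, with Lemma \ref{lemma:exp3_flow} replacing Lemma \ref{lemma:exp2_flow}.

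First I would invoke Theorem \ref{theo:CLT1}: under the hypotheses of the present theorem (strong irreducibility, symplecticity, symplectic orthogonality to the tautological bundle, and the gap $\lambda_k > \lambda_{k+1}$), the KZ flow $A^{(k)}$ on $\mathbf{H}^{(k)}$ satisfies a mixing DLT on $(\mathbb{P}\mathbf{H}^{(k)}, \widehat{\mu})$ with averaging function $A_t = t \cdot \Lambda_k$, normalizing function $V_t = \sqrt{t}$, and limiting distribution $\mathcal{N}(0, V_k)$. This is precisely the input required by Theorem \ref{theo:cocycle_CCLT_sec2_flows}. Moreover, under the simplicity assumption on the Lyapunov spectrum of $\mathbf{H}$, Theorem \ref{theo:CLT1} gives $V_1 > 0$, which is the same constant that will appear in the conclusion here.

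Next I would check the one remaining hypothesis of Theorem \ref{theo:cocycle_CCLT_sec2_flows}, namely that the given section $s \colon \mathcal{M} \to \mathbf{H}^{(k)}$ is $(A^{(k)}, \mathcal{V})$-generic in the sense defined before Lemma \ref{lemma:exp3_flow}. The gap $\lambda_k > \lambda_{k+1}$ means that the top Lyapunov exponent $\Lambda_k$ of the exterior power cocycle $A^{(k)}$ is simple; the cocycle $A^{(k)}$ is also log-integrable by Corollary \ref{cor:bounded}. Thus Lemma \ref{lemma:exp3_flow} applies and, since by hypothesis $s(\omega)$ is future-Oseledets-generic for $\mu$-almost every $\omega$, yields the uniform bound
\[
|\sigma_k(\omega, s(\omega), t) - \sigma_k(\omega, w, t)| = O_\omega(1)
\]
for $\widehat{\mu}$-a.e.\ $(\omega, w)$, which in particular is $o_{\omega, w}(\sqrt{t})$, i.e.\ the $(C, \mathcal{V})$-genericity condition for $\mathcal{V} = (\sqrt{t})_{t \in \mathbb{R}}$.

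Having verified the input DLT and the genericity of $s$, I would conclude by directly applying Theorem \ref{theo:cocycle_CCLT_sec2_flows}, which transfers the mixing Gaussian limit from the projectivized bundle to the trajectory of the section, yielding exactly the stated limit for $S_t(\omega) = (\sigma_k(\omega, s(\omega), t) - t \Lambda_k)/\sqrt{t}$. The main (and essentially only) step requiring attention is the verification of genericity via Lemma \ref{lemma:exp3_flow}; once that is in place, the proof is a one-line citation chain. In particular, no obstacle of substance arises beyond bookkeeping, since all the hard analytic content—existence of the variance, non-triviality when the Lyapunov spectrum is simple, and the upgrade from distributional to mixing convergence—is already supplied by Theorems \ref{dclt}, \ref{theo:cocycle_MCLT_flows}, and \ref{theo:cocycle_CCLT_sec2_flows}.
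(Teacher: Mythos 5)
Your proposal is correct and matches the paper's proof exactly: the paper deduces Theorem \ref{theo:CLT3} as a direct consequence of Theorems \ref{theo:cocycle_CCLT_sec2_flows} and \ref{theo:CLT1} together with Lemma \ref{lemma:exp3_flow}, precisely the citation chain you identify. Your additional remarks verifying the hypotheses (log-integrability via Corollary \ref{cor:bounded}, simplicity of $\Lambda_k$ from the gap $\lambda_k > \lambda_{k+1}$) are accurate and merely spell out what the paper leaves implicit.
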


\begin{proof}
    This is a direct consequence of Theorems \ref{theo:cocycle_CCLT_sec2_flows} and \ref{theo:CLT1}, and Lemma \ref{lemma:exp3_flow}.
\end{proof}

\subsection*{A criterion for genericity of sections.}

We recall below results on Oseledets genericity for the Kontsevich--Zorich cocycles on strongly irreducible subbundles of the Hodge bundle. We begin with the following result concerning the deviations of Lyapunov exponents.


\begin{theorem} [Theorem 1.5, \cite{AlSaqbanetal}]

Consider $\mathcal{M}$ an $\mathrm{SL}(2,\mathbb{R})$-invariant suborbifold of a stratum of Abelian differentials with ergodic affine probability measure $\mu$. Let $\textbf{H}$ be an $h$-dimensional, $\mathrm{SL}(2,\R)$-invariant, continuous subbundle of the Kontsevich-Zorich cocycle over $\mathcal{M}$ with Lyapunov exponents $\lambda_1 \geq \cdots \geq \lambda_h$. Fix $k \in \{1,\dots,h\}$, let $\smash{\Lambda_k := \sum_{i=1}^k \lambda_i}$, and consider the $k$-th exterior power $\mathbf{H}^{(k)}$. Assume the Kontsevich-Zorich cocycle on $\mathbf{H}^{(k)}$ is strongly irreducible with respect to $\mu$. Then, for any $\epsilon > 0$, there exist affine invariant submanifolds $\mathcal N_1, \dots, \mathcal N_k$ properly contained in $\mathcal M$, and $0<\delta <1$, such that for all $\omega \in {\mathcal M} \setminus \bigcup_{i=1}^k \mathcal N_i$, the set
\[
\left\lbrace \theta \in [0, 2\pi) \ \vert \  \limsup_{t\to \infty} \frac{\sigma_k(r_\theta\omega, t) }{t} \geq \Lambda_k + \epsilon \right\rbrace 
\]
has Hausdorff dimension at most $\delta$.
\end{theorem}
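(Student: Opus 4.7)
The plan is to reduce the statement to a quantitative large deviations estimate for the Kontsevich-Zorich cocycle along rotations, and then convert an exponential Lebesgue decay to a Hausdorff dimension bound. As a baseline, the refinement of the Oseledets theorem by Chaika and Eskin \cite{CE15} already gives that for every $\omega \in \mathcal{M}$ and for Lebesgue almost every $\theta \in [0, 2\pi)$ one has $\lim_{t \to \infty} \sigma_k(r_\theta \omega, t)/t = \Lambda_k$, so the exceptional set has zero Lebesgue measure. The content of the theorem is to upgrade ``measure zero'' to ``Hausdorff dimension strictly less than $1$'', and to identify the finite list of $\omega$'s for which no such upgrade can hold.

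The core step would be a large deviations estimate of the following form: there exists $\beta = \beta(\epsilon) > 0$ and proper affine invariant submanifolds $\mathcal{N}_1, \dots, \mathcal{N}_k \subsetneq \mathcal{M}$ such that for every $\omega \in \mathcal{M} \setminus \bigcup_i \mathcal{N}_i$ and every sufficiently large $t$,
\[
\bigl|\{\theta \in [0, 2\pi) : \sigma_k(r_\theta \omega, t) \geq t(\Lambda_k + \epsilon)\}\bigr| \leq C_\omega \, e^{-\beta t}.
\]
To prove such an estimate I would combine three ingredients: (i) the subadditivity $\sigma_k(r_\theta \omega, s+t) \leq \sigma_k(r_\theta \omega, s) + \sigma_k(a_s r_\theta \omega, t)$, reducing the analysis to iterates of a discretized version of $a_t$; (ii) the Eskin-Mirzakhani-Mohammadi measure classification together with quantitative equidistribution of pieces of circles $\{a_t r_\theta \omega\}$ under the Teichm\"uller flow, which lets one compare empirical averages of $\sigma_k(\cdot, 1)$ to $\mu(\sigma_k(\cdot,1)) = \Lambda_k$; (iii) Filip's semi-simplicity and the strong irreducibility of the action on $\mathbf{H}^{(k)}$ (Theorem \ref{theo:filip}), which secures the spectral gap separating $\Lambda_k$ from the other exponents of the exterior power. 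The submanifolds $\mathcal{N}_i$ arise precisely as those proper affine invariant loci on which circles fail to equidistribute to $\mu$, or on which the monodromy has a larger top exponent in one of the relevant subbundles; their finiteness follows from the discreteness of the list of proper affine invariant suborbifolds whose restricted cocycle exhibits such degenerate behavior, controlled by Filip's classification.

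Given the large deviations estimate, I would convert it to a Hausdorff dimension bound by a standard covering argument. The key geometric observation is that at time $t$ the Teichm\"uller flow contracts horocyclic arcs by $e^{-t}$, and by Proposition \ref{prop:variational} the operator-norm cocycle $\sigma_k(\cdot, t)$ is Lipschitz in the base at scale $e^{-t}$. Thus the bad set $E_t := \{\theta : \sigma_k(r_\theta \omega, t) \geq t(\Lambda_k + \epsilon)\}$ is a union of at most $C e^{(1-\beta)t}$ arcs of length comparable to $e^{-t}$. For every $s > 1 - \beta$ the $s$-dimensional Hausdorff measure of $\limsup_{t \to \infty} E_t$ is bounded by a convergent series, so the set where the limsup of $\sigma_k(r_\theta \omega, t)/t$ exceeds $\Lambda_k + \epsilon$ has Hausdorff dimension at most $\delta := 1 - \beta/2 < 1$.

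The main obstacle is the large deviations estimate itself, and in particular the identification of the exceptional $\mathcal{N}_i$'s with sharp control on $\beta(\epsilon)$. All of the soft reductions (subadditivity, covering, Lipschitz control at scale $e^{-t}$) are essentially formal, but producing exponential decay of the measure of the bad set requires genuinely quantitative equidistribution and spectral gap information for the Teichm\"uller flow on each ambient orbit closure, and delicate use of Filip's structure theorems to rule out hidden invariant splittings that would inflate the top exponent on a positive-measure set of $\theta$.
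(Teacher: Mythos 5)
This theorem is quoted verbatim from the reference \cite{AlSaqbanetal} (it is labeled ``[Theorem 1.5, \cite{AlSaqbanetal}]'' in the theorem environment) and the present paper provides no proof of it at all; it only records the statement and remarks that it refines the Chaika--Eskin result. So there is no proof in this paper against which to compare your proposal.

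Taking your proposal on its own terms, the overall scheme (a Lebesgue large-deviations bound on the bad circle set at each time, upgraded to a Hausdorff dimension bound via a covering and Borel--Cantelli argument) is a reasonable template, but two points need repair. First, the geometric observation is stated backwards: forward time $t$ under $a_t$ does not contract circle arcs, it expands them (since $a_t h^+_s a_{-t} = h^+_{e^{2t}s}$, a circle arc through $\omega$ is stretched by a factor $\sim e^{2t}$, not shrunk). What you actually want is the reverse statement, namely that two angles $\theta_1, \theta_2$ with $|\theta_1-\theta_2| \lesssim e^{-2t}$ keep their forward orbits $\{a_s r_{\theta_i}\omega\}_{s\in[0,t]}$ uniformly close, so that the cocycle values $\sigma_k(r_{\theta_i}\omega,t)$ differ by $O(1)$ by a variational estimate; the right scale for the covering intervals is therefore $e^{-2t}$, and the resulting dimension bound is $\delta = 1-\beta/2$ with this corrected scale, not $e^{-t}$. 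Second, and more seriously, the large deviations estimate you isolate as ``the core step'' is not a soft consequence of Eskin--Mirzakhani--Mohammadi equidistribution plus subadditivity: the EMM equidistribution theorems are not effective, so they cannot by themselves yield an exponential decay rate $\beta(\epsilon)>0$ uniform over $\omega$ off finitely many proper affine loci. This is exactly the hard content of \cite{AlSaqbanetal} and requires an argument beyond the ingredients you list (in the Chaika--Eskin framework, for instance, the crucial input is the exponential drift / random walk machinery of Eskin--Mirzakhani rather than equidistribution with a rate). As it stands, your outline identifies the right target estimate and the right way to convert it into a dimension bound, but it does not supply the mechanism that would actually prove the large deviations inequality or pin down the finite collection $\mathcal{N}_1,\dots,\mathcal{N}_k$.
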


The theorem above is a refinement of a fundamental result of Chaika and Eskin who
in \cite[Theorem 1.5]{CE15} proved that, for 
all $\omega \in {\mathcal M} \setminus \bigcup_{i=1}^k \mathcal N_i$ and for almost all $\theta\in [0, 2\pi)$,
$$
\limsup_{t\to \infty} \frac{\sigma_k(r_\theta\omega, t) }{t} = \Lambda_k\,.
$$

A similar result can be stated for horocycle arcs.

\begin{theorem} [Theorem 1.5, \cite{AlSaqbanetal}] Consider $\mathcal{M}$ an $\mathrm{SL}(2,\mathbb{R})$-invariant suborbifold of a stratum of Abelian differentials with ergodic affine probability measure $\mu$. Let $\textbf{H}$ be an $h$-dimensional, $\mathrm{SL}(2,\R)$-invariant, continuous subbundle of the Kontsevich-Zorich cocycle over $\mathcal{M}$ with Lyapunov exponents $\lambda_1 \geq \cdots \geq \lambda_h$. Fix $k \in \{1,\dots,h\}$, let $\smash{\Lambda_k := \sum_{i=1}^k \lambda_i}$, and consider the $k$-th exterior power $\mathbf{H}^{(k)}$. Assume the Kontsevich-Zorich cocycle on $\mathbf{H}^{(k)}$ is strongly irreducible with respect to $\mu$. Then, for any $\epsilon > 0$, there exist affine invariant submanifolds $\mathcal N_1, \dots, \mathcal N_k$ properly contained in $\mathcal M$, and $0<\delta <1$, such that for all $\omega \in {\mathcal M} \setminus \bigcup_{i=1}^k \mathcal N_i$, the set
\[
\left\lbrace s \in \mathbb{R} \ \vert \  \limsup_{t\to \infty} \frac{\sigma_k(h_s^+ \omega, t) }{t} \geq \Lambda_k + \epsilon \right\rbrace 
\]
has Hausdorff dimension at most $\delta$.
\end{theorem}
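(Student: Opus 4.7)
The plan is to mirror the proof of the immediately preceding rotational analog, replacing the circle arcs $\{r_\theta\omega\}_{\theta}$ by the horocycle arcs $\{h_s^+\omega\}_{s}$ throughout. The argument factors into three pieces: a quantitative equidistribution input for the arc pushed forward by the Teichm\"uller geodesic flow; a renormalization step that converts excess Lyapunov growth into anomalous excursions of the geodesic orbit either toward the cusps of $\mathcal{M}$ or toward proper affine invariant subloci detected by strong irreducibility of $\mathbf{H}^{(k)}$; and a covering/Frostman-type argument bounding the Hausdorff dimension of the parameter set producing such excursions.

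First, I would recall the rotational proof in outline and isolate the precise places where compactness of the $\mathrm{SO}(2)$-orbit is invoked. The only analytic ingredient that actually needs replacement is the quantitative equidistribution of the $a_t$-pushforward of Lebesgue on the arc. For horocycles, the commutation relation
\[
a_t h_s^+ = h_{e^{2t}s}^+ a_t
\]
reexpresses this pushforward, on a fixed $s$-interval, as Lebesgue on a rescaled horocycle arc based at $a_t\omega$. Combining the Eskin-Mirzakhani-Mohammadi equidistribution theorem for horocycle arcs in affine invariant orbit closures with Chaika-Eskin-style quantitative refinements and Athreya-type polynomial non-divergence of the horocycle flow around the cusps provides the required input for all $\omega$ outside a finite union of proper affine invariant submanifolds $\mathcal{N}_1,\dots,\mathcal{N}_k$. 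These exceptional loci are forced by strong irreducibility of $\mathbf{H}^{(k)}$ by the same mechanism as in the rotational case.

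Next, I would run the excursion/renormalization step: an excess $\limsup_{t\to\infty}\sigma_k(h_s^+\omega,t)/t \geq \Lambda_k + \epsilon$ forces, via sub-additive and Oseledets estimates on $\mathbf{H}^{(k)}$ along equidistributing trajectories, that for a positive density of dyadic scales $t_n \to \infty$ the point $a_{t_n} h_s^+\omega$ lies in a specific bad set of small $\mu$-measure. The commutation relation above then confines $s$ at each scale $t_n$ to a controlled union of intervals of length proportional to $e^{-2t_n}$ and of density at most $\eta(\epsilon) < 1$. A standard Besicovitch / dyadic-partitioning covering argument converts this sparseness into a Hausdorff dimension bound $\delta = \delta(\epsilon) < 1$, with $\delta \to 1$ as $\epsilon \to 0$.

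The main obstacle is obtaining the quantitative horocycle equidistribution with rate and cusp control strong enough to drive the dimension argument. Circle arcs sit inside compact $\mathrm{SO}(2)$-orbits, so equidistribution for them comes almost for free; horocycle arcs are unbounded and may escape to infinity in moduli space, so one must invoke polynomial non-divergence of the horocycle flow together with uniform Sobolev regularity of $\mu$ to control the dependence of the equidistribution constants on $\omega$ and on the scale $t_n$. Once this technical input is in place, the remainder of the argument is structurally identical to the rotational proof, and the same $\mathcal{N}_1,\dots,\mathcal{N}_k$ (or their horocycle-adapted analogs) serve as the exceptional loci.
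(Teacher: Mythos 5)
The paper does not prove this theorem; it simply records it with the citation [Theorem 1.5, \cite{AlSaqbanetal}]. The same is true of the circle-arc version stated just before it: both the $\mathrm{SO}(2)$ and the $H^+$ statements are imported verbatim from \cite{AlSaqbanetal}, and neither is accompanied by an argument in the present paper. Your proposal is therefore built on a false premise: there is no ``immediately preceding rotational analog'' proof in this paper for you to mirror, so there is nothing here against which your outline can be checked for fidelity.

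As to the outline on its own terms: the general shape (renormalize by $a_t$, detect anomalous growth via excursions, cover the bad set at dyadic scales $t_n$ by intervals of length $\sim e^{-2t_n}$, and run a Frostman/Besicovitch estimate) is the right sort of skeleton for a Hausdorff dimension bound, and you correctly identify the commutation $a_t h^+_s = h^+_{e^{2t}s} a_t$ as the mechanism that converts time into scale. However, two ingredients are asserted rather than established, and they are exactly where the real work would lie. First, the claim that ``equidistribution for circle arcs comes almost for free'' because $\mathrm{SO}(2)$-orbits are compact is not right: the relevant object in both cases is the $a_t$-pushforward of the arc, which is not confined to a compact set, and the rotation and horocycle cases are comparable in difficulty after the approximate conjugacy $r_\theta \approx h^+_{\tan\theta}$ that the paper itself uses elsewhere. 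Second, the step converting ``excess Lyapunov growth'' into ``positive-density visits to a fixed small-measure bad set'' needs a genuine large-deviations or Margulis-function input (this is where strong irreducibility and the exceptional loci $\mathcal N_1, \dots, \mathcal N_k$ actually enter), and a bare appeal to Oseledets plus equidistribution does not produce a uniform rate. Your outline gestures at ``Chaika--Eskin-style quantitative refinements'' but does not pin down a statement strong enough to make the covering estimate close. Since the theorem is a black-box input here, the honest conclusion is that your proposal is a plausible sketch of a possible route, but it cannot be assessed as a reconstruction of the paper's proof, because the paper contains none.
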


In the other direction the following large deviation result holds.

\begin{lemma}  
Consider $\mathcal{M}$ an $\mathrm{SL}(2,\mathbb{R})$-invariant suborbifold of a stratum of Abelian differentials with ergodic affine probability measure $\mu$. Let $\textbf{H}$ be an $h$-dimensional, $\mathrm{SL}(2,\R)$-invariant, continuous subbundle of the Kontsevich-Zorich cocycle over $\mathcal{M}$ with Lyapunov exponents $\lambda_1 \geq \cdots \geq \lambda_h$. Fix $k \in \{1,\dots,h\}$, let $\smash{\Lambda_k := \sum_{i=1}^k \lambda_i}$, and consider the $k$-th exterior power $\mathbf{H}^{(k)}$. Assume the KZ cocycle on $\mathbf{H}^{(k)}$ is strongly irreducible with respect to $\mu$. Let $s : \mathcal M \to \textbf{H}^{(k)}$ be a measurable, nowhere vanishing section.  Then,
\begin{enumerate}
\item for any $k\geq 1$, if the section $s$ is $\mathrm{SO}(2)$-invariant or $H^+$-invariant for $\mu$-almost every $\omega \in \mathcal{M}$, then, 
for every $\epsilon >0$, there exists $C>1$ such that for every $t>0$, 
\[
\mu \left( \left\lbrace \omega \in \mathcal M \colon
\sigma(\omega, s(\omega), t)\leq  e^{(\Lambda_k -\epsilon)t } \right\rbrace \right)  \leq  C e^{-t/C}\,;
\]
\item for $k= 1$, the above large deviation estimate holds under the weaker hypothesis that the section $s$ is $\mathrm{SO}(2)$-Lipschitz-continuous or $H^+$-Lipschitz-continuous for $\mu$-almost-every $\omega \in \mathcal M$. 
\end{enumerate}
\end{lemma}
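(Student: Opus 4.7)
The plan is to derive the large deviation estimate by combining three ingredients: strong irreducibility of the cocycle on $\mathbf{H}^{(k)}$, the cited Hausdorff--dimension bounds of AlSaqban et al.\ on exceptional circles and horocycles, and quantitative mixing (decay of correlations) for the Teichm\"uller flow on $(\mathcal M, \mu)$.

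First I would verify that, under the hypotheses of part (1), the section $s$ is future-Oseledets-generic $\mu$-almost everywhere, in the sense that $\sigma(\omega, s(\omega), t)/t \to \Lambda_k$. If this failed on a set of positive measure, then the measurable line bundle spanned by $s(\omega)$ would push $\mu$ to an $\mathrm{SO}(2)$- (respectively $H^+$-) invariant probability on the projectivization $\mathbb{P}\mathbf{H}^{(k)}$ concentrated off the Oseledets unstable direction. Combining this with the $g_t$-invariance of the Oseledets filtration and the fact that the one-parameter subgroup $\{g_t\}$ together with $\mathrm{SO}(2)$ (or $H^+$) generates $\mathrm{SL}(2,\R)$, and invoking Filip's rigidity for measurable $\mathrm{SL}(2,\R)$-invariant subbundles, one obtains a proper $\mathrm{SL}(2,\R)$-equivariant subbundle of $\mathbf{H}^{(k)}$, contradicting strong irreducibility.

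Second, to upgrade this almost-sure convergence to the exponential large-deviation bound, I would interpret $\sigma(\omega, s(\omega), t)$ as a Birkhoff integral of a bounded observable on the projectivized bundle evaluated along the orbit $(g_r \omega, [\rho(g_r,\omega) s(\omega)])$, where $\rho$ is the Kontsevich--Zorich cocycle. The invariance of $s$ places this initial condition on an $\mathrm{SO}(2)$- (or $H^+$-) equivariant measurable section which, by the previous step, is supported in the ``good'' part of each fiber. A large-deviation principle for Birkhoff integrals on the mixing system $(\mathbb{P}\mathbf{H}^{(k)}, \widehat\mu)$, combined with the Hausdorff-dimension control of AlSaqban et al.\ to handle exceptional circular (resp.\ horocyclic) slices for $\omega$ outside a proper affine-invariant union (itself of zero $\mu$-measure by \cite{EM18,EMM15}), then produces the claimed bound $C e^{-t/C}$. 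For part (2) with $k=1$ the same strategy applies but with the weaker Lipschitz regularity exploited via Lemma \ref{lemma:exp2_flow}: any future-Oseledets-generic vector expands at the top rate $\lambda_1$ up to $O(1)$, so Lipschitz continuity of $s$ along circles or horocycles keeps $[s(r_\theta \omega)]$ uniformly close to the one-dimensional top Oseledets subspace for most $\theta$, and a covering/perturbation argument reduces the Lipschitz case to the invariant case. The argument breaks for $k \geq 2$ because the $k$-th exterior Oseledets subspace is a strong attractor only on the complement of a hyperplane in $\mathbf{H}^{(k)}$, and Lipschitz regularity alone cannot prevent $s(\omega)$ from entering that hyperplane on a set of positive measure.

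The principal obstacle lies in the second step: converting the Hausdorff-dimension estimate, which is a pointwise geometric statement on individual $\mathrm{SO}(2)$- or $H^+$-orbits, into a $\mu$-measure large-deviation bound on the base. This bridge requires a uniform-in-$\omega$ quantitative refinement of the Chaika--Eskin/AlSaqban estimate together with a Dolgopyat-style decay-of-correlations input for $(\mathcal M, \mu)$. In the $H^+$-invariant case the noncompactness of $H^+$ forces additional work: Siegel--Veech compact exhaustions and polynomial (or exponential) return-time estimates for the Teichm\"uller geodesic flow must be invoked to control horocyclic excursions into the cusps of moduli space.
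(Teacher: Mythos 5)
Your proposal does not match the paper's proof, and it has substantive gaps that go beyond the obstacle you flag yourself. The paper's argument is, in fact, much shorter than what you envisage: the lower-tail large deviation estimate for parallel sections along unstable horocycle arcs (with respect to Lebesgue measure on the arc) is taken directly from \cite[Remark 5.4]{CF}, the passage from circles to horocycles uses the $KAN$-type identity $r_\theta = g_{\log|\cos\theta|}\circ h^-_{-\sin\theta\cos\theta}\circ h^+_{\tan\theta}$ (forward Teichm\"uller orbits of $r_\theta\omega$ and of $g_{\log|\cos\theta|}h^+_{\tan\theta}\omega$ are exponentially asymptotic, and arctangent is absolutely continuous), and Fubini then converts the leafwise Lebesgue estimate into the $\mu$-measure estimate. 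For part (2) the paper invokes the ``freezing'' mechanism of \cite{CF,KZCLT}: since unstable horocycle length grows at rate $e^{2t}$ while cocycle vectors grow at most at a strictly smaller exponential rate on subbundles symplectically orthogonal to the tautological one, a Lipschitz section pushed forward by the flow becomes projectively Lipschitz, and then \cite[Lemma 5.3]{CF} applies. None of this requires decay of correlations, Siegel--Veech exhaustions, or any uniform-in-$\omega$ refinement.

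Beyond being a different route, there are two concrete problems with your plan. First, the Hausdorff-dimension estimates of AlSaqban et al.\ (and the Chaika--Eskin refinement) control the set of angles $\theta$ for which $\limsup_{t\to\infty}\sigma_k(r_\theta\omega,t)/t \geq \Lambda_k + \epsilon$, i.e.\ the \emph{upper} tail, and they are asymptotic ($\limsup$) statements. The lemma requires a quantitative, time-$t$ estimate on the \emph{lower} tail $\{\sigma(\omega,s(\omega),t)\leq (\Lambda_k-\epsilon)t\}$; a Hausdorff-dimension bound on a limsup exceptional set neither addresses the relevant direction of deviation nor yields a rate in $t$, so this ingredient cannot do the work you assign it. Second, for part (2) the appeal to Lemma \ref{lemma:exp2_flow} plus ``Lipschitz continuity keeps $[s(r_\theta\omega)]$ uniformly close to the top Oseledets subspace'' does not hold: the Oseledets filtration is only measurable in $\omega$, not Lipschitz, so no such proximity follows from regularity of $s$. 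The actual mechanism is the domination of horocycle stretching over cocycle growth (hence the restriction to subbundles orthogonal to the tautological one), which forces projective Lipschitz regularity of the evolved section; this is precisely what is missing from your perturbation argument.
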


\begin{proof} The first item is proved for parallel sections above horocycle arcs with respect to the Lebesgue measure in \cite[Remark 5.4]{CF}. Hence, the statement for sections which restrict to parallel sections along horocycle arcs follows immediately by Fubini's theorem.

The statement for constant sections along circle orbits is equivalent since we have 
$$
 r_\theta= g_{\thinspace \log \vert \cos\theta \vert} \circ h^-_{-\sin \theta \cos\theta } \circ h^+_{\tan\theta}  \,.
$$
Hence, for any $\omega \in \mathcal{M}$, the forward Teichm\"uller orbit of $r_\theta \omega$ is exponentially asymptotic to that of the Abelian differential $g_{\thinspace \log \vert \cos\theta \vert} \circ  h^+_{\tan\theta} \thinspace \omega$, which implies that $r_\theta\omega$ is Oseledets regular if and only if 
$h^+_{\tan\theta} \thinspace \omega$ is. In addition, the parallel transport on the Hodge bundle of a constant section on the circle $\mathrm{SO}(2) \omega$ to the horocycle orbit $H^+ \omega$
is by definition a parallel section
above $H^+ \omega$.
Since the arctangent function is absolutely continuous, 
the large deviation statement for
constant sections above circles follows from the analogous statement for parallel sections above horocycles. 
 The statement in the first item 
 for sections which are constant above almost all circles then follows by Fubini's theorem. 

The second item can be proved for parallel sections above horocycle orbits as explained in~\cite[Remark 5.4]{CF}.
The argument can be reduced to invariant  subbundles symplectically orthogonal to the tautological subbundle. In the latter case, the argument is based on \cite[Lemma 5.3]{CF} and on a `freezing' mechanism (suggested by Jon Chaika
and exploited in \cite{CF} and \cite{KZCLT}).  The freezing mechanism is based on the observation that since the growth of the length of unstable horocycle arcs dominates the growth of vectors under the action of the cocycle, the image of a Lipschitz section under the Teichm\"uller geodesic flow  is eventually projectively Lipschitz
(in the sense of \cite[Definition 4.3]{CF}). Then \cite[Lemma 5.3]{CF} applies to projectively Lipschitz sections and implies the desired large deviations estimate for paralellel sections along horocycle
orbits. A similar statement can then be derived for constant sections along circle orbits 
as outlined above. Finally, the stated measure estimate holds for Lipschitz sections by Fubini's theorem. 
\end{proof}     

In summary, we have the following genericity criterion.

\begin{theorem}
Consider $\mathcal{M}$ an $\mathrm{SL}(2,\mathbb{R})$-invariant suborbifold of a stratum of Abelian differentials with ergodic affine probability measure $\mu$. Let $\textbf{H}$ be an $h$-dimensional, $\mathrm{SL}(2,\R)$-invariant, continuous subbundle of the Kontsevich-Zorich cocycle over $\mathcal{M}$ with Lyapunov exponents $\lambda_1 \geq \cdots \geq \lambda_h$. Fix $k \in \{1,\dots,h\}$, let $\smash{\Lambda_k := \sum_{i=1}^k \lambda_i}$, and consider the $k$-th exterior power $\mathbf{H}^{(k)}$. Assume the KZ cocycle on $\mathbf{H}^{(k)}$ is strongly irreducible with respect to $\mu$. Let $s : \mathcal M \to \textbf{H}^{(k)}$ be a measurable, nowhere vanishing section.  Then,
\begin{enumerate}
\item for any $k\geq 1$, if the section $s$ is $\mathrm{SO}(2)$-invariant or $H^+$-invariant for $\mu$-almost every $\omega \in \mathcal{M}$, then the section $s$ is generic, i.e., 
$$
\lim_{t\to \infty} \frac{\sigma(\omega, s(\omega),t)}{t} = \Lambda_k\,  \quad \text{ for $\mu$-almost every }  \omega \in \mathcal M \,.
$$
\item for $k= 1$, the above conclusion holds  under the weaker hypothesis that the section $s$ is $\mathrm{SO}(2)$-Lipschitz or $H^+$-Lipschitz for $\mu$-almost-every $\omega \in \mathcal M$.  
\end{enumerate}
\end{theorem}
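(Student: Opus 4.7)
The plan is to sandwich $\sigma(\omega, s(\omega), t)/t$ between matching upper and lower bounds that both converge to $\Lambda_k$. For the upper bound I would use the trivial pointwise inequality $\sigma(\omega, s(\omega), t) \leq \sigma_k(\omega, t)$, which holds since $s(\omega) \neq 0$ and $\sigma_k$ is defined via the operator norm. The deviations theorem of Al-Saqban et al.\ stated immediately above asserts that, for every $\epsilon > 0$, the set of $\theta \in [0, 2\pi)$ with $\limsup_{t\to\infty} \sigma_k(r_\theta\omega, t)/t \geq \Lambda_k + \epsilon$ has Hausdorff dimension strictly less than one for all $\omega$ off a union of proper affine invariant submanifolds $\mathcal{N}_1, \ldots, \mathcal{N}_k$. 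These exceptional submanifolds have $\mu$-measure zero since $\mu$ is ergodic on the strictly larger $\mathcal{M}$, so positive codimension forces Lebesgue measure zero in the $\theta$-direction; since $\mu$ is affine, hence $\mathrm{SO}(2)$-invariant, a Fubini argument then transfers this to a $\mu$-almost-everywhere statement on $\mathcal{M}$. Letting $\epsilon \downarrow 0$ through a countable sequence gives $\limsup_{t\to\infty} \sigma(\omega, s(\omega), t)/t \leq \Lambda_k$ almost surely. The horocycle-invariant case proceeds identically with circles replaced by horocycle arcs and the corresponding version of the deviations theorem.

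For the lower bound, the large deviation lemma immediately preceding the theorem is already tailored to both hypotheses in (1) and (2): under either the $\mathrm{SO}(2)$- or $H^+$-invariance assumption for arbitrary $k$, or its Lipschitz relaxation in the case $k=1$, it supplies a constant $C > 1$ for each $\epsilon > 0$ with
\[
\mu\bigl(\{\omega \in \mathcal{M} : \sigma(\omega, s(\omega), t) \leq (\Lambda_k - \epsilon)t\}\bigr) \leq C e^{-t/C}
\]
for every $t > 0$. Applying the Borel--Cantelli lemma along the summable sequence $t = n \in \mathbb{N}$ yields, for $\mu$-almost every $\omega$, the bound $\sigma(\omega, s(\omega), n) > (\Lambda_k - \epsilon) n$ for all sufficiently large $n$. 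Interpolating to continuous $t$ via the cocycle identity
\[
\sigma(\omega, s(\omega), t) - \sigma(\omega, s(\omega), \lfloor t \rfloor) = \sigma(a_{\lfloor t \rfloor}\omega, a_{\lfloor t \rfloor} s(\omega), t - \lfloor t \rfloor)
\]
and controlling the right-hand side by Corollary~\ref{cor:bounded} (together with a symmetric backward estimate obtained by integrating Proposition~\ref{prop:variational} in the opposite direction) introduces only an $O(1/t)$ correction, so that $\liminf_{t\to\infty} \sigma(\omega, s(\omega), t)/t \geq \Lambda_k - \epsilon$ almost surely. Intersecting over a countable sequence $\epsilon \downarrow 0$ completes the lower bound and hence the theorem.

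The main obstacle is conceptual rather than computational, since all the heavy technical inputs (the Al-Saqban et al.\ deviations theorem, the large deviation lemma, and boundedness of the KZ cocycle) are already in place. The nontrivial check is ensuring the Fubini transfer in the upper bound, namely that the exceptional affine submanifolds $\mathcal{N}_i$ have $\mu$-measure zero, which is automatic from ergodicity of $\mu$ on the strictly larger $\mathcal{M}$, together with the fact that affine measures are $\mathrm{SO}(2)$-invariant (and quasi-invariant under $H^+$, sufficient for the horocycle Fubini step). Part (2) is then identical to part (1) but invokes the sharper $k=1$ clause of the large deviation lemma, which permits the weakening from $\mathrm{SO}(2)$- or $H^+$-invariance of $s$ to Lipschitz regularity along those orbits.
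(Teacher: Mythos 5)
Your argument is correct and is essentially the one the paper leaves implicit when it says ``In summary, we have the following genericity criterion'': lower bound from the preceding large deviation lemma via Borel--Cantelli along integer times, interpolation to continuous time by the variational formula (Proposition~\ref{prop:variational} and its time-reversal), and an upper bound using $\sigma(\omega, s(\omega), t)\leq\sigma_k(\omega,t)$. You also correctly read the lemma's displayed inequality $\sigma(\omega, s(\omega), t)\leq e^{(\Lambda_k-\epsilon)t}$ as $\sigma(\omega, s(\omega), t)\leq(\Lambda_k-\epsilon)t$, since $\sigma$ is already a logarithm of a norm ratio.

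One remark: for the upper bound you reach for the Al-Saqban et al.\ Hausdorff-dimension deviations theorem plus a Fubini transfer, which works (the exceptional sets are proper affine submanifolds, hence $\mu$-null, and the bad angle sets are Lebesgue-null). But this is heavier than necessary: since $\mu$ is $A$-invariant and ergodic and the cocycle on $\mathbf{H}^{(k)}$ is log-integrable, the Oseledets/Kingman theorem already gives $\lim_{t\to\infty}\sigma_k(\omega,t)/t=\Lambda_k$ for $\mu$-a.e.\ $\omega$ directly, and then $\limsup_t\sigma(\omega,s(\omega),t)/t\leq\Lambda_k$ a.e.\ is immediate without any Fubini argument. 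The real content of the theorem is the matching lower bound, where your use of the large deviation lemma and Borel--Cantelli (with the $k=1$ clause covering the Lipschitz relaxation in item (2)) is exactly right.
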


\bibliographystyle{amsalpha}


\bibliography{bibliography}

\newcommand{\etalchar}[1]{$^{#1}$}
\providecommand{\bysame}{\leavevmode\hbox to3em{\hrulefill}\thinspace}
\providecommand{\MR}{\relax\ifhmode\unskip\space\fi MR }
\providecommand{\MRhref}[2]{%
  \href{http://www.ams.org/mathscinet-getitem?mr=#1}{#2}
}
\providecommand{\href}[2]{#2}
\begin{thebibliography}{ASAE{\etalchar{+}}21}

\bibitem[ABEM12]{ABEM12}
Jayadev Athreya, Alexander Bufetov, Alex Eskin, and Maryam Mirzakhani,
  \emph{Lattice point asymptotics and volume growth on {T}eichm\"uller space},
  Duke Math. J. \textbf{161} (2012), no.~6, 1055--1111. \MR{2913101}

\bibitem[AD01]{AaDen}
Jon Aaronson and Manfred Denker, \emph{Local limit theorems for partial sums of
  stationary sequences generated by {G}ibbs-{M}arkov maps}, Stoch. Dyn.
  \textbf{1} (2001), no.~2, 193--237. \MR{1840194}

\bibitem[AF22]{KZCLT}
Hamid {Al-Saqban} and Giovanni {Forni}, \emph{{A Central Limit Theorem for the
  Kontsevich-Zorich Cocycle}}, arXiv e-prints (2022), arXiv:2207.11760.

\bibitem[AG13]{AG13}
A.~Avila and S.~Gou\"{e}zel, \emph{Small eigenvalues of the {L}aplacian for
  algebraic measures in moduli space, and mixing properties of the
  {T}eichm\"{u}ller flow}, Ann. of Math. (2) \textbf{178} (2013), no.~2,
  385--442. \MR{3071503}

\bibitem[AGY06]{AGY06}
A.~Avila, S.~Gou\"{e}zel, and J.-C. Yoccoz, \emph{Exponential mixing for the
  {T}eichm\"{u}ller flow}, Publ. Math. Inst. Hautes \'{E}tudes Sci. (2006),
  no.~104, 143--211. \MR{2264836}

\bibitem[AH23]{statistics}
Francisco {Arana-Herrera} and Pouya {Honaryar}, \emph{Limit theorems for the
  homology of simple closed curves on surfaces}, In preparation, 2023.

\bibitem[ASAE{\etalchar{+}}21]{AlSaqbanetal}
H.~Al-Saqban, P.~Apisa, Al. Erchenko, O.~Khalil, S.~Mirzadeh, and C.~Uyanik,
  \emph{Exceptional directions for the {T}eichm{\"u}ller geodesic flow and
  hausdorff dimension}, J. Eur. Math. Soc. \textbf{23} (2021), 1423--1476.

\bibitem[Ath06]{A06}
Jayadev~S. Athreya, \emph{Quantitative recurrence and large deviations for
  {T}eichmuller geodesic flow}, Geom. Dedicata \textbf{119} (2006), 121--140.
  \MR{2247652}

\bibitem[AV07]{AV07}
Artur Avila and Marcelo Viana, \emph{Simplicity of {L}yapunov spectra: proof of
  the {Z}orich-{K}ontsevich conjecture}, Acta Math. \textbf{198} (2007), no.~1,
  1--56. \MR{2316268}

\bibitem[BDG{\etalchar{+}}21]{flow_group}
M.~Bell, V.~Delecroix, V.~Gadre, R.~Guti{\'e}rrez-Romo, and S.~Schleimer,
  \emph{The flow group of rooted abelian or quadratic differentials}, arXiv
  e-prints (2021), arXiv:2101.12197.

\bibitem[Bel54]{Be54}
Richard Bellman, \emph{{Limit theorems for non-commutative operations. I.}},
  Duke Mathematical Journal \textbf{21} (1954), no.~3, 491 -- 500.

\bibitem[BF14]{BufFor}
Alexander Bufetov and Giovanni Forni, \emph{Limit theorems for horocycle
  flows}, Ann. Sci. \'{E}c. Norm. Sup\'{e}r. (4) \textbf{47} (2014), no.~5,
  851--903. \MR{3294619}

\bibitem[BL85]{BL85}
Philippe Bougerol and Jean Lacroix, \emph{Products of random matrices with
  applications to {S}chr\"{o}dinger operators}, Progress in Probability and
  Statistics, vol.~8, Birkh\"{a}user Boston, Inc., Boston, MA, 1985.
  \MR{886674}

\bibitem[BL98]{hom5}
Martine Babillot and Fran\c{c}ois Ledrappier, \emph{Lalley's theorem on
  periodic orbits of hyperbolic flows}, Ergodic Theory Dynam. Systems
  \textbf{18} (1998), no.~1, 17--39. \MR{1609507}

\bibitem[BM10]{BM10}
I.I. Bouw and M.~M\"oller, \emph{Teichm\"uller curves, triangle groups, and
  lyapunov exponents}, Ann. Math. \textbf{172} (2010), no.~1, 139--185.
  \MR{2680418}

\bibitem[Bow70]{Bowen1}
Rufus Bowen, \emph{Markov partitions for {A}xiom {${\rm A}$} diffeomorphisms},
  Amer. J. Math. \textbf{92} (1970), 725--747. \MR{277003}

\bibitem[Bow73]{Bowen2}
\bysame, \emph{Symbolic dynamics for hyperbolic flows}, Amer. J. Math.
  \textbf{95} (1973), 429--460. \MR{339281}

\bibitem[BQ16]{BQ16}
Yves Benoist and Jean-Fran\c{c}ois Quint, \emph{Random walks on reductive
  groups}, Ergebnisse der Mathematik und ihrer Grenzgebiete. 3. Folge. A Series
  of Modern Surveys in Mathematics [Results in Mathematics and Related Areas.
  3rd Series. A Series of Modern Surveys in Mathematics], vol.~62, Springer,
  Cham, 2016. \MR{3560700}

\bibitem[BR75]{Bowen3}
Rufus Bowen and David Ruelle, \emph{The ergodic theory of {A}xiom {A} flows},
  Invent. Math. \textbf{29} (1975), no.~3, 181--202. \MR{380889}

\bibitem[CE15]{CE15}
Jon Chaika and Alex Eskin, \emph{Every flat surface is {B}irkhoff and
  {O}seledets generic in almost every direction}, J. Mod. Dyn. \textbf{9}
  (2015), 1--23. \MR{3395258}

\bibitem[CF21]{CF}
A.~Calderon and J.~Farre, \emph{Shear-shape cocycles for measured laminations
  and ergodic theory of the earthquake flow}, Preprint, {arXiv:2102.13124},
  2021.

\bibitem[Che95]{Cher}
N.~I. Chernov, \emph{Limit theorems and {M}arkov approximations for chaotic
  dynamical systems}, Probab. Theory Related Fields \textbf{101} (1995), no.~3,
  321--362. \MR{1324089}

\bibitem[DDKN22]{DDKN}
D.~Dolgopyat, C.~Dong, A.~Kanigowski, and P.~N\'andori, \emph{Flexibility of
  statistical properties for smooth systems satisfying the central limit
  theorem}, Inv. Math. \textbf{230} (2022), 31--120.

\bibitem[DFK22]{DFK}
D.~Dolgopyat, B.~Fayad, and A.~Kanigowski, \emph{Smooth zero entropy flows
  satisfying the classical central limit theorem}, Preprint,
  {arXiv:2210.10121}, 2022.

\bibitem[DN16]{DN16}
Dmitry Dolgopyat and P\'{e}ter N\'{a}ndori, \emph{Nonequilibrium density
  profiles in {L}orentz tubes with thermostated boundaries}, Comm. Pure Appl.
  Math. \textbf{69} (2016), no.~4, 649--692. \MR{3465085}

\bibitem[DN19]{DN17}
\bysame, \emph{Infinite measure renewal theorem and related results}, Bull.
  Lond. Math. Soc. \textbf{51} (2019), no.~1, 145--167. \MR{3919567}

\bibitem[DN20]{DN20}
\bysame, \emph{On mixing and the local central limit theorem for hyperbolic
  flows}, Ergodic Theory Dynam. Systems \textbf{40} (2020), no.~1, 142--174.
  \MR{4038028}

\bibitem[Dol04]{Dol1}
Dmitry Dolgopyat, \emph{Limit theorems for partially hyperbolic systems},
  Trans. Amer. Math. Soc. \textbf{356} (2004), no.~4, 1637--1689. \MR{2034323}

\bibitem[Eag76]{MCLT76}
G.~K. Eagleson, \emph{Some simple conditions for limit theorems to be mixing},
  Teor. Verojatnost. i Primenen. \textbf{21} (1976), no.~3, 653--660.
  \MR{0428388}

\bibitem[EFW18]{EFW18}
A.~Eskin, S.~Filip, and A.~Wright, \emph{The algebraic hull of the
  kontsevich–zorich cocycle}, Ann. of Math. (2) \textbf{188} (2018), no.~1,
  281--313. \MR{3815463}

\bibitem[EM18]{EM18}
Viveka {Erlandsson} and Gabriele {Mondello}, \emph{{Ergodic invariant measures
  on the space of geodesic currents}}, arXiv e-prints (2018), arXiv:1807.02144.

\bibitem[EMM15]{EMM15}
Alex Eskin, Maryam Mirzakhani, and Amir Mohammadi, \emph{{Isolation,
  equidistribution, and orbit closures for the SL(2,R) action on moduli
  space}}, Annals of Mathematics (2015), 673--721.

\bibitem[EMM19]{EMM19}
Alex {Eskin}, Maryam {Mirzakhani}, and Amir {Mohammadi}, \emph{{Effective
  counting of simple closed geodesics on hyperbolic surfaces}}, arXiv e-prints
  (2019), arXiv:1905.04435.

\bibitem[Eps87]{hom2}
Charles~L. Epstein, \emph{Asymptotics for closed geodesics in a homology class,
  the finite volume case}, Duke Math. J. \textbf{55} (1987), no.~4, 717--757.
  \MR{916117}

\bibitem[FF03]{FF03}
L.~Flaminio and G.~Forni, \emph{{Invariant distributions and time averages for
  horocycle flows}}, Duke Mathematical Journal \textbf{119} (2003), no.~3, 465
  -- 526.

\bibitem[Fil16]{Fil16}
Simion Filip, \emph{Semisimplicity and rigidity of the {K}ontsevich-{Z}orich
  cocycle}, Invent. Math. \textbf{205} (2016), no.~3, 617--670. \MR{3539923}

\bibitem[Fil17]{Fil17}
Simion Filip, \emph{{Zero Lyapunov exponents and monodromy of the
  Kontsevich--Zorich cocycle}}, Duke Mathematical Journal \textbf{166} (2017),
  no.~4, 657 -- 706.

\bibitem[FK60]{FK60}
H.~Furstenberg and H.~Kesten, \emph{{Products of Random Matrices}}, The Annals
  of Mathematical Statistics \textbf{31} (1960), no.~2, 457 -- 469.

\bibitem[FMZ11]{FMZ11}
Giovanni Forni, Carlos Matheus, and Anton Zorich, \emph{Square-tiled cyclic
  covers}, J. Mod. Dyn. \textbf{5} (2011), no.~2, 285--318. \MR{2820563}

\bibitem[FMZ14a]{FMZ14}
G.~Forni, C.~Matheus, and A.~Zorich, \emph{Zero lyapunov exponents of the hodge
  bundle}, Comment. Math. Helv. \textbf{89} (2014), no.~2, 489–--535.

\bibitem[FMZ14b]{FMZhodge}
Giovanni Forni, Carlos Matheus, and Anton Zorich, \emph{Lyapunov spectrum of
  invariant subbundles of the {H}odge bundle}, Ergodic Theory Dynam. Systems
  \textbf{34} (2014), no.~2, 353--408. \MR{3233697}

\bibitem[For02]{For02}
Giovanni Forni, \emph{{Deviation of ergodic averages for area-preserving flows
  on surfaces of higher genus}}, Annals of Mathematics \textbf{155} (2002),
  no.~1, 1--103.

\bibitem[For11]{For11}
Giovanni Forni, \emph{A geometric criterion for the nonuniform hyperbolicity of
  the kontsevich--zorich cocycle}, Journal of Modern Dynamics \textbf{5}
  (2011), no.~2, 355--395.

\bibitem[FT21]{FK}
Alex {Furman} and Robert {Thijs Kozma}, \emph{{Central Limit Theorem for
  Cocycles over Hyperbolic Systems}}, arXiv e-prints (2021), arXiv:2106.15564.

\bibitem[GM89]{GM89}
I~Ya Gol'dsheid and G~A Margulis, \emph{Lyapunov indices of a product of random
  matrices}, Russian Mathematical Surveys \textbf{44} (1989), no.~5, 11.

\bibitem[Gou15]{Gouezel}
S\'{e}bastien Gou\"{e}zel, \emph{Limit theorems in dynamical systems using the
  spectral method}, Hyperbolic dynamics, fluctuations and large deviations,
  Proc. Sympos. Pure Math., vol.~89, Amer. Math. Soc., Providence, RI, 2015,
  pp.~161--193. \MR{3309098}

\bibitem[GR86]{GR86}
Y.~Guivarc’h and A.~Raugi, \emph{Products of random matrices: convergence
  theorems}, Random matrices and their applications (Brunswick, Maine, 1984)
  (Providence, RI), Contemp. Math., vol.~50, Amer. Math. Soc., 1986,
  pp.~31--54.

\bibitem[GR17]{GR}
R.~Guti\'errez-Romo, \emph{Simplicity of the {L}yapunov spectra of certain
  quadratic differentials}, Preprint, {arXiv: 1711.02006}, 2017.

\bibitem[Iwa08]{Iwa}
Yukiko Iwata, \emph{A generalized local limit theorem for mixing semi-flows},
  Hokkaido Math. J. \textbf{37} (2008), no.~1, 215--240. \MR{2395082}

\bibitem[Kon97]{Kon97}
M.~Kontsevich, \emph{Lyapunov exponents and {H}odge theory}, The mathematical
  beauty of physics ({S}aclay, 1996), Adv. Ser. Math. Phys., vol.~24, World
  Sci. Publ., River Edge, NJ, 1997, pp.~318--332. \MR{1490861}

\bibitem[KZ97]{KZ97}
M.~{Kontsevich} and A.~{Zorich}, \emph{{Lyapunov exponents and Hodge theory}},
  arXiv e-prints (1997), hep--th/9701164.

\bibitem[Lal89]{hom3}
Steven~P. Lalley, \emph{Closed geodesics in homology classes on surfaces of
  variable negative curvature}, Duke Math. J. \textbf{58} (1989), no.~3,
  795--821. \MR{1016446}

\bibitem[Liv96]{Liv}
Carlangelo Liverani, \emph{Central limit theorem for deterministic systems},
  International {C}onference on {D}ynamical {S}ystems ({M}ontevideo, 1995),
  Pitman Res. Notes Math. Ser., vol. 362, Longman, Harlow, 1996, pp.~56--75.
  \MR{1460797}

\bibitem[LP82]{LeP82}
Emile Le~Page, \emph{Theoremes limites pour les produits de matrices
  aleatoires}, Probability Measures on Groups (Berlin, Heidelberg) (Herbert
  Heyer, ed.), Springer Berlin Heidelberg, 1982, pp.~258--303.

\bibitem[Mas82]{Mas82}
Howard Masur, \emph{Interval exchange transformations and measured foliations},
  Ann. of Math. (2) \textbf{115} (1982), no.~1, 169--200. \MR{644018}

\bibitem[Mas85]{Mas85}
\bysame, \emph{Ergodic actions of the mapping class group}, Proc. Amer. Math.
  Soc. \textbf{94} (1985), no.~3, 455--459. \MR{787893}

\bibitem[MMY15]{MMY15}
C.~Matheus, M.~M\"oller, and J.-C. Yoccoz, \emph{A criterion for the simplicity
  of the lyapunov spectrum of square--tiled surfaces}, Invent. Math.
  \textbf{202} (2015), 333–--425.

\bibitem[Pol91]{hom4}
Mark Pollicott, \emph{Homology and closed geodesics in a compact negatively
  curved surface}, Amer. J. Math. \textbf{113} (1991), no.~3, 379--385.
  \MR{1109342}

\bibitem[PP22]{PP22}
Kiho Park and Mark Piraino, \emph{{Transfer Operators and Limit Laws for
  Typical Cocycles}}, Communications in Mathematical Physics \textbf{389}
  (2022), no.~3, 1475--1523.

\bibitem[PS87]{hom1}
Ralph Phillips and Peter Sarnak, \emph{Geodesics in homology classes}, Duke
  Math. J. \textbf{55} (1987), no.~2, 287--297. \MR{894581}

\bibitem[Rat73]{Ratner1}
M.~Ratner, \emph{The central limit theorem for geodesic flows on
  {$n$}-dimensional manifolds of negative curvature}, Israel J. Math.
  \textbf{16} (1973), 181--197. \MR{333121}

\bibitem[Roy71]{disk}
H.~L. Royden, \emph{Automorphisms and isometries of {T}eichm\"{u}ller space},
  Advances in the {T}heory of {R}iemann {S}urfaces ({P}roc. {C}onf., {S}tony
  {B}rook, {N}.{Y}., 1969), Ann. of Math. Studies, No. 66, Princeton Univ.
  Press, Princeton, N.J., 1971, pp.~369--383. \MR{0288254}

\bibitem[Rue76]{Ruelle1}
David Ruelle, \emph{A measure associated with axiom-{A} attractors}, Amer. J.
  Math. \textbf{98} (1976), no.~3, 619--654. \MR{415683}

\bibitem[Sha04]{sharp}
Richard Sharp, \emph{A local limit theorem for closed geodesics and homology},
  Trans. Amer. Math. Soc. \textbf{356} (2004), no.~12, 4897--4908. \MR{2084404}

\bibitem[Sin60]{Sinai1}
Ja.~G. Sina\u{\i}, \emph{The central limit theorem for geodesic flows on
  manifolds of constant negative curvature}, Soviet Math. Dokl. \textbf{1}
  (1960), 983--987. \MR{0125607}

\bibitem[Sin68]{Sinai2}
\bysame, \emph{Markov partitions and u-diffeomorphisms}, Funkcional. Anal. i
  Prilo\v{z}en \textbf{2} (1968), no.~1, 64--89. \MR{0233038}

\bibitem[Sin72]{Sinai3}
\bysame, \emph{Gibbs measures in ergodic theory}, Uspehi Mat. Nauk \textbf{27}
  (1972), no.~4(166), 21--64. \MR{0399421}

\bibitem[Tn13]{Tre13}
R.~Trevi\~no, \emph{On the non-uniform hyperbolicity of the kontsevich–zorich
  cocycle for quadratic differentials}, Geom Dedicata \textbf{163} (2013),
  311--338.

\bibitem[Vee82]{Vee82}
William~A. Veech, \emph{Gauss measures for transformations on the space of
  interval exchange maps}, Ann. of Math. (2) \textbf{115} (1982), no.~1,
  201--242. \MR{644019}

\bibitem[Wad96]{Wad}
Simon Waddington, \emph{Large deviation asymptotics for {A}nosov flows}, Ann.
  Inst. H. Poincar\'{e} C Anal. Non Lin\'{e}aire \textbf{13} (1996), no.~4,
  445--484. \MR{1404318}

\bibitem[Zor94]{Zor94}
Anton Zorich, \emph{Asymptotic flag of an orientable measured foliation on a
  surface}, Geometric Study of Foliations, World Scientific, 1994,
  pp.~479--498.

\bibitem[Zor99]{Zor99}
\bysame, \emph{How do the leaves of a closed {$1$}-form wind around a
  surface?}, Pseudoperiodic topology, Amer. Math. Soc. Transl. Ser. 2, vol.
  197, Amer. Math. Soc., Providence, RI, 1999, pp.~135--178. \MR{1733872}

\end{thebibliography}

$ $

\end{document}